\documentclass[a4paper, 10pt]{article}

\usepackage[hang,flushmargin]{footmisc}

\usepackage[dvipsnames]{xcolor}
\usepackage{graphicx}
\usepackage[a4paper,top=28mm,bottom=32mm,left=28mm,right=28mm]{geometry}

\usepackage{amsthm,amsmath,amsfonts,amssymb}
\usepackage{mathtools}

\usepackage{textpos}

\usepackage{caption,subcaption}
\usepackage{enumitem}

\usepackage{tikz, tikz-cd}

\usepackage{float}

\usepackage[hyperfootnotes=false,colorlinks,
linkcolor={blue!80!black},
citecolor={red!60!black},
urlcolor={blue!80!black}]{hyperref}

\numberwithin{equation}{section}

\newtheorem{theorem}{Theorem}[section]
\newtheorem{lemma}[theorem]{Lemma}
\newtheorem{proposition}[theorem]{Proposition}

\newtheorem{question}[theorem]{Open question}

\newtheorem{letterthm}{Theorem}
\newtheorem{lettercor}[letterthm]{Corollary}

\newtheorem{letterthmbody}{Theorem}
\newtheorem{lettercorbody}[letterthmbody]{Corollary}
\theoremstyle{definition}
\newtheorem{definition}[theorem]{Definition}
\newtheorem{example}[theorem]{Example}

\theoremstyle{remark}
\newtheorem*{remark}{Remark}

\makeatletter
\renewenvironment{proof}[1][\proofname]{%
	\par\pushQED{\qed}%
	\normalfont\topsep6\p@\@plus6\p@\relax
	\trivlist
	\item[\hskip\labelsep\bfseries #1\@addpunct{.}]%
	\ignorespaces
}{%
	\popQED\endtrivlist\@endpefalse
}
\makeatother

\newcommand\xqed[1]{%
	\leavevmode\unskip\penalty9999 \hbox{}\nobreak\hfill
	\quad\hbox{#1}}
\newcommand\demo{\xqed{$\triangle$}}

\newcommand{\CMP}{$\mathsf{CMP}$} 

\newcommand{\G}{\mathbf{G}}
\newcommand{\HH}{\mathbf{H}}
\newcommand{\U}{\mathbf{U}}

\newcommand{\T}{\mathbf{T}}

\begin{document}

\begin{center}
	{\boldmath\LARGE\bf Automorphisms and monomorphisms of direct products of virtually solvable minimax groups}
	
\vspace{2ex}
	
	{\sc Jonas Der\'e$^\text{1}$ and Ken Vandermeersch\footnote{\noindent KU Leuven Kulak, Kortrijk, Belgium.  Corresponding author: Jonas Der\'e: \href{mailto:jonas.dere@kuleuven.be}{\texttt{jonas.dere@kuleuven.be}}.}}
\end{center}

\begin{abstract}
	\noindent  This paper studies automorphisms and monomorphisms of direct products $\Gamma=\Gamma_1\times\cdots\times\Gamma_r$ of finitely generated virtually solvable minimax groups, a class containing all virtually polycyclic groups. Under an indecomposability assumption on the $\mathbb Q$-algebraic hulls, we prove that every monomorphism of $\Gamma$ factorizes uniquely as $\varphi=\theta\cdot\zeta$, where $\theta$ sends each factor into a permuted factor with $\mathbb Q$-isomorphic hull and $\zeta$ is central and off-diagonal. Conversely, every such pair defines a monomorphism of $\Gamma$, and \(\varphi\) is an automorphism if and only if \(\theta\) is. This indecomposability assumption is sharp: we show it cannot be weakened to direct indecomposability of the factors.
	
	The proof proceeds in three steps: first by establishing the corresponding central mixing property for finite-dimensional Lie algebras and algebraic Lie algebras, then for connected linear algebraic groups, and finally by transferring these results to minimax groups via $\mathbb Q$-algebraic hulls. This extends the previously known nilpotent case both from automorphisms to monomorphisms and from finitely generated torsion-free nilpotent groups to the broader class of finitely generated virtually solvable minimax groups. As applications, we characterize co-Hopfian direct products and derive formulas for Reidemeister numbers and Reidemeister spectra.\\	
	\textbf{Keywords:} Automorphism group, algebraic hull, linear algebraic groups, direct products, virtually solvable groups.\\
	\textbf{MSC2020:} 20E36, 20F16, 20G20, 17B40  
\end{abstract}

\section{Introduction}
Computing automorphism groups is notoriously subtle, even for groups built from simpler pieces. 
Let \[G = G_1 \times G_2 \times \cdots \times G_r\] be a direct product of directly indecomposable groups. We consider the following question: 
\medskip 

\noindent \textbf{Question.} \textit{Can one describe \(\operatorname{Aut}(G)\) in terms of the groups \(\operatorname{Aut}(G_i)\)?}

\medskip 

\noindent Of course, given \(\theta_i \in \operatorname{Aut}(G_i)\), we obtain a diagonal automorphism \(\theta  = \operatorname{Diag}(\theta_1,\theta_2,\ldots,\theta_r)\) of \(G\), but in general automorphisms may also \emph{mix} the factors. For simplicity, we restrict our attention to non-abelian factors. 

 In the case of non-abelian finite groups, any \emph{mixing of the factors} is known to be necessarily {central}.  
  In \cite{Bidwell1,Bidwell2} it is proved that if $G = G_1 \times G_2 \times \cdots \times G_r$ is a direct product of non-abelian, directly indecomposable finite groups, then every $\varphi \in \operatorname{Aut}(G)$ admits a unique factorization \[\varphi = \theta \cdot \zeta \colon \quad x \mapsto \theta(x) \cdot  \zeta(x),\] where:
	\begin{enumerate}[label={(\Alph*)}, ref={\Alph*}]
	\item $\theta \in \operatorname{Aut}(G)$ maps each factor $G_i$ onto an isomorphic  $G_{\sigma(i)}$ for some unique permutation $\sigma$ in the symmetric group $\mathcal S_r$ on $r$ letters, and \label{item:Introduction1}
	\item $\zeta \in \operatorname{Hom}(G,Z(G))$ is \emph{off-diagonal}, i.e.\ \(\zeta\) satisfies  $\pi_{\sigma(i)}(\zeta(G_i))=\{e\}$ for each $i$.\(\!\!\) \label{item:Introduction2}
\end{enumerate}
Conversely, any such pair $\theta,\zeta$ satisfying {\eqref{item:Introduction1}} and {\eqref{item:Introduction2}} above yields an automorphism $\varphi:=\theta\cdot\zeta$ of \(G\).  In particular, apart from permuting isomorphic factors, the only non-trivial mixing is \emph{central}. 

This motivates the following question: for which classes of groups is the only mixing of direct factors central, up to permutation of isomorphic factors?

\vspace*{-0.1cm}

\begin{definition}[Central Mixing Property]
	Let $G=G_1 \times G_2 \times \cdots\times G_r$ be a direct product of non-abelian, directly indecomposable groups. We say that $G$ satisfies the \emph{Central Mixing Property} (\CMP{}) on automorphisms if every $\varphi\in\operatorname{Aut}(G)$ factorizes as $\varphi=\theta\cdot\zeta$ with $\theta,\zeta$ as in {\eqref{item:Introduction1}} and {\eqref{item:Introduction2}}, and conversely any such pair $(\theta,\zeta)$ induces an automorphism $\varphi:= \theta\cdot\zeta$ of \(G\).
	
			We define \CMP{} analogously in the categories of Lie algebras and linear algebraic groups.
\end{definition}

		\CMP{} is known to hold for direct products of finite groups \cite{Bidwell1, Bidwell2}, centerless groups \cite{Joh83,Sen21}, right-angled Artin groups \cite[Proposition 3.3]{GW16}, and finitely generated torsion-free nilpotent groups with directly indecomposable rational Mal'cev completions \cite{Sen24}. Not all direct products satisfy \CMP{}, however: our examples show that both the forward and converse directions of \CMP{} already fail for directly indecomposable torsion-free nilpotent groups with decomposable rational Mal'cev completions (see Subsection~\ref{subsec:assumptionsDECOMP}). 

Nevertheless, these counterexamples suggest that the failure of \CMP{} is more subtle than the failure of a central factorization \(\varphi=\theta\cdot\zeta\) itself. Indeed, the automorphisms in Subsection~\ref{subsec:assumptionsDECOMP} are still of this form with \(\zeta\colon G\to Z(G)\) central; what breaks down is the stronger off-diagonal condition on \(\zeta\). This shows that, already in the infinite nilpotent setting, the component-map description of Bidwell and Senden does not capture the full picture.
\begin{question}
	Let \(\Gamma = \Gamma_1 \times \Gamma_2\times  \cdots \times \Gamma_r\) be a direct product of directly indecomposable finitely generated torsion-free nilpotent groups. 	
	Does every automorphism \(\varphi \in \operatorname{Aut}(\Gamma)\) factorize as \(
	\varphi = \theta \cdot \zeta,
	\)
	where \(\theta\) maps each factor \(\Gamma_i\) onto an isomorphic factor \(\Gamma_{\sigma(i)}\), and \(\zeta\colon \Gamma \to Z(\Gamma)\) is central?
\end{question}

\subsection*{Main results}

The central result of this paper is Theorem~\ref{thm:C-minimax}.  It extends \cite{Sen24} in two directions:  we work with a strictly larger class of groups, namely finitely generated virtually solvable minimax groups, and we prove \CMP{} for \emph{monomorphisms} rather than only automorphisms. 

The structure of the proof is as follows. First \CMP{} is established for finite-dimensional Lie algebras, \(K\)-algebraic Lie algebras, and a class of \(K\)-defined linear algebraic groups in Theorems~\ref{thm:A-KalgebraicLieAlgebras} and \ref{thm:B-LAG}. We then transfer these results to minimax groups to obtain Theorem~\ref{thm:C-minimax}, via the notion of the $\mathbb Q$-algebraic hull. Finally, in Theorems~\ref{thm:D-cohopfian}, \ref{thm:E-Reidemeister} and Corollary~\ref{cor:F-spectrum}, we derive applications to co-Hopfian groups and twisted conjugacy/Reidemeister theory in direct products of minimax groups.

\paragraph{Lie algebras and linear algebraic groups.} Our first theorem is a generalization of a result used in \cite[Theorem 3.4]{FGH13} over $\mathbb R$. It establishes the Central Mixing Property for finite-dimensional Lie algebras over arbitrary fields. 

\begin{theorem}\label{thm:A-abstractLieAlgebras}
	Let $\mathcal L = \mathcal L_1 \oplus \mathcal L_2 \oplus \ldots  \oplus \mathcal L_r$ be a direct sum of non-abelian, directly indecomposable  finite-dimensional Lie algebras over an arbitrary field. Then $\mathcal L$ satisfies \CMP{} on automorphisms. 
\end{theorem}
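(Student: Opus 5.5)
The plan is to work additively, so that ``$\theta\cdot\zeta$'' becomes $\theta+\zeta$. We encode a linear map $\varphi\colon\mathcal L\to\mathcal L$ by its components $\varphi_{ji}:=\pi_j\circ\varphi\circ\iota_i\colon \mathcal L_i\to\mathcal L_j$. When $\varphi$ is a Lie algebra homomorphism, each $\varphi_{ji}$ is a Lie algebra homomorphism, since $\pi_j$ and $\iota_i$ are. Moreover, $[\varphi_{ji}(\mathcal L_i),\varphi_{jk}(\mathcal L_k)]=0$ for $i\neq k$, because $[\mathcal L_i,\mathcal L_k]=0$.

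The first step is a structural lemma. If $\mathcal M$ is non-abelian and directly indecomposable, then $Z(\mathcal M)\subseteq[\mathcal M,\mathcal M]$. Indeed, suppose $z\in Z(\mathcal M)\setminus[\mathcal M,\mathcal M]$. Choose a hyperplane $H\supseteq[\mathcal M,\mathcal M]$ with $z\notin H$. Then $H$ is an ideal and $\mathcal M=H\oplus Fz$ as Lie algebras. Here $H\neq 0$ because $\mathcal M$ is non-abelian, so this contradicts indecomposability. Consequently, every homomorphism $\mathcal L\to Z(\mathcal L)$ kills $[\mathcal L,\mathcal L]\supseteq Z(\mathcal L)$.

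Converse direction. Let $\theta$ be as in (A) and $\zeta$ as in (B), and put $\varphi=\theta+\zeta$. Since $\zeta$ has central image, $\zeta([x,y])=[\zeta x,\zeta y]=0$, so $[\varphi x,\varphi y]=[\theta x,\theta y]=\theta[x,y]=\varphi[x,y]$, and $\varphi$ is a homomorphism. For injectivity, suppose $\varphi(x)=0$. Then $\theta(x)=-\zeta(x)\in Z(\mathcal L)$, so $x\in Z(\mathcal L)$, since $\theta$ is an automorphism. By the lemma, $\zeta(x)=0$, hence $\theta(x)=0$ and $x=0$. Finite dimensionality then gives bijectivity. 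Notice that this direction uses only centrality of $\zeta$, not the off-diagonal condition.

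Forward direction. Here I would invoke the Krull--Remak--Schmidt theorem for finite-dimensional Lie algebras. Ideals satisfy ACC and DCC, and Fitting's lemma holds for endomorphisms commuting with all inner derivations, so the classical group-theoretic proof carries over. Apply it to the two decompositions $\mathcal L=\bigoplus_i\mathcal L_i=\bigoplus_i\varphi(\mathcal L_i)$. Its exchange form yields a permutation $\sigma$ such that $\pi_{\sigma(i)}$ restricts to an isomorphism $\varphi(\mathcal L_i)\to\mathcal L_{\sigma(i)}$. Equivalently, $\varphi_{\sigma(i)i}$ is an isomorphism for each $i$.

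Now set $\theta|_{\mathcal L_i}:=\varphi_{\sigma(i)i}$, which is an automorphism as in (A), and $\zeta:=\varphi-\theta$. Then $\pi_{\sigma(i)}\zeta(\mathcal L_i)=0$ by construction, so $\zeta$ is off-diagonal. For $j\neq\sigma(i)$, the image $\varphi_{ji}(\mathcal L_i)$ commutes with $\varphi_{j\sigma^{-1}(j)}(\mathcal L_{\sigma^{-1}(j)})=\mathcal L_j$ and trivially with every $\mathcal L_k$ for $k\neq j$, so it is central. Hence $\zeta$ maps into $Z(\mathcal L)$, and $\zeta=\varphi-\theta$ is a homomorphism because its image is central.

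Uniqueness. In any factorization, the component $\pi_{\sigma(i)}\varphi(\mathcal L_i)$ equals $\theta(\mathcal L_i)=\mathcal L_{\sigma(i)}$, which is non-central, while all other components are central. So $\sigma$ is determined by $\varphi$. The off-diagonal condition then forces $\theta|_{\mathcal L_i}=\pi_{\sigma(i)}\varphi|_{\mathcal L_i}$, and $\zeta=\varphi-\theta$ is determined as well.

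The main obstacle is the existence of $\sigma$ with each $\varphi_{\sigma(i)i}$ an isomorphism, which is where Krull--Remak--Schmidt is needed. If I had to avoid citing it, I would argue directly. Fix $j$. The pairwise commuting ideals $A_i:=\varphi_{ji}(\mathcal L_i)$ sum to $\mathcal L_j$. Apply Fitting's lemma to the normal endomorphisms $\varphi_{ji}\circ\varphi^{-1}_{ij}$-type composites to show that some $\varphi_{ji}$ is bijective. The delicate point is making the normal-endomorphism sum argument work over an arbitrary field.
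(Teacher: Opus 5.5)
Your proof is correct, and it differs from the paper's mainly in where the work is done.

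\emph{How the paper proceeds.} The paper writes the argument out in the $K$-algebraic setting, Theorem~\ref{thm:KalgebraicKrullSchmidt} through Proposition~\ref{prop:addendumtothm:CharacterizationAlgebraicAutomorphismsDirectSumLieAlgebras}; the abstract case is obtained by dropping the algebraic-group references. It proves its own Krull--Schmidt theorem. Besides the matching, that theorem gives two extra facts:
\begin{itemize}
\item the equality $[\mathfrak g_i,\mathfrak g_i]=[\mathfrak h_j,\mathfrak h_j]$, which pins down $\sigma$;
\item the inclusion $\mathfrak h_j\subset\mathfrak g_i\oplus Z(\bigoplus_{k\neq i}\mathfrak g_k)$, which is what makes $\Psi_\ast=\sum_i(\mathrm{Id}-\pi_i)\Phi_\ast\pi_{\sigma(i)}$ central.
\end{itemize}

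\emph{How yours differs.} You take only the bare matching from Krull--Remak--Schmidt for $\mathcal L$ as a module under the adjoint action. You get centrality from the commuting-images identity $[\varphi_{ji}(\mathcal L_i),\mathcal L_j]=\pi_j\varphi[\mathcal L_i,\mathcal L_{\sigma^{-1}(j)}]=0$ for $j\neq\sigma(i)$. The paper uses this device only later, for disconnected groups in Theorem~\ref{thm:virtuallysolvablegroups}. You then read off $\sigma(i)$ as the unique non-central component. The rest matches the paper:
\begin{itemize}
\item Your $\theta$ is the paper's $\Theta_\ast=\sum_i\pi_i\Phi_\ast\pi_{\sigma(i)}$, up to the direction of $\sigma$.
\item Your kernel argument for the converse uses the same fact, that $\zeta$ kills $Z(\mathcal L)\subseteq[\mathcal L,\mathcal L]$. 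The paper packages this as $\Delta_\ast^2=0$ for $\Delta_\ast=\Theta_\ast^{-1}\Psi_\ast$.
\end{itemize}

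\emph{What each route buys.} Besides brevity, your route is genuinely independent of the characteristic. Lemma~\ref{lemma:KalgebraicKrullSchmidt} finds an invertible $\varphi_{ij}\psi_{ji}$ by taking traces. That needs $\dim\mathfrak g_i\neq0$ in the ground field, which is automatic for $K\subset\mathbb C$ but not over an arbitrary field. By contrast, the local-ring form of Fitting's lemma says the centroid of an indecomposable finite-dimensional Lie algebra is local. Hence the centroid elements $\varphi_{ji}\circ(\varphi^{-1})_{ij}$, which sum to $\mathrm{Id}_{\mathcal L_j}$, cannot all be nilpotent in any characteristic. That is precisely how the ``delicate point'' you flag is resolved. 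The paper's route, in return, is self-contained. Its Krull--Schmidt theorem also handles abelian summands, where the extra inclusions carry real information.
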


 An important step in proving Theorem~\ref{thm:A-abstractLieAlgebras} is a \emph{Krull-Schmidt theorem} for Lie algebras establishing uniqueness of a direct sum decomposition into indecomposable summands (Theorem~\ref{thm:KalgebraicKrullSchmidt}). This corresponds to \cite[Theorem 3.3]{FGH13}, but that theorem contains a crucial error if some of the factors are abelian, for which we provide a simple counterexample in Example~\ref{example:counterexampleFHS}.

We prove a novel \emph{\(K\)-algebraic} variant for both the Krull-Schmidt theorem and Theorem~\ref{thm:A-abstractLieAlgebras}, for the Lie algebras of linear algebraic groups defined over some subfield \(K\subset\mathbb C\). 
In this setting, we consider \emph{\(K\)-algebraic} Lie algebra endomorphisms, i.e.\ endomorphisms that arise as differentials of \(K\)-defined algebraic group endomorphisms. We say a \(K\)-defined linear algebraic group \(\G\) is \(K\)-indecomposable if \(\G\) cannot be written as a direct product of non-trivial \(K\)-defined Zariski closed  subgroups. Similarly, we say its Lie algebra \(\mathfrak g\) is \(K\)-algebraically indecomposable if it cannot be written as a direct sum of two Lie ideals that arise as the Lie algebras of \(K\)-defined Zariski closed subgroups of \(\G\).  See Section~\ref{sec:PreliminariesLAG} for our conventions concerning linear algebraic groups.  

Since the proof for abstract Lie algebras is simpler, over \(\mathbb R\) it is available in \cite{FGH13}, and the main conceptual steps are the same, we give a detailed proof only in the \(K\)-algebraic case, see Subsections~\ref{subsec:KAlgebraicKS} and~\ref{subsec:KalgebraicAutomorphismsOfADirectSumOfLieAlgebras}.

\begin{letterthm}\label{thm:A-KalgebraicLieAlgebras}
	Let $\G = \G_1 \times \G_2 \times \cdots \times \G_r$ be a direct product of Zariski connected non-abelian \(K\)-defined linear algebraic groups. Let $\mathfrak g=\mathfrak g_1\oplus \mathfrak g_2 \oplus \ldots \oplus\mathfrak g_r$ be its Lie algebra, and assume every summand \(\mathfrak g_i\) is \(K\)-algebraically indecomposable.  
	Then \(\mathfrak g\) satisfies \CMP{} on \(K\)-algebraic automorphisms.
\end{letterthm}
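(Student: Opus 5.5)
We follow the classical Fitting/Krull--Schmidt strategy of Bidwell and of \cite[Theorem 3.4]{FGH13}, run inside the category of $K$-algebraic Lie algebras. Let $\varphi$ be a $K$-algebraic automorphism of $\mathfrak g$. Write $\iota_j$ and $\pi_i$ for the inclusions and projections. These are differentials of $K$-defined morphisms $\G_j\to\G$ and $\G\to\G_i$, so the component maps $\varphi_{ij}=\pi_i\varphi\iota_j\colon\mathfrak g_j\to\mathfrak g_i$ are again $K$-algebraic, being differentials of $K$-defined homomorphisms $\G_j\to\G_i$.

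The first step is to apply the $K$-algebraic Krull--Schmidt theorem (Theorem~\ref{thm:KalgebraicKrullSchmidt}) to the two decompositions
\[
\mathfrak g=\bigoplus_i\mathfrak g_i=\bigoplus_i\varphi(\mathfrak g_i).
\]
Both consist of $K$-algebraically indecomposable summands, since $\varphi$ comes from a $K$-automorphism of $\G$ and hence maps $K$-closed subgroups to $K$-closed subgroups. From this we extract a permutation $\sigma$ such that $\varphi_{\sigma(i)i}\colon\mathfrak g_i\to\mathfrak g_{\sigma(i)}$ is an isomorphism.

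The key local fact, which I expect to be the main obstacle, is a $K$-algebraic Fitting lemma. If $\alpha\colon\mathfrak g_i\to\mathfrak g_i$ is a $K$-algebraic endomorphism that is the differential of an endomorphism $\alpha$ of $\G_i$, then for large $n$ we have $\mathfrak g_i=\operatorname{im}\alpha^n\oplus\ker\alpha^n$. We must check that this is a decomposition into Lie ideals which are Lie algebras of $K$-closed subgroups. Since $\mathfrak g_i$ is $K$-algebraically indecomposable, $\alpha$ is then either bijective or nilpotent. The delicate points are:
\begin{itemize}
\item the kernel and image of $\alpha^n$ must be ideals that are algebraic; they are the differentials of $\ker$ and the image of the group endomorphism, and connectedness of $\G_i$ is needed so that the Lie algebra detects directness;
\item the image must commute with the kernel, i.e.\ the sum must be a direct sum of ideals and not merely of subspaces.
\end{itemize}
For the second point we use that $[\operatorname{im}\alpha^n,\ker\alpha^n]$ lies in both pieces. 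From this lemma we derive the usual consequence: a sum of two nilpotent $K$-algebraic endomorphisms whose sum is again an endomorphism is nilpotent. Arguing on $\mathfrak g_{\sigma(i)}$ with $\sum_j\varphi_{\sigma(i)j}\psi_{j\sigma(i)}=\mathrm{id}$, where $\psi=\varphi^{-1}$, this yields the required bijectivity.

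Next we show that the off-diagonal components are central. For $j\ne i$ we must show $\varphi_{ij}(\mathfrak g_j)\subseteq Z(\mathfrak g_i)$ whenever $i\ne\sigma(j)$. Since $\varphi$ is a homomorphism, $\varphi(\mathfrak g_j)$ commutes with $\varphi(\mathfrak g_k)$ for $k\ne j$. Projecting to $\mathfrak g_i$ gives $[\varphi_{ij}(\mathfrak g_j),\varphi_{ik}(\mathfrak g_k)]=0$. Take $k=\sigma^{-1}(i)\neq j$; then $\varphi_{ik}$ is surjective onto $\mathfrak g_i$, so $\varphi_{ij}(\mathfrak g_j)\subseteq Z(\mathfrak g_i)$. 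Now define
\[
\theta=\sum_i\iota_{\sigma(i)}\varphi_{\sigma(i)i}\pi_i
\qquad\text{and}\qquad
\zeta=\varphi-\theta.
\]
Then $\zeta$ has off-diagonal components landing in the centre, and $\varphi=\theta+\zeta$. This is the Lie algebra version of $\theta\cdot\zeta$. For the diagonal blocks we also need that $\varphi_{\sigma(i)i}$ is a homomorphism: it equals $\pi_{\sigma(i)}\varphi\iota_i$, and it is bijective by the first step. Both $\theta$ and $\zeta$ are $K$-algebraic, as sums of composites of $K$-defined homomorphisms. Uniqueness follows because $\sigma$ is forced by the positions of the non-central components, which uses that the factors are non-abelian, so $Z(\mathfrak g_i)\neq\mathfrak g_i$.

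For the converse, take $\theta$ and $\zeta$ as in the statement. Then $\theta+\zeta$ is a Lie homomorphism because $\zeta$ is a homomorphism into $Z(\mathfrak g)$ vanishing on $[\mathfrak g,\mathfrak g]$. Bijectivity is obtained by block triangularity: after reordering by $\sigma$, the map $\theta^{-1}\zeta$ has zero diagonal blocks and central image. Moreover, $\theta^{-1}\zeta$ kills $[\mathfrak g,\mathfrak g]$, and on $\mathfrak g_i$ the composite of its components with $\theta$-blocks factors through $Z$. So we show that $(\theta^{-1}\zeta)^2$ has image in $\zeta(Z(\mathfrak g))$ and is eventually zero, using the off-diagonal condition and the finiteness of the index set; hence $\mathrm{id}+\theta^{-1}\zeta$ is invertible. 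Since $\theta+\zeta$ is the differential of the $K$-defined product morphism $\theta\cdot\zeta$ on $\G$, it is $K$-algebraic. Its inverse is $K$-algebraic because it is the differential of the inverse automorphism of the connected group $\G$.
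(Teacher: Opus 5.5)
\textbf{The gap is in the converse: your invertibility argument for $\theta+\zeta$ fails.} The off-diagonal condition only says that $\Delta:=\theta^{-1}\zeta$ has zero diagonal blocks. It does not make $\Delta$ block triangular for any ordering, and a block matrix with zero diagonal blocks need not be nilpotent. Here is a concrete instance. Let $\mathfrak l=\mathfrak n\oplus\mathbb C f$, where $\mathfrak n$ is the $3$-dimensional Heisenberg algebra, and let $\lambda\colon\mathfrak l\to\mathbb C$ be the $f$-coordinate. On $\mathfrak l\oplus\mathfrak l$ take $\theta=\mathrm{Id}$ and $\zeta(u,v)=(\lambda(v)f,\lambda(u)f)$. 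Then $\zeta$ is a central, off-diagonal, $K$-algebraic endomorphism, yet $\zeta^2(f,0)=(f,0)$ and $(f,-f)\in\operatorname{Ker}(\mathrm{Id}+\zeta)$. This is the Lie algebra shadow of the counterexamples in Subsection~\ref{subsec:assumptionsDECOMP}.

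So off-diagonality together with finiteness of the index set cannot be what makes $\mathrm{Id}+\Delta$ invertible. Your reduction, that the image of $\Delta^2$ lies in $\theta^{-1}\zeta(Z(\mathfrak g))$, only closes if $\zeta$ vanishes on $Z(\mathfrak g)$. The missing ingredient is that every non-abelian $K$-algebraically indecomposable $\mathfrak g_i$ satisfies $Z(\mathfrak g_i)\subseteq[\mathfrak g_i,\mathfrak g_i]$, hence $Z(\mathfrak g)\subseteq[\mathfrak g,\mathfrak g]$. Then any central endomorphism kills $Z(\mathfrak g)$, so $\Delta^2=0$ outright, without using off-diagonality at all. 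This is exactly where the hypotheses enter the converse, and you never use them there.

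In the $K$-algebraic setting this inclusion is not the routine abstract fact. If $Z(\mathfrak g)\not\subseteq[\mathfrak g,\mathfrak g]$, one must produce a non-zero abelian $K$-algebraic ideal \emph{together with a $K$-algebraic complementary ideal}. The paper does this in Lemma~\ref{lemma:gAdmitsNoNonzeroAbelianSummandIFFCenterIsInDerived}: it splits $Z(\G)^\circ=\mathbf T\times\mathbf U$ and takes kernels of suitable $K$-morphisms from $\G$ to $\mathbb G_{\mathrm a}$ or to a torus quotient of $\G/[\G,\G]$.

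\textbf{Secondary issues.}
\begin{itemize}
\item Your $K$-algebraic Fitting lemma is false for arbitrary endomorphisms. On $\operatorname{Lie}(\mathbb G_{\mathrm a}\rtimes\mathbb G_{\mathrm m})$ with $[h,e]=e$, the differential of $(x,t)\mapsto(0,t)$ is idempotent with image $\mathbb C h$, which is not an ideal. So this map is neither bijective nor nilpotent, although the Lie algebra is indecomposable. Your claim that $[\operatorname{im}\alpha^n,\ker\alpha^n]$ lies in both pieces presupposes that the image is an ideal. The lemma needs \emph{normal} endomorphisms as in Definition~\ref{def:normalEndomorphisms}. The maps $\pi_{\sigma(i)}\rho_j$ you actually apply it to are normal, so the argument can be repaired. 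Since you already invoke Theorem~\ref{thm:KalgebraicKrullSchmidt} to obtain $\sigma$ and the isomorphisms, this whole paragraph is redundant anyway.
\item Your last sentence asserts that $\Theta\cdot\Psi$ is an automorphism of $\G$. A bijective differential does not imply this without a torsion-free centre; compare Proposition~\ref{prop:K-automorphismOfLieAlgebraIFFK-automorphismOfGroup} and the map $t\mapsto t^n$ on $\mathbb G_{\mathrm m}$. The claim is also unnecessary, because a $K$-algebraic automorphism here only means an invertible $K$-algebraic endomorphism. Drop it.
\end{itemize}

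Apart from this, the forward direction is sound. Your commuting-images argument for centrality of the off-diagonal blocks is a valid, more direct alternative to item (3) of the Krull--Schmidt theorem.
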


These statements may then be pushed to situations where automorphisms of Lie algebras completely determine automorphisms of groups.  
For instance, Theorem~\ref{thm:A-abstractLieAlgebras} implies that a direct product of directly indecomposable, non-abelian, \(1\)-connected Lie groups satisfies the Central Mixing Property on automorphisms.

In the Zariski connected case, and under the additional hypothesis that $Z(\G)(\mathbb C)$ is torsion-free (intuitively corresponding to \(1\)-connectedness of Lie groups), statements about \(K\)-algebraic automorphisms of \(\mathfrak g\) lift to statements about \(K\)-automorphisms of $\G$.  
Using Theorem~\ref{thm:A-KalgebraicLieAlgebras} we will deduce:

\begin{letterthm}\label{thm:B-LAG}
	Let $\G = \G_1 \times \G_2 \times  \cdots \times \G_r$ be a direct product of Zariski connected non-abelian \(K\)-defined linear algebraic groups that are \(K\)-indecomposable and have torsion-free centers \(Z(\G_i)(\mathbb{C})\). 
	Then $\G$ satisfies \CMP{} on \(K\)-automorphisms.
\end{letterthm}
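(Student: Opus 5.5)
We deduce Theorem~\ref{thm:B-LAG} from Theorem~\ref{thm:A-KalgebraicLieAlgebras} by differentiating, applying the Lie algebra statement, and lifting back to the group. Two preliminary reductions are needed. First, each $\mathfrak g_i$ must be $K$-algebraically indecomposable. Suppose $\mathfrak g_i=\mathfrak a\oplus\mathfrak b$ with $\mathfrak a,\mathfrak b$ the Lie algebras of $K$-defined closed subgroups $\mathbf A,\mathbf B$ of $\G_i$; we may take these connected. Then $\mathbf A,\mathbf B$ are normal, commute (their Lie algebras commute and they are connected), and generate $\G_i$. Moreover $\mathbf A\cap\mathbf B$ is finite and central in $\G_i$, so it is a finite subgroup of the torsion-free group $Z(\G_i)(\mathbb C)$, hence trivial. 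So $\G_i=\mathbf A\times\mathbf B$ as $K$-groups, contradicting $K$-indecomposability. Second, $\mathfrak g_i$ is non-abelian because $\G_i$ is connected and non-abelian.

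For the forward direction, let $\varphi$ be a $K$-automorphism of $\G$. Its differential $d\varphi$ is a $K$-algebraic automorphism of $\mathfrak g$, so Theorem~\ref{thm:A-KalgebraicLieAlgebras} gives a factorization $d\varphi=\bar\theta+\bar\zeta$ with a permutation $\sigma$. Here $\bar\theta$ maps $\mathfrak g_i$ onto $\mathfrak g_{\sigma(i)}$, and $\bar\zeta\colon\mathfrak g\to\mathfrak z(\mathfrak g)$ is off-diagonal and vanishes on $[\mathfrak g,\mathfrak g]$. These components must be integrated to $K$-morphisms. Define $\theta_i:=\pi_{\sigma(i)}\circ\varphi|_{\G_i}\colon\G_i\to\G_{\sigma(i)}$, a $K$-homomorphism with differential $\bar\theta|_{\mathfrak g_i}$, hence an isogeny. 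Its kernel is a finite normal subgroup of the connected group $\G_i$, hence central, hence trivial by torsion-freeness. So $\theta_i$ is injective with image of full dimension, i.e.\ a bijective $K$-morphism. To see it is an isomorphism of $K$-groups, use that in characteristic zero a bijective homomorphism of algebraic groups with bijective differential is an isomorphism.

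Now set $\theta=\operatorname{Diag}$-with-$\sigma$ of the $\theta_i$, and define $\zeta(x):=\theta(x)^{-1}\varphi(x)$. Then $\zeta$ is a $K$-morphism of varieties with $d\zeta=\bar\zeta$ at the identity. To show $\zeta$ is a central homomorphism, use the off-diagonal structure componentwise. For $x\in\G_i$, the component $\pi_j(\varphi(x))$ with $j\ne\sigma(i)$ is a homomorphism $\G_i\to\G_j$ whose differential has image in $\mathfrak z(\mathfrak g_j)$. Since $\G_i$ is connected, its image is a connected subgroup with central Lie algebra, hence lies in $Z(\G_j)^\circ$. On $\G_i$ we therefore get $\varphi(x)=\theta(x)\zeta(x)$ with $\zeta|_{\G_i}$ a central off-diagonal homomorphism. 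Extend $\zeta$ multiplicatively over the commuting factors; it remains a homomorphism because its image is central. The identity $\varphi=\theta\cdot\zeta$ then holds on all of $\G$. Uniqueness of the factorization follows from uniqueness at the Lie algebra level together with connectedness.

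For the converse, let $\theta,\zeta$ satisfy \eqref{item:Introduction1} and \eqref{item:Introduction2} as $K$-morphisms. Then $\varphi=\theta\cdot\zeta$ is a $K$-homomorphism because $\zeta$ is central. Its differential $d\theta+d\zeta$ is an automorphism by the converse in Theorem~\ref{thm:A-KalgebraicLieAlgebras}. The kernel of $\varphi$ is then finite, hence central, and torsion-freeness of $Z(\G)(\mathbb C)=\prod_i Z(\G_i)(\mathbb C)$ makes it trivial. So $\varphi$ is bijective with bijective differential, and thus a $K$-automorphism.

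The main obstacle is the step from bijective $K$-homomorphism to $K$-isomorphism, together with the corresponding statements on $K$-definedness. I plan to use characteristic zero (separability) and Galois descent: a morphism defined over $\mathbb C$ that commutes with $\operatorname{Aut}(\mathbb C/K)$ is defined over $K$. This gives $K$-definedness of $\theta^{-1}$, of $\zeta$ as a product of $K$-morphisms, and of the inverse of $\varphi$ in the converse direction.
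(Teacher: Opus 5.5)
Your proposal is correct and follows essentially the paper's route. You reprove the needed direction of Proposition~\ref{prop:K-indecomposableGroupsIFFK-algebraicallyIndecomposableLiealgebras} by the same commuting-subgroups/finite-central-intersection argument, apply the Lie algebra \CMP{} to $d\varphi$, and lift back using that finite normal subgroups of connected groups are central (Lemma~\ref{lemma:Ghasnonontrivialfinitenormalsubgroups}) together with the bijective-$K$-morphism argument of Proposition~\ref{prop:K-automorphismOfLieAlgebraIFFK-automorphismOfGroup}. The only minor difference is in the forward direction: you read off $\theta$ and $\zeta$ directly from the component maps $\pi_j\circ\varphi|_{\G_i}$, so $\varphi=\theta\cdot\zeta$ holds by construction, whereas the paper integrates the $K$-algebraic maps $\Theta_\ast,\Psi_\ast$ and identifies $\Phi=\Theta\cdot\Psi$ by comparing differentials on the connected group.
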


\paragraph{Minimax groups.}  Next, we consider direct products of finitely generated virtually solvable \emph{minimax} groups. This is a broad class of groups including all virtually polycyclic groups, but also solvable groups that are not virtually polycyclic, like the Baumslag--Solitar groups $\operatorname{BS}(1,n)$ for \(|n| > 1\). 

Under the hypothesis of no non-trivial finite normal subgroups, a minimax group $\Gamma$ embeds into a unique $\mathbb Q$-defined linear algebraic group $\HH$, its $\mathbb Q$\emph{-algebraic hull}, first introduced by Arapura and Nori in \cite{AN99}. Deré and Pengitore give the first explicit construction of the hull for this collection of groups in \cite{DP26}. 
 For virtually polycyclic groups, this hull reduces to the notion of $\mathbb Q$\emph{-algebraic hull} introduced by Mostow \cite{Mos70}, and for finitely generated torsion-free nilpotent groups, it may be identified with the \emph{rational Mal'cev completion} \cite{Mal51}. We refer to Subsection~\ref{subsec:minimax} for the precise definitions. 

In this setting, every monomorphism \(\varphi \colon \Gamma \to \Gamma\) uniquely extends to a \(\mathbb Q\)-automorphism of \(\HH\). Applying Theorem \ref{thm:B-LAG} on the level of the \(\mathbb Q\)-algebraic hulls, we then prove our central result:

\begin{letterthm}\label{thm:C-minimax}
	Let $\Gamma=\Gamma_1 \times \Gamma_2 \times  \cdots\times \Gamma_r$ be a direct product of  non-abelian finitely generated virtually solvable minimax groups having \(\mathbb Q\)-algebraic hulls \(\HH_i\) with $\mathbb{Q}$-indecomposable identity component  \(\HH_i^\circ\). Then $\Gamma$ satisfies \CMP{} \emph{on monomorphisms}.
\end{letterthm}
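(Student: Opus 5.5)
We will pass to the $\mathbb Q$-algebraic hulls and apply Theorem~\ref{thm:B-LAG} there. The first step is to record the properties of the hull $\HH$ of $\Gamma$ that we need.

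\begin{itemize}
\item The hull of $\Gamma=\Gamma_1\times\cdots\times\Gamma_r$ is $\HH=\HH_1\times\cdots\times\HH_r$, so $\HH^\circ=\HH_1^\circ\times\cdots\times\HH_r^\circ$. We check this via the universal property and uniqueness of hulls: the product is $\mathbb Q$-defined, $\Gamma$ is Zariski dense in it, and the unipotent radical and centralizer conditions pass to products.
\item Hulls have a torsion-free (indeed unipotent) center property: $Z(\HH^\circ)(\mathbb C)$ is torsion-free. This follows from the hull condition $C_{\HH}(\U)\subseteq \U$, where $\U$ is the unipotent radical.
\item Each $\HH_i^\circ$ is non-abelian. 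If $\HH_i^\circ$ were abelian, the centralizer condition would force $\HH_i^\circ=\U_i$. Then $\Gamma_i\cap \HH_i^\circ$ would be abelian of finite index in $\Gamma_i$, and we have to argue that this contradicts $\Gamma_i$ being non-abelian while $\HH_i^\circ$ is abelian and contains its own centralizer. If this step fails directly, we will instead work on $\HH$ rather than only on $\HH^\circ$.
\end{itemize}

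Second, take a monomorphism $\varphi\colon\Gamma\to\Gamma$. It extends uniquely to a $\mathbb Q$-automorphism $\Phi$ of $\HH$, which restricts to $\HH^\circ$. Theorem~\ref{thm:B-LAG} then gives $\Phi|_{\HH^\circ}=\Theta\cdot Z$, with $\Theta$ permuting the factors $\HH_i^\circ\to\HH_{\sigma(i)}^\circ$ and $Z$ central and off-diagonal. We then need to extend this factorization from $\HH^\circ$ to all of $\HH$ and restrict it to $\Gamma$. Define $\theta_i:=\pi_{\sigma(i)}\circ\varphi|_{\Gamma_i}$ and $\zeta:=\theta^{-1}\cdot\varphi$, componentwise.

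The main obstacle is showing that $\zeta$ takes values in $Z(\Gamma)$ and that $\theta$ maps $\Gamma_i$ into $\Gamma_{\sigma(i)}$. For the second claim, $\pi_j\varphi(\Gamma_i)\subseteq\Gamma_j$ holds automatically. For centrality, we first show $\pi_j\circ\varphi|_{\Gamma_i}$ is central for $j\neq\sigma(i)$:
\begin{itemize}
\item On the finite-index subgroup $\Gamma_i\cap\HH_i^\circ$, this map agrees with the component of $Z$, hence lands in $Z(\HH_j^\circ)$.
\item Its image commutes with $\pi_j\varphi(\Gamma_j')$ for $j'=\sigma^{-1}(j)$, since $\Gamma_i$ and $\Gamma_{j'}$ commute.
\item To extend centrality from the finite-index subgroup to all of $\Gamma_i$, we use that $\pi_j\varphi(\Gamma_{j'})$ is Zariski dense in $\HH_j$. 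This density follows because $\Phi$ is an automorphism and $\Theta$ identifies the factors.
\end{itemize}
Together these show $\pi_j\varphi(\Gamma_i)$ centralizes a Zariski dense subset, so it lies in $Z(\HH_j)\cap\Gamma_j=Z(\Gamma_j)$. Consequently $\zeta$ is a homomorphism into $Z(\Gamma)$ and is off-diagonal. Moreover, each $\theta_i$ is a homomorphism, since it differs from the corresponding component of $\varphi$ by central terms, and it is injective, since its hull extension is the isomorphism $\Theta$ extended. The hulls $\HH_i$ and $\HH_{\sigma(i)}$ are $\mathbb Q$-isomorphic via this extension. Uniqueness of $\sigma$ and of the factorization follows from uniqueness in Theorem~\ref{thm:B-LAG} together with uniqueness of the extension.

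For the converse, given $(\theta,\zeta)$ with $\zeta$ off-diagonal, we show $\varphi=\theta\cdot\zeta$ is a homomorphism using centrality, and injective by the standard argument: if $\theta(x)=\zeta(x)^{-1}$, compare components in an iterated fashion along the cycles of $\sigma$. We expect this to need a nilpotency trick, namely that $\zeta\circ\theta^{-1}$ composed along cycles is nilpotent because of the off-diagonal condition. Cleaner is to pass to hulls, where $\Phi=\Theta\cdot Z$ is an automorphism by Theorem~\ref{thm:B-LAG}, extended to $\HH$ as above. The same hull argument shows that $\varphi$ is surjective if and only if $\theta$ is.
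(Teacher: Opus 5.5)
Your third preliminary bullet is false, and the rest of the argument depends on it. The infinite dihedral group $\mathcal D_\infty=\mathbb Z\rtimes_{-1}C_2$ satisfies every hypothesis: it is non-abelian, virtually cyclic, and has no non-trivial finite normal subgroups. Yet its hull is $\mathbb G_{\mathrm a}\rtimes_{-1}C_2$, whose identity component $\mathbb G_{\mathrm a}$ is abelian and $\mathbb Q$-indecomposable. An abelian subgroup of finite index in a non-abelian $\Gamma_i$ is no contradiction at all. Whenever such a factor occurs, $\HH^\circ$ has an abelian $\mathbb Q$-factor, so Theorem~\ref{thm:B-LAG} cannot be applied to $\Phi|_{\HH^\circ}$. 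In fact no permutation can be read off at that level: for $\Gamma=\mathcal D_\infty\times\mathcal D_\infty$ one gets $\HH^\circ=\mathbb G_{\mathrm a}^2$, whose $\mathbb Q$-automorphisms form $\mathrm{GL}_2(\mathbb Q)$, so the rigidity must come from the non-identity components. Your fallback ``we will instead work on $\HH$'' is not an argument. The paper handles this first. Lemma~\ref{lemma:infinitedihedral} shows that such factors are exactly copies of $\mathcal D_\infty$. These are split off as a centerless block with no component maps to or from the other factors, and only then are all $\HH_i^\circ$ assumed non-abelian.

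Even with all $\HH_i^\circ$ non-abelian, your passage from $\HH^\circ$ to $\HH$ omits the decisive step. Theorem~\ref{thm:B-LAG} only gives $\Phi_{jj'}(\HH_{j'}^\circ)=\HH_j^\circ$. Zariski density of $\pi_j\varphi(\Gamma_{j'})$ in $\HH_j$ is equivalent to $\Phi_{jj'}(\HH_{j'})=\HH_j$, and that does not follow from ``$\Theta$ identifies the factors''. The missing idea is the strong unipotent radical applied to all of $\HH_k$. For $k\neq j'$, $\Phi_{jk}(\HH_k)$ commutes with $\HH_j^\circ$, hence lies in $C_{\HH_j}(\U_j)\subseteq\U_j\subseteq\HH_j^\circ$. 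Then $\HH_j=\pi_j\Phi(\HH)=\Phi_{jj'}(\HH_{j'})$. The induced permutation-shaped automorphism of $\HH/\HH^\circ$ then shows that each $\Phi_{\sigma(i)i}\colon\HH_i\to\HH_{\sigma(i)}$ is a $\mathbb Q$-isomorphism. You need this both for injectivity of $\theta_i$ and for the hulls to be $\mathbb Q$-isomorphic; it is exactly Theorem~\ref{thm:virtuallysolvablegroups}.

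Your converse is also incomplete, in two ways.
\begin{itemize}
\item Extending the central $\zeta$ to a $\mathbb Q$-endomorphism of $\HH$ is not covered by the monomorphism-extension lemma. The paper uses the hull of $\zeta(\Gamma)$ inside $\U(\HH)$ together with \cite[Proposition~3.7]{DP26}.
\item ``The same hull argument'' cannot detect surjectivity on $\Gamma$, since the hull extension of every monomorphism is a $\mathbb Q$-automorphism.
\end{itemize}
The relevant mechanism is not off-diagonality but $Z(\HH)\subseteq[\HH,\HH]$. With it, $\Delta=\Theta^{-1}\circ\Psi$ has $\Delta\circ\Delta$ trivial, so $E(h)=h\Delta(h)$ is a $\mathbb Q$-automorphism. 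When $\theta(\Gamma)=\Gamma$ one gets $E^{\pm1}(\Gamma)=\Gamma$ and $\varphi=\theta\circ E|_\Gamma$ (Proposition~\ref{prop:virtuallypolycyclicGroupsInjectiveIFFinjectiveandAutomorphismIFFAutomorphism}). The same square-zero argument on $\mathfrak h$ gives injectivity of $\varphi$, again without any use of off-diagonality.
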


Here, the Central Mixing Property \emph{on monomorphisms} is a stronger version of the one on automorphisms, in the following precise sense:

\medskip 

\textit{Every monomorphism \(\varphi\colon \Gamma \to \Gamma\) uniquely factorizes as
\[
\varphi = \theta \cdot \zeta,
\]
where:   }
\begin{enumerate}[label={(\arabic*)}, ref={(\arabic*)}]
	\item \(\theta\) \textit{is a monomorphism of \(\Gamma\) and there is a unique permutation \(\sigma \in \mathcal S_r\) such that \(\theta\) sends each factor \(\Gamma_i\) into a factor \(\Gamma_{\sigma(i)}\), where \(\Gamma_i\) and \(\Gamma_{\sigma(i)}\) have \(\mathbb Q\)-isomorphic \(\mathbb Q\)-algebraic hulls.}  \label{item:introthmCharacterizationAutomorphismDirectProductOfMinimaxGroups(1)} 
	\item \(\zeta\) \textit{is a central endomorphism of \(\Gamma\) such that \(\pi_{\sigma(i)}\big(\zeta(\Gamma_i)\big) = \{e\}\).}  \label{item:introthmCharacterizationAutomorphismDirectProductOfMinimaxGroups(2)} 
\end{enumerate}
\textit{Conversely, every such pair \((\theta, 
\zeta)\) defines a monomorphism \(\varphi:= \theta \cdot \zeta\) of \(\Gamma\).}

\smallskip  

\noindent \textit{Moreover, if \(\varphi \in \operatorname{Aut}(\Gamma)\), then \(\theta \in \operatorname{Aut}(\Gamma)\) sends each factor \(\Gamma_i\) onto an isomorphic factor \(\Gamma_{\sigma(i)}\).}

\medskip 

Theorem \ref{thm:C-minimax} indeed specializes to \cite[Theorem~3.1]{Sen24}: for a finitely generated torsion-free nilpotent group \(\Gamma\), the algebraic hull \(\HH\) is a connected unipotent \(\mathbb Q\)-group, where the group of rational points \(\HH(\mathbb Q)\) equals the \emph{rational Mal'cev completion} of \(\Gamma\). In this case, the unipotent group \(\HH\) is \(\mathbb Q\)-indecomposable if and only if the rational Mal'cev completion \(\HH(\mathbb Q)\) is directly indecomposable. 
	
\medskip 

 In Subsection~\ref{subsec:assumptionsDECOMP}, we show that the \(\mathbb Q\)-indecomposability assumption on the components \(\HH_i^\circ\) is sharp: both directions of \CMP{} may already fail for directly indecomposable torsion-free nilpotent and polycyclic groups whose hulls are Zariski connected and \(\mathbb Q\)-decomposable. Nevertheless, Subsection~\ref{subsec:example} shows that the hypotheses of Theorem~\ref{thm:C-minimax} still hold for many natural non-nilpotent torsion-free polycyclic groups; we include an abelian-by-cyclic example of the form \(\mathbb Z^4 \rtimes \mathbb Z\) and a Heisenberg-by-cyclic example of the form \(H_5(\mathbb Z) \rtimes \mathbb Z\).

 An interesting direction for further research is to study \CMP{} on automorphisms for direct products of solvable groups beyond the virtually minimax setting, for instance solvable groups that are linear over some field but not over \(\mathbb Q\). The methods of this paper do not directly extend to this setting, since they rely on the existence and functorial properties of the \(\mathbb Q\)-algebraic hull, for which no analogue is currently available in this generality.

\paragraph{Applications.} Finally, as applications of Theorem \ref{thm:C-minimax}, we relate several properties of monomorphisms of a direct product $\Gamma = \Gamma_1  \times \Gamma_2 \times \cdots \times \Gamma_r$ of minimax groups to the corresponding properties of its factors. We give two applications: the co-Hopfian property, and Nielsen--Reidemeister fixed point theory and its relation to twisted conjugacy. 

Recall that a group \(\Gamma\) is called \emph{co-Hopfian} if every monomorphism \(\varphi\colon \Gamma \to \Gamma\) is an automorphism. 

\begin{letterthm} \label{thm:D-cohopfian}
	Let $\Gamma=\Gamma_1 \times \Gamma_2 \times \cdots\times \Gamma_r$ be as in Theorem \ref{thm:C-minimax}. Then \(\Gamma\) is co-Hopfian if and only if every factor \(\Gamma_i\) is co-Hopfian.
\end{letterthm}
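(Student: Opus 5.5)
We prove both implications using the factorization $\varphi=\theta\cdot\zeta$ from Theorem~\ref{thm:C-minimax}, together with its final clause: if $\varphi$ is an automorphism then $\theta$ is one. The crucial extra fact is the converse of that clause, which the abstract asserts: $\varphi$ is an automorphism if and only if $\theta$ is. If this is not already part of the statement of Theorem~\ref{thm:C-minimax}, we first establish it, as described in the last paragraph.

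\emph{Factors co-Hopfian $\Rightarrow$ $\Gamma$ co-Hopfian.} Let $\varphi$ be a monomorphism of $\Gamma$ and write $\varphi=\theta\cdot\zeta$. Then $\theta$ is a monomorphism sending each $\Gamma_i$ into $\Gamma_{\sigma(i)}$. Write $\theta_i\colon\Gamma_i\to\Gamma_{\sigma(i)}$ for the restriction; it is injective. Decompose $\sigma$ into cycles $(i_1\,i_2\,\cdots\,i_k)$ and consider the composite
\[
\theta_{i_k}\circ\cdots\circ\theta_{i_1}\colon\Gamma_{i_1}\to\Gamma_{i_1}.
\]
This composite is injective, so it is an automorphism because $\Gamma_{i_1}$ is co-Hopfian. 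Hence $\theta_{i_k}$ is surjective. The same argument applied to each rotation of the cycle shows that every $\theta_{i_j}$ is surjective. Therefore $\theta$ is an automorphism, and so is $\varphi$.

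\emph{$\Gamma$ co-Hopfian $\Rightarrow$ factors co-Hopfian.} Suppose $\psi$ is a non-surjective monomorphism of $\Gamma_1$. Then $\operatorname{Diag}(\psi,\mathrm{id},\ldots,\mathrm{id})$ is a non-surjective monomorphism of $\Gamma$. This direction needs no hypotheses.

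\emph{The converse clause: $\theta$ an automorphism $\Rightarrow$ $\varphi=\theta\cdot\zeta$ an automorphism.} This is the main obstacle. Consider $\varphi\circ\theta^{-1}=\mathrm{id}\cdot\zeta'$ with $\zeta'=\zeta\circ\theta^{-1}$. The map $\zeta'$ is central and satisfies $\pi_i(\zeta'(\Gamma_i))=\{e\}$, so it suffices to treat $\varphi=\mathrm{id}\cdot\zeta$. The plan is to show that $\zeta$ is nilpotent as an endomorphism, so that $\varphi$ is unipotent-like with explicit inverse $\sum(-\zeta)^n$ in the additive sense. We need central maps to compose additively: $\zeta(Z(\Gamma))$ lies in $Z(\Gamma)$, and $\zeta$ kills commutators, so on the central part everything is abelian.

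To get nilpotency, pass to the hulls. $Z(\Gamma)$ embeds into $Z(\HH)$, and Theorem~\ref{thm:B-LAG} applied to the extended $\mathbb Q$-automorphism shows that the unipotent part acts as we want. Concretely, we would argue as follows. Injectivity of $\varphi$ together with the off-diagonal condition forces the block matrix of $\zeta$ on $\bigoplus_i Z(\Gamma_i)$ to have a zero diagonal, and the mixing can only go between central parts. Then $\varphi$ restricted to the center is $1+N$ with $N$ having zero diagonal blocks.

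Here $N$ need not be nilpotent. Instead, we would use that $\varphi$ extends uniquely to a $\mathbb Q$-automorphism $\Phi$ of $\HH$, which is surjective. Surjectivity on $\Gamma$ then follows once we show that $\Phi(\Gamma)=\Gamma$. For this, $\Phi^{-1}=\Theta^{-1}\cdot\zeta''$ has the same shape, and the inverse maps $\Gamma$ into the subgroup generated by $\Gamma$ and central images. Since $\zeta(x)$ depends only on $x$ modulo $[\Gamma,\Gamma]$, solving $y=x\zeta(x)$ reduces to the central block equation. That equation has determinant equal to the determinant of $1+N$ with integral-type entries relative to the lattice. I expect this arithmetic step, ensuring the inverse preserves $\Gamma$ and not merely its hull, to be the hardest part. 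I would first try to invoke the paper's stated equivalence directly rather than reprove it.
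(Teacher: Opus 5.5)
Your argument is correct and is essentially the paper's proof, provided you simply cite Proposition~\ref{prop:virtuallypolycyclicGroupsInjectiveIFFinjectiveandAutomorphismIFFAutomorphism}\,(2) for the final step. The paper also notes that one implication holds for any direct product. It gets surjectivity of $\theta$ from $\theta^{r!}=\operatorname{Diag}(\theta_1,\ldots,\theta_r)$, each $\theta_i$ being an injective, hence bijective, endomorphism of $\Gamma_i$; this is the same idea as your cycle composites. It then concludes that $\varphi=\theta\cdot\zeta$ is an automorphism using exactly the fact you isolate, which is that proposition.

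Drop your self-contained reproof of that step, because it heads in the wrong direction. The claim that $N$ need not be nilpotent is false under the hypotheses, and there is no arithmetic obstacle. The key input is $Z(\HH)\subseteq[\HH,\HH]$ (Proposition~\ref{prop:K-indecomposableGroupsIFFK-algebraicallyIndecomposableLiealgebras}). This is where $\mathbb Q$-indecomposability enters; the off-diagonal condition plays no role.

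The map $\zeta'=\zeta\circ\theta^{-1}$ extends to a central $\mathbb Q$-endomorphism of $\HH$. Such a map kills $[\HH,\HH]\supseteq Z(\HH)\supseteq Z(\Gamma)\supseteq\zeta'(\Gamma)$. Hence $\zeta'\circ\zeta'$ is trivial, and in fact your $N$ is $0$. So $x\mapsto x\,\zeta'(x)^{-1}$ is an explicit two-sided inverse of $x\mapsto x\,\zeta'(x)$, and it visibly maps $\Gamma$ into $\Gamma$. Your map $x\mapsto x\,\zeta'(x)$ is just $\theta\circ E\circ\theta^{-1}$, where $E$ is the map in the paper's proof of that proposition.

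The determinant issue you anticipate is precisely what happens in Example~\ref{example:Q-indecomposabilityPolycyclic}. There $Z(\HH)\not\subseteq[\HH,\HH]$, and the corresponding determinant is $-3$.
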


A second application of Theorem~\ref{thm:C-minimax} is the study of \emph{twisted conjugacy} and more specifically the Reidemeister number \(R(\varphi) \in \mathbb N_{>0} \cup \{\infty\}\) of a monomorphism \(\varphi\colon \Gamma \to \Gamma\). This number is an important invariant of endomorphisms, related to the study of homotopy classes of fixed points of a continuous map on a topological space and to Nielsen theory as described in \cite{FH94}. 

We note that twisted conjugacy was the original motivation for Senden to study the automorphisms of a direct product of finitely generated torsion-free nilpotent groups in \cite{Sen24}. Also in the setting of Reidemeister numbers, we can completely generalize their results to our class of groups and to all monomorphisms instead of just automorphisms.

\begin{letterthm}
	\label{thm:E-Reidemeister}
	Let $\Gamma=\Gamma_1 \times \Gamma_2 \times  \cdots\times \Gamma_r$ be as in Theorem \ref{thm:C-minimax}. 
	Let $\varphi\colon \Gamma\to\Gamma$ be a monomorphism.  	
	Decompose $\varphi=\theta\cdot\zeta$ as in Theorem~\ref{thm:C-minimax}. Then
	\(
	R(\varphi)=R(\theta).
	\) 
	In particular, if $\sigma=\mathrm{Id}$, then
	\(
	R(\varphi)=\prod_{i=1}^r R(\theta_i),
	\) 
	where $\theta_i\colon \Gamma_i\to\Gamma_i$ are the diagonal monomorphisms induced by $\theta$.
\end{letterthm}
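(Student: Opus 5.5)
The plan is to show directly that the central, off-diagonal part $\zeta$ does not change the twisted conjugacy classes. Recall that $R(\varphi)$ is the number of orbits of the twisted action $g\cdot x = g x \varphi(g)^{-1}$ of $\Gamma$ on itself.

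\textbf{Step 1: set up a bijection of classes.} Write $\varphi(g)=\theta(g)\zeta(g)$, where $\zeta(g)\in Z(\Gamma)$. Then
\[
x\sim_\varphi y \iff y = g x \theta(g)^{-1}\zeta(g)^{-1}\ \text{for some } g\in\Gamma.
\]
The aim is to relate the $\varphi$-classes to the $\theta$-classes by a map that corrects the central factor.

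\textbf{Step 2: reduce to the action of $\Gamma$ on $\Gamma/[\Gamma,\Gamma]$.} Since $\zeta$ is a homomorphism into the abelian group $Z(\Gamma)$, it factors through $\Gamma^{ab}$. Since $\zeta$ is central, $\varphi$ and $\theta$ induce the same automorphism on $\Gamma/Z(\Gamma)$. That alone is not enough, because Reidemeister numbers may change under central perturbations. The structural input is the off-diagonal condition $\pi_{\sigma(i)}(\zeta(\Gamma_i))=\{e\}$. Decompose $\sigma$ into cycles. For $g\in\Gamma_i$, the element $\zeta(g)$ has no component in $\Gamma_{\sigma(i)}$, where $\theta(g)$ lives.

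\textbf{Step 3: conjugate by a suitable automorphism.} I would try to find an automorphism $\psi$ of $\Gamma$ of the form $\psi=\mathrm{Id}\cdot\eta$, with $\eta\colon\Gamma\to Z(\Gamma)$ central and off-diagonal. The goal is that $\psi^{-1}\varphi\psi$ equals $\theta$ up to an inner automorphism, or differs from $\theta$ by a nilpotent central term. Conjugate endomorphisms have equal Reidemeister numbers. If this normalization fails, the fallback is the standard exact sequence
\[
1\to Z(\Gamma)\to\Gamma\to\Gamma/Z(\Gamma)\to 1 .
\]
Compute $R(\varphi)$ and $R(\theta)$ via the addition formula over the classes of the induced map $\bar\varphi=\bar\theta$ on the quotient. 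For each fibre, compare the twisted actions of the centralizer-type subgroups on $Z(\Gamma)$. These actions are given by $\varphi|_{Z}=\theta|_Z\cdot\zeta|_Z$. Off-diagonality gives a block-triangular form of $\varphi|_Z$ relative to $\theta|_Z$ along the cycles of $\sigma$, with a unipotent correction. Therefore $\mathrm{Id}-\varphi|_Z$ and $\mathrm{Id}-\theta|_Z$ have cokernels of the same size. In particular one is finite exactly when the other is, using that the hulls make $Z(\Gamma)$ a minimax abelian group.

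\textbf{Step 4: the product formula.} If $\sigma=\mathrm{Id}$, then $\theta=\mathrm{Diag}(\theta_1,\dots,\theta_r)$, and twisted classes of a diagonal map are products of twisted classes of the factors. Hence $R(\theta)=\prod_i R(\theta_i)$.

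\textbf{Main obstacle.} I expect the hardest part to be Step 3: making rigorous that the off-diagonal central perturbation behaves like a nilpotent perturbation, so that the cokernel counts agree. This needs care when $\sigma$ has nontrivial cycles and when $Z(\Gamma)$ is not finitely generated. Here I would pass to $Z(\HH)$ and to the Lie algebra, where the determinant of $\mathrm{Id}-\varphi$ visibly equals that of $\mathrm{Id}-\theta$.
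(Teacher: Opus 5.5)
\textbf{There is a genuine gap: the mechanism you propose for Step~3 is wrong, and it leaves out the ingredient that actually makes the theorem true.}

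Your fallback in Step~3 has the right architecture: the extension $1\to Z(\Gamma)\to\Gamma\to\Gamma/Z(\Gamma)\to 1$, with a fibrewise count over $\mathcal R[\overline\varphi]=\mathcal R[\overline\theta]$. However, off-diagonality does not make $\varphi|_{Z(\Gamma)}$ block-triangular relative to $\theta|_{Z(\Gamma)}$. Already for $r=2$ and $\sigma=\mathrm{Id}$, $\zeta$ can have nonzero components $\Gamma_2\to Z(\Gamma_1)$ and $\Gamma_1\to Z(\Gamma_2)$ at the same time. The paper's example on $\Gamma\times\Gamma$ with $\Gamma=\mathbb Z^4\rtimes_A\mathbb Z$ (directly indecomposable, $\mathbb Q$-decomposable hull) has exactly this shape: $\sigma=\mathrm{Id}$ and $\zeta$ off-diagonal, yet $R(\varphi)=128\neq 64=R(\theta)$. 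So off-diagonality is not what forces $R(\varphi)=R(\theta)$, and your argument never uses the $\mathbb Q$-indecomposability hypothesis where it matters.

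The missing idea is that $\zeta$ is \emph{trivial} on $Z(\Gamma)$. You already noted that $\zeta$ factors through the abelianization. Under the hypotheses (no abelian $\mathbb Q$-factor) one has $Z(\Gamma)\subset Z(\HH)\subset[\HH,\HH]$. Extending $\zeta$ to a central $\mathbb Q$-endomorphism $\Psi$ of $\HH$ then gives $\zeta(Z(\Gamma))\subset\Psi([\HH,\HH])=\{e\}$. Hence $\varphi|_{Z(\Gamma)}=\theta|_{Z(\Gamma)}$ exactly, not merely up to a unipotent correction, and $\overline\varphi=\overline\theta$ on $\Gamma/Z(\Gamma)$.

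Even granting this, matching the cokernels of $\mathrm{Id}-\varphi|_{Z(\Gamma)}$ and $\mathrm{Id}-\theta|_{Z(\Gamma)}$ does not finish the proof. Over a class $[gZ(\Gamma)]$, the contribution to $R(\varphi)$ is the number of orbits of the subgroup $\{g^{-1}x^{-1}g\varphi(x)\mid x\in\pi^{-1}(\operatorname{Stab}_{\overline\varphi}(gZ(\Gamma)))\}\le Z(\Gamma)$, acting by translation on $\mathcal R[\varphi|_{Z(\Gamma)}]$. This acting group depends on $\varphi$ on the whole preimage $\pi^{-1}(\operatorname{Stab}_{\overline\varphi}(gZ(\Gamma)))$, where $\zeta$ has no a priori reason to vanish. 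You need an argument that $\zeta$ kills this preimage. The paper gets it as follows:
\begin{itemize}
\item If $R(\overline\varphi)=\infty$, both Reidemeister numbers are infinite.
\item Otherwise, $\Gamma/Z(\Gamma)$ is again a finitely generated \textsf{WTN} virtually solvable minimax group, hence residually finite. So every $\overline\varphi$-stabilizer is finite.
\item Then $\zeta$ restricted to the preimage factors through a finite group into the torsion-free group $Z(\Gamma)$, so it is trivial.
\end{itemize}

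Your other routes do not substitute for this. Conjugating by $\mathrm{Id}\cdot\eta$ cannot bring $\varphi$ to $\theta$ modulo inner automorphisms when $\theta=\mathrm{Id}$ and $\zeta\neq 1$: that would force $\varphi$ to be inner, which it is not, since it does not preserve the factors. An equality of determinants on $\mathfrak h$ does not by itself compute Reidemeister numbers of non-nilpotent minimax groups, whose center need not even be finitely generated. Step~4 is fine.
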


If the permutation of the factors \(\sigma\) is non-trivial, \(R(\varphi) = R(\theta)\) remains easily computable in terms of Reidemeister numbers of monomorphisms of some of the factors,  associated with a disjoint cycle decomposition of \(\sigma\). 

Again, the \(\mathbb Q\)-indecomposability assumption cannot be weakened to mere direct indecomposability: in the case of a \(\mathbb Q\)-decomposable hull, central off-diagonal mixing terms may change the Reidemeister number. This already happens in elementary torsion-free polycyclic examples of the form \(\mathbb Z^4 \rtimes \mathbb Z\).  

\medskip  

Associated with Reidemeister numbers, one studies the \textit{Reidemeister spectrum} of a group \(\Gamma\), defined as  \(
\operatorname{Spec}_{\mathrm{R}}(\Gamma) = \{R(\varphi) \mid \varphi \in \operatorname{Aut}(\Gamma)\}. 
\) 
The group \(\Gamma\) has the \textit{\(R_\infty\)-property} if and only if \(\operatorname{Spec}_{\mathrm{R}}(\Gamma) = \{\infty\}\). Just as in \cite{Sen24}, we may then use Theorem \ref{thm:E-Reidemeister} to describe the Reidemeister spectrum of such a direct product of groups in terms of the spectra of the factors. To do so, we define the \emph{multiplication} of subsets $A_1,\ldots, A_r \subset \mathbb N_{>0} \cup \{\infty\}$ by
 \(
 \prod_{i=1}^r A_i  :=\{a_1 \cdots a_r\mid a_i \in A_i\}
 \) 
 and the \(r^{\text{\textit{th}}}\)~\textit{power} of a subset \(A\subset \mathbb N_{>0} \cup \{\infty\}\) by \(A^{(r)} :=  \prod_{i=1}^r A\).

\begin{lettercor} 
	\label{cor:F-spectrum}
	Let \(\Gamma_1,\Gamma_2, \ldots, \Gamma_k\) be \emph{non-isomorphic} non-abelian finitely generated virtually solvable minimax groups and let \(r_1,r_2, \ldots, r_k \geq 1\).  Assume that each \(\Gamma_i\) has a \(\mathbb Q\)-algebraic hull \(\HH_i\) with \(\mathbb Q\)-indecomposable identity component \(\HH_i^\circ\).  Put \(\Gamma = \Gamma_1^{r_1} \times \Gamma_2^{r_2} \times  \cdots \times \Gamma_k^{r_k}\). Then 
	\[
	\mathrm{Spec}_{\mathrm{R}}(\Gamma)=\prod_{i=1}^k \left( \bigcup_{j = 1}^{r_i}  \mathrm{Spec}_{\mathrm{R}}(\Gamma_i)^{(j)} \right).
	\]
	In particular, \(\Gamma\) has the \(R_\infty\)-property if and only if  some factor \(\Gamma_i\) has the \(R_\infty\)-property. 
\end{lettercor}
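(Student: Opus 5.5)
The Reidemeister spectrum is computed from the automorphism part of Theorem~\ref{thm:C-minimax} combined with Theorem~\ref{thm:E-Reidemeister}. Let $\varphi\in\operatorname{Aut}(\Gamma)$ and write $\varphi=\theta\cdot\zeta$. By Theorem~\ref{thm:C-minimax}, $\theta\in\operatorname{Aut}(\Gamma)$ maps each factor onto an isomorphic factor via a permutation $\sigma$. Since the $\Gamma_i$ are pairwise non-isomorphic, $\sigma$ preserves each block $\Gamma_i^{r_i}$. Hence $\theta=\theta^{(1)}\times\cdots\times\theta^{(k)}$ with $\theta^{(i)}\in\operatorname{Aut}(\Gamma_i^{r_i})$. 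Theorem~\ref{thm:E-Reidemeister} gives $R(\varphi)=R(\theta)$. For a product automorphism, the twisted conjugacy classes are products of the classes on each block, so $R(\theta)=\prod_i R(\theta^{(i)})$, with the convention that the product is $\infty$ as soon as one factor is $\infty$. This reduces the statement to the case $k=1$, namely
\[
\mathrm{Spec}_{\mathrm R}(\Gamma_1^{r})=\bigcup_{j=1}^{r}\mathrm{Spec}_{\mathrm R}(\Gamma_1)^{(j)},
\]
and we may restrict to automorphisms $\theta$ that permute the factors.

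For $k=1$, decompose $\sigma$ into disjoint cycles. Then $\theta$ splits as a product over the cycles, and $R(\theta)$ is the product of the Reidemeister numbers of the restrictions to $\Gamma_1^{\ell}$, one for each cycle of length $\ell$. The key lemma is for a single cyclic permutation of length $\ell$:
\[
\theta(x_1,\dots,x_\ell)=(\alpha_\ell(x_\ell),\alpha_1(x_1),\dots,\alpha_{\ell-1}(x_{\ell-1})),\qquad \alpha_m\in\operatorname{Aut}(\Gamma_1).
\]
We claim $R(\theta)=R(\alpha_\ell\alpha_{\ell-1}\cdots\alpha_1)$. To prove it, use the twisted conjugation $y\mapsto g\,y\,\theta(g)^{-1}$ with $g=(g_1,\dots,g_\ell)$. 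Choosing $g_2,\dots,g_\ell$ suitably moves every element into one of the form $(y,e,\dots,e)$, and two such elements $(y,e,\dots,e)$ and $(y',e,\dots,e)$ are $\theta$-conjugate exactly when $y,y'$ are $(\alpha_\ell\cdots\alpha_1)$-conjugate. This bookkeeping is the main technical step. It is standard, and I expect the paper records it near Theorem~\ref{thm:E-Reidemeister}, which already refers to cycle decompositions.

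Given the lemma, a permutation with $j$ cycles yields $R(\theta)$ equal to a product of $j$ elements of $\mathrm{Spec}_{\mathrm R}(\Gamma_1)$, so $R(\theta)\in\mathrm{Spec}_{\mathrm R}(\Gamma_1)^{(j)}$ with $1\le j\le r$. This proves the inclusion ``$\subseteq$''. For ``$\supseteq$'', take $a_1,\dots,a_j\in\mathrm{Spec}_{\mathrm R}(\Gamma_1)$ realized by $\beta_1,\dots,\beta_j\in\operatorname{Aut}(\Gamma_1)$. Choose $\sigma$ with $j$ cycles covering all $r$ letters, for instance $j-1$ fixed points and one cycle of length $r-j+1$. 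On each cycle put $\alpha_1=\beta_m$ and all other $\alpha$'s equal to the identity, so that the composite around that cycle is $\beta_m$. The resulting automorphism has Reidemeister number $a_1\cdots a_j$.

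The $R_\infty$ statement follows from the spectrum formula. If some $\Gamma_i$ has $\mathrm{Spec}_{\mathrm R}(\Gamma_i)=\{\infty\}$, then every power $\mathrm{Spec}_{\mathrm R}(\Gamma_i)^{(j)}$ equals $\{\infty\}$, so every product in the formula is $\infty$. Conversely, if each $\Gamma_i$ has a finite value in its spectrum, taking $j=1$ in every block gives a finite product, so $\mathrm{Spec}_{\mathrm R}(\Gamma)\neq\{\infty\}$.
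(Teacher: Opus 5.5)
Your proposal is correct and follows the same route as the paper. The paper likewise uses the automorphism part of Theorem~\ref{thm:C-minimax} (pairwise non-isomorphic $\Gamma_i$ force $\sigma$ to preserve the blocks), Theorem~\ref{thm:E-Reidemeister} to replace $R(\varphi)$ by $R(\theta)$, and the cycle product formula of Proposition~\ref{prop:productFormulaPermutationShapedEndomorphisms}, then follows Senden's argument for \cite[Theorem 3.6]{Sen24}; the only difference is that you verify the single-cycle identity $R(\theta)=R(\alpha_\ell\cdots\alpha_1)$ directly by normalizing to elements $(y,e,\dots,e)$, whereas the paper defers this step to \cite[Proposition 2.9]{Sen21}.
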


\section{Preliminaries}\label{sec:PreliminariesLAG}
\paragraph{Notation.} 
  Throughout, the symmetric group on $r$ letters is denoted by $\mathcal S_r$ and $\sigma \in \mathcal S_r$ is always a general permutation. 

Linear algebraic groups are denoted by bold capital letters such as $\G,\HH$, and morphisms between them by capital Greek letters such as $\Phi, \Theta, \Psi$. 
Their Lie algebras are denoted by fraktur letters such as $\mathfrak g,\mathfrak h$, with induced Lie algebra morphisms denoted by $\Phi_\ast, \Theta_\ast, \Psi_\ast$. 

Finite-dimensional Lie algebras over an arbitrary field are denoted by calligraphic letters such as $\mathcal L,\mathcal M$, with morphisms typically denoted by \(f,g,h\). 

Finally, finitely generated groups are denoted by capital Greek letters such as $\Gamma$, with homomorphisms typically denoted by \(\varphi,\theta,\zeta\).

\paragraph{Linear algebraic \texorpdfstring{\(K\)}{K}-groups in characteristic 0.}

A \textit{linear algebraic group} defined over a field \(K \subset \mathbb C\), also called a \(K\)\textit{-group}, is a subgroup \(\G \subset \operatorname{GL}_n(\mathbb C)\) given by the zeros of a finite number of polynomials over \(K\), equipped with the Zariski topology. Given a subring \(R\) of \(\mathbb C\), we denote the group of \(R\)-points of \(\G\) by \(\G(R) = \G \cap \operatorname{GL}_n(R)\).

We call a map \(\Phi\colon \G\to \HH\) between \(K\)-groups \emph{\(K\)-defined} if it is given by polynomial coordinate functions with coefficients in \(K\), and we call it a \(K\)\emph{-(endo)morphism} if it is an (endo)morphism of \(K\)-groups. In characteristic 0, every bijective \(K\)-morphism is a \(K\)-isomorphism, i.e.\ has a \(K\)-defined inverse (\cite[Proposition 6.13]{Bor91}). 
 We denote by \(\G^\circ\) the Zariski identity component, which is a finite index normal subgroup of \(\G\). If \(\G = \G^\circ\), i.e. \(\G\) is Zariski connected, then we call \(\G\) a \textit{connected \(K\)-group}.   For more information on linear algebraic groups, we refer to the classical textbooks \cite{Bor91,Spr09}.

\paragraph{The Lie algebra of a \texorpdfstring{\(K\)}{K}-group.} To each \(K\)-group \(\G\) we may associate a Lie algebra \(\mathfrak g\). A \(K\)-morphism of \(K\)-groups \(\Phi\colon \G \to \HH\) induces a Lie algebra homomorphism between their respective Lie algebras \(\Phi_\ast \colon \mathfrak g \to \mathfrak h\) by differentiating at \(e_{\G}\). It will be helpful for us to give the following notions a special name:

\begin{definition}[\(K\)-algebraic Lie subalgebras and morphisms]
	Let \(\G\) and \(\HH\) be \(K\)-groups with Lie algebras \(\mathfrak g\) and \(\mathfrak h\), respectively. 
	\begin{itemize}[leftmargin=1.8em]
	\item A Lie algebra morphism \(f\colon \mathfrak g \to \mathfrak h\) is called \(K\)-\emph{algebraic} if there exists a \(K\)-morphism \(\Phi\colon \G \to \HH\) such that \(f = \Phi_\ast\).
	\item  A Lie subalgebra \(\mathfrak l \subset \mathfrak g\) is called \(K\)-\emph{algebraic} if there exists a Zariski closed \(K\)-subgroup \(\mathbf{L} \subset \G\) whose Lie algebra is \(\mathfrak l\). We denote \(\mathfrak l = \operatorname{Lie}(\mathbf{L})\). 
	\end{itemize} 
\end{definition}

We will repeatedly use the following fact, giving a correspondence between the closed connected subgroups of a linear algebraic group \(\G\) and the algebraic Lie subalgebras of its Lie algebra \(\mathfrak g\) (see \cite[\S7.1]{Bor91}).

\begin{theorem}\label{thm:LieAlgebraCorrespondence} Let \(\G\) be a \(K\)-group with Lie algebra \(\mathfrak g\). Let $\HH_1,\HH_2\le \G$ be closed $K$-subgroups.
	\begin{enumerate}
		\item One has 
		\[
		\operatorname{Lie}(\HH_1 \cap \HH_2) = \operatorname{Lie}(\HH_1) \cap \operatorname{Lie}(\HH_2).
		\]
		\item If \(\HH_1\) is connected, then 
		\[
		\HH_1 \leq \HH_2 \iff \operatorname{Lie}(\HH_1) \subset \operatorname{Lie}(\HH_2).
		\]
		\end{enumerate}
		In particular, if both \(\HH_1\) and \(\HH_2\) are connected, then
\[
\HH_1 = \HH_2 \iff \operatorname{Lie}(\HH_1) = \operatorname{Lie}(\HH_2).
\]
\end{theorem}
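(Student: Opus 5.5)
The plan is to work entirely over $\mathbb C$, where the $K$-structure plays no role: the statement only concerns the closed subgroups $\HH_1,\HH_2$ and their Lie algebras as subspaces of $\mathfrak g\subset\mathfrak{gl}_n(\mathbb C)$. The one input that uses characteristic $0$ is that the algebraic Lie algebra agrees with the analytic one. A Zariski closed subgroup $\HH\le \operatorname{GL}_n(\mathbb C)$ is a closed complex Lie subgroup. Its algebraic Lie algebra $\operatorname{Lie}(\HH)$, the tangent space at $e$, coincides with
\[
\{X\in\mathfrak{gl}_n(\mathbb C)\mid \exp(tX)\in\HH \text{ for all } t\in\mathbb C\}.
\]
Moreover $\dim\HH=\dim_{\mathbb C}\operatorname{Lie}(\HH)$, because algebraic groups in characteristic $0$ are smooth (Cartier). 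I would take both facts from \cite{Bor91}.

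\textbf{Part 1.} The inclusion $\operatorname{Lie}(\HH_1\cap\HH_2)\subset\operatorname{Lie}(\HH_1)\cap\operatorname{Lie}(\HH_2)$ is formal: it follows by differentiating the inclusion morphisms $\HH_1\cap\HH_2\hookrightarrow\HH_j$. For the reverse inclusion, take $X\in\operatorname{Lie}(\HH_1)\cap\operatorname{Lie}(\HH_2)$. The exponential description gives $\exp(tX)\in\HH_1\cap\HH_2$ for all $t$, so $X\in\operatorname{Lie}(\HH_1\cap\HH_2)$.

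This is the step I expect to be the main obstacle, since in positive characteristic the scheme-theoretic intersection can be non-reduced and equality fails. If one prefers a purely algebraic argument, the alternative is as follows. The tangent space of the scheme-theoretic intersection is $\operatorname{Lie}(\HH_1)\cap\operatorname{Lie}(\HH_2)$. That intersection is a group scheme, hence reduced in characteristic $0$ by Cartier's theorem, so it equals the variety $\HH_1\cap\HH_2$.

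\textbf{Part 2.} The direction ($\Rightarrow$) is again functoriality. For ($\Leftarrow$), assume $\operatorname{Lie}(\HH_1)\subset\operatorname{Lie}(\HH_2)$. Part 1 then gives $\operatorname{Lie}(\HH_1\cap\HH_2)=\operatorname{Lie}(\HH_1)$. By smoothness this yields $\dim(\HH_1\cap\HH_2)=\dim\HH_1$. Hence $(\HH_1\cap\HH_2)^\circ$ is an irreducible closed subset of the irreducible variety $\HH_1$ (irreducible since $\HH_1$ is connected) of the same dimension, so it equals $\HH_1$. Therefore $\HH_1\le\HH_2$.

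\textbf{Final equivalence.} Apply Part 2 in both directions when $\HH_1$ and $\HH_2$ are both connected.
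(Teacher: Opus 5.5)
Your proof is correct, but it takes a different route from the paper. The paper gives no argument of its own; it simply cites \cite[\S7.1]{Bor91}, where the result is proved purely algebraically over an algebraically closed field of characteristic \(0\).

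\textbf{Part 1.} In Borel's treatment, characteristic \(0\) enters through separability of morphisms, so that the Lie algebra of a stabilizer or kernel is the kernel of the differential. This is essentially the same mechanism as your fallback via Cartier's theorem applied to the scheme-theoretic intersection \(\HH_1\times_{\G}\HH_2\). Your main argument instead passes to complex Lie groups and uses \(\operatorname{Lie}(\HH)=\{X\mid \exp(tX)\in\HH \text{ for all } t\}\). That makes the reverse inclusion a one-line observation. The cost is that you must import the comparison between the Zariski tangent space of the smooth variety \(\HH\) and the tangent space of the complex manifold \(\HH(\mathbb C)\) at \(e\). You will not find that fact, or the exponential description, in Borel; you need a Lie-theoretic source.

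\textbf{What each route buys.} The analytic route is legitimate in the paper's setting, since every group sits inside \(\operatorname{GL}_n(\mathbb C)\). The algebraic route works over any algebraically closed field of characteristic \(0\). It also isolates exactly what fails in characteristic \(p\), namely non-reduced intersections.

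\textbf{Part 2.} Here both approaches use the same dimension count, so your argument coincides with the standard one.

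\textbf{A small correction.} The equality \(\dim\HH=\dim\operatorname{Lie}(\HH)\) for a reduced closed subgroup \(\HH\) holds in every characteristic, by homogeneity. It does not need Cartier's theorem. Cartier is only needed in your scheme-theoretic variant, to know that \(\HH_1\times_{\G}\HH_2\) is reduced and hence equals the variety \(\HH_1\cap\HH_2\).
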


\paragraph{The unipotent radical and tori.} Let \(\G\) be a \(K\)-group. Each element \(g \in \G\) admits a unique multiplicative \emph{Jordan decomposition} \(g =  g_u g_s\), where \(g_u \in \G\) is unipotent, \(g_s \in \G\) is semisimple and \(g_u, g_s\) commute. The \emph{unipotent radical} \(\mathbf U(\G)\) is the set of unipotent elements of the maximal closed connected normal solvable subgroup (the \emph{radical}) of \(\G\); it is a connected unipotent \(K\)-closed normal subgroup.  
A \(K\)-\emph{torus} \(\mathbf T\) is a connected abelian \(K\)-group consisting entirely of semisimple elements.

Let \(\HH\) be a virtually solvable \(K\)-group. Then \(\HH^\circ\) is a connected solvable \(K\)-group. In this setting, \(\mathbf U(\HH)\) equals the set of unipotent elements of \(\HH\) and \(\HH / \mathbf U(\HH)\) consists entirely of semisimple elements. In fact, there exists a \(K\)-closed subgroup \(\mathbf{S}\), called a \emph{Levi subgroup}, such that \(\HH = \mathbf U(\HH) \rtimes \mathbf{S}\). If \(\HH\) is connected, then \(\HH/\mathbf{U}(\HH)\) is a \(K\)-torus.

\section{\texorpdfstring{\(K\)}{K}-algebraic Lie algebras}\label{sec:CentralMixingPropertyLieAlgebras}

\subsection{A \texorpdfstring{\(K\)}{K}-algebraic Krull-Schmidt theorem}\label{subsec:KAlgebraicKS}

Fix a subfield \(K \subset \mathbb C\). Now we consider the situation of \(K\)-algebraic Lie algebras. Let \(\G\) be a \(K\)-group with Lie algebra \(\mathfrak g\). We call \(\mathfrak g\) \emph{\(K\)-algebraically indecomposable} if \(\mathfrak g\) cannot be written as a direct sum of two non-zero \(K\)-algebraic ideals. In this subsection, we prove a \(K\)-algebraic version of a Krull-Schmidt-type theorem for Lie algebras.

\begin{theorem}[{\(K\)-algebraic Krull-Schmidt theorem}]\label{thm:KalgebraicKrullSchmidt}
	Let \(\G\) be a \(K\)-group with Lie algebra \(\mathfrak g\). Suppose there are two decompositions 
\(\G = \prod_{i=1}^r \G_i = \prod_{j=1}^s \HH_j\) with corresponding Lie algebras	\[
	\mathfrak g = \mathfrak g_1 \oplus \mathfrak g_2 \oplus \ldots \oplus \mathfrak g_r = \mathfrak h_1 \oplus \mathfrak h_2 \oplus \ldots \oplus \mathfrak h_s \]
	into \(K\)-algebraic Lie ideals,  each of which is {\(K\)-algebraically indecomposable}. Let \(\pi_i\) (resp.\ \(\rho_j\)) denote the projection onto \(\mathfrak g_i\) (resp.\ \(\mathfrak h_j\)) along the first (resp.\ second) decomposition. Then \(r = s\), and there exists a permutation \(\sigma \in \mathcal S_r\) such that whenever \(j = \sigma(i)\), we have that:	\begin{enumerate}[label=\emph{(\arabic*)}, ref={(\arabic*)}]
		\item The projection \(\pi_i|_{\mathfrak h_j} \colon \mathfrak h_j \to \mathfrak g_i\) is a \(K\)-algebraic Lie algebra isomorphism, \label{item:thmKSi}
		\item \([\mathfrak g_i, \mathfrak g_i] = [\mathfrak h_j, \mathfrak h_j]\), \label{item:thmKSii}
		\item \(\displaystyle{\mathfrak h_j \subset \mathfrak g_i \oplus Z\Big(\bigoplus_{k \neq i} \mathfrak g_k \Big)}\). \label{item:thmKSiii}
	\end{enumerate}
	If moreover the \(\mathfrak g_i\) are all {non-abelian}, then \(\sigma \in \mathcal S_r\) is unique, in \emph{\ref{item:thmKSi}} the projection \(\rho_j|_{\mathfrak g_i}\colon \mathfrak g_i \to \mathfrak h_j\) is also a \(K\)-algebraic Lie algebra isomorphism, and in \emph{\ref{item:thmKSiii}} we also have \(\displaystyle{\mathfrak g_i \subset \mathfrak h_j \oplus Z\Big(\bigoplus_{k \neq j} \mathfrak h_k \Big)}\).
\end{theorem}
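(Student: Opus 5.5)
The plan is to adapt the classical Fitting-lemma proof of Krull--Schmidt, working throughout with the idempotent endomorphisms \(\pi_i\) and \(\rho_j\) of \(\mathfrak g\). The first thing to check is that these are \(K\)-algebraic. Since \(\G=\prod_i\G_i\), the projection \(\G\to\G_i\) followed by inclusion is a \(K\)-endomorphism of \(\G\), and its differential is \(\pi_i\). Likewise \(\rho_j\) is the differential of a \(K\)-endomorphism \(P_j\). Consequently every composite \(\pi_i\rho_j\pi_i\), restricted to \(\mathfrak g_i\), is the differential of a \(K\)-endomorphism of \(\G_i\). Its image and kernel are then \(K\)-algebraic: they are \(\operatorname{Lie}\) of the image and of \(\operatorname{Lie}\) of the kernel of the group map, by Theorem~\ref{thm:LieAlgebraCorrespondence} and \cite[\S7.1]{Bor91}.

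\textbf{Fitting's lemma.} The key algebraic input is a \(K\)-algebraic Fitting lemma. Let \(\Phi\) be a \(K\)-endomorphism of \(\G_i\) with differential \(f=\Phi_\ast\), and suppose \(f\) is a \emph{normal} endomorphism, meaning it commutes with all inner derivations. This holds for \(\pi_i\rho_j|_{\mathfrak g_i}\), because each \(\rho_j\) is a projection onto an ideal along a complementary ideal, hence \(\rho_j[x,y]=[x,\rho_j y]\). For \(n\) large, Fitting's lemma gives \(\mathfrak g_i=\ker f^n\oplus \operatorname{im} f^n\), and normality makes both summands ideals. Both are also \(K\)-algebraic, being the Lie algebras of \((\ker\Phi^n)^\circ\) and \(\Phi^n(\G_i)\). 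Here I must check that the group-level subgroups really give a direct product decomposition of \(\G_i\). Their Lie algebras intersect trivially and span \(\mathfrak g_i\), so the product map is an isogeny onto \(\G_i\). The decomposition requirement in the statement is only at the level of \(K\)-algebraic ideals, so a Lie-algebra direct sum of \(K\)-algebraic ideals suffices, and no group-level check is needed. By indecomposability of \(\mathfrak g_i\), each such \(f\) is therefore either nilpotent or bijective. Moreover, a sum of two normal nilpotent endomorphisms whose sum is normal is nilpotent. This is the standard argument: if the sum were invertible one would derive a contradiction by working inside the associative algebra of normal endomorphisms.

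\textbf{Extracting \(\sigma\) and (1).} We have \(\mathrm{id}_{\mathfrak g_1}=\sum_j \pi_1\rho_j|_{\mathfrak g_1}\). Hence some \(\pi_1\rho_j|_{\mathfrak g_1}\) is not nilpotent, and so it is bijective. Then \(\rho_j|_{\mathfrak g_1}\colon \mathfrak g_1\to\mathfrak h_j\) is injective, and \(\pi_1|_{\mathfrak h_j}\) is surjective. Running the symmetric argument with \(\rho_j\pi_1|_{\mathfrak h_j}\) shows it is not nilpotent, because \((\rho_j\pi_1)^{n+1}=\rho_j(\pi_1\rho_j)^n\pi_1\). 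It is therefore bijective, and \(\pi_1|_{\mathfrak h_j}\) is an isomorphism. This map is \(K\)-algebraic: it is the differential of the composite \(\HH_j\hookrightarrow\G\to\G_1\), whose inverse is \(K\)-defined in characteristic \(0\). The usual exchange argument then replaces \(\mathfrak g_1\) by \(\mathfrak h_j\) in the decomposition, giving \(\mathfrak g=\mathfrak h_j\oplus\bigoplus_{k\ne1}\mathfrak g_k\). Inducting yields \(r=s\) and a bijection \(\sigma\).

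\textbf{(2) and (3).} For (3), take \(x\in\mathfrak h_j\) and \(k\ne i\). Then \(\pi_k(x)\) commutes with \(\mathfrak g_k\), because \([\pi_k x,\pi_k y]=\pi_k[x,y]\) and \([x,\mathfrak g_k]\subset \mathfrak h_j\cap\ldots\). Concretely, I would show \(\pi_k(\mathfrak h_j)\) is an ideal of \(\mathfrak g_k\) that is killed by commutation with \(\mathfrak h_{j'}\) for \(j'\ne j\). Since \(\pi_k\rho_j\) is nilpotent and normal, one gets \([\pi_k\mathfrak h_j,\mathfrak g_k]=\pi_k\rho_j[\mathfrak g_k,\mathfrak g_k]\), and iterating the nilpotency gives \(0\). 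Statement (2) then follows from (3): \([\mathfrak h_j,\mathfrak h_j]\subset[\mathfrak g_i,\mathfrak g_i]\), and equality holds by symmetry or by dimension count via the isomorphism in (1).

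\textbf{Non-abelian case.} If all \(\mathfrak g_i\) are non-abelian, uniqueness of \(\sigma\) follows from (2). The derived algebras \([\mathfrak g_i,\mathfrak g_i]\) are nonzero and lie in distinct summands, so \(\mathfrak h_j\) determines \(i\). The symmetric statements follow by swapping the roles of the two decompositions, since the \(\mathfrak h_j\) are isomorphic to the \(\mathfrak g_i\) and hence non-abelian too.

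\textbf{Expected obstacle.} The main obstacle I expect is the central-containment step (3), which uses the nilpotency of the normal maps \(\pi_k\rho_j\) for \(k\ne i\). Establishing that nilpotency carefully is where the abelian-factor subtleties enter, which is exactly what goes wrong in the error the paper mentions in \cite[Theorem 3.3]{FGH13}.
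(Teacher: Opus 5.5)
\textbf{Gap in (3), and hence in (2).} Your proof of (3) rests on the claim that \(\pi_k\rho_j|_{\mathfrak g_k}\) is nilpotent for every \(k\neq i\) when \(j=\sigma(i)\). You also derive (2) from (3), so the gap carries over. Nothing in your construction of \(\sigma\) gives this nilpotency; you flag it yourself as the expected obstacle but do not resolve it.

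For abelian summands the claim is false. Realize Example~\ref{example:counterexampleFHS} as \(\G=\mathbb G_{\mathrm a}^3\). Then \(\pi_1\rho_j|_{\mathfrak g_1}\) is multiplication by \(A_{1j}(A^{-1})_{j1}\neq 0\) for every \(j\). So for any bijection \(\sigma\), some \(i\neq 1\) has \(\pi_1\rho_{\sigma(i)}|_{\mathfrak g_1}\) invertible. There (3) holds trivially because \(\mathfrak g_1\) is abelian, so you could split cases. For non-abelian \(\mathfrak g_k\), however, you must still rule out that \(\pi_k\rho_j|_{\mathfrak g_k}\) is invertible. The natural way to do that is to know \([\mathfrak g_k,\mathfrak g_k]=[\mathfrak h_j,\mathfrak h_j]\), which is statement (2). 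Since you only obtain (2) from (3), the argument is circular.

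\textbf{The missing idea.} Statement (2) follows from invertibility of \(\pi_i\rho_j|_{\mathfrak g_i}\) for the single pair \((i,j)\), using only normality of the projections. Since \(\pi_i(\mathfrak h_j)=\mathfrak g_i\) and \([x,h]=[\rho_j x,h]\) for \(h\in\mathfrak h_j\),
\[
[\mathfrak g_i,\mathfrak g_i]=[\mathfrak g_i,\pi_i(\mathfrak h_j)]=\pi_i\big([\mathfrak g_i,\mathfrak h_j]\big)=[\mathfrak g_i,\mathfrak h_j]\subset[\mathfrak h_j,\mathfrak h_j].
\]
The reverse inclusion follows symmetrically from \(\rho_j(\mathfrak g_i)=\mathfrak h_j\). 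Then (3) is immediate: for \(h\in\mathfrak h_j\) and \(y\in\mathfrak g\),
\[
[(\mathrm{Id}-\pi_i)h,y]=(\mathrm{Id}-\pi_i)[h,y]\in(\mathrm{Id}-\pi_i)[\mathfrak h_j,\mathfrak h_j]=(\mathrm{Id}-\pi_i)[\mathfrak g_i,\mathfrak g_i]=0.
\]
This is the order used in the paper. It needs no information about the other summands, which is why it avoids the abelian pathology.

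\textbf{Smaller points.}
\begin{itemize}
\item To produce a non-nilpotent \(\pi_1\rho_j|_{\mathfrak g_1}\), you sketch ``a sum of normal nilpotents is nilpotent'' via some \(\gamma^{-1}\). An invertible \(K\)-algebraic map need not have a \(K\)-algebraic inverse (take \(t\mapsto t^n\) on \(\mathbb G_{\mathrm m}\)). So the invertible-or-nilpotent dichotomy is not available for \(\alpha\gamma^{-1}\) as stated. The paper's trace argument avoids this: \(\operatorname{tr}(\mathrm{Id}_{\mathfrak g_1})=\dim\mathfrak g_1\neq0\) in characteristic \(0\), while nilpotent maps have trace \(0\).
\item Your claim that \(\HH_j\hookrightarrow\G\to\G_1\) has a \(K\)-defined inverse is false in general. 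Take \(\G=\mathbb G_{\mathrm m}^2\) with \(\HH_1=\{(t^2,t)\}\) and \(\HH_2=\{(s,s)\}\). Then \(\HH_1\to\G_1\) is \(t\mapsto t^2\), although \(\pi_1\rho_1|_{\mathfrak g_1}\) is invertible. Fortunately this is not needed, since a \(K\)-algebraic isomorphism here only means a bijective \(K\)-algebraic morphism.
\item Your exchange replaces \(\mathfrak g_1\) by \(\mathfrak h_{j_1}\). At later steps this controls the original \(\rho_j\) rather than the original \(\pi_i\). The paper exchanges the other way, \(\mathfrak g=\mathfrak g_1\oplus\bigoplus_{j\neq j_1}\mathfrak h_j\), so the one-sided conclusions (1) and (3) stay phrased in terms of the original \(\pi_i\).
\end{itemize}
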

\begin{remark}
	This is a \(K\)-algebraic version of the Krull-Schmidt theorem \cite[Theorem 3.3]{FGH13} over \(\mathbb R\). Using similar, but simpler, arguments, essentially by leaving out all references to linear algebraic groups and \emph{\(K\)-algebraic}, one may prove a version of Theorem~\ref{thm:KalgebraicKrullSchmidt} for abstract Lie algebras over any arbitrary field.
	
	Notably,  \cite[Theorem 3.3]{FGH13} contains an error in case some factors are abelian: it claims in {\ref{item:thmKSi}} that \(\rho_j|_{\mathfrak g_i}\colon \mathfrak g_i \to \mathfrak h_j\) is also an isomorphism without the assumption that the factors are non-abelian. It is however false in general that the two projections \emph{simultaneously} induce isomorphisms; this fails as soon as \(\mathfrak g\) has at least three indecomposable abelian summands, even for general abstract Lie algebras over an arbitrary field, as the next example demonstrates.
\end{remark}
\begin{example}\label{example:counterexampleFHS}
	Let \(F\) be an arbitrary field and consider the abelian Lie algebra \(\mathcal L = F^3\). Let \(e_1,e_2,e_3\) be the standard basis of \(\mathcal L\), i.e.\ the columns of the \(3\times 3\) unit matrix \(I_3\). Let \(a_1,a_2,a_3\) be an alternative basis of \(\mathcal L\), consisting of the columns of the matrix
	\[
	A = \begin{bmatrix}
		1 & 1 & 1 \\
		1 & 1 & 0 \\
		1 & 0 & 1
	\end{bmatrix} \implies A^{-1} = \begin{bmatrix}
		-1 & 1 & 1 \\
		1 & 0 & -1 \\
		1 & -1 & 0
	\end{bmatrix}.
	\]
	Then 
	\[
	\mathcal L = F e_1 \oplus F e_2 \oplus F e_3 = F a_1 \oplus F a_2 \oplus F a_3
	\]
	are two decompositions of \(\mathcal L\) into directly indecomposable ideals. Let \(\pi_i\) denote the projections onto \(Fe_i\) along the first decomposition, and let \(\rho_j\) denote the projections onto \(F a_j\) along the second decomposition. We have the following equivalent statements:
	\begin{itemize}[leftmargin=1.8em]
		\item	\(\pi_i|_{F a_j}\colon Fa_j \to Fe_i\) is an isomorphism \(\iff \pi_i(a_j) \neq 0 \iff A_{ij} \neq 0\), 
		\item	\(\rho_j|_{F e_i} \colon Fe_i \to F a_j \) is an isomorphism \(\iff \rho_j(e_i) \neq 0 \iff (A^{-1})_{ji} \neq 0\).
	\end{itemize}
	However, \cite[Theorem 3.3]{FGH13} for \(F = \mathbb R\) claims that there exists a permutation \(\sigma \in \mathcal S_3\) so that \emph{simultaneously}
	\[
	A_{ij} \neq 0 \quad \text{and} \quad (A^{-1})_{ji} \neq 0	\quad \text{whenever \(j = \sigma(i)\).}
	\]
	This is clearly false: for both \(j=2\) and \(j=3\), the corresponding index \(i\) with \(j = \sigma(i)\) is forced to equal \(1\), contradicting that \(\sigma\) is a permutation.  \demo
\end{example}

For the proof of Theorem~\ref{thm:KalgebraicKrullSchmidt}, it is helpful to use the notion of a \emph{normal} Lie algebra endomorphism.

\begin{definition}[Normal endomorphism]\label{def:normalEndomorphisms}
	Let \(\mathcal L\) be a Lie algebra and \(f\) be a Lie algebra endomorphism, then we call \(f\) a \emph{normal endomorphism} if
	\[
	f([X,Y]) = [f(X),Y]	\quad \text{for all \(X,Y \in \mathcal L\).}
	\]
The projections \(\pi_i\colon \mathfrak g \to \mathfrak g\) onto the ideals \(\mathfrak g_i\) along a direct sum decomposition \(\mathfrak g = \bigoplus_{i=1}^r \mathfrak g_i\) are examples of normal endomorphisms. Notice that the composition of two normal endomorphisms is again a normal endomorphism. If the sum \(f + g\) of two normal endomorphisms is again a Lie algebra endomorphism, then \(f + g\) is again normal, as is the case with sums of distinct projections.  For these normal endomorphisms, we have a version of Fitting's lemma. 
\end{definition}

\begin{lemma}[\(K\)-algebraic Fitting lemma]\label{lemma:KalgebraicFittingLemma}
	Let \(\G\) be a \(K\)-group with Lie algebra \(\mathfrak g\) and \(\Phi_\ast\) be a \(K\)-algebraic normal endomorphism of \(\mathfrak g\). Then there exists an \(N \geq 1\) such that 
	\[
	\mathfrak g = \operatorname{Ker} (\Phi_\ast^N) \oplus \Phi_\ast^N(\mathfrak g),
	\]
	where \(\operatorname{Ker} (\Phi_\ast^N)\) and \(\Phi_\ast^N(\mathfrak g)\) are \(K\)-algebraic Lie ideals.
\end{lemma}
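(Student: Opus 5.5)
The plan is to run the classical Fitting argument on a linear map, and then upgrade each piece to a \(K\)-algebraic object using the correspondence in Theorem~\ref{thm:LieAlgebraCorrespondence}. Write \(\Phi_\ast^k\) for \((\Phi^k)_\ast\); this is legitimate because \(\Phi^k\) is again a \(K\)-endomorphism of \(\G\), so \(\Phi_\ast^k\) is \(K\)-algebraic.

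\emph{Linear algebra.} Since \(\mathfrak g\) is finite-dimensional, the chains \(\operatorname{Ker}(\Phi_\ast^k)\) (increasing) and \(\Phi_\ast^k(\mathfrak g)\) (decreasing) stabilize. Choose \(N\) beyond both stabilization points. The usual Fitting argument then gives \(\mathfrak g=\operatorname{Ker}(\Phi_\ast^N)\oplus\Phi_\ast^N(\mathfrak g)\):
\begin{itemize}
\item if \(X=\Phi_\ast^N(Y)\) lies in the kernel, then \(\Phi_\ast^{2N}(Y)=0\), so \(Y\in\operatorname{Ker}\Phi_\ast^{2N}=\operatorname{Ker}\Phi_\ast^N\), and hence \(X=0\);
\item the dimensions add up by rank–nullity.
\end{itemize}

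\emph{Ideals.} Both summands are ideals because of normality.
\begin{itemize}
\item The kernel of any Lie homomorphism is an ideal.
\item For the image, note that powers of a normal endomorphism are normal, so \([\Phi_\ast^N(X),Y]=\Phi_\ast^N([X,Y])\in\Phi_\ast^N(\mathfrak g)\).
\end{itemize}

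\emph{\(K\)-algebraicity.} This is the step requiring care. Set \(\Psi=\Phi^N\), a \(K\)-endomorphism of \(\G\).
\begin{itemize}
\item The image \(\Psi(\G)\) is a closed subgroup, being the image of a morphism of algebraic groups. It is \(K\)-defined, since images of \(K\)-morphisms in characteristic \(0\) are \(K\)-closed subgroups. Its Lie algebra is \(\Psi_\ast(\mathfrak g)\), because in characteristic \(0\) the differential of a morphism onto its image is surjective (Borel, separability).
\item The kernel \(\operatorname{Ker}\Psi\) is a closed \(K\)-subgroup. In characteristic \(0\), \(\operatorname{Lie}(\operatorname{Ker}\Psi)=\operatorname{Ker}(\Psi_\ast)\).
\end{itemize}
Hence both summands are \(K\)-algebraic ideals. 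I expect the main obstacle to be justifying these Lie algebra identities for the image and the kernel. This is where characteristic zero is essential, and I would cite \cite[\S6--7]{Bor91} for it.

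Finally, if a stronger statement is needed later (a group-level direct product decomposition), one can consider \((\operatorname{Ker}\Psi)^\circ\) and \(\Psi(\G)\). Their Lie algebras intersect trivially and sum to \(\mathfrak g\), so by Theorem~\ref{thm:LieAlgebraCorrespondence} their product has full dimension. For the lemma as stated, however, only the ideal-level claim is required.
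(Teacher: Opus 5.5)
Your proposal is correct and follows the paper's route: the classical Fitting decomposition for \(\Phi_\ast^N\), normality to see that \(\Phi_\ast^N(\mathfrak g)\) is an ideal, and \(K\)-algebraicity via the \(K\)-subgroups \(\operatorname{Ker}(\Phi^N)\) and \(\Phi^N(\G)\). The only difference is that you spell out the characteristic-zero facts the paper uses implicitly, namely \(\operatorname{Lie}(\operatorname{Ker}\Psi)=\operatorname{Ker}\Psi_\ast\), \(\operatorname{Lie}(\Psi(\G))=\Psi_\ast(\mathfrak g)\), and that both kernel and image are defined over \(K\).
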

The proof is routine. We include it for completeness.
\begin{proof}
	Using the classical Fitting lemma for vector spaces, we find \(N \geq 1\) such that \begin{equation*}
		\mathfrak g = \operatorname{Ker} (\Phi_\ast^N) \oplus \Phi_\ast^N(\mathfrak g) \qquad \text{as vector spaces.}
		\end{equation*} 
		Using the normality of \(\Phi_\ast^N\), we deduce that \(\Phi_\ast^N(\mathfrak g)\) is a Lie ideal. Hence the direct sum \(\mathfrak g = \operatorname{Ker} (\Phi_\ast^N) \oplus \Phi_\ast^N(\mathfrak g)\) is a direct sum of Lie algebras. 
	If \(\Phi\) is the \(K\)-endomorphism of \(\G\) corresponding to \(\Phi_\ast\), then \(\operatorname{Ker} (\Phi_\ast^N) \) and  \(\Phi_\ast^N(\mathfrak g)\) are the \(K\)-algebraic Lie algebras of the \(K\)-subgroups \(\operatorname{Ker} (\Phi^N) \) and \(\Phi^N(\G)\), respectively. 
\end{proof}

 We obtain the following lemma as an immediate consequence.

\begin{lemma}
	Let \(\G\) be a \(K\)-group with Lie algebra \(\mathfrak g\). If \(\mathfrak g\) is \(K\)-algebraically indecomposable, then any \(K\)-algebraic normal endomorphism \(\Phi_\ast\) of \(\mathfrak g\) is either invertible or nilpotent.
\end{lemma}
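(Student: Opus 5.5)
The plan is to apply the \(K\)-algebraic Fitting lemma (Lemma~\ref{lemma:KalgebraicFittingLemma}) directly to \(\Phi_\ast\). That lemma provides an \(N \geq 1\) with
\[
\mathfrak g = \operatorname{Ker}(\Phi_\ast^N) \oplus \Phi_\ast^N(\mathfrak g),
\]
where both summands are \(K\)-algebraic Lie ideals. They are the Lie algebras of the \(K\)-subgroups \(\operatorname{Ker}(\Phi^N)\) and \(\Phi^N(\G)\), with \(\Phi\) the \(K\)-endomorphism of \(\G\) satisfying \(\Phi_\ast = d\Phi\).

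Since \(\mathfrak g\) is \(K\)-algebraically indecomposable, it cannot be a direct sum of two non-zero \(K\)-algebraic ideals. Hence one of the two summands must be zero.

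In the first case, \(\Phi_\ast^N(\mathfrak g) = 0\). Then \(\Phi_\ast^N = 0\), so \(\Phi_\ast\) is nilpotent.

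In the second case, \(\operatorname{Ker}(\Phi_\ast^N) = 0\). Then \(\Phi_\ast^N\) is injective on the finite-dimensional space \(\mathfrak g\), hence bijective. It follows that \(\Phi_\ast\) is injective, since \(\operatorname{Ker}\Phi_\ast \subset \operatorname{Ker}\Phi_\ast^N\). By finite-dimensionality \(\Phi_\ast\) is therefore invertible.

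The only point needing care is that both Fitting summands are genuinely \(K\)-algebraic ideals, so that indecomposability applies. This is exactly what Lemma~\ref{lemma:KalgebraicFittingLemma} supplies. One should also note that "invertible" here is meant as a linear (Lie algebra) automorphism of \(\mathfrak g\), not necessarily as an invertible \(K\)-morphism of \(\G\). I expect no real obstacle: the statement is an immediate corollary of the Fitting lemma.
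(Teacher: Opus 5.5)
Your proof is correct and is the argument the paper intends: the paper states this lemma as an immediate consequence of the \(K\)-algebraic Fitting lemma, with no separate proof. The Fitting lemma splits \(\mathfrak g\) into two \(K\)-algebraic ideals, so indecomposability forces one summand to be zero, giving either nilpotence or invertibility of \(\Phi_\ast\) exactly as you describe.
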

The proof of the following lemma contains most of the work to prove Theorem \ref{thm:KalgebraicKrullSchmidt}.
\begin{lemma}\label{lemma:KalgebraicKrullSchmidt}
	Let \(\mathfrak g = \bigoplus_{i=1}^r \mathfrak g_i = \bigoplus_{j=1}^s \mathfrak h_j\) be as in Theorem \ref{thm:KalgebraicKrullSchmidt}. Let \(\pi_i\) and \(\rho_j\) denote the projections onto the subspaces \(\mathfrak g_i\) and \(\mathfrak h_j\) along their respective decompositions. If \(1 \leq i \leq r\), then there exists some \(1 \leq j \leq s\) such that:
	\begin{enumerate}[label=\emph{(\arabic*)}, ref={(\arabic*)}]
		\item The projections \({\pi_i}{\mid_{\mathfrak h_j}}\colon \mathfrak h_j \to \mathfrak g_i\) and
		 \({\rho_j}{\mid_{\mathfrak g_i}}\colon \mathfrak g_i \to \mathfrak h_j\) are  \(K\)-algebraic Lie algebra isomorphisms,  \label{item:LKSi}
		\item \([\mathfrak g_i, \mathfrak g_i] = [\mathfrak h_j, \mathfrak h_j]\), \label{item:LKSii}
		\item \(\displaystyle{\mathfrak h_j \subset \mathfrak g_i \oplus Z\Big(\bigoplus_{k \neq i} \mathfrak g_k \Big)}\) {and} \(\displaystyle{\mathfrak g_i \subset \mathfrak h_j \oplus Z\Big(\bigoplus_{k \neq j} \mathfrak h_k \Big)}\). \label{item:LKSiii}
	\end{enumerate}
		Moreover, if \(\mathfrak g_i\) is \textbf{non-abelian}, then \emph{\ref{item:LKSii}} forces \(j\) to be unique.
\end{lemma}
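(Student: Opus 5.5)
The plan follows the classical Fitting-lemma proof of Krull--Schmidt, keeping track of $K$-algebraicity throughout. Fix $i$. The projections $\pi_i$ and $\rho_j$ are $K$-algebraic normal endomorphisms of $\mathfrak g$, because they are differentials of the $K$-defined projections $\G\to\G_i\hookrightarrow\G$ and $\G\to\HH_j\hookrightarrow\G$. Composites of $K$-endomorphisms are $K$-endomorphisms, so each $\pi_i\rho_j|_{\mathfrak g_i}$ is a $K$-algebraic normal endomorphism of $\mathfrak g_i = \operatorname{Lie}(\G_i)$. By the lemma following Lemma~\ref{lemma:KalgebraicFittingLemma}, applied to the $K$-group $\G_i$, each such map is either invertible or nilpotent.

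The key point is that the relevant sums of these maps are again homomorphisms. Any partial sum $\sum_{j\in J}\rho_j$ is the projection onto $\bigoplus_{j\in J}\mathfrak h_j$, which is normal and $K$-algebraic. Hence every partial sum $\sum_{j\in J}\pi_i\rho_j|_{\mathfrak g_i}$ is again a $K$-algebraic normal endomorphism of $\mathfrak g_i$. The full sum is the identity. I then run the standard argument: if every summand were nilpotent, induction on $|J|$ would show each partial sum is nilpotent. The inductive step uses that a normal map $f$ with $f+g$ a normal endomorphism and $g$ nilpotent gives $f+g$ invertible $\Rightarrow f$ invertible. To see this, note $(f+g)^{-1}f$ is normal, and $1-(f+g)^{-1}f=(f+g)^{-1}g$ is normal and nilpotent-or-invertible; since $g$ is nilpotent it cannot be invertible on a nonzero space, so it is nilpotent, and then $(f+g)^{-1}f$ is invertible. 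This is the step I expect to be the main obstacle: one must check that each intermediate map (such as $(f+g)^{-1}f$) is really $K$-algebraic, so that the dichotomy lemma applies. I would handle this by noting that inverses of $K$-automorphisms are $K$-defined (\cite[Proposition 6.13]{Bor91}), and that compositions and these particular sums all come from products of commuting-image $K$-morphisms of $\G_i$. The conclusion is an index $j$ with $\pi_i\rho_j|_{\mathfrak g_i}$ invertible.

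From this, $\rho_j|_{\mathfrak g_i}$ is injective and $\pi_i|_{\mathfrak h_j}$ is surjective. Symmetrically, $\rho_j\pi_i|_{\mathfrak h_j}$ is normal and $K$-algebraic on $\mathfrak h_j$. It is not nilpotent, because $(\rho_j\pi_i)^{n+1}=\rho_j(\pi_i\rho_j)^n\pi_i$ and $(\pi_i\rho_j)^n$ is invertible on $\mathfrak g_i$. So $\rho_j\pi_i|_{\mathfrak h_j}$ is invertible, and both restrictions are bijective $K$-algebraic isomorphisms, which proves (1). Here $K$-algebraicity of the inverse again uses that bijective $K$-morphisms of connected groups are $K$-isomorphisms, via Theorem~\ref{thm:LieAlgebraCorrespondence}.

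For (3), take $X\in\mathfrak h_j$ and $Y\in\mathfrak g_k$ with $k\neq i$. By normality of $\rho_j$ and $\pi_k$,
\[
[\pi_k X, Y]=\pi_k[X,Y]=\pi_k\rho_j[X,Y]=\pi_k[X,\rho_j Y].
\]
Now $\rho_j Y=\rho_j\pi_k Y$, and $\pi_k\rho_j$ is normal, nilpotent-or-invertible on $\mathfrak g_k$. Using $\pi_i\rho_j|_{\mathfrak g_i}$ invertible, I would argue as in the classical proof that $[\pi_k X,\mathfrak g_k]=0$, so $\pi_k X\in Z(\mathfrak g_k)$. The symmetric argument gives the second inclusion. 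For (2), (3) gives $[\mathfrak h_j,\mathfrak h_j]\subset[\mathfrak g_i,\mathfrak g_i]$ and conversely, since central parts drop out of brackets. Finally, if $\mathfrak g_i$ is non-abelian then $[\mathfrak g_i,\mathfrak g_i]\neq0$. Since the $\mathfrak h_j$ are independent ideals, their derived algebras are independent, so at most one $j$ can satisfy (2), which gives uniqueness.
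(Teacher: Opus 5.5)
\paragraph{The gap: the inductive step on partial sums.} This is exactly the step you flagged. To apply the invertible-or-nilpotent lemma to $(f+g)^{-1}g$, you need it to be the differential of a $K$-endomorphism of $\G_i$. Borel's Proposition 6.13 does not give this. Invertibility of $f+g$ on $\mathfrak g_i$ only says that the corresponding $K$-endomorphism of $\G_i$ has finite kernel. This lemma assumes nothing like torsion-free centers, so that kernel need not be trivial.

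Here is a concrete example. Take $\G=\mathbb G_{\mathrm m}^2$ over $\mathbb Q$ with coordinate factors $\G_1,\G_2$, and let $\HH_1=\{(t^2,t)\}$ and $\HH_2=\{(s,s)\}$. Write $e_1,e_2$ for the standard basis of $\mathfrak g=\mathbb C^2$. Then $e_1=(2,1)-(1,1)$, so $\pi_1\rho_1|_{\mathfrak g_1}$ is multiplication by $2$. This is the differential of $x\mapsto x^2$, and its inverse is not $\mathbb Q$-algebraic. Nor is the dichotomy automatic for non-algebraic normal maps. A rank-one idempotent on the Lie algebra of a $2$-dimensional $\mathbb Q$-algebraically indecomposable torus (e.g.\ the norm-one torus of a cyclic cubic field) is neither invertible nor nilpotent. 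So your induction is unproved as it stands.

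The paper avoids inverses entirely by using characteristic $0$. Take traces in $\mathrm{Id}_{\mathfrak g_i}=\sum_j\pi_i\rho_j|_{\mathfrak g_i}$. The left side has trace $\dim\mathfrak g_i\neq0$, while nilpotent maps have trace $0$. Hence some $\pi_i\rho_j|_{\mathfrak g_i}$ is not nilpotent, and so it is invertible. The dichotomy is only ever applied to these genuinely $K$-algebraic maps.

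The same example refutes your extra claim in (1) that the inverse isomorphisms are $K$-algebraic. The group projection $\HH_1\to\G_1$, $(t^2,t)\mapsto t^2$, has kernel of order $2$, and its differential $\pi_1|_{\mathfrak h_1}$ has no $\mathbb Q$-algebraic inverse. Drop that claim. The lemma only needs bijective $K$-algebraic maps, and your argument does give these once some $\pi_i\rho_j|_{\mathfrak g_i}$ is known to be invertible.

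\paragraph{Part (3) and the order of (2) and (3).} Your part (3) is only sketched (``argue as in the classical proof''), but it has a one-line proof from (1), and the detour through $\pi_k\rho_j$ is not needed. Since $\pi_i|_{\mathfrak h_j}$ is injective, for $X\in\mathfrak h_j$ and $Y\in\mathfrak g_k$ with $k\neq i$ you get $[X,Y]\in\mathfrak h_j\cap\mathfrak g_k\subset\mathfrak h_j\cap\operatorname{Ker}\pi_i=0$. Hence $[\pi_kX,Y]=\pi_k[X,Y]=0$. With that, your deduction of (2) from the two inclusions in (3) is correct, and so is your uniqueness argument.

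This reverses the paper's order, and both orders work. The paper first proves (2) from normality, via $[\mathfrak g_i,\mathfrak g_i]=\pi_i[\mathfrak g_i,\mathfrak h_j]=[\mathfrak g_i,\mathfrak h_j]\subset[\mathfrak h_j,\mathfrak h_j]$ and the symmetric inclusion. It then obtains (3) because $\mathrm{Id}-\pi_i$ kills $[\mathfrak h_j,\mathfrak h_j]=[\mathfrak g_i,\mathfrak g_i]$.
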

\begin{proof}
 The projections \(\pi_i\) and \(\rho_j\) induce \(K\)-algebraic Lie algebra morphisms \[
	\varphi_{ij} \colon \mathfrak h_j \longrightarrow \mathfrak g_i, \quad \psi_{ji}\colon \mathfrak g_i \longrightarrow \mathfrak h_j, \quad   \varphi_{ij}\psi_{ji}\colon \mathfrak g_i \longrightarrow \mathfrak g_i, \quad \psi_{ji}\varphi_{ij}\colon \mathfrak h_j \longrightarrow \mathfrak h_j,
	\]
	where \(\varphi_{ij}\psi_{ji}\) and \(\psi_{ji}\varphi_{ij}\) are moreover normal endomorphisms. Note that we have identities
	\begin{equation}
	\mathrm{Id}_{\mathfrak g_i} = \sum_{j=1}^s \varphi_{ij}\psi_{ji} \quad \text{for all \(1 \leq i\leq r\).} \label{eq:identityMatrixofMaps}
	\end{equation}
	Fix \(i\). Since \(\mathfrak g_i\) is \(K\)-algebraically indecomposable, each endomorphism \(\varphi_{ij}\psi_{ji}\) is either invertible or nilpotent. Note that \(\varphi_{ij}\psi_{ji}\) is nilpotent if and only if \(\psi_{ji}\varphi_{ij}\) is, so these maps are either both invertible or both nilpotent.

	The trace of a nilpotent map is \(0\), so taking traces in Equation \eqref{eq:identityMatrixofMaps}, we deduce that there exists at least one \(j\) such that \(\varphi_{ij}\psi_{ji}\) is invertible. Fix such a \(j\). As \(\varphi_{ij}\psi_{ji}\) and \(\psi_{ji}\varphi_{ij}\) are invertible linear maps, so must \(\varphi_{ij}\colon \mathfrak h_j \to \mathfrak g_i\) and \(\psi_{ji} \colon \mathfrak g_i \to \mathfrak h_j\) be. Being restrictions of the respective projection maps, this proves {\ref{item:LKSi}}.
	
	\bigskip 
	
	 \noindent {\ref{item:LKSii}}. Since \(\pi_i\) is a normal endomorphism of \(\mathfrak g\), we have that \[[\mathfrak g_i, \mathfrak g_i] = [\mathfrak g_i, \pi_i(\mathfrak h_j)] = \pi_i\big([\mathfrak g_i, \mathfrak h_j]\big).\] But \([\mathfrak g_i, \mathfrak h_j] \subset \mathfrak g_i\) so \(\pi_i\big([\mathfrak g_i, \mathfrak h_j]\big) = [\mathfrak g_i, \mathfrak h_j] \subset [\mathfrak h_j, \mathfrak h_j].\)  Hence, \([\mathfrak g_i, \mathfrak g_i] \subset [\mathfrak h_j, \mathfrak h_j]\). A symmetric argument shows the reverse inclusion, thus \([\mathfrak g_i, \mathfrak g_i] = [\mathfrak h_j, \mathfrak h_j]\).
	
	\bigskip 
	
	\noindent {\ref{item:LKSiii}}. It is straightforward to show that \([(\mathrm{Id} - \pi_i)(\mathfrak h_j), \mathfrak g] \subset (\mathrm{Id} - \pi_i) [\mathfrak h_j, \mathfrak h_j]\), and since \([\mathfrak g_i, \mathfrak g_i] = [\mathfrak h_j, \mathfrak h_j]\), it follows that  \([(\mathrm{Id} - \pi_i)(\mathfrak h_j), \mathfrak g] = 0.\) Since \((\mathrm{Id} - \pi_i)(\mathfrak h_j) \subset \displaystyle \bigoplus_{k \neq i} \mathfrak g_k\), we deduce that \((\mathrm{Id} - \pi_i)(\mathfrak h_j)  \subset Z\big( \displaystyle\bigoplus_{k \neq i} \mathfrak g_k )\). Therefore \({\mathfrak h_j \subset \mathfrak g_i \oplus Z\Big( \displaystyle \bigoplus_{k \neq i} \mathfrak g_k \Big)}\). The inclusion \({\mathfrak g_i \subset \mathfrak h_j \oplus Z\Big( \displaystyle \bigoplus_{k \neq j} \mathfrak h_k \Big)}\) is similar and uses the projection \(\rho_j\).
	
	\medskip 
	
	If \(\mathfrak g_i\) is non-abelian, then \([\mathfrak g_i, \mathfrak g_i] = [\mathfrak h_j, \mathfrak h_j] \neq 0\) lies in no \(\mathfrak h_k\) but \(\mathfrak h_j\). Therefore this condition indeed forces \(j\) to be unique.
\end{proof}

With the help of this lemma, we can prove Theorem \ref{thm:KalgebraicKrullSchmidt}.
\begin{proof}[Proof of Theorem \ref{thm:KalgebraicKrullSchmidt}]
	Consider \(i = 1\). Lemma \ref{lemma:KalgebraicKrullSchmidt} yields some \(j_1\) such that statements \ref{item:thmKSi}, \ref{item:thmKSii} and \ref{item:thmKSiii} hold for the pair \((i,j) = (1,j_1)\). We intend to continue this process inductively. 
	
	First, we claim that 
	\[
	\mathfrak g = \mathfrak g_1 \oplus \Big( \bigoplus_{j \neq j_1} \mathfrak h_j \Big). \tag{A} \label{eq:DecompositionA}
	\]
	To prove this, notice that 
	\[
	\operatorname{Ker} (\Phi^2) = \operatorname{Ker} (\Phi) = \bigoplus_{j \neq j_1 } \mathfrak h_j \quad \text{and} \quad \Phi^2(\mathfrak g) = \Phi(\mathfrak g) = \mathfrak g_1 
	\]
for the normal endomorphism \(\Phi = \pi_1 \rho_{j_1}  \colon \mathfrak g \to \mathfrak g\) and apply the Fitting Lemma~\ref{lemma:KalgebraicFittingLemma}. 

Consider \(i = 2\). By Lemma \ref{lemma:KalgebraicKrullSchmidt} applied to the decompositions \eqref{eq:DecompositionA} and \(\bigoplus_{i =1}^r \mathfrak g_i\), there exists some summand of decomposition \eqref{eq:DecompositionA} isomorphic to \(\mathfrak g_2\) via the projection \(\pi_2.\) As \(\pi_2(\mathfrak g_1) = 0,\) this summand cannot be \(\mathfrak g_1\), so it must be some \(\mathfrak h_{j_2}\) with \(j_2 \neq j_1\). Lemma  \ref{lemma:KalgebraicKrullSchmidt} further  implies that \([\mathfrak g_2, \mathfrak g_2] = [\mathfrak h_{j_2},\mathfrak h_{j_2}]\) and  \({\mathfrak h_{j_2} \subset \mathfrak g_2 \oplus Z\Big(\bigoplus_{i \neq 2} \mathfrak g_i \Big)}\).

Let \(\tau_{j_2}\) denote the projection onto \(\mathfrak h_{j_2}\) along decomposition \eqref{eq:DecompositionA}. Then \({\tau_{j_2}}_{|_{\mathfrak g_2}}\colon \mathfrak g_2 \to \mathfrak h_{j_2}\) defines an isomorphism, and by similarly applying the Fitting Lemma~\ref{lemma:KalgebraicFittingLemma} to the normal endomorphism \(\pi_2 \tau_{j_2}\) with kernel \(\mathfrak g_1 \oplus \big( \bigoplus_{j \neq j_1, j_2} \mathfrak h_j \big)\) and image \(\mathfrak g_2\), we find a new decomposition
\[
\mathfrak g = \mathfrak g_1 \oplus \mathfrak g_2 \oplus \Big( \bigoplus_{j \neq j_1, j_2} \mathfrak h_j \Big).
\] 

This process can be continued inductively. The desired permutation \(\sigma \in \mathcal S_r\) is given by \(i \mapsto j_i\), and at every step, one verifies the three statements \ref{item:thmKSi}, \ref{item:thmKSii} and \ref{item:thmKSiii} in the theorem. 

\medskip  

If all the \(\mathfrak g_i\) are non-abelian, then the uniqueness of \(j\) in Lemma \ref{lemma:KalgebraicKrullSchmidt} implies we may simply apply Lemma \ref{lemma:KalgebraicKrullSchmidt} a total of \(r\) times to deduce that for every \(1 \leq i \leq r\) a unique \(j_i\) may be found satisfying all the conditions of Lemma \ref{lemma:KalgebraicKrullSchmidt}. Since these conditions are symmetric, we may swap the two decompositions to deduce that the assignment \(i \mapsto j_i\) is bijective, hence \(r = s\) and the stronger version of Theorem \ref{thm:KalgebraicKrullSchmidt} follows.
\end{proof}
\subsection{\texorpdfstring{\(K\)}{K}-algebraic automorphisms of a direct sum of Lie algebras}\label{subsec:KalgebraicAutomorphismsOfADirectSumOfLieAlgebras}

This subsection is devoted to the proof of Theorem~\ref{thm:A-KalgebraicLieAlgebras} from the introduction. 

\begin{lemma}\label{lemma:productIsAgainAMorphism}
Let \(\G\) be a connected \(K\)-group with Lie algebra \(\mathfrak g\) and \(\Theta,\Psi\) be two \(K\)-endomorphisms of \(\G\) such that \(\Psi_\ast\) is a central Lie algebra endomorphism. Then \(\Psi\) is central and \(\Theta \cdot \Psi\) is a \(K\)-endomorphism with differential \((\Theta \cdot \Psi)_\ast = \Theta_\ast + \Psi_\ast\). 
\end{lemma}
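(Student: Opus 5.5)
We first show that $\Psi$ is central, and then that $\Theta\cdot\Psi$ is a $K$-endomorphism whose differential is $\Theta_\ast+\Psi_\ast$.

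\textbf{Step 1: $\Psi$ is central.} Saying $\Psi_\ast$ is central means $\Psi_\ast(\mathfrak g)\subset \mathfrak z(\mathfrak g)$. The image $\Psi(\G)$ is a connected closed $K$-subgroup with Lie algebra $\Psi_\ast(\mathfrak g)$, since $\G$ is connected and in characteristic $0$ the differential of a morphism onto its image is surjective. For connected $\G$ in characteristic $0$, the Lie algebra of $Z(\G)$ is $\mathfrak z(\mathfrak g)$: the centralizer $C_\G(\mathfrak g)=\ker(\operatorname{Ad})$ equals $Z(\G)$ by connectedness, and its Lie algebra is $\ker(\operatorname{ad})=\mathfrak z(\mathfrak g)$.

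To compare $\Psi(\G)$ with $Z(\G)^\circ$, consider each $\HH=\Psi(\G)\le \G$ acting by conjugation. For $x\in\Psi(\G)$ we have $\operatorname{Ad}(x)=\exp$-type unipotent limit data, but it is cleaner to argue directly. Since $\Psi(\G)$ is connected, it is generated by its one-parameter behaviour, and $\operatorname{Ad}(\Psi(\G))$ is a connected group whose Lie algebra is $\operatorname{ad}(\Psi_\ast\mathfrak g)=0$. Hence $\operatorname{Ad}(\Psi(\G))$ is trivial, so $\Psi(\G)\subset\ker\operatorname{Ad}=Z(\G)$. Alternatively, one can apply Theorem~\ref{thm:LieAlgebraCorrespondence}(2) with $\HH_1=\Psi(\G)$ and $\HH_2=Z(\G)$: this gives $\Psi(\G)\le Z(\G)$, since $\operatorname{Lie}(\Psi(\G))\subset \mathfrak z(\mathfrak g)=\operatorname{Lie}(Z(\G))$. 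Either way, $\Psi$ is central.

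\textbf{Step 2: $\Theta\cdot\Psi$ is a $K$-endomorphism.} The map $\Theta\cdot\Psi$ is the composite
\[
\G\xrightarrow{\ \Delta\ }\G\times\G\xrightarrow{\ \Theta\times\Psi\ }\G\times\G\xrightarrow{\ m\ }\G .
\]
All three maps are $K$-defined morphisms of varieties, so $\Theta\cdot\Psi$ is $K$-defined. It is a homomorphism because $\Psi(y)$ is central:
\[
\Theta(xy)\Psi(xy)=\Theta(x)\Theta(y)\Psi(x)\Psi(y)=\Theta(x)\Psi(x)\Theta(y)\Psi(y).
\]

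\textbf{Step 3: the differential.} The differential of multiplication $m$ at $(e,e)$ is addition $(X,Y)\mapsto X+Y$, and the differential of $\Delta$ is $X\mapsto (X,X)$. By the chain rule, $(\Theta\cdot\Psi)_\ast=\Theta_\ast+\Psi_\ast$.

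The only nontrivial point is Step 1, namely the identification $\operatorname{Lie}(Z(\G))=\mathfrak z(\mathfrak g)$. This identification needs characteristic $0$ and uses $\operatorname{Lie}(\ker\operatorname{Ad})=\ker\operatorname{ad}$, which is standard in \cite{Bor91}.
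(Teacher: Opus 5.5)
Your proof is correct and matches the paper's argument. Your ``alternative'' in Step 1 is exactly how the paper shows \(\Psi\) is central: it applies the correspondence theorem to the connected subgroup \(\Psi(\G)\) using \(\operatorname{Lie}(Z(\G))=\mathfrak z(\mathfrak g)\). Your factorization \(m\circ(\Theta\times\Psi)\circ\Delta\) simply spells out the differential computation that the paper treats as clear.

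The \(\operatorname{Ad}\)-based variant is also valid in characteristic \(0\). However, the sentence about ``\(\exp\)-type unipotent limit data'' is meaningless and should be deleted.
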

\begin{proof}
	The image \(\Psi(\G)\) is a connected closed \(K\)-subgroup of \(\G\), with Lie algebra \(\Psi_\ast(\mathfrak g)\). 
	By assumption \(\Psi_\ast(\mathfrak g) \subset Z(\mathfrak g)\). Since \(Z(\mathfrak g)\) is the Lie algebra of \(Z(\G)\), we deduce from the Correspondence Theorem \ref{thm:LieAlgebraCorrespondence} that \(\Psi(\G) \subset Z(\G)\). It is clear that the map \(\Theta \cdot \Psi\) is defined over \(K\). As \(\Psi\) is central, we deduce \(\Theta \cdot \Psi\) is a \(K\)-morphism with differential \((\Theta \cdot \Psi)_\ast = \Theta_\ast + \Psi_\ast\). 
\end{proof}

\begin{theorem}\label{thm:CharacterizationAlgebraicAutomorphismsDirectSumLieAlgebras}
	Let \( \G = \G_1 \times \G_2 \times \cdots \times \G_r \) be a direct product of connected \(K\)-groups with corresponding Lie algebras
	\[
	\mathfrak g = \mathfrak g_1 \oplus \mathfrak g_2 \oplus \ldots \oplus \mathfrak g_r,
	\]
	where each summand \(\mathfrak g_i\) is \(K\)-algebraically indecomposable. Then:
	\begin{enumerate}
	\item  Every \(K\)-algebraic Lie algebra automorphism \(\Phi_\ast\) of \(\mathfrak g\) can be written as
		\[
		\Phi_\ast = \Theta_\ast + \Psi_\ast,
		\]
		where: 
		\begin{enumerate}[label=\emph{(\arabic*)}, ref={(\arabic*)}]
			\item \(\Theta_\ast\) is a \(K\)-algebraic automorphism of \(\mathfrak g\) mapping each summand \(\mathfrak g_i\) onto an isomorphic summand \(\mathfrak g_j\), and \label{item:thmCharacterizationAlgebraicAutomorphismsDirectSumLieAlgebras(1)}
			\item \(\Psi_\ast\) is a \(K\)-algebraic central endomorphism of \(\mathfrak g\). \label{item:thmCharacterizationAlgebraicAutomorphismsDirectSumLieAlgebras(2)}
		\end{enumerate}
\item Conversely, suppose \(\Theta_\ast\) and \(\Psi_\ast\) are \(K\)-algebraic Lie algebra endomorphisms as in \emph{\ref{item:thmCharacterizationAlgebraicAutomorphismsDirectSumLieAlgebras(1)}} and \emph{\ref{item:thmCharacterizationAlgebraicAutomorphismsDirectSumLieAlgebras(2)}} above. If \(\Theta_\ast + \Psi_\ast\) is an invertible linear map, then it is a \(K\)-algebraic Lie algebra automorphism with corresponding \(K\)-endomorphism \(\Theta \cdot \Psi\). 
		\end{enumerate}
\end{theorem}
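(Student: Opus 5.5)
For part 1, I apply the \(K\)-algebraic Krull-Schmidt Theorem~\ref{thm:KalgebraicKrullSchmidt} to the two decompositions \(\mathfrak g = \bigoplus_i \mathfrak g_i = \bigoplus_i \Phi_\ast(\mathfrak g_i)\). The second decomposition is legitimate: \(\Phi_\ast(\mathfrak g_i) = \operatorname{Lie}(\Phi(\G_i))\), and since \(\Phi_\ast\) is bijective, \(\Phi\) is a \(K\)-isomorphism of connected groups, so \(\G=\prod_i \Phi(\G_i)\). Each summand \(\Phi_\ast(\mathfrak g_i)\) is \(K\)-algebraically indecomposable, because \(\Phi\) carries \(K\)-subgroups to \(K\)-subgroups in both directions. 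Krull-Schmidt gives \(\sigma\in\mathcal S_r\) such that \(\pi_{\sigma(i)}|_{\Phi_\ast(\mathfrak g_i)}\colon \Phi_\ast(\mathfrak g_i)\to \mathfrak g_{\sigma(i)}\) is a \(K\)-algebraic isomorphism and \(\Phi_\ast(\mathfrak g_i)\subset \mathfrak g_{\sigma(i)}\oplus Z\big(\bigoplus_{k\ne\sigma(i)}\mathfrak g_k\big)\). (I apply it with the roles of the two decompositions chosen so that the projection \(\pi_{\sigma(i)}\) is the isomorphism; this is the direction guaranteed without a non-abelian hypothesis.)

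I then define \(\Theta_\ast := \sum_i \pi_{\sigma(i)}\Phi_\ast \pi_i\) and \(\Psi_\ast := \Phi_\ast - \Theta_\ast = \sum_i (\mathrm{Id}-\pi_{\sigma(i)})\Phi_\ast\pi_i\). By the inclusion above, \(\Psi_\ast\) takes values in \(Z(\mathfrak g)\). Using the identity \(\pi_{\sigma(i)}\Phi_\ast\pi_i = \pi_{\sigma(i)}\Phi_\ast|_{\mathfrak g_i}\) (composition with projections), \(\Theta_\ast\) restricted to \(\mathfrak g_i\) is a Lie isomorphism onto \(\mathfrak g_{\sigma(i)}\). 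Hence \(\Theta_\ast\) is a Lie automorphism. It is \(K\)-algebraic as the differential of the \(K\)-morphism \(\Theta := \prod_i P_{\sigma(i)}\circ\Phi\circ\iota_i\circ P_i\), with \(P_k\) the group projections. This map is a homomorphism because the images land in distinct commuting factors. Next, \(\Psi_\ast\) is a linear map into the center that kills \([\mathfrak g,\mathfrak g]\): indeed \(\Phi_\ast\) and \(\Theta_\ast\) agree on brackets, since the central part vanishes on brackets. So \(\Psi_\ast\) is a Lie endomorphism. To see that it is \(K\)-algebraic, write it as the differential of \(\Psi := \prod_i \big(\prod_{k\ne\sigma(i)} P_k\big)\circ\Phi\circ\iota_i\circ P_i\). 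Each piece is a \(K\)-morphism whose differential is central, hence central by Lemma~\ref{lemma:productIsAgainAMorphism}, so the product of these commuting central morphisms is a homomorphism.

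For part 2, Lemma~\ref{lemma:productIsAgainAMorphism} shows directly that \(\Theta\cdot\Psi\) is a \(K\)-endomorphism with differential \(\Theta_\ast+\Psi_\ast\). If this differential is invertible, then \(\Theta\cdot\Psi\) has finite kernel and is surjective, since \(\G\) is connected and the image is closed of full dimension. Hence \(\Theta_\ast+\Psi_\ast\) is a \(K\)-algebraic Lie algebra automorphism, which is all the statement asks for.

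I expect the main obstacle to be in part 1: verifying that \(\Theta_\ast\) and \(\Psi_\ast\) are genuinely \(K\)-algebraic homomorphisms, rather than merely linear maps. The group-level homomorphism property of \(\Theta\) and \(\Psi\) is the delicate point, and it is handled by the centrality lemma.
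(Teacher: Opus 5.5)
Your argument is the paper's argument. You apply Theorem \ref{thm:KalgebraicKrullSchmidt} to $\bigoplus_i\mathfrak g_i=\bigoplus_i\Phi_\ast(\mathfrak g_i)$, use the same formulas for $\Theta_\ast$ and $\Psi_\ast$ (with $\sigma$ replaced by $\sigma^{-1}$), get centrality from part (3) of Krull--Schmidt, and use Lemma \ref{lemma:productIsAgainAMorphism} both for $K$-algebraicity of $\Psi_\ast$ and for part 2.

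The one step that is wrong is your justification that the second decomposition meets the group-level hypothesis of Krull--Schmidt. Bijectivity of $\Phi_\ast$ does \emph{not} make $\Phi$ a $K$-isomorphism, because this theorem does not assume torsion-free centers. The paper's own example $t\mapsto t^n$ on $\mathbb G_{\mathrm m}$ shows this, and in your part 2 you yourself only obtain a finite kernel. Consequently $\G$ need not be the internal direct product of the $\Phi(\G_i)$. For instance, take $\G=\mathbb G_{\mathrm m}\times\mathbb G_{\mathrm m}$ and $\Phi(s,t)=(st,st^{-1})$: then $\Phi_\ast$ is invertible but $\Phi(\G_1)\cap\Phi(\G_2)=\{(1,1),(-1,-1)\}$. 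In such cases the projections along $\bigoplus_j\Phi_\ast(\mathfrak g_j)$ are $\rho_j=\Phi_\ast\pi_j\Phi_\ast^{-1}$. These need not be differentials of $K$-morphisms, so the Fitting step in the proof of Krull--Schmidt (``invertible or nilpotent'') is not justified by what you wrote.

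The repair is easy, and it is what the paper's one-line appeal to Krull--Schmidt implicitly relies on. That proof needs only two things. First, the summands must be $K$-algebraic ideals, which $\Phi_\ast(\mathfrak g_i)=\operatorname{Lie}(\Phi(\G_i))$ are. Second, the Fitting components of the normal endomorphisms $\pi_i\rho_j|_{\mathfrak g_i}$ must be $K$-algebraic. For the second point, suppose $\mathfrak g=\mathfrak a\oplus\mathfrak b$ with $\mathfrak a=\operatorname{Lie}(\mathbf A)$ and $\mathfrak b=\operatorname{Lie}(\mathbf B)$ for connected normal $K$-subgroups, and let $\tau$ be the projection onto $\mathfrak a$. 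In characteristic $0$, for $\mathfrak l=\operatorname{Lie}(\mathbf L)$ and $\mathfrak m=\operatorname{Lie}(\mathbf M)$ one has $\tau(\mathfrak l)=(\mathfrak l+\mathfrak b)\cap\mathfrak a=\operatorname{Lie}(\mathbf L\mathbf B\cap\mathbf A)$ and $\tau^{-1}(\mathfrak m)=(\mathfrak m\cap\mathfrak a)+\mathfrak b=\operatorname{Lie}((\mathbf M\cap\mathbf A)\mathbf B)$. So images and preimages under any such projection, and hence kernels and images of all powers of compositions of them, are $K$-algebraic, and the invertible-or-nilpotent dichotomy survives.

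The isomorphism you actually use, $\pi_{\sigma(i)}|_{\Phi_\ast(\mathfrak g_i)}$, is the differential of $P_{\sigma(i)}|_{\Phi(\G_i)}$, so it is $K$-algebraic regardless. Your claim that each $\Phi_\ast(\mathfrak g_i)$ is $K$-algebraically indecomposable also survives, since it uses only images and preimages under $\Phi$. With this replacement for your ``$\Phi$ is a $K$-isomorphism'' sentence, the rest of your proof goes through as written.
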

Here, a \(K\)\emph{-algebraic Lie algebra automorphism} is a \(K\)-algebraic Lie algebra endomorphism which is also invertible. Admitting a \(K\)-algebraic inverse is guaranteed only under additional assumptions, see Proposition~\ref{prop:K-automorphismOfLieAlgebraIFFK-automorphismOfGroup}.
\begin{remark}
	Theorem~\ref{thm:CharacterizationAlgebraicAutomorphismsDirectSumLieAlgebras} is again a \(K\)-algebraic version of the corresponding result \cite[Theorem 3.4]{FGH13}. Again, by using similar, and simpler, arguments, essentially leaving out all references to linear algebraic groups and \emph{\(K\)-algebraic}, one may prove a version of Theorem~\ref{thm:CharacterizationAlgebraicAutomorphismsDirectSumLieAlgebras} and its addendum Proposition~\ref{prop:addendumtothm:CharacterizationAlgebraicAutomorphismsDirectSumLieAlgebras} for abstract Lie algebras over any arbitrary field. 
\end{remark}

\begin{proof}[Proof of Theorem~\ref{thm:CharacterizationAlgebraicAutomorphismsDirectSumLieAlgebras}] \textit{1.} 
	First assume \(\Phi_\ast\) is a \(K\)-algebraic automorphism of \(\mathfrak g\). This yields two decompositions of \(\mathfrak g\) into direct sums of ideals
	\[
	\mathfrak g = \mathfrak g_1 \oplus \mathfrak g_2 \oplus \ldots \oplus \mathfrak g_r = \Phi_\ast (\mathfrak g_1) \oplus \Phi_\ast (\mathfrak g_2) \oplus \ldots \oplus \Phi_\ast (\mathfrak g_r).
	\] Let \(\pi_i\) denote the projections onto \(\mathfrak g_i\) along the first decomposition. By the \(K\)-algebraic Krull-Schmidt theorem (Theorem \ref{thm:KalgebraicKrullSchmidt}), there exists a permutation \(\sigma \in \mathcal S_r\) such that \[\mathfrak g_i \cong \Phi_\ast (\mathfrak g_{\sigma(i)}), \quad 	\pi_i\big(\Phi_\ast (\mathfrak g_{\sigma(i)})\big) = \mathfrak g_i, \quad \text{and} \quad [\mathfrak g_i, \mathfrak g_i] = \big[\Phi_\ast (\mathfrak g_{\sigma(i)}),\Phi_\ast (\mathfrak g_{\sigma(i)})\big].\]
	Define linear maps \[
	\Theta_\ast = \sum_{i=1}^r \pi_i  \Phi_\ast  \pi_{\sigma(i)}  \qquad \text{and} \qquad \Psi_\ast = \sum_{i=1}^r (\mathrm{Id} - \pi_i)  \Phi_\ast  \pi_{\sigma(i)}.
	\]
	By construction, \(\Phi_\ast = \Theta_\ast + \Psi_\ast\). 
	
 For each \(1 \leq i \leq r\), define the surjective Lie algebra morphism \[(\Theta_i)_\ast = \pi_i  \Phi_\ast  \pi_{\sigma(i)}\colon \mathfrak g \to \mathfrak g_i\]
  and note that \((\Theta_i)_\ast\) is \(K\)-algebraic. 
	It is clear that the corresponding \(K\)-morphisms \(\Theta_i\colon \G \to \G_i\) combine to give a \(K\)-endomorphism \(\Theta\colon \G\to \G\) whose differential equals \(\Theta_\ast = \sum_{i=1}^r (\Theta_i)_\ast\). Hence, \(\Theta_\ast\) is a \(K\)-algebraic Lie algebra morphism. By construction, \(\Theta_\ast\) maps each \(\mathfrak g_i\) onto the isomorphic summand \(\mathfrak g_{\sigma^{-1}(i)}\). In particular, \(\Theta_\ast\) is surjective, and by finite dimensionality, it is a Lie algebra automorphism. This proves \ref{item:thmCharacterizationAlgebraicAutomorphismsDirectSumLieAlgebras(1)}.
	
	Theorem \ref{thm:KalgebraicKrullSchmidt} further guarantees that \(\Phi_\ast \mathfrak g_{\sigma(i)} \subset \mathfrak g_i \oplus Z\Big( \bigoplus_{k \neq i } \mathfrak g_k\Big)\), hence each \(\mathrm{Id} - \pi_i\) maps \(\Phi_\ast \pi_{\sigma(i)}(\mathfrak g)\) into \(Z(\mathfrak g)\). Therefore, every \((\mathrm{Id} - \pi_i)  \Phi_\ast  \pi_{\sigma(i)}\) is a central Lie algebra endomorphism, and hence the same holds for their sum \(\Psi_\ast\). Moreover, each \((\mathrm{Id} - \pi_i)  \Phi_\ast  \pi_{\sigma(i)}\) is \(K\)-algebraic. Applying Lemma~\ref{lemma:productIsAgainAMorphism}, we deduce that the central Lie algebra endomorphism \(\Psi_\ast\) is also \(K\)-algebraic, proving \ref{item:thmCharacterizationAlgebraicAutomorphismsDirectSumLieAlgebras(2)}.

	\bigskip 
	
	\noindent \textit{2.} The map \(\Theta_\ast + \Psi_\ast\) is a Lie algebra endomorphism as \(\Psi_\ast\) is central. Lemma~\ref{lemma:productIsAgainAMorphism} implies that it is \(K\)-algebraic.
\end{proof}
If the summands \(\mathfrak g_i\) are non-abelian, then we can always choose \(\Theta_\ast\) and \(\Psi_\ast\) in a specific nice way. The following addendum suffices to prove Theorem~\ref{thm:A-KalgebraicLieAlgebras} from the introduction. 
\begin{proposition}\label{prop:addendumtothm:CharacterizationAlgebraicAutomorphismsDirectSumLieAlgebras}
	Let \(\G = \prod_{i=1}^r \G_i\) and \(\mathfrak g = \bigoplus_{i=1}^r \mathfrak g_i\) be as in Theorem~\ref{thm:CharacterizationAlgebraicAutomorphismsDirectSumLieAlgebras}, and let \(\pi_i\) denote the projections onto \(\mathfrak g_i\).  Assume that the \(K\)-algebraically indecomposable summands \(\mathfrak g_i\) are non-abelian. Then:  
	\begin{enumerate}
	\item Any \(K\)-algebraic Lie algebra automorphism \(\Phi_\ast\) of \(\mathfrak g\) uniquely decomposes as \(\Phi_\ast = \Theta_\ast+\Psi_\ast\) for some \(K\)-algebraic endomorphisms \(\Theta_\ast, \Psi_\ast\) satisfying \emph{\ref{item:thmCharacterizationAlgebraicAutomorphismsDirectSumLieAlgebras(1)}} and \emph{\ref{item:thmCharacterizationAlgebraicAutomorphismsDirectSumLieAlgebras(2)}} above, and the additional condition that \(\pi_j\big( \Psi_\ast(\mathfrak g_i) \big) = 0\) whenever \(\Theta_\ast(\mathfrak g_i) = \mathfrak g_j.\) \label{item:prop:addendumtothm:CharacterizationAlgebraicAutomorphismsDirectSumLieAlgebras1}
	\item 	Conversely, any pair of  \(K\)-algebraic endomorphisms \(\Theta_\ast, \Psi_\ast\) of \(\mathfrak g\) satisfying the above three conditions defines a \(K\)-algebraic automorphism  \(\Phi_\ast := \Theta_\ast +  \Psi_\ast\) with associated \(K\)-endomorphism \(\Theta\cdot \Psi\). \label{item:prop:addendumtothm:CharacterizationAlgebraicAutomorphismsDirectSumLieAlgebras2}
	\end{enumerate}
\end{proposition}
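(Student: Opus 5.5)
For part 1, existence comes straight from the proof of Theorem~\ref{thm:CharacterizationAlgebraicAutomorphismsDirectSumLieAlgebras}. There we set
\[\Theta_\ast=\sum_i \pi_i\Phi_\ast\pi_{\sigma(i)},\qquad \Psi_\ast=\sum_i(\mathrm{Id}-\pi_i)\Phi_\ast\pi_{\sigma(i)}.\]
With this choice $\Theta_\ast(\mathfrak g_{\sigma(i)})=\mathfrak g_i$, while $\Psi_\ast(\mathfrak g_{\sigma(i)})\subset \bigoplus_{k\neq i}\mathfrak g_k$. Hence $\pi_i(\Psi_\ast(\mathfrak g_{\sigma(i)}))=0$, which is exactly the additional off-diagonal condition. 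Conditions (1) and (2), and $K$-algebraicity, were already established there.

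For uniqueness, suppose $\Phi_\ast=\Theta_\ast+\Psi_\ast=\Theta'_\ast+\Psi'_\ast$ are two such decompositions, with permutations $\tau,\tau'$, so that $\Theta_\ast(\mathfrak g_i)=\mathfrak g_{\tau(i)}$ and $\Theta'_\ast(\mathfrak g_i)=\mathfrak g_{\tau'(i)}$. The first step is to show $\tau=\tau'$, using non-abelianness. Since $\Psi_\ast$ is central, $\Phi_\ast([X,Y])=[\Theta_\ast X,\Theta_\ast Y]$. Therefore
\[\Phi_\ast([\mathfrak g_i,\mathfrak g_i])=[\mathfrak g_{\tau(i)},\mathfrak g_{\tau(i)}],\]
and this is a nonzero subspace of $\mathfrak g_{\tau(i)}$. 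The same computation gives $\Phi_\ast([\mathfrak g_i,\mathfrak g_i])\subset\mathfrak g_{\tau'(i)}$. Because the summands intersect trivially, $\tau(i)=\tau'(i)$. This is the step I expect to be the real content, and it is short.

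Now fix $i$ and write $j=\tau(i)$. For $X\in\mathfrak g_i$ we have $\Theta_\ast X,\Theta'_\ast X\in\mathfrak g_j$, and both $\Psi_\ast X$ and $\Psi'_\ast X$ have zero $\mathfrak g_j$-component. Applying $\pi_j$ to $\Phi_\ast X$ therefore gives $\Theta_\ast X=\pi_j\Phi_\ast X=\Theta'_\ast X$. Hence $\Theta_\ast=\Theta'_\ast$, and then $\Psi_\ast=\Phi_\ast-\Theta_\ast=\Psi'_\ast$.

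For part 2, by part~2 of Theorem~\ref{thm:CharacterizationAlgebraicAutomorphismsDirectSumLieAlgebras} it suffices to show that $\Theta_\ast+\Psi_\ast$ is invertible. The $K$-algebraicity of the result, and the fact that $\Theta\cdot\Psi$ is its associated $K$-endomorphism, then follow from Lemma~\ref{lemma:productIsAgainAMorphism}.

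To show invertibility, consider $N=\Theta_\ast^{-1}\Psi_\ast$; then $\Theta_\ast+\Psi_\ast=\Theta_\ast(\mathrm{Id}+N)$, so it is enough to show $N$ is nilpotent. Write $j=\tau(i)$. Since $\Psi_\ast(\mathfrak g_i)\subset Z(\mathfrak g)=\bigoplus_k Z(\mathfrak g_k)$ and $\pi_j\Psi_\ast(\mathfrak g_i)=0$, we get
\[N(\mathfrak g_i)\subset \Theta_\ast^{-1}\Big(\bigoplus_{k\neq j}Z(\mathfrak g_k)\Big)=\bigoplus_{k\neq i}Z(\mathfrak g_k).\]
Moreover $N$ maps $Z(\mathfrak g)$ into itself. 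So $N$ maps $\mathfrak g$ into $Z(\mathfrak g)$, and on $Z(\mathfrak g)=\bigoplus_k Z(\mathfrak g_k)$ its block matrix has zero diagonal blocks.

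Zero diagonal blocks do not by themselves force nilpotency, so this is where more care is needed. Plan: first try to argue that $\Psi_\ast$ kills $Z(\mathfrak g)\cap[\mathfrak g,\mathfrak g]$, reducing to a complement. If that fails, use non-abelianness more strongly, via the indecomposability of each $\mathfrak g_k$. A nonzero central direction of $\mathfrak g_k$ that is not in $[\mathfrak g_k,\mathfrak g_k]$ would split off a one-dimensional $K$-algebraic abelian summand. Since $\mathfrak g_k$ is non-abelian and indecomposable, this should give $Z(\mathfrak g_k)\subset[\mathfrak g_k,\mathfrak g_k]$. Verifying that the split-off summand is genuinely $K$-algebraic is the delicate point. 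Given $Z(\mathfrak g_k)\subset[\mathfrak g_k,\mathfrak g_k]$, every $z\in Z(\mathfrak g)$ is a sum of brackets, and centrality of $\Psi_\ast$ gives $\Psi_\ast([X,Y])=[\Psi_\ast X,Y]+[X,\Psi_\ast Y]+[\Psi_\ast X,\Psi_\ast Y]=0$. So $\Psi_\ast$ vanishes on $Z(\mathfrak g)$, hence $N(Z(\mathfrak g))=0$ and $N^2=0$. Therefore $\mathrm{Id}+N$ is invertible.
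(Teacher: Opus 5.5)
Your proposal is correct. Part~2 is the paper's argument: $\Theta_\ast+\Psi_\ast=\Theta_\ast(\mathrm{Id}+\Theta_\ast^{-1}\Psi_\ast)$, and $(\Theta_\ast^{-1}\Psi_\ast)^2=0$ because $Z(\mathfrak g)\subset[\mathfrak g,\mathfrak g]$ and a central endomorphism kills $[\mathfrak g,\mathfrak g]$. The off-diagonal condition is not needed there, so you can drop the block-matrix discussion.

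Part~1 is where you differ from the paper. For existence, you note that the pair built in the proof of Theorem~\ref{thm:CharacterizationAlgebraicAutomorphismsDirectSumLieAlgebras} already satisfies $\Psi_\ast(\mathfrak g_{\sigma(i)})\subset\bigoplus_{k\neq i}\mathfrak g_k$, so the off-diagonal condition comes for free. The paper instead starts from an arbitrary pair as in that theorem and moves the diagonal central part $D_\ast=\sum_i\pi_{\sigma(i)}\Psi_\ast\pi_i$ into $\Theta_\ast$. It must then re-verify that $\Theta_\ast+D_\ast$ is invertible, using the same square-zero trick. Your route is shorter; indeed $D_\ast=0$ for the paper's own construction. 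The paper's route uses only the statement of the theorem and shows that every such decomposition can be normalized.

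For uniqueness, the paper's appeal to component maps tacitly treats the permutation as fixed. You show that the nonzero space $\Phi_\ast([\mathfrak g_i,\mathfrak g_i])=[\mathfrak g_{\tau(i)},\mathfrak g_{\tau(i)}]$ lies in $\mathfrak g_{\tau(i)}\cap\mathfrak g_{\tau'(i)}$, which forces $\tau=\tau'$. That is exactly where non-abelianness enters, so your version is more complete here.

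Two small corrections. First, the inclusion $Z(\mathfrak g_k)\subset[\mathfrak g_k,\mathfrak g_k]$ that you flag as delicate is precisely Lemma~\ref{lemma:gAdmitsNoNonzeroAbelianSummandIFFCenterIsInDerived}, combined with indecomposability and non-abelianness of $\mathfrak g_k$, so simply cite it. Your heuristic of splitting off a \emph{one-dimensional} $K$-algebraic summand is not right in general. In the torus case the lemma splits off all of $\mathfrak t$, and a $K$-torus need not contain any one-dimensional $K$-subtorus.

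Second, $\Psi_\ast([X,Y])=[\Psi_\ast X,\Psi_\ast Y]=0$ holds directly, because $\Psi_\ast$ is a homomorphism with central image. The three-term expansion you wrote is not an identity for general homomorphisms; it holds here only because every term vanishes.
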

\noindent   To prove this addendum to Theorem \ref{thm:CharacterizationAlgebraicAutomorphismsDirectSumLieAlgebras}, we require the following observations.

First, we say that a Lie algebra \(\mathfrak g\) has a \textit{non-zero \(K\)-algebraic abelian summand} if we can decompose \(\mathfrak g = \mathfrak a \oplus \mathfrak h\) as a direct sum of \(K\)-algebraic Lie ideals where \(\mathfrak a\) is non-zero abelian. Now it is well-known that a finite-dimensional Lie algebra \(\mathcal L\) (over an arbitrary field) has no non-zero abelian summand if and only if \(Z(\mathcal L) \subset [\mathcal L , \mathcal L]\). To get a similar statement for \(K\)-algebraic Lie algebras in characteristic 0, we need a bit more work. 

\begin{lemma}\label{lemma:gAdmitsNoNonzeroAbelianSummandIFFCenterIsInDerived}
	Let \(\G\) be a \(K\)-group with Lie algebra \(\mathfrak g\), then \(\mathfrak g\) has no non-zero \(K\)-algebraic abelian summand if and only if \(Z(\mathfrak g) \subset [\mathfrak g, \mathfrak g]\).
\end{lemma}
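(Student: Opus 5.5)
The direction ``\(\Leftarrow\)'' is formal, and I will argue it by contraposition. Suppose \(\mathfrak g = \mathfrak a \oplus \mathfrak h\) is a decomposition into \(K\)-algebraic ideals with \(\mathfrak a \neq 0\) abelian. Then \([\mathfrak a,\mathfrak g] = [\mathfrak a,\mathfrak a]+[\mathfrak a,\mathfrak h] = 0\), so \(\mathfrak a \subset Z(\mathfrak g)\). On the other hand \([\mathfrak g,\mathfrak g] = [\mathfrak h,\mathfrak h] \subset \mathfrak h\), so \(\mathfrak a \cap [\mathfrak g,\mathfrak g] = 0\). Hence \(Z(\mathfrak g) \not\subset [\mathfrak g,\mathfrak g]\). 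This argument does not use algebraicity at all.

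For ``\(\Rightarrow\)'', assume \(Z(\mathfrak g)\not\subset[\mathfrak g,\mathfrak g]\). The task is to produce the abelian summand \emph{algebraically}, and I will work at the group level. Since \(\mathfrak g\) only sees \(\G^\circ\), I replace \(\G\) by \(\G^\circ\). I will use the standard characteristic \(0\) facts (Borel) that \([\mathfrak g,\mathfrak g] = \operatorname{Lie}((\G^\circ,\G^\circ))\) and \(Z(\mathfrak g) = \operatorname{Lie}(Z(\G^\circ))\), with both groups \(K\)-defined. Put \(\mathbf D = (\G^\circ,\G^\circ)\) and \(\mathbf Z = Z(\G^\circ)^\circ\). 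Then \(\mathbf Q = \G^\circ/\mathbf D\) is a connected abelian \(K\)-group, so \(\mathbf Q = \mathbf U_{\mathbf Q}\times \T_{\mathbf Q}\) (vector group times \(K\)-torus), and similarly \(\mathbf Z = \mathbf U_{\mathbf Z}\times\T_{\mathbf Z}\). The quotient map \(p\colon \G^\circ\to\mathbf Q\) restricts to a \(K\)-morphism \(\mathbf Z \to \mathbf Q\). It respects the unipotent and toral parts, and its image \(\bar{\mathbf Z}\) is non-trivial because \(\operatorname{Lie}(\bar{\mathbf Z}) = Z(\mathfrak g)/(Z(\mathfrak g)\cap[\mathfrak g,\mathfrak g]) \neq 0\).

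The plan is to find two objects. The first is a non-trivial connected \(K\)-subgroup \(\mathbf A\subset\mathbf Z\) on which \(p\) has finite kernel. The second is a connected \(K\)-subgroup \(\mathbf C\subset\mathbf Q\) with \(\operatorname{Lie}(\mathbf Q) = \operatorname{Lie}(p(\mathbf A))\oplus\operatorname{Lie}(\mathbf C)\). I then set \(\mathfrak a = \operatorname{Lie}(\mathbf A)\) and \(\mathfrak h = \operatorname{Lie}(p^{-1}(\mathbf C)^\circ)\). Here \(\mathfrak a\) is central, hence an abelian ideal, and \(\mathfrak h\supset[\mathfrak g,\mathfrak g]\) is automatically an ideal. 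Both are \(K\)-algebraic, and a dimension count together with \(\mathfrak a\cap\mathfrak h = 0\) gives \(\mathfrak g = \mathfrak a\oplus\mathfrak h\). The intersection is trivial because \(p_\ast\) is injective on \(\mathfrak a\) and \(p_\ast(\mathfrak a)\cap\operatorname{Lie}(\mathbf C)=0\).

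I construct these objects in two cases. If the unipotent part of \(\bar{\mathbf Z}\) is non-trivial, everything is linear algebra over \(K\), since closed connected \(K\)-subgroups of vector groups are \(K\)-subspaces. I pick a \(K\)-line in \(\operatorname{Lie}(\mathbf U_{\mathbf Z})\) not contained in \(\ker p_\ast\); \(\mathbf A\) is the corresponding one-dimensional \(K\)-subgroup. For \(\mathbf C\) I take a \(K\)-complement of its image in \(\mathbf U_{\mathbf Q}\), times \(\T_{\mathbf Q}\).

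Otherwise \(\T_{\bar{\mathbf Z}}\neq 1\), and I need complements among \(K\)-tori. This is the main obstacle. Arbitrary subspaces of \(\operatorname{Lie}(\T)\) need not be algebraic, so I need \(K\)-\emph{Poincaré reducibility} for tori: every \(K\)-subtorus \(\mathbf S\subset\T\) has a \(K\)-subtorus \(\mathbf S'\) with \(\T = \mathbf S\mathbf S'\) and \(\mathbf S\cap\mathbf S'\) finite. I would prove this via the anti-equivalence with character lattices: \(X(\T)\otimes\mathbb Q\) is a \(\mathbb Q\)-representation of a finite Galois quotient, so it is semisimple by Maschke. A Galois-stable complement then yields, after saturation, a \(K\)-subtorus with complementary Lie algebra. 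I apply this twice. First, in \(\T_{\mathbf Z}\) I take \(\mathbf A\) to be a \(K\)-complement to \((\ker p\cap \T_{\mathbf Z})^\circ\), so that \(p|_{\mathbf A}\) has finite kernel and \(p(\mathbf A) = \T_{\bar{\mathbf Z}}\). Second, in \(\T_{\mathbf Q}\) I take a \(K\)-complement \(\mathbf C_0\) to \(\T_{\bar{\mathbf Z}}\), and set \(\mathbf C = \mathbf U_{\mathbf Q}\times\mathbf C_0\). With \(\mathbf A\) and \(\mathbf C\) in hand, the general construction above completes the proof.
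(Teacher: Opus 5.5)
Your proof is correct and follows essentially the same route as the paper. Both arguments reduce to $\G^\circ$, write the abelianization $\G/[\G,\G]$ as a vector group times a $K$-torus, and split into a unipotent and a toral case; both then obtain the complementary ideal $\mathfrak h$ as the Lie algebra of the kernel of a $K$-morphism from $\G$ onto a quotient of the abelianization (your $p^{-1}(\mathbf C)^\circ$), using a complementary $K$-subtorus in the toral case.

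The only difference is in the toral case. The paper takes $\mathfrak a=\mathfrak t$, the Lie algebra of the whole central torus, and uses Borel's Lemma~14.2 to see that $\T\cap[\G,\G]$ is finite. You sidestep that lemma by taking a $K$-complement of $(\ker p\cap\T_{\mathbf Z})^\circ$ in $\T_{\mathbf Z}$, at the cost of a second application of Poincar\'e reducibility for tori, which you prove via Maschke on character lattices rather than citing it.
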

\begin{proof}
Since $\operatorname {Lie}(\HH)=\operatorname{Lie}(\HH^\circ)$ for any $K$-closed subgroup $\HH\le \G$, we may replace
each subgroup appearing in a $K$-algebraic direct sum decomposition of \(\mathfrak g\) by its
identity component. In particular, we may assume $\HH\le \G^\circ$ and hence
replace $\G$ by $\G^\circ$; thus we may assume $\G$ is connected.
 
If \(\mathfrak g = \mathfrak a \oplus \mathfrak h\) for some abelian ideal \(\mathfrak a\), then \(Z(\mathfrak g) = \mathfrak a \oplus Z(\mathfrak h)\) and \([\mathfrak g, \mathfrak g] = [\mathfrak h, \mathfrak h] \subset \mathfrak h\). Therefore \(Z(\mathfrak g) \not\subset [\mathfrak g, \mathfrak g]\), unless \(\mathfrak a = 0\).

 Conversely, suppose that \(Z(\mathfrak g) \not\subset [\mathfrak g, \mathfrak g]\).  Hence \(Z(\G)^\circ \not\subset [\G,\G]\).  In characteristic 0, the connected center \(Z(\G)^\circ\) splits as a direct product \[Z(\G)^\circ = \mathbf T \times \mathbf U\]
 with \(\mathbf T\) a \(K\)-torus and \(\mathbf U\) a unipotent \(K\)-group. Write \(\mathfrak t = \operatorname{Lie}(\mathbf T)\) and \(\mathfrak u = \operatorname{Lie}(\mathbf U)\), so that \(Z(\mathfrak g) = \mathfrak t \oplus \mathfrak u\). We distinguish two cases: either \(\mathbf T\) is trivial and hence \(Z(\G)^\circ\) is a unipotent \(K\)-group, \textit{or} \(\mathbf T\) is non-trivial, and hence \(\mathfrak t \neq 0\).
 
 \medskip 
 
\textit{Case 1:} Assume \(Z(\G)^\circ\) is unipotent. Then \(Z(\G)^\circ\) is a \emph{vector group}, i.e.\ \(Z(\G)^\circ \cong (\mathbb G_{\mathrm a})^d\) for some \(d \geq 0\). Hence, there exists a central 1-parameter \(K\)-subgroup \( \mathbb G_{\mathrm a} \cong \mathbf A\subset Z(\G)^\circ\) whose Lie algebra \(\mathfrak a\) satisfies  \(\mathfrak a \cap [\mathfrak g, \mathfrak g] = 0\). We argue that this is a \(K\)-algebraic abelian summand.
 
 Consider the abelianization \[\pi\colon \G \longrightarrow \G_{\mathrm{ab}} =  \G/{[\G,\G]},\]
 which is a surjective \(K\)-morphism of \(K\)-groups. Of course, \(\G_\mathrm{ab}\) is a connected commutative  \(K\)-group and therefore splits as a direct product \[\G_\mathrm{ab} = \mathbf T_\mathrm{ab} \times \mathbf U_\mathrm{ab}\] with \(\mathbf T_\mathrm{ab}\) a \(K\)-torus and \(\mathbf U_\mathrm{ab}\cong (\mathbb G_{\mathrm a})^m\) the unipotent radical of \(\mathbf G_\mathrm{ab}\). As \(0 \neq \pi(\mathbf A) \subset \mathbf{U}_\mathrm{ab}\) is unipotent, there exists a surjective \(K\)-morphism \(f\colon \mathbf U_\mathrm{ab} \to \mathbb G_{\mathrm a}\) with \(f(\pi(\mathbf A)) = {\mathbb G}_{\mathrm{a}}\).
 
 Let \(\Phi\colon \G \to \mathbb G_{\mathrm a}\) be the surjective \(K\)-morphism given by the composition 
 \[
\Phi \colon \: \G \overset{\pi}{\longrightarrow} \G_{\mathrm{ab}} \overset{\text{proj}}{\longrightarrow} \U_\mathrm{ab}  \overset{f}{\longrightarrow} \mathbb G_{\mathrm a}.
	\]
	Let \(\HH = \operatorname{Ker} (\Phi)\), a \(K\)-closed subgroup with Lie algebra \(\mathfrak h := \operatorname{Ker} (\Phi_\ast)\). Then \(\mathfrak h\) has codimension 1. Moreover, as \(\Phi|_{\mathbf A}\colon \mathbf  A \to \mathbb G_{\mathrm{a}}\) is a \(K\)-isomorphism, we have  \(\mathfrak a \cap \mathfrak h = 0\). Therefore, \(\mathfrak g\) is a direct sum of \(K\)-algebraic ideals 
	\(
	\mathfrak g = \mathfrak a \oplus \mathfrak h 
	\),  with \(
	\mathfrak a\) non-zero abelian.
	
	\medskip 
	
	\textit{Case 2:} Assume \(\mathfrak t \neq 0\). We will prove that \(\mathfrak t\) is a \(K\)-algebraic abelian summand of \(\mathfrak g\).  	
By \cite[Lemma 14.2]{Bor91}, we have that \(\T \cap [\G,\G]\) is a finite group. Hence
\[
\mathfrak t \cap [\mathfrak g,\mathfrak g] = \operatorname{Lie}(\T \cap [\G,\G]) =  0.
\] 

As before, consider the abelianization \(\pi\colon \G \to \G_{\mathrm{ab}} = \T_{\mathrm{ab}} \times \U_{\mathrm{ab}}\). Note that \(\mathbf S:= \pi(\T) \subset \T_{\mathrm{ab}}\) is a \(K\)-subtorus. Hence  
\[
\pi|_{\T} \colon \T \longrightarrow \mathbf S
\]
is an \emph{isogeny}, i.e.\ \(\operatorname{Ker} (\pi|_\T) = \T \cap [\G,\G]\) is finite, 
	 and by e.g.\ \cite[Proof of Symmetry Lemma 2.3]{GG09} there exists a \emph{complementary} \(K\)-subtorus \(\mathbf S' \subset \T_{\mathrm{ab}}\), i.e.\ such that 
	\[
	\T_{\mathrm{ab}} = \mathbf S \cdot \mathbf S' \quad \text{with} \quad \mathbf S \cap \mathbf S' \text{ finite.}
	\] 
	Define the quotient map
	\[
	q\colon \T_{\mathrm{ab}} \longrightarrow \mathbf Q:= \T_{\mathrm{ab}} / \mathbf{S}'.
	\] 
	Then \(q|_{\mathbf S} \colon \mathbf S \to \mathbf Q\) is an isogeny with kernel \(\mathbf S \cap \mathbf S'\).

Finally, let \(\Psi\colon \G \to \mathbf Q\) be the surjective \(K\)-morphism defined by the composition 
\[
\Psi\colon \: \G \overset{\pi}\longrightarrow \G_{\mathrm{ab}} \overset{\mathrm{proj}}\longrightarrow \T_{\mathrm{ab}} \overset{q}\longrightarrow \mathbf Q.
\]
Let \(\HH = \operatorname{Ker} (\Psi )\), a \(K\)-closed subgroup with Lie algebra \(\mathfrak h := \operatorname{Ker} (\Psi_\ast)\). Then \(\Psi|_\T \colon \T \to \mathbf Q\) is a \(K\)-isogeny, so its differential \((\Psi|_\T)_\ast\) is an isomorphism, hence \(\mathfrak t \cap \mathfrak h = 0\) and 
\[
\dim \mathfrak g =  \dim \mathfrak h + \dim \operatorname{Lie}(\mathbf Q) = \dim \mathfrak h + \dim \mathfrak t.
\] 
 Therefore, \(\mathfrak g\) is a direct sum of \(K\)-algebraic ideals \(\mathfrak g = \mathfrak t \oplus \mathfrak h\) with \(\mathfrak t\) non-zero abelian. 
\end{proof}

Finally, we record the following elementary observation; the proof is straightforward.
\begin{lemma}[Component morphisms]\label{lemma:componentMorphisms}
	Let \(\G = \G_1 \times \G_2 \times \cdots \times \G_r\) be a direct product of \(K\)-groups. Let \(\Phi\) be a \(K\)-endomorphism of \(\G\). Define \(K\)-morphisms \[\Phi_{ji}= \pi_j \circ \Phi \circ \iota_i\colon \G_i \longrightarrow \G_j,\]  where \(\iota_i\colon \G_i \hookrightarrow \G\) is the canonical inclusion and \(\pi_j \colon \G \to \G_j\) the projection. Then \(\Phi\) is defined by the \emph{matrix of maps} \(\Phi_{ji}\) as 
	\[
	\Phi(x_1,x_2,\ldots,x_r) = \left( \prod_{i=1}^r \Phi_{1i}(x_i), \prod_{i=1}^r \Phi_{2i}(x_i), \ldots, \prod_{i=1}^r \Phi_{ri}(x_i)\right).
	\]
	Moreover, for a fixed \(j\), the closed subgroups \(\Phi_{ji}(\G_i) \leq \G_j\) commute pairwise.
	
	Conversely, any family of \(K\)-morphisms \(\Phi_{ji}\colon \G_i \to \G_j\) with commuting images for every fixed \(j\) determines a unique \(K\)-endomorphism \(\Phi \colon \G \to \G\) by the same formula.
\end{lemma}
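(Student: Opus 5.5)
The statement is Lemma~\ref{lemma:componentMorphisms}, so the plan is a direct computation using only the group structure of the product and the fact that the canonical inclusions and projections are \(K\)-morphisms.

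\textbf{Forward direction.} Each \(\Phi_{ji}=\pi_j\circ\Phi\circ\iota_i\) is a composition of \(K\)-morphisms, so it is itself a \(K\)-morphism. Write \(x=(x_1,\ldots,x_r)=\prod_{i=1}^r \iota_i(x_i)\). The factors \(\iota_i(x_i)\) commute pairwise, so the product does not depend on the order. Applying the homomorphism \(\Phi\) gives \(\Phi(x)=\prod_i \Phi(\iota_i(x_i))\). Then projecting with the homomorphism \(\pi_j\) yields
\[
\pi_j\Phi(x)=\prod_i \Phi_{ji}(x_i),
\]
which is the displayed formula.

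\textbf{Commuting images.} Fix \(j\) and indices \(i\neq k\). The subgroups \(\iota_i(\G_i)\) and \(\iota_k(\G_k)\) commute elementwise in \(\G\). Their images under the homomorphism \(\pi_j\circ\Phi\) therefore also commute elementwise. Hence \(\Phi_{ji}(\G_i)\) and \(\Phi_{jk}(\G_k)\) commute. These images are closed, since images of \(K\)-morphisms of algebraic groups are closed subgroups.

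\textbf{Converse.} Suppose we are given \(K\)-morphisms \(\Phi_{ji}\) whose images commute for each fixed \(j\). Define \(\Phi\) by the same formula.
\begin{itemize}
\item \emph{\(\Phi\) is a \(K\)-morphism of varieties.} Its coordinates are products of the polynomial maps \(\Phi_{ji}\circ\mathrm{pr}_i\), with coefficients in \(K\).
\item \emph{\(\Phi\) is a homomorphism.} This is the only step needing care. We need \(\prod_i \Phi_{ji}(x_iy_i)=\prod_i\Phi_{ji}(x_i)\prod_i\Phi_{ji}(y_i)\). Expanding \(\Phi_{ji}(x_iy_i)=\Phi_{ji}(x_i)\Phi_{ji}(y_i)\), the required equality follows by reordering the factors, which is allowed because factors from different \(i\) commute by hypothesis.
\item \emph{The matrix of \(\Phi\) is the given family.} Restricting \(\Phi\) to \(\iota_i(\G_i)\) and projecting gives \(\pi_j\Phi\iota_i=\Phi_{ji}\), because \(\Phi_{jk}(e)=e\) for \(k\neq i\).
\item \emph{Uniqueness.} Any \(K\)-endomorphism with these components is given by the same formula, by the forward direction.
\end{itemize}

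No real obstacle is expected; the only point requiring attention is the reordering argument in the homomorphism check.
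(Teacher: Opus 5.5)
Your proposal is correct and is exactly the routine verification the paper has in mind; the paper omits the proof as ``straightforward''. You correctly identify the one step needing care, the reordering of factors in the homomorphism check using the commuting-images hypothesis, and you also cover the recovery of the components and uniqueness.
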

	Analogous results in the categories of abstract groups, Lie groups, and (\(K\)-algebraic) Lie algebras hold verbatim.
\begin{proof}[Proof of Proposition \ref{prop:addendumtothm:CharacterizationAlgebraicAutomorphismsDirectSumLieAlgebras}]
\emph{\ref{item:prop:addendumtothm:CharacterizationAlgebraicAutomorphismsDirectSumLieAlgebras1}}.	We start by proving existence. Let \(\Theta_\ast\) and \(
	\Psi_\ast\) be given as in Theorem~\ref{thm:CharacterizationAlgebraicAutomorphismsDirectSumLieAlgebras}. 	
	By the Lie algebra analog of Lemma \ref{lemma:componentMorphisms}, there exist \(K\)-algebraic Lie algebra isomorphisms \((\Theta_i)_\ast\colon \mathfrak g_i \to \mathfrak g_{\sigma(i)}\) and \(K\)-algebraic central Lie algebra morphisms \((\Psi_{ji})_\ast \colon \mathfrak g_i \to \mathfrak g_j\) which determine the component maps of \(\Theta_\ast\) and \(\Psi_\ast\). 
	
	Split \(\Psi_\ast\) into its \emph{diagonal central part} and \emph{off-diagonal central part} relative to the permutation \(\sigma\):
	\[
	D_\ast := \sum_{i=1}^r \pi_{\sigma(i)} \Psi_\ast \pi_i, \qquad N_\ast := \Psi_\ast - D_\ast. 
	\]
	Then define \[
	\widetilde{\Theta}_\ast := \Theta_\ast + D_\ast, \qquad \widetilde{\Psi}_\ast := N_\ast. 
	\]
	By construction, \[
	\Phi_\ast = \widetilde{\Theta}_\ast + \widetilde{\Psi}_\ast, \qquad  \widetilde{\Theta}_\ast(\mathfrak g_i) \subset \mathfrak g_{\sigma(i)}, \qquad \pi_{\sigma(i)} \big(\widetilde{\Psi}_\ast(\mathfrak g_i)\big)  =0.
	\]
	Moreover, \(\widetilde \Psi_\ast\) remains central. Since \(D_\ast\) is central, Lemma~\ref{lemma:productIsAgainAMorphism} implies that \(	\widetilde{\Theta}_\ast = \Theta_\ast + D_\ast\) is a \(K\)-algebraic Lie algebra endomorphism. 
	By Lemma~\ref{lemma:componentMorphisms}, so is \(\widetilde{\Psi}_\ast\).

It remains to argue that \(\widetilde{\Theta}_\ast\) is still an automorphism.
Define the central endomorphism \[
\Delta_\ast := \Theta_\ast^{-1} D_\ast.
\]
Then \(\widetilde{\Theta}_\ast = \Theta_\ast \big( \mathrm{Id} + \Delta_\ast\big).\) Since each \(\mathfrak g_i\) is non-abelian and \(K\)-algebraically indecomposable, we have \(Z(\mathfrak g_i) \subset [\mathfrak g_i, \mathfrak g_i]\) for each \(i\), hence \(Z(\mathfrak g)\subset [\mathfrak g,\mathfrak g]\). Since any central Lie algebra endomorphism kills \([\mathfrak g,\mathfrak g]\), we have that \(\Delta_\ast^2 = 0\). Hence \(\mathrm{Id} + \Delta_\ast\) is an automorphism with inverse \(\mathrm{Id} - \Delta_\ast\). Since \(\Theta_\ast\) is an automorphism, so is \(\widetilde{\Theta}_\ast = \Theta_\ast \big( \mathrm{Id} + \Delta_\ast\big).\)

\medskip 

Uniqueness is immediate from the component maps: for each \(i\), the condition \(\pi_{\sigma(i)}(\Psi_\ast(\mathfrak g_i))=0\) forces \((\Phi_{\sigma(i)i})_\ast =(\Theta_i)_\ast\), while \((\Phi_{ji})_\ast=(\Psi_{ji})_\ast\) for \(j\neq \sigma(i)\), uniquely determining \(\Theta_\ast\) and \(\Psi_\ast\).

\medskip 

\noindent \emph{\ref{item:prop:addendumtothm:CharacterizationAlgebraicAutomorphismsDirectSumLieAlgebras2}}. As above, we have that \(Z(\mathfrak g) \subset [\mathfrak g, \mathfrak g]\), 
let \(\Delta_\ast := \Theta_\ast^{-1}\Psi_\ast\) and write \(\Phi_\ast = \Theta_\ast (\mathrm{Id} + \Delta_\ast)\) with \(\Delta_\ast^2 =0\). Then \(
(\mathrm{Id} + \Delta_\ast)^{-1} = \mathrm{Id} - \Delta_\ast
\), and \(\Phi_\ast\) is an automorphism with inverse \(
\Phi_\ast^{-1} =(\mathrm{Id} - \Delta_\ast) \Theta_\ast^{-1}.
\) It is \(K\)-algebraic with corresponding \(K\)-morphism \(\Theta \cdot \Psi\) by Lemma~\ref{lemma:productIsAgainAMorphism}. 
\end{proof}

\section{Linear algebraic groups}
As before, fix a subfield \(K \subset \mathbb C\). A \(K\)-group \(\G\) is called \textit{\(K\)-indecomposable} if it cannot be written as a direct product of two non-trivial closed \(K\)-subgroups. Similarly, we say \(\G\) admits a non-trivial abelian \(K\)-factor if \(\G = \mathbf A \times \HH\) for some closed \(K\)-subgroups \(\mathbf A\) and \(\HH\) with \(\mathbf A\) non-trivial abelian.

The goal of this section is to prove the following theorem and proposition using their Lie algebraic analogs Theorem~\ref{thm:CharacterizationAlgebraicAutomorphismsDirectSumLieAlgebras} and Proposition~\ref{prop:addendumtothm:CharacterizationAlgebraicAutomorphismsDirectSumLieAlgebras}, which together suffice to prove Theorem~\ref{thm:B-LAG}.

\begin{theorem} \label{thm:characterizationOfAutomorphismsOfDirectProductOfLinearAlgebraicGroups} Let \(\G = \G_1 \times \G_2 \times \cdots \times \G_r\) 
	be a direct product of connected \(K\)-groups that are \(K\)-indecomposable and have torsion-free centers \(Z(\G_i)(\mathbb{C})\), then every \(K\)-automorphism \(\Phi\) of \(\G\) factorizes as \[\Phi = \Theta \cdot \Psi,\]
	where:
	\begin{enumerate}[label=\emph{(\arabic*)}, ref={(\arabic*)}]
		\item \(\Theta\) is a \(K\)-automorphism of \(\G\) mapping each factor \(\G_i\) onto an isomorphic factor \(\G_j\), and \label{item:thmcharacterizationOfAutomorphismsOfDirectProductOfLinearAlgebraicGroups(1)}
		\item \(\Psi\) is a central \(K\)-endomorphism of \(\G\). \label{item:thmcharacterizationOfAutomorphismsOfDirectProductOfLinearAlgebraicGroups(2)}
	\end{enumerate}
\end{theorem}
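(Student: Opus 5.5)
The plan is to pass to Lie algebras, apply Theorem~\ref{thm:CharacterizationAlgebraicAutomorphismsDirectSumLieAlgebras}, and lift the resulting decomposition back to $\G$. Let $\Phi$ be a $K$-automorphism of $\G$. Then $\Phi_\ast$ is a $K$-algebraic automorphism of $\mathfrak g=\bigoplus_i\mathfrak g_i$, with $K$-algebraic inverse $(\Phi^{-1})_\ast$.

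\textbf{Step 1: indecomposability passes to Lie algebras.} We first show that each $\mathfrak g_i$ is $K$-algebraically indecomposable. Suppose $\mathfrak g_i=\operatorname{Lie}(\mathbf A)\oplus\operatorname{Lie}(\mathbf B)$ with $\mathbf A,\mathbf B$ closed $K$-subgroups, which we may take connected. By Theorem~\ref{thm:LieAlgebraCorrespondence}, $\mathbf A$ and $\mathbf B$ are normal in $\G_i$ (normalizers are closed $K$-subgroups containing them). Since $\operatorname{Lie}(\mathbf A\cap\mathbf B)=0$, the intersection $\mathbf A\cap\mathbf B$ is finite. It is central in the connected group $\G_i$: its elements are normalized by $\G_i$ and have finite orbits, so connectedness fixes them. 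Torsion-freeness of $Z(\G_i)(\mathbb C)$ then forces $\mathbf A\cap\mathbf B$ to be trivial. Also $[\mathbf A,\mathbf B]\subset\mathbf A\cap\mathbf B=\{e\}$, so $\mathbf A$ and $\mathbf B$ commute. The product map $\mathbf A\times\mathbf B\to\G_i$ is therefore an injective $K$-morphism. By dimension count and connectedness it is bijective, hence a $K$-isomorphism in characteristic~$0$. This contradicts $K$-indecomposability of $\G_i$ unless one factor is trivial.

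\textbf{Step 2: decompose on the Lie algebra and lift.} Theorem~\ref{thm:CharacterizationAlgebraicAutomorphismsDirectSumLieAlgebras} now gives $\Phi_\ast=\Theta_\ast+\Psi_\ast$. Here $\Theta_\ast=\Theta'_\ast$ for a $K$-endomorphism $\Theta'$ with $\Theta'_\ast(\mathfrak g_i)=\mathfrak g_{\sigma^{-1}(i)}$, and $\Psi_\ast=\Psi'_\ast$ is central for a $K$-endomorphism $\Psi'$. By Lemma~\ref{lemma:productIsAgainAMorphism}, $\Psi'$ is central and $\Theta'\cdot\Psi'$ is a $K$-endomorphism with differential $\Phi_\ast$. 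A connected group's morphisms are determined by their differentials in characteristic~$0$ (compare graphs via Theorem~\ref{thm:LieAlgebraCorrespondence}). Hence $\Phi=\Theta'\cdot\Psi'$. Set $\Theta=\Theta'$ and $\Psi=\Psi'$. Since $\Theta_\ast(\mathfrak g_i)=\mathfrak g_{\sigma^{-1}(i)}$, Theorem~\ref{thm:LieAlgebraCorrespondence} gives $\Theta(\G_i)=\G_{\sigma^{-1}(i)}$.

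\textbf{Step 3 (main obstacle): $\Theta$ is a $K$-automorphism.} It remains to show that $\Theta$ is a $K$-automorphism, which is where torsion-freeness of centres is used again. Surjectivity is immediate from Step~2. For injectivity, $\Theta_\ast$ is bijective, so $\operatorname{Ker}\Theta$ is finite and normal in the connected group $\G$, hence central. It is therefore a finite subgroup of $Z(\G)(\mathbb C)=\prod_i Z(\G_i)(\mathbb C)$, which is torsion-free, so $\operatorname{Ker}\Theta$ is trivial. A bijective $K$-morphism is a $K$-isomorphism by \cite[Proposition 6.13]{Bor91}. The same argument applied to the restriction $\Theta|_{\G_i}\colon\G_i\to\G_{\sigma^{-1}(i)}$ shows that these factors are $K$-isomorphic.

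The delicate points are Step~1 and the kernel argument in Step~3. In both places one must check that a finite normal subgroup of a connected group is central, and then invoke the torsion-free centre hypothesis to conclude it is trivial.
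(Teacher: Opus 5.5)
Your proof is correct and is essentially the paper's argument. You pass to $\mathfrak g$, apply Theorem~\ref{thm:CharacterizationAlgebraicAutomorphismsDirectSumLieAlgebras}, and lift back using Lemma~\ref{lemma:productIsAgainAMorphism} and the correspondence theorem; the differences are that you reprove inline the needed directions of Propositions~\ref{prop:K-indecomposableGroupsIFFK-algebraicallyIndecomposableLiealgebras} and~\ref{prop:K-automorphismOfLieAlgebraIFFK-automorphismOfGroup}, and that your graph argument gives $\Phi=\Theta\cdot\Psi$ without first showing $\Theta\cdot\Psi$ is an automorphism.

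The one soft spot is the normality of $\mathbf A,\mathbf B$ in Step~1. Closedness of normalizers is not enough; you need the characteristic-$0$ fact that a connected subgroup whose Lie algebra is an ideal is normal (e.g.\ via $\operatorname{Lie}(N_{\G_i}(\mathbf A))=\mathfrak n_{\mathfrak g_i}(\operatorname{Lie}\mathbf A)$), which the paper also uses tacitly.
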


\begin{proposition}\label{prop:addendumtothm:characterizationOfAutomorphismsOfDirectProductOfLinearAlgebraicGroups}
	Let \(\G = \prod_{i=1}^r \G_i\) be as in Theorem \ref{thm:characterizationOfAutomorphismsOfDirectProductOfLinearAlgebraicGroups}, and let \(\pi_i\) denote the projections onto \(\G_i\). Assume moreover that the \(K\)-indecomposable factors \(\G_i\) are all non-abelian. Then:
	\begin{enumerate}
		\item	Any \(K\)-automorphism \(\Phi\) of \(\G\) uniquely factorizes as \(\Phi = \Theta \cdot \Psi\) for some \(K\)-endomorphisms \(\Theta, \Psi\) satisfying \emph{\ref{item:thmcharacterizationOfAutomorphismsOfDirectProductOfLinearAlgebraicGroups(1)}} and \emph{\ref{item:thmcharacterizationOfAutomorphismsOfDirectProductOfLinearAlgebraicGroups(2)}} above, and the additional condition that \(\pi_j\big( \Psi(\G_i) \big) = \{e\}\) whenever \(\Theta(\G_i) = \G_j\). 
		\item Conversely, any pair of \(K\)-endomorphisms \(\Theta,\Psi\) of \(\G\) satisfying the above three conditions defines a \(K\)-automorphism \(\Phi:=\Theta\cdot \Psi\). 
	\end{enumerate}
\end{proposition}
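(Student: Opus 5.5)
The plan is to transfer Proposition~\ref{prop:addendumtothm:CharacterizationAlgebraicAutomorphismsDirectSumLieAlgebras} from Lie algebras to groups, using Theorem~\ref{thm:characterizationOfAutomorphismsOfDirectProductOfLinearAlgebraicGroups} (taken as already established) and the Correspondence Theorem~\ref{thm:LieAlgebraCorrespondence}. Throughout I use two facts about connected \(K\)-groups. First, a \(K\)-endomorphism is determined by its differential; this follows from Theorem~\ref{thm:LieAlgebraCorrespondence} applied to graphs in \(\G\times\G\). Second, a closed connected subgroup is trivial if and only if its Lie algebra is zero.

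\emph{Preliminary step.} Each factor \(\G_i\) is connected, non-abelian and \(K\)-indecomposable. I first check that \(\mathfrak g_i\) is \(K\)-algebraically indecomposable. Suppose \(\mathfrak g_i=\mathfrak a\oplus\mathfrak b\) with \(\mathfrak a=\operatorname{Lie}(\mathbf A)\) and \(\mathfrak b=\operatorname{Lie}(\mathbf B)\) for connected \(K\)-subgroups \(\mathbf A,\mathbf B\). By Theorem~\ref{thm:LieAlgebraCorrespondence}, \(\mathbf A\cap\mathbf B\) is finite, and \(\mathbf A,\mathbf B\) are normal because their Lie algebras are ideals. Hence \([\mathbf A,\mathbf B]\subset \mathbf A\cap\mathbf B\) is connected and finite, so trivial. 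Then \(\mathbf A\cap\mathbf B\) is a finite central subgroup of \(\G_i\), so it is trivial by torsion-freeness of \(Z(\G_i)(\mathbb C)\). A dimension count gives \(\mathbf A\mathbf B=\G_i\), which contradicts indecomposability. In particular \(\mathfrak g_i\) is non-abelian, since \(\G_i\) is connected and non-abelian. Lemma~\ref{lemma:gAdmitsNoNonzeroAbelianSummandIFFCenterIsInDerived} then gives \(Z(\mathfrak g)\subset[\mathfrak g,\mathfrak g]\).

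\emph{Existence.} Let \(\Phi\) be a \(K\)-automorphism. Apply Proposition~\ref{prop:addendumtothm:CharacterizationAlgebraicAutomorphismsDirectSumLieAlgebras} to \(\Phi_\ast\) to get \(\Phi_\ast=\widetilde\Theta_\ast+\widetilde\Psi_\ast\), where both maps are \(K\)-algebraic with corresponding \(K\)-endomorphisms \(\Theta,\Psi\), \(\widetilde\Theta_\ast(\mathfrak g_i)=\mathfrak g_{\sigma(i)}\), \(\widetilde\Psi_\ast\) is central, and \(\pi_{\sigma(i)}\widetilde\Psi_\ast(\mathfrak g_i)=0\). By Lemma~\ref{lemma:productIsAgainAMorphism}, \(\Psi\) is central and \((\Theta\cdot\Psi)_\ast=\Phi_\ast\), so \(\Phi=\Theta\cdot\Psi\). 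The connected subgroups \(\Theta(\G_i)\) and \(\pi_{\sigma(i)}\Psi(\G_i)\) have Lie algebras \(\mathfrak g_{\sigma(i)}\) and \(0\), so \(\Theta(\G_i)=\G_{\sigma(i)}\) and \(\pi_{\sigma(i)}(\Psi(\G_i))=\{e\}\).

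\emph{Main obstacle: \(\Theta\) is a \(K\)-automorphism.} Its differential is bijective, so \(\operatorname{Ker}\Theta\) is finite and normal in a connected group, hence central. By torsion-freeness of \(Z(\G)(\mathbb C)=\prod_i Z(\G_i)(\mathbb C)\), the kernel is trivial. Surjectivity follows because \(\Theta(\G)\) is closed and connected of full dimension. Since we are in characteristic \(0\), a bijective \(K\)-morphism is a \(K\)-isomorphism. Each restriction \(\Theta|_{\G_i}\colon\G_i\to\G_{\sigma(i)}\) is then a \(K\)-isomorphism, which gives condition \ref{item:thmcharacterizationOfAutomorphismsOfDirectProductOfLinearAlgebraicGroups(1)}.

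\emph{Uniqueness.} Suppose \(\Phi=\Theta\cdot\Psi=\Theta'\cdot\Psi'\), both satisfying the conditions. The permutation attached to \(\Theta\) must equal the Krull--Schmidt permutation of \(\Phi_\ast\). This is because \(\Phi_\ast(\mathfrak g_i)\subset\mathfrak g_{\sigma(i)}\oplus Z(\mathfrak g)\) and \([\mathfrak g_{\sigma(i)},\mathfrak g_{\sigma(i)}]\neq0\), so uniqueness of \(\sigma\) in Theorem~\ref{thm:KalgebraicKrullSchmidt} applies. On differentials, by Lemma~\ref{lemma:productIsAgainAMorphism}, \(\Theta_\ast+\Psi_\ast=\Theta'_\ast+\Psi'_\ast\) with the same off-diagonal condition. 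The uniqueness part of Proposition~\ref{prop:addendumtothm:CharacterizationAlgebraicAutomorphismsDirectSumLieAlgebras} then gives \(\Theta_\ast=\Theta'_\ast\) and \(\Psi_\ast=\Psi'_\ast\), so \(\Theta=\Theta'\) and \(\Psi=\Psi'\) by the first fact above.

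\emph{Converse.} Given \(\Theta,\Psi\) satisfying the three conditions, \(\Theta\cdot\Psi\) is a \(K\)-endomorphism with differential \(\Theta_\ast+\Psi_\ast\) by Lemma~\ref{lemma:productIsAgainAMorphism}. Here \(\Psi_\ast\) is central because \(\Psi(\G)\subset Z(\G)\), and the differential conditions follow by differentiating the group conditions. Proposition~\ref{prop:addendumtothm:CharacterizationAlgebraicAutomorphismsDirectSumLieAlgebras}(2) shows this differential is bijective. The kernel/image argument from the main-obstacle step then shows \(\Theta\cdot\Psi\) is a bijective \(K\)-endomorphism, hence a \(K\)-automorphism.
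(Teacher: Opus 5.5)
Your proposal is correct and follows the paper's route. The paper proves this by transferring Proposition~\ref{prop:addendumtothm:CharacterizationAlgebraicAutomorphismsDirectSumLieAlgebras} to groups exactly as in the proof of Theorem~\ref{thm:characterizationOfAutomorphismsOfDirectProductOfLinearAlgebraicGroups}, via Lemma~\ref{lemma:productIsAgainAMorphism}, the Correspondence Theorem~\ref{thm:LieAlgebraCorrespondence}, and Propositions~\ref{prop:K-indecomposableGroupsIFFK-algebraicallyIndecomposableLiealgebras} and~\ref{prop:K-automorphismOfLieAlgebraIFFK-automorphismOfGroup}; your preliminary and ``main obstacle'' steps just re-derive those two propositions inline. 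Your explicit check that the permutation is forced by $\Phi$ (via the non-zero derived algebras) is a detail the paper leaves implicit in its uniqueness claim, and it is correct.
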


The rest of this section is devoted to the proofs of Theorem~\ref{thm:characterizationOfAutomorphismsOfDirectProductOfLinearAlgebraicGroups} and Proposition~\ref{prop:addendumtothm:characterizationOfAutomorphismsOfDirectProductOfLinearAlgebraicGroups}.  
It turns out that if \(\G\) is connected and has torsion-free center \(Z(\G)(\mathbb C)\), then \(\G\) is \(K\)-indecomposable if and only if its Lie algebra \(\mathfrak g\) is \(K\)-algebraically indecomposable, with a similar correspondence holding for admitting a \(K\)-abelian factor or summand. This is summarized in the following result. 
\begin{proposition}\label{prop:K-indecomposableGroupsIFFK-algebraicallyIndecomposableLiealgebras}
	Let \(\G\) be a connected \(K\)-group with torsion-free center and Lie algebra \(\mathfrak g\), then \(\G\) is \(K\)-indecomposable if and only if \(\mathfrak g\) is \(K\)-algebraically indecomposable. 
	Moreover, the following are equivalent:
	\begin{enumerate}[label=\emph{(\arabic*)}, ref={(\arabic*)}]
		\item \(\G\) has no non-trivial abelian \(K\)-factor, \label{item:K-indecomposableGroupsIFFK-algebraicallyIndecomposableLiealgebras1}
		\item \(\mathfrak g\) has no non-zero \(K\)-algebraic abelian  summand, \label{item:K-indecomposableGroupsIFFK-algebraicallyIndecomposableLiealgebras2}
		\item \(Z(\G)^\circ \subset [\G,\G]\), and \label{item:K-indecomposableGroupsIFFK-algebraicallyIndecomposableLiealgebras3}
		\item \(Z(\mathfrak g) \subset [\mathfrak g, \mathfrak g]\). \label{item:K-indecomposableGroupsIFFK-algebraicallyIndecomposableLiealgebras4}
	\end{enumerate}
\end{proposition}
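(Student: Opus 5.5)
The plan is to prove the two parts separately, using Lemma~\ref{lemma:gAdmitsNoNonzeroAbelianSummandIFFCenterIsInDerived} to get \ref{item:K-indecomposableGroupsIFFK-algebraicallyIndecomposableLiealgebras2}$\iff$\ref{item:K-indecomposableGroupsIFFK-algebraicallyIndecomposableLiealgebras4}. The key tool throughout is the correspondence of Theorem~\ref{thm:LieAlgebraCorrespondence}, together with one fact about connected groups with torsion-free center. If $\G = \mathbf A \times \HH$ with $\G$ connected, then both factors are images of $\G$ under the projections, so they are connected. Hence a $K$-decomposition of $\G$ gives a $K$-algebraic decomposition $\mathfrak g = \operatorname{Lie}(\mathbf A)\oplus\operatorname{Lie}(\HH)$ into ideals, with both summands non-zero because the factors are non-trivial and connected. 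This already proves "$\mathfrak g$ $K$-algebraically indecomposable $\Rightarrow$ $\G$ $K$-indecomposable", and likewise \ref{item:K-indecomposableGroupsIFFK-algebraicallyIndecomposableLiealgebras2}$\Rightarrow$\ref{item:K-indecomposableGroupsIFFK-algebraicallyIndecomposableLiealgebras1}.

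For the converse, suppose $\mathfrak g = \mathfrak a\oplus\mathfrak b$ with $\mathfrak a=\operatorname{Lie}(\mathbf A)$ and $\mathfrak b=\operatorname{Lie}(\mathbf B)$ non-zero $K$-algebraic ideals. After replacing $\mathbf A$ and $\mathbf B$ by their identity components, they are connected $K$-subgroups. Since their Lie algebras are ideals, they are normal in $\G$ (connected $\G$, characteristic $0$). Their commutator $[\mathbf A,\mathbf B]$ is connected with Lie algebra inside $[\mathfrak a,\mathfrak b]\subset \mathfrak a\cap\mathfrak b=0$, so $\mathbf A$ and $\mathbf B$ commute. The intersection $\mathbf A\cap\mathbf B$ has Lie algebra $\mathfrak a\cap\mathfrak b=0$, so it is finite. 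It is also central: it commutes with $\mathbf A$ and with $\mathbf B$, and $\mathbf A\mathbf B$ is a closed connected subgroup with Lie algebra $\mathfrak g$, hence $\mathbf A\mathbf B=\G$. A finite subgroup of the torsion-free group $Z(\G)(\mathbb C)$ is trivial. Therefore the multiplication map $\mathbf A\times\mathbf B\to\G$ is a bijective $K$-morphism, hence a $K$-isomorphism by \cite[Proposition 6.13]{Bor91}, giving $\G=\mathbf A\times\mathbf B$ with both factors non-trivial. The same argument with $\mathfrak a$ abelian gives \ref{item:K-indecomposableGroupsIFFK-algebraicallyIndecomposableLiealgebras1}$\Rightarrow$\ref{item:K-indecomposableGroupsIFFK-algebraicallyIndecomposableLiealgebras2}: here $\mathbf A$ is connected with abelian Lie algebra, hence abelian.

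Finally I show \ref{item:K-indecomposableGroupsIFFK-algebraicallyIndecomposableLiealgebras3}$\iff$\ref{item:K-indecomposableGroupsIFFK-algebraicallyIndecomposableLiealgebras4}. In characteristic $0$ and with $\G$ connected, $\operatorname{Lie}(Z(\G))=Z(\mathfrak g)$, and $[\G,\G]$ is a closed connected $K$-subgroup with Lie algebra $[\mathfrak g,\mathfrak g]$. Then part 2 of Theorem~\ref{thm:LieAlgebraCorrespondence} applied to the connected group $Z(\G)^\circ$ gives $Z(\G)^\circ\subset[\G,\G]\iff Z(\mathfrak g)\subset[\mathfrak g,\mathfrak g]$. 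Combined with Lemma~\ref{lemma:gAdmitsNoNonzeroAbelianSummandIFFCenterIsInDerived}, this closes the cycle of equivalences.

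The main obstacle is the converse step: lifting a Lie algebra decomposition to a group decomposition. The delicate points are justifying that $\mathbf A$ and $\mathbf B$ are normal and commute, and that the product map is an isomorphism of $K$-groups rather than merely an isogeny. Torsion-freeness of the center is exactly what kills the finite central kernel, which is why the hypothesis is needed. The standard facts $\operatorname{Lie}(Z(\G))=Z(\mathfrak g)$ and $\operatorname{Lie}[\G,\G]=[\mathfrak g,\mathfrak g]$ I would cite from \cite{Bor91}.
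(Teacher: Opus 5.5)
Your proposal is correct and follows essentially the same route as the paper. You lift a $K$-algebraic decomposition $\mathfrak g=\mathfrak a\oplus\mathfrak b$ to the multiplication map of the corresponding connected, normal, commuting $K$-subgroups, use torsion-freeness of the center to kill the finite kernel so that a bijective $K$-morphism gives a $K$-isomorphism, get (2)$\Leftrightarrow$(4) from the lemma, and get (3)$\Leftrightarrow$(4) from the correspondence theorem. The only difference is that you show $\mathbf A\cap\mathbf B$ is central in $\G$ directly, whereas the paper checks that the source $\HH\times\mathbf L$ has torsion-free center and then invokes its proposition on $K$-morphisms with bijective differential; you are also more explicit about passing to identity components, normality, and non-triviality of the factors.
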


\noindent  Before we prove Proposition~\ref{prop:K-indecomposableGroupsIFFK-algebraicallyIndecomposableLiealgebras}, we  determine which \(K\)-groups have torsion-free center:

\begin{lemma}
	Let \(\G\) be a \(K\)-group, then \(Z(\G)(\mathbb C)\) is torsion-free if and only if \(Z(\G)\) is a unipotent \(K\)-group.
\end{lemma}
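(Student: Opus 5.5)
The approach is to apply the structure theory of commutative linear algebraic groups in characteristic~0 to the closed commutative \(K\)-subgroup \(\mathbf Z := Z(\G)\). Here \(\mathbf Z\) is \(K\)-closed: it is the intersection of the centralizers of the (Zariski dense) \(K\)-points, or in characteristic~0 one can invoke the standard fact that the center of a \(K\)-group is \(K\)-defined. Every commutative linear algebraic group decomposes as \(\mathbf Z = \mathbf Z_s \times \mathbf Z_u\), where \(\mathbf Z_u\) is the set of unipotent elements, \(\mathbf Z_s\) is the set of semisimple elements, and both are closed \(K\)-subgroups (Borel, Theorem~4.7). Moreover \(\mathbf Z_s\) is a diagonalizable group, so \(\mathbf Z_s \cong \mathbf Z_s^\circ \times F\) over \(\mathbb C\), with \(\mathbf Z_s^\circ\) a torus and \(F\) finite. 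The lemma then reduces to two facts: unipotent groups in characteristic~0 have torsion-free \(\mathbb C\)-points, and every non-trivial diagonalizable group has non-trivial torsion.

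For the direction ``\(\Leftarrow\)'', suppose \(Z(\G)\) is unipotent and let \(u \in Z(\G)(\mathbb C)\) satisfy \(u^n = e\) for some \(n \geq 1\). Then \(u\) is both unipotent and of finite order. An element of finite order is semisimple, since it is annihilated by the separable polynomial \(X^n - 1\) over \(\mathbb C\). By uniqueness of the Jordan decomposition, an element that is both semisimple and unipotent equals \(e\). Hence \(Z(\G)(\mathbb C)\) is torsion-free. Equivalently, one can use \(\log u\), which is nilpotent with \(n\log u = 0\) and therefore vanishes.

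For the direction ``\(\Rightarrow\)'', suppose \(Z(\G)(\mathbb C)\) is torsion-free. It suffices to show that \(\mathbf Z_s\) is trivial, because then \(\mathbf Z = \mathbf Z_u\) is unipotent, and it is a \(K\)-group by the \(K\)-closedness noted above. Now \(\mathbf Z_s\) is diagonalizable. If its identity component \(\mathbf Z_s^\circ \cong \mathbb G_m^d\) had \(d \geq 1\), it would contain the roots of unity \(\mu_m \subset \mathbb G_m\), which are torsion. Any non-identity element of the finite part \(F\) is torsion as well. So \(\mathbf Z_s\) must be trivial.

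The only step requiring care is citing the \(K\)-rationality correctly: that \(Z(\G)\) is defined over \(K\), and that its unipotent part \(\mathbf Z_u\) is a \(K\)-subgroup. Both hold in characteristic~0 (perfect field), and I would cite Borel~\S4 for them rather than reprove anything. Apart from this, the argument is routine.
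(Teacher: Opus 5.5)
Your proposal is correct and follows essentially the same route as the paper. Both split $Z(\G)=\mathbf Z_s\times\mathbf Z_u$ via Borel's Theorem~4.7 and use that a non-trivial diagonalizable $\mathbf Z_s$ has non-trivial torsion; you spell this out via $\mathbf Z_s^\circ\times F$ and roots of unity, where the paper cites Borel~\S8.9. Both also use that a unipotent element of finite order is trivial.

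One minor caveat concerns your first justification that $Z(\G)$ is $K$-closed. $\G(K)$ need not be Zariski dense when $\G$ is disconnected, so that argument only works for connected $\G$. Rely instead on your fallback, the standard characteristic-$0$ fact that the center is $K$-defined, which the paper also takes for granted.
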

\begin{proof}
	Since \(Z(\G)\) is a commutative \(K\)-group, it splits as a direct product \(Z(\G) = \mathbf Z_s \times \mathbf Z_u\), where \(\mathbf Z_s\) is the \(K\)-closed subgroup of semisimple elements and \(\mathbf Z_u = \mathbf U\big(Z(\G) \big)\) is the \(K\)-closed subgroup of unipotent elements (see \cite[Theorem 4.7]{Bor91}). Note that \(\mathbf Z_s\) is diagonalizable by \cite[Proposition 8.4]{Bor91}. If \(\mathbf Z_s\) is non-trivial, then \(\mathbf Z_s(\mathbb C)\) contains non-trivial torsion by \cite[\S8.9]{Bor91}. Thus, if \(Z(\G)(\mathbb C)\) is torsion-free, we must have \(\mathbf Z_s = 1\), so \(Z(\G)\) is unipotent. 	Conversely, if \(Z(\G) = \mathbf Z_u\) is unipotent, then \(Z(\G)(\mathbb C)\) is clearly torsion-free.
\end{proof}
 To prove Proposition~\ref{prop:K-indecomposableGroupsIFFK-algebraicallyIndecomposableLiealgebras}, we require two auxiliary results, Lemma~\ref{lemma:Ghasnonontrivialfinitenormalsubgroups} and Proposition~\ref{prop:K-automorphismOfLieAlgebraIFFK-automorphismOfGroup} below.

\begin{lemma}\label{lemma:Ghasnonontrivialfinitenormalsubgroups}
	Let \(\G\) be a connected \(K\)-group and let \(E\) be a finite normal subgroup, then \(E \leq Z(\G)\). 
	In particular, \(\G\) has torsion-free center if and only if \(\G\) has no non-trivial finite normal subgroups. 
\end{lemma}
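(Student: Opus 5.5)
The first claim follows from the classical rigidity argument that a connected group acts trivially on any discrete normal subgroup, carried out with the Zariski topology. Let \(E \trianglelefteq \G\) be finite and fix \(x \in E\). Consider the conjugation map
\[
c_x\colon \G \to E, \qquad g \mapsto g x g^{-1},
\]
which is a morphism of varieties. Its image is the image of an irreducible variety, since \(\G\) is connected and hence irreducible. A finite irreducible set is a single point. As \(c_x(e) = x\), we get \(gxg^{-1} = x\) for all \(g \in \G\). Hence \(x \in Z(\G)\), and so \(E \le Z(\G)\).

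For the equivalence, first suppose \(\G\) has a non-trivial finite normal subgroup \(E\). By the first part \(E \le Z(\G)\), so \(Z(\G)(\mathbb C)\) contains a non-trivial element of finite order and is not torsion-free.

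Conversely, suppose \(Z(\G)(\mathbb C)\) has a non-trivial torsion element \(z\) of order \(n\). The cyclic group \(\langle z\rangle\) is finite and central, hence normal in \(\G\), and it is non-trivial. So \(\G\) has a non-trivial finite normal subgroup.

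The only point needing care is the first one: the image of \(c_x\) is a finite set containing \(x\), and irreducibility of \(\G\) forces it to be exactly \(\{x\}\). Here one uses that \(\G\) is connected in the Zariski sense, so it is irreducible, and that the continuous image of an irreducible space is irreducible. No \(K\)-structure is needed anywhere, since torsion-freeness is a statement about \(\mathbb C\)-points.
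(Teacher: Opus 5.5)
Your proposal is correct and follows the paper's proof: the conjugation orbit map \(c_x\colon \G \to E\) has irreducible (equivalently connected) image inside a finite set, so it is the single point \(\{x\}\), giving \(E \le Z(\G)\). You also write out both directions of the ``in particular'' equivalence, which the paper leaves implicit, and your argument there is fine.
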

\begin{proof} Let \(x\in E\). The orbit map under conjugation \(c_x\colon \G\to E\) defined by \(c_x(g) = gxg^{-1}\)  is a morphism of varieties. As \(\G\) is connected, \(c_x(\G)\) is connected. But \(E\) is finite, so \(c_x(\G)=\{x\}\). Therefore \(x\in Z(\G)\). \end{proof}
\begin{example}\label{example:Ghasnonontrivialfinitenormalsubgroups}
	Connectedness matters: let $\G$ be a non-abelian finite group, viewed (via a faithful
	representation) as a finite, hence disconnected, 0-dimensional $\mathbb Q$-group.
	Then $\G$ is a non-central finite normal subgroup of itself. \demo 
\end{example}
The differential of a \(K\)-automorphism of a \(K\)-group \(\G\) is an automorphism of \(\mathfrak g\). The converse holds only under additional assumptions, e.g.\  connectedness and torsion-free center.
\begin{proposition}\label{prop:K-automorphismOfLieAlgebraIFFK-automorphismOfGroup}
 Let \(\G\) be a connected \(K\)-group with torsion-free center and Lie algebra \(\mathfrak g\). Let \(\Phi\colon \G \to \HH\) be a \(K\)-morphism between connected \(K\)-groups, then \(\Phi\) is a \(K\)-isomorphism if and only if \(\Phi_\ast\colon \mathfrak g \to \mathfrak h\) is a \(K\)-algebraic isomorphism.
\end{proposition}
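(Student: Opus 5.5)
The forward direction is formal, so the real content is the converse: there I will show that \(\Phi\) is bijective using the structure of \(\G\), and then invoke the characteristic \(0\) fact quoted in Section~\ref{sec:PreliminariesLAG} that a bijective \(K\)-morphism is a \(K\)-isomorphism.

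\emph{Forward direction.} Suppose \(\Phi\) is a \(K\)-isomorphism with \(K\)-defined inverse \(\Phi^{-1}\). By the chain rule, \(\Phi_\ast\) and \((\Phi^{-1})_\ast\) are mutually inverse Lie algebra morphisms. Both are differentials of \(K\)-morphisms, so \(\Phi_\ast\) is a \(K\)-algebraic isomorphism, even with \(K\)-algebraic inverse.

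\emph{Converse, injectivity.} Assume \(\Phi_\ast\colon \mathfrak g\to\mathfrak h\) is bijective. The kernel \(\mathbf N=\operatorname{Ker}(\Phi)\) is a closed normal \(K\)-subgroup of \(\G\). In characteristic \(0\) every morphism is separable, so \(\operatorname{Lie}(\mathbf N)=\operatorname{Ker}(\Phi_\ast)=0\) (\cite[\S6]{Bor91}). Hence \(\dim\mathbf N=0\), so \(\mathbf N\) is finite. Since \(\G\) is connected, Lemma~\ref{lemma:Ghasnonontrivialfinitenormalsubgroups} gives \(\mathbf N\le Z(\G)\). Every element of the finite group \(\mathbf N(\mathbb C)\) has finite order, and \(Z(\G)(\mathbb C)\) is torsion-free, so \(\mathbf N\) is trivial. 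Thus \(\Phi\) is injective.

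\emph{Converse, surjectivity.} The image \(\Phi(\G)\) is a closed connected \(K\)-subgroup of \(\HH\) whose Lie algebra is \(\Phi_\ast(\mathfrak g)=\mathfrak h=\operatorname{Lie}(\HH)\). Both \(\Phi(\G)\) and \(\HH\) are connected, so the last statement of Theorem~\ref{thm:LieAlgebraCorrespondence} yields \(\Phi(\G)=\HH\).

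\emph{Conclusion.} \(\Phi\) is therefore a bijective \(K\)-morphism, hence a \(K\)-isomorphism by \cite[Proposition 6.13]{Bor91}. As a by-product, \(\Phi_\ast^{-1}=(\Phi^{-1})_\ast\) is \(K\)-algebraic.

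The only step that is not purely formal is the identification \(\operatorname{Lie}(\operatorname{Ker}\Phi)=\operatorname{Ker}(\Phi_\ast)\). It uses characteristic \(0\) (separability of \(\Phi\) onto its image), and I would cite it from Borel rather than reprove it. The hypothesis of torsion-free center enters exactly once, to exclude a finite central kernel. That this is necessary is shown by isogenies such as \(\mathrm{SL}_2\to\mathrm{PGL}_2\), whose differential is an isomorphism but which is not injective.
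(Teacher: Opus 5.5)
Your proof is correct and follows the paper's argument. Surjectivity comes from the Lie correspondence applied to the connected image. The kernel is finite, hence central by Lemma~\ref{lemma:Ghasnonontrivialfinitenormalsubgroups} and therefore trivial by torsion-freeness, and then one uses that a bijective $K$-morphism is a $K$-isomorphism in characteristic $0$. The only minor difference is that you obtain $\dim\operatorname{Ker}\Phi=0$ from $\operatorname{Lie}(\operatorname{Ker}\Phi)=\operatorname{Ker}(\Phi_\ast)$, whereas the paper first proves surjectivity and then uses the dimension formula $\dim\G=\dim\operatorname{Ker}\Phi+\dim\Phi(\G)$; you also write out the forward direction, which the paper leaves implicit.
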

\begin{proof}
	Assume \(\Phi_\ast\) is an isomorphism. 
	Since \(\Phi_\ast(\mathfrak g) = \mathfrak h\) and \(\Phi(\G)\) is a closed connected subgroup of \(\HH\) with Lie algebra \(\Phi_\ast(\mathfrak g)\), we have \(\Phi(\G) = \HH\). From the dimension theorem 
	\[
	\dim \G = \dim \operatorname{Ker} \Phi + \dim \Phi(\G),
	\]
	it now follows that \(\operatorname{Ker} \Phi\) is zero-dimensional and hence finite. Since \(\G\) has torsion-free center, Lemma \ref{lemma:Ghasnonontrivialfinitenormalsubgroups} implies that \(\operatorname{Ker} \Phi = \{e_{\G}\}\). We conclude that \(\Phi\) is a bijective \(K\)-morphism, and hence a \(K\)-isomorphism.
\end{proof}
\begin{example} Both hypotheses, connectedness and torsion-free center, are essential. 
	\begin{itemize}[leftmargin=1.8em]
		\item \emph{Disconnected with torsion-free center.}
		Let \(\G\) be any finite centerless group (e.g.\ \(\mathcal S_3\)), viewed via a faithful representation as a \(0\)-dimensional, hence disconnected, \(\mathbb Q\)-group.  Then \(Z(\G) = \{e_{\G}\}\) is torsion-free, and any non-bijective endomorphism has bijective differential since  \(\mathfrak g= 0\).

		\item \emph{Connected with torsion in the center.} Take the 1-dimensional \(\mathbb Q\)-torus 
		\(\G = \mathbb G_{\mathrm{m}} \) and fix \(n \geq 2\). The \(\mathbb Q\)-endomorphism \(\Phi\colon t \mapsto t^n\) is not injective, but its differential  \(\Phi_\ast \colon \mathfrak g \to \mathfrak g\) is multiplication by \(n\) and hence bijective in characteristic 0. 		\demo 
	\end{itemize}
\end{example}

\begin{proof}[{Proof of Proposition \ref{prop:K-indecomposableGroupsIFFK-algebraicallyIndecomposableLiealgebras}}] 
	It is clear that if \(\G = \HH \times \mathbf L\) for some closed \(K\)-subgroups \(\HH\) and \(\mathbf L\), then \(\mathfrak g = \mathfrak h \oplus \mathfrak l\) for some \(K\)-algebraic ideals \(\mathfrak h\) and \(\mathfrak l\), namely the Lie algebras of the normal subgroups \(\HH, \mathbf L \leq \G\). If moreover \(\HH\) is non-trivial abelian, then so is \(\mathfrak h\), proving  \ref{item:K-indecomposableGroupsIFFK-algebraicallyIndecomposableLiealgebras2}~\(\Rightarrow\)~\ref{item:K-indecomposableGroupsIFFK-algebraicallyIndecomposableLiealgebras1}.
	
	For the converse implication, suppose \(\mathfrak g = \mathfrak h \oplus \mathfrak l\) for some \(K\)-algebraic Lie ideals \(\mathfrak h\) and \(\mathfrak l\) with corresponding normal connected \(K\)-subgroups \(\HH\) and \(\mathbf L\) of \(\G\). By \cite[\S2.3]{Bor91}, the commutator subgroup \([\HH,\mathbf L]\) is a closed connected normal \(K\)-subgroup of \(\G\) with Lie algebra \([\mathfrak h,\mathfrak l] = 0\) and thus \([\HH,\mathbf L]\) is trivial. Consider the multiplication \(K\)-morphism
	\[
	m\colon \HH \times \mathbf L \longrightarrow \G\colon (h,l) \longmapsto h \cdot l.
	\]
	Since \(m_\ast\) is the identity on \(\mathfrak h \oplus \mathfrak l = \mathfrak g\), a dimension argument shows that \(m\) is surjective, so \(\G = \HH \cdot \mathbf{L}\). Using \([\HH,\mathbf L] = \mathbf 1\), this implies \(Z(\HH) \subset Z(\G)\) and \(Z(\mathbf{L})\subset Z(\G)\), hence \(\HH \times \mathbf{L}\) has torsion-free center. Proposition~\ref{prop:K-automorphismOfLieAlgebraIFFK-automorphismOfGroup} then implies that \(m\) is a \(K\)-isomorphism. If moreover \(\mathfrak h\) is non-trivial abelian, then so must \(\HH\) be, proving  \ref{item:K-indecomposableGroupsIFFK-algebraicallyIndecomposableLiealgebras1}~\(\Rightarrow\)~\ref{item:K-indecomposableGroupsIFFK-algebraicallyIndecomposableLiealgebras2}.

	The equivalence \ref{item:K-indecomposableGroupsIFFK-algebraicallyIndecomposableLiealgebras2}~\(\Leftrightarrow\)~\ref{item:K-indecomposableGroupsIFFK-algebraicallyIndecomposableLiealgebras4} is precisely Lemma \ref{lemma:gAdmitsNoNonzeroAbelianSummandIFFCenterIsInDerived}. The remaining equivalence \ref{item:K-indecomposableGroupsIFFK-algebraicallyIndecomposableLiealgebras3}~\(\Leftrightarrow\)~\ref{item:K-indecomposableGroupsIFFK-algebraicallyIndecomposableLiealgebras4} follows from \(\operatorname{Lie}\big([\G,\G]\big) = [\mathfrak g,\mathfrak g]\), \(\operatorname{Lie}\big(Z(\G)^\circ\big) = Z(\mathfrak g)\), and the Correspondence Theorem \ref{thm:LieAlgebraCorrespondence}.
 \end{proof}

\begin{proof}[Proof of Theorem~\ref{thm:characterizationOfAutomorphismsOfDirectProductOfLinearAlgebraicGroups}]
	The differential \(\Phi_\ast\) is a \(K\)-algebraic automorphism of the corresponding direct sum of Lie algebras \(
	\mathfrak g = \mathfrak g_1 \oplus \mathfrak g_2 \oplus \ldots \oplus \mathfrak g_r,
	\) whose summands are \(K\)-algebraically indecomposable by Proposition \ref{prop:K-indecomposableGroupsIFFK-algebraicallyIndecomposableLiealgebras}. By Theorem \ref{thm:CharacterizationAlgebraicAutomorphismsDirectSumLieAlgebras}, we have that \[\Phi_\ast = \Theta_\ast + \Psi_\ast,\] where \(\Theta_\ast\) is a \(K\)-algebraic Lie algebra automorphism mapping each summand \(
	\mathfrak g_i\) onto \(\mathfrak g_j\) and \(
	\Psi_\ast \) is a \(K\)-algebraic central Lie algebra endomorphism. 
 Proposition~\ref{prop:K-automorphismOfLieAlgebraIFFK-automorphismOfGroup} implies that \(\Theta\) is a \(K\)-automorphism of \(\G\). 	It follows from Lemma~\ref{lemma:productIsAgainAMorphism} that \(\Psi\) is a central \(K\)-endomorphism and that \(\Theta \cdot \Psi \) is a \(K\)-morphism with \[(\Theta \cdot \Psi)_\ast = \Theta_\ast + \Psi_\ast.\]
 
 As \(\Theta_\ast+\Psi_\ast\) is an automorphism of \(\mathfrak g\), Proposition~\ref{prop:K-automorphismOfLieAlgebraIFFK-automorphismOfGroup} implies that \(\Theta \cdot \Psi\) is a \(K\)-automorphism of \(\G\). Hence, \(\Phi\) and \(\Theta \cdot \Psi\) are two \(K\)-automorphisms of a connected linear algebraic group whose differentials coincide on the Lie algebra, and thus \(\Phi = \Theta \cdot \Psi\).

	Finally, let \(i,j\) be given such that \(\Theta_\ast (\mathfrak g_i) = \mathfrak g_j\). Then \(\Theta(\G_i)\) and \(\G_j\) are  closed connected subgroups of \(\G\) with the same Lie algebra. By the Correspondence Theorem \ref{thm:LieAlgebraCorrespondence}, we deduce  \(\Theta(\G_i) = \G_j\). This concludes the proof.
\end{proof}

\begin{proof}[Proof of Proposition~\ref{prop:addendumtothm:characterizationOfAutomorphismsOfDirectProductOfLinearAlgebraicGroups}] 
	As in the proof of Theorem \ref{thm:characterizationOfAutomorphismsOfDirectProductOfLinearAlgebraicGroups}, this follows similarly from the Lie algebra analog, Proposition \ref{prop:addendumtothm:CharacterizationAlgebraicAutomorphismsDirectSumLieAlgebras}.
\end{proof}

In this final part of the section, we prove a variant of Proposition~\ref{prop:addendumtothm:characterizationOfAutomorphismsOfDirectProductOfLinearAlgebraicGroups} for a non-connected virtually solvable case satisfying an additional property. A virtually solvable \(K\)-group \(\HH\) has a \emph{strong unipotent radical} if no non-trivial semisimple element centralizes the unipotent radical \(\U\), so
\[
C_{\HH}(\U) \subset \U. 
\] 
In particular, the center of $\HH$ is unipotent and hence torsion-free. This will be useful in proving Theorem~\ref{thm:C-minimax} in the next section, as the \(\mathbb Q\)-algebraic hull of a finitely generated virtually solvable minimax group has a strong unipotent radical.

\begin{theorem}\label{thm:virtuallysolvablegroups}
	Let \(\HH = \HH_1\times \cdots \times \HH_r\) be a direct product of virtually solvable \(K\)-groups \(\HH_i\) with strong unipotent radicals \(\U_i\). Assume that each identity component \(\HH_i^\circ\) is non-abelian and \(K\)-indecomposable.
	
	\medskip 
	
	\noindent Then every \(K\)-automorphism \(\Phi\colon \HH \to \HH\) uniquely factorizes as 
	\[
	\Phi = \Theta \cdot \Psi,
	\] 
	where: 
		\begin{enumerate}[label=\emph{(\arabic*)}, ref={(\arabic*)}]
		\item \(\Theta\) is a \(K\)-automorphism of \(\HH\) mapping each factor \(\HH_i\) onto an isomorphic factor \(\HH_{\sigma(i)}\) for some unique permutation \(\sigma \in \mathcal S_r\), and \label{item:thm:virtuallysolvablegroups(1)}
		\item \(\Psi\) is a central \(K\)-endomorphism of \(\HH\) satisfying \(\pi_{\sigma(i)}\big(\Psi(\HH_i)\big) = \{e\}\).  \label{item:thm:virtuallysolvablegroups(2)}
	\end{enumerate}
	Conversely, every such pair \((\Theta,\Psi)\) defines a \(K\)-automorphism \(\Phi := \Theta \cdot \Psi\) of \(\HH\). 
\end{theorem}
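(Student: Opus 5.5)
The plan is to reduce to the connected case, Proposition~\ref{prop:addendumtothm:characterizationOfAutomorphismsOfDirectProductOfLinearAlgebraicGroups}, applied to $\HH^\circ=\HH_1^\circ\times\cdots\times\HH_r^\circ$. The strong unipotent radical is then used to control the finite parts $\HH_i/\HH_i^\circ$.

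\textbf{Step 1: the connected case applies.} Since $\U_i$ is connected, $\U_i\subset\HH_i^\circ$. Hence $Z(\HH_i^\circ)\subset C_{\HH_i}(\U_i)\subset\U_i$, so $Z(\HH_i^\circ)$ is unipotent and therefore torsion-free. So each $\HH_i^\circ$ is connected, non-abelian, $K$-indecomposable, with torsion-free center. A $K$-automorphism $\Phi$ of $\HH$ restricts to a $K$-automorphism $\Phi^\circ$ of $\HH^\circ$. Proposition~\ref{prop:addendumtothm:characterizationOfAutomorphismsOfDirectProductOfLinearAlgebraicGroups} then gives $\Phi^\circ=\Theta^\circ\cdot\Psi^\circ$ with:
\begin{itemize}
\item a permutation $\sigma$, unique because the $\HH_i^\circ$ are non-abelian;
\item $\Theta^\circ(\HH_i^\circ)=\HH_{\sigma(i)}^\circ$;
\item $\Psi^\circ$ central and off-diagonal.
\end{itemize}

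I also record a fact used repeatedly. If $N$ is a finite normal subgroup of $\HH_i$, then conjugation by the connected group $\HH_i^\circ$ fixes $N$ pointwise, as in Lemma~\ref{lemma:Ghasnonontrivialfinitenormalsubgroups}. Thus $N\subset C_{\HH_i}(\U_i)\subset\U_i$, and since $\U_i$ is torsion-free, $N=\{e\}$.

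\textbf{Step 2: the off-diagonal components are central.} Write $\Phi_{ji}=\pi_j\circ\Phi\circ\iota_i$ as in Lemma~\ref{lemma:componentMorphisms}, and fix $j\neq\sigma(i)$, say $j=\sigma(k)$ with $k\neq i$. Since $\HH_i$ commutes with $\HH_k^\circ$, the subgroup $\Phi_{ji}(\HH_i)$ commutes with $\Phi_{jk}(\HH_k^\circ)$. Because $\Psi^\circ$ is off-diagonal, $\Phi_{jk}|_{\HH_k^\circ}=\Theta^\circ|_{\HH_k^\circ}$, whose image is all of $\HH_j^\circ$. So $\Phi_{ji}(\HH_i)$ centralizes $\HH_j^\circ\supset\U_j$, and the strong unipotent radical gives $\Phi_{ji}(\HH_i)\subset\U_j\cap Z(\HH_j^\circ)$, which is abelian.

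To upgrade this to centrality in $\HH_j$, I use that $\Phi$ is surjective. Then $\HH_j$ is generated by the subgroups $\Phi_{jl}(\HH_l)$, which commute pairwise by Lemma~\ref{lemma:componentMorphisms}. The subgroup $\Phi_{ji}(\HH_i)$ commutes with all $\Phi_{jl}(\HH_l)$ for $l\ne i$, and with itself since it is abelian. Hence $\Phi_{ji}(\HH_i)\subset Z(\HH_j)\cap\U_j\subset\HH_j^\circ$.

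\textbf{Step 3: define $\Theta,\Psi$ and prove the automorphism property.} Define $\Theta$ by the diagonal-after-$\sigma$ components $\Phi_{\sigma(i)i}$, and $\Psi$ by the remaining components. Both are $K$-endomorphisms by Lemma~\ref{lemma:componentMorphisms}, $\Psi$ is central and off-diagonal, and $\Phi=\Theta\cdot\Psi$ because the components commute.

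The main obstacle is showing that each $\Theta_i:=\Phi_{\sigma(i)i}\colon\HH_i\to\HH_{\sigma(i)}$ is bijective.
\begin{itemize}
\item \emph{Injectivity.} The kernel meets $\HH_i^\circ$ trivially, since $\Theta_i|_{\HH_i^\circ}$ is an isomorphism onto $\HH_{\sigma(i)}^\circ$. So the kernel is a finite normal subgroup of $\HH_i$, hence trivial by Step 1.
\item \emph{Surjectivity.} With $j=\sigma(i)$, Step 2 gives $\HH_j=\Theta_i(\HH_i)\cdot C$ with $C$ central and contained in $\HH_j^\circ=\Theta_i(\HH_i^\circ)$. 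Hence $\HH_j=\Theta_i(\HH_i)$.
\end{itemize}
A bijective $K$-morphism is a $K$-isomorphism in characteristic $0$. Uniqueness of $(\Theta,\Psi)$ follows by reading off components, exactly as in Proposition~\ref{prop:addendumtothm:CharacterizationAlgebraicAutomorphismsDirectSumLieAlgebras}.

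\textbf{Step 4: the converse.} Given $(\Theta,\Psi)$ as in the statement, $\Phi=\Theta\cdot\Psi$ is a $K$-endomorphism because $\Psi$ is central.
\begin{itemize}
\item \emph{Restriction to $\HH^\circ$.} By the converse part of Proposition~\ref{prop:addendumtothm:characterizationOfAutomorphismsOfDirectProductOfLinearAlgebraicGroups}, $\Phi|_{\HH^\circ}$ is an automorphism of $\HH^\circ$.
\item \emph{Injectivity.} Since $\Phi|_{\HH^\circ}$ is injective, $\operatorname{Ker}\Phi$ is finite, normal, and a product of such pieces in the factors. By the Step 1 argument applied in $\HH$, it is trivial.
\item \emph{Surjectivity.} We have $\Psi(\HH)\subset Z(\HH)$, and $Z(\HH)$ is unipotent, so $\Psi(\HH)\subset\U\subset\HH^\circ\subset\Phi(\HH)$. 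Therefore $\Phi(\HH)\supset\Theta(\HH)=\HH$.
\end{itemize}
Thus $\Phi$ is a bijective $K$-morphism, hence a $K$-automorphism.
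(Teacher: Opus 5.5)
Your proof is correct and follows the paper's route. You restrict to $\HH^\circ$, apply Proposition~\ref{prop:addendumtothm:characterizationOfAutomorphismsOfDirectProductOfLinearAlgebraicGroups}, and then use the strong unipotent radical to force the off-diagonal components $\Phi_{jk}(\HH_k)$ into $C_{\HH_j}(\HH_j^\circ)\subset\U_j\subset\HH_j^\circ$.

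The only difference is how you handle the finite part. The paper passes to the induced automorphism of the component group $\HH/\HH^\circ$, which is a pure permutation of isomorphisms, and concludes by a five-lemma-type argument. You instead get centrality from the surjectivity of $\Phi$, and bijectivity of $\Theta_i$ from the fact that $\HH_i$ and $\HH$ have no non-trivial finite normal subgroups; this is equally valid. One small point: the phrase ``a product of such pieces in the factors'' in Step~4 is imprecise and unnecessary, since $\HH$ itself has strong unipotent radical $\U_1\times\cdots\times\U_r$.
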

\begin{proof}
Let \(\Phi\) be a \(K\)-automorphism of \(\HH\). Since each \(\HH_i\) has a strong unipotent radical \(\U_i = \U(\HH_i) = \U(\HH_i^\circ)\), so does each \(\HH_i^\circ\). Hence, every \(\HH_i^\circ\) has a torsion-free center \(Z(\HH_i^\circ)(\mathbb C)\).  

Therefore Proposition~\ref{prop:addendumtothm:characterizationOfAutomorphismsOfDirectProductOfLinearAlgebraicGroups} applies to the restriction \(\Phi^\circ\colon \HH^\circ \to \HH^\circ\). Using the matrix of component maps (Lemma~\ref{lemma:componentMorphisms}): there exists a permutation \(\sigma \in \mathcal S_r\) such that, for a fixed \(j = \sigma(i)\) and \(k \neq i\), \(\Phi_{ji}|_{\HH_i^\circ}\colon \HH_i^\circ \to \HH_{\sigma(i)}^\circ\) is a \(K\)-isomorphism and \(\Phi_{jk}(\HH_k^\circ) \subset Z(\HH_j^\circ)\). 
 Moreover, \(\Phi_{ji}(\HH_i^\circ) = \HH_j^\circ\) and \(\Phi_{jk}(\HH_k)\) are two commuting subgroups of \(\HH_j\). Therefore by the strong unipotent radical property:
 \[
 \Phi_{jk}(\HH_k) \subset C_{\HH_j}(\HH_j^\circ) \subset C_{\HH_j}(\U_j) \subset \U_j \subset \HH_j^\circ.
 \] 
 
 Consider the finite component groups \(\HH_i / \HH_i^\circ\) and \(\HH / \HH^\circ = \HH_1 / \HH_1^\circ \times \cdots \times \HH_r / \HH_r^\circ\).  
Since every off-diagonal component \(\Phi_{jk}(\HH_k)\) of \(\Phi\) lands in \(\HH_j^\circ\), the induced \(\overline \Phi \in  \operatorname{Aut}(\HH / \HH^\circ)\) is a pure permutation of isomorphisms \(\overline{\Phi}_{\sigma(i)i}\) between the factors. Therefore, since \(\Phi_{\sigma(i)i}\colon \HH_i \to \HH_{\sigma(i)}\) induces a \(K\)-isomorphism \(\HH_i^\circ \to \HH_{\sigma(i)}^\circ\) and an isomorphism \(\overline{\Phi}_{\sigma(i)i} \colon \HH_i / \HH_i^\circ \to \HH_{\sigma(i)} / \HH_{\sigma(i)}^\circ\), it follows that each \(\Phi_{\sigma(i)i}\) is a \(K\)-isomorphism mapping \(\HH_i\) onto the isomorphic factor \(\HH_{\sigma(i)}\). Moreover, each off-diagonal \(\Phi_{jk}(\HH_{k})\) commutes with the main image \(\Phi_{ji}(\HH_{i}) = \HH_j\), hence \(\Phi_{jk}(\HH_{k}) \subset Z(\HH_j)\). It follows that \(\Phi\) is of the form \(\Phi = \Theta \cdot \Psi\) as in the statement of the theorem. Uniqueness of the decomposition follows from the matrix of component maps. 

For the converse statement, assume (\(\Theta,\Psi\)) are as in \ref{item:thm:virtuallysolvablegroups(1)} and \ref{item:thm:virtuallysolvablegroups(2)} and put \(\Phi := \Theta \cdot \Psi\). Then 
\[
\Theta(\HH^\circ) = \HH^\circ, \qquad \Psi(\HH)\subset Z(\HH) \subset \U \subset \HH^\circ. 
\]
Hence \(\overline{\Phi} = \overline{\Theta}\) is an automorphism of the finite component group \(\HH/\HH^\circ\). Moreover, \(\Phi|_{\HH^\circ} = \Theta|_{\HH^\circ} \cdot \Psi|_{\HH^\circ}\) is a \(K\)-automorphism of \(\HH^\circ\) by Proposition~\ref{prop:addendumtothm:characterizationOfAutomorphismsOfDirectProductOfLinearAlgebraicGroups}. Therefore \(\Phi\) is a \(K\)-automorphism of \(\HH\).
\end{proof}

\section{Finitely generated virtually solvable minimax groups}\label{sec:minimax}

\subsection{Minimax groups and their \texorpdfstring{\(\mathbb Q\)}{Q}-algebraic hulls}\label{subsec:minimax}

From now on, we will focus on a class of finitely generated virtually solvable groups, known as the \emph{finitely generated virtually solvable minimax groups}. This class of groups strictly includes all virtually polycyclic groups, and hence all finitely generated virtually nilpotent groups. We briefly recall the definition of \emph{minimax.}
\begin{definition}
	Let \(G\) be a group.
	\begin{itemize}[leftmargin=1.8em]
		\item We say \(G\) satisfies \textbf{Max}, also known as the \emph{maximal condition} or \emph{Noetherian} property, if every ascending chain of subgroups stabilizes.  
				\item We say \(G\) satisfies \textbf{Min}, also known as the \emph{minimal condition} or \emph{Artinian} property, if every descending chain of subgroups stabilizes. 
		\item We say \(G\) is \emph{minimax} if it admits a normal series
		\[
		1 = G_0 \lhd G_1 \lhd G_2  \lhd \cdots \lhd G_{k-1} \lhd G_k = G
		\] 	
		such that each factor \(G_i / G_{i-1}\) satisfies \textbf{Max} or \textbf{Min}.
				\end{itemize}
\end{definition}

\begin{remark}	
	The class of finitely generated virtually solvable minimax groups coincides with the class of \emph{finitely generated virtually solvable FAR-groups} in \cite{DP26}, see \cite[Lemma 2.3]{DP26}. 
\end{remark}

\paragraph{\texorpdfstring{\(\mathbb Q\)}{Q}-algebraic hulls.} Let \(\Gamma\) be a finitely generated virtually solvable minimax group. Its \emph{Hirsch length} \(h(\Gamma)\) is defined as in \cite[\S~2.1]{DP26}; it agrees with the usual Hirsch length when \(\Gamma\) is virtually polycyclic.

\begin{definition}[\(\mathbb Q\)-algebraic hull of \(\Gamma\)] \label{def:Qalgebraichull} A \(\mathbb Q\)-algebraic hull of \(\Gamma\) is a pair \((\HH, \iota)\) consisting of a \(\mathbb Q\)-group \(\HH\) and an injective group homomorphism \(\iota \colon \Gamma \hookrightarrow \HH\) satisfying the following three conditions: \begin{enumerate}
		\item  \(\iota(\Gamma)\leq \HH(\mathbb Q)\) and \(\iota(\Gamma)\) is Zariski dense in \(\HH\).
		\item  \(\dim \mathbf U\big( \HH \big) = h(\Gamma)\).
		\item 	\(\HH\) has a \emph{strong unipotent radical}, i.e.\ \(C_{\HH}\big( \U(\HH) \big)\subset \U(\HH)\). 
	\end{enumerate}
\end{definition}
\begin{remark}
	In \cite{DP26}, one additionally requires \(\iota(\Gamma)\leq \HH(\mathbb{Z}[1/S])\), where \(S\) is the \textit{spectrum} of \(\Gamma\). We will only need \(\iota(\Gamma)\leq \HH(\mathbb Q)\), so we do not introduce the spectrum here. 
\end{remark}

 Assume \(\Gamma\) has a \(\mathbb Q\)-algebraic hull \((\HH,\iota)\). We will always identify \(\Gamma\) with its image under \(\iota \colon \Gamma \hookrightarrow \HH\) and simply call \(\HH\) a \(\mathbb Q\)-algebraic hull of \(\Gamma\). By \cite[Corollary 3.8]{DP26},  a \(\mathbb Q\)-algebraic hull \(\HH\) is unique up to a \(\mathbb Q\)-isomorphism preserving \(\Gamma\), so we call \(\HH\) \emph{the} \(\mathbb Q\)-algebraic hull of \(\Gamma\), when it exists. The following two results can be found in \cite{DP26}.

\begin{lemma}\label{lemma:QalgebraicHullOfAFiniteIndexSubgroup} 	Let \(\Gamma' \leq \Gamma\) be a subgroup of finite index, then the Zariski closure of \(\Gamma'\) inside \(\HH\) is a \(\mathbb Q\)-algebraic hull of \(\Gamma'\). 
\end{lemma}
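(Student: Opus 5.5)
Let \(\HH'\) denote the Zariski closure of \(\Gamma'\) in \(\HH\). I will verify the three conditions of Definition~\ref{def:Qalgebraichull} for the pair \((\HH',\iota|_{\Gamma'})\). The overall idea is that, since \(\Gamma'\) has finite index in \(\Gamma\), its closure \(\HH'\) has finite index in \(\HH\) and contains \(\HH^\circ\). Both the unipotent radical and the strong unipotent radical property therefore only depend on \(\HH^\circ\).

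\textbf{Step 1 (closure is \(\mathbb Q\)-defined and dense).} Since \(\Gamma' \le \Gamma \le \HH(\mathbb Q)\), the Zariski closure of a subgroup of \(\mathbb Q\)-points is a \(\mathbb Q\)-closed subgroup (\cite[AG.14.4]{Bor91}). Hence \(\HH'\) is a \(\mathbb Q\)-group containing \(\Gamma'\le\HH'(\mathbb Q)\) as a Zariski dense subgroup, which is condition 1.

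\textbf{Step 2 (\(\HH^\circ\le\HH'\)).} Choose coset representatives \(\gamma_1,\dots,\gamma_m\) with \(\Gamma=\bigcup_k \gamma_k\Gamma'\). Taking closures gives \(\HH=\overline{\Gamma}=\bigcup_k\gamma_k\HH'\). So \(\HH'\) is a closed subgroup of finite index in \(\HH\), and therefore \(\HH'\supseteq\HH^\circ\) and \((\HH')^\circ=\HH^\circ\). Since \(\HH\) is virtually solvable, \(\U(\HH)\) is the set of unipotent elements of \(\HH\), and it lies in \(\HH^\circ\subseteq\HH'\). Likewise \(\U(\HH')\) is the set of unipotent elements of \(\HH'\). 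It follows that \(\U(\HH')=\U(\HH)\).

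\textbf{Step 3 (dimension and strong radical).} By Step 2, \(\dim\U(\HH')=\dim\U(\HH)=h(\Gamma)\). Hirsch length is invariant under passing to finite index subgroups (\cite[\S2.1]{DP26}), so \(h(\Gamma)=h(\Gamma')\), which is condition 2. For condition 3, note that \(\HH'\subseteq\HH\), so
\[
C_{\HH'}(\U(\HH')) = \HH'\cap C_{\HH}(\U(\HH)) \subseteq \U(\HH)=\U(\HH').
\]

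The only step needing care is Step 2: one must identify the unipotent radical of a possibly disconnected group. I would rely on the preliminaries' statement that for virtually solvable groups \(\U\) consists exactly of the unipotent elements, and that these lie in the identity component. I would also use the standard fact that Hirsch length in the minimax sense is invariant under finite index, as established in \cite{DP26}.
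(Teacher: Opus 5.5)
Your proof is correct. The paper gives no argument of its own for this lemma; it simply states that it "can be found in \cite{DP26}". Your proposal is therefore a self-contained replacement for that citation, not a variant of an in-paper proof.

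Your verification of Definition~\ref{def:Qalgebraichull} is complete:
\begin{itemize}
\item The coset decomposition \(\HH=\bigcup_k\gamma_k\HH'\) forces \(\HH^\circ\le\HH'\).
\item Hence \(\U(\HH')=\U(\HH)\), because every unipotent element of \(\HH\) already lies in the connected subgroup \(\U(\HH)\subseteq\HH^\circ\).
\item Condition~2 then reduces to \(h(\Gamma')=h(\Gamma)\). This follows from additivity of Hirsch length along a series and its vanishing on finite groups.
\item Condition~3 is inherited, since \(C_{\HH'}(\U(\HH'))=\HH'\cap C_{\HH}(\U(\HH))\).
\end{itemize}

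The citation does cover one thing your argument does not address: the stronger notion of hull in \cite{DP26}. That notion additionally requires \(\Gamma'\le\HH'(\mathbb Z[1/S'])\), where \(S'\) is the spectrum of \(\Gamma'\). This would follow from \(\Gamma'\le\Gamma\le\HH(\mathbb Z[1/S])\) once you note that the spectrum of a minimax group is unchanged under passage to a finite-index subgroup. For the weaker definition used in this paper, your verification suffices.
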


\begin{lemma}
	Let \(\varphi\colon \Gamma \to \Gamma\) be a monomorphism, then \(\varphi\) uniquely extends to a \(\mathbb Q\)-automorphism \(\Phi\colon \HH \to \HH\).
\end{lemma}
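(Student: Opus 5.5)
The plan is to build the extension as an isomorphism from \(\HH\) onto the Zariski closure of \(\varphi(\Gamma)\) inside \(\HH\), and then show that this closure is all of \(\HH\).

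\emph{Uniqueness.} This is immediate from Zariski density. Two \(\mathbb Q\)-morphisms \(\HH \to \HH\) that agree on \(\Gamma\) agree on its Zariski closure, which is \(\HH\).

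\emph{Existence.} Put \(\Delta := \varphi(\Gamma) \le \Gamma \le \HH(\mathbb Q)\), and let \(\HH' \le \HH\) be the Zariski closure of \(\Delta\). It is a closed \(\mathbb Q\)-subgroup, since \(\Delta\) consists of rational points. The abstract group \(\Delta\) is isomorphic to \(\Gamma\) via \(\varphi\), so I want to show that \((\HH', \varphi)\) is a \(\mathbb Q\)-algebraic hull of \(\Gamma\) in the sense of Definition~\ref{def:Qalgebraichull}. Condition 1 holds by construction. Once conditions 2 and 3 are verified, the uniqueness of the hull \cite[Corollary 3.8]{DP26} gives a \(\mathbb Q\)-isomorphism \(\Phi \colon \HH \to \HH'\) with \(\Phi|_\Gamma = \varphi\). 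Composing with the inclusion \(\HH' \le \HH\) gives an injective \(\mathbb Q\)-endomorphism of \(\HH\).

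\emph{Surjectivity.} The image \(\HH'\) has \(\dim \HH' = \dim \HH\), so \(\HH'^\circ = \HH^\circ\). The injective map then induces an injection of the finite group \(\HH/\HH^\circ\) into \(\HH'/\HH^\circ \le \HH/\HH^\circ\), which forces \(\HH' = \HH\). So \(\Phi\) is a bijective \(\mathbb Q\)-morphism, hence a \(\mathbb Q\)-automorphism by \cite[Proposition 6.13]{Bor91}.

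\emph{Conditions 2 and 3.} Everything reduces to showing \(\U(\HH') = \U(\HH)\). Condition 3 then follows directly: \(C_{\HH'}(\U(\HH')) \subset C_{\HH}(\U(\HH)) \subset \U(\HH) = \U(\HH')\). For condition 2, one inclusion is easy: \(\HH'\) is virtually solvable, so \(\U(\HH')\) is the set of unipotent elements of \(\HH'\), which lies in \(\U(\HH)\), giving \(\dim \U(\HH') \le \dim \U(\HH) = h(\Gamma)\). Both groups are connected unipotent, so equality of dimensions gives equality of the groups. It therefore remains to prove \(\dim \U(\HH') \ge h(\Delta) = h(\Gamma)\).

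\emph{Main obstacle.} The lower bound \(\dim \U(\HH') \ge h(\Delta)\) is the hard step. It is false for arbitrary Zariski-dense minimax subgroups: \(\langle 2 \rangle \le \mathbb G_{\mathrm m}(\mathbb Q)\) has Hirsch length \(1\) but trivial unipotent radical. So the argument must use that \(\Delta\) sits inside \(\Gamma\), whose hull already has a strong unipotent radical.

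My first attempt passes to a finite-index subgroup of \(\Gamma\), using Lemma~\ref{lemma:QalgebraicHullOfAFiniteIndexSubgroup}, so that \(\HH\) may be assumed connected and \(\Gamma\) has a nilpotent normal subgroup \(N\) with \(\Gamma/N\) virtually abelian. I would then compare Hirsch lengths along \(N \cap \U(\HH)\) and its image under \(\varphi\), using that \(\varphi\) preserves Hirsch length on each layer of a normal series. The step I am least sure of is controlling the semisimple (torus) part of the Zariski closure of \(\varphi(\Gamma)\). I would try to rule out that Hirsch length "escapes" into the torus by exploiting \(C_\HH(\U(\HH)) \subset \U(\HH)\): this should force the torus part to act faithfully on the unipotent part.

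If this direct approach becomes unwieldy, my fallback is to invoke the explicit construction of the hull in \cite{DP26}. That construction is functorial for injective homomorphisms into groups of the same Hirsch length, which would yield \(\Phi\) directly.
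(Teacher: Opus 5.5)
Your reductions are correct: uniqueness by density, existence by applying uniqueness of hulls to $(\HH',\varphi)$ with $\HH'=\overline{\varphi(\Gamma)}$, surjectivity by counting components, and the reduction of conditions 2 and 3 to the single equality $\U(\HH')=\U(\HH)$. The paper gives no argument for this lemma; it quotes it from \cite{DP26}, so your fallback is what the paper does. As a self-contained proof, though, your proposal stops exactly at the step that carries the content, namely $\dim\U(\HH')\ge h(\Gamma)$. The mechanism you sketch for it does not work. The strong unipotent radical of $\HH$ does not help: faithfulness of the torus action on $\U(\HH)$ says nothing about how large the unipotent radical of the smaller group $\HH'$ is. Comparing $F=\Gamma\cap\U(\HH)$ with $\varphi(F)$ is also problematic, because $\varphi(F)\subseteq\U(\HH)$ is not available before the extension exists. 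The paper's remark that $F$ is injectively characteristic is naturally obtained through the extension to $\HH$.

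The missing idea is a Hirsch-length count inside $\Gamma$, combined with a generation statement for $\U(\HH)$. Put $\Delta=\varphi(\Gamma)$ and $F=\Gamma\cap\U(\HH)$. From $h(\Delta)=h(\Delta\cap F)+h(\Delta F/F)\le h(F)+h(\Gamma/F)=h(\Gamma)=h(\Delta)$, both inequalities are equalities.
\begin{itemize}
\item The first equality gives $\overline{\Delta\cap F}=\overline F$, since Zariski closures of subgroups of $\U(\HH)(\mathbb Q)$ are connected of dimension equal to their Hirsch length.
\item The second gives $[\Gamma/F:\Delta F/F]<\infty$, because $\Gamma/F$ is finitely generated virtually abelian.
\end{itemize}
Next, $\U(\HH)$ equals the Zariski closure $\mathbf M$ of the subgroup generated by $\overline F$ and all unipotent parts $\gamma_u$, $\gamma\in\Gamma$. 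Indeed, $\mathbf M\subseteq\U(\HH)$ is normalized by $\Gamma$, hence normal in $\HH$, and $\Gamma\cap\mathbf M=F$. The image of $\Gamma$ in $\HH/\mathbf M$ is virtually abelian and consists of semisimple elements, so $(\HH/\mathbf M)^\circ$ is a torus and $\U(\HH)\subseteq\mathbf M$.

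Finally, given $\gamma\in\Gamma$, pick $m\ge 1$ with $\gamma^m=\delta f$, where $\delta\in\Delta$ and $f\in F$. The Jordan decomposition in $\HH/\overline F$ (note $\overline F\lhd\HH$) yields $\gamma_u^m\in\delta_u\overline F\subseteq\U(\HH')$. Since $\gamma_u$ lies in the Zariski closure of $\langle\gamma_u^m\rangle$, we get $\gamma_u\in\U(\HH')$. Hence $\U(\HH)=\mathbf M\subseteq\U(\HH')$, which is the missing inequality. This equality uses only $h(\Delta)=h(\Gamma)$ and Zariski density. The strong unipotent radical enters solely when transferring condition 3, which you already handled.
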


See Example~\ref{example:Q-indecomposabilityPolycyclic} for an example of the construction of a \(\mathbb Q\)-algebraic hull, illustrating the procedure in a concrete torsion-free polycyclic example.

\begin{example}\label{example:unipotenthull}
	Let \(\Gamma\) be a finitely generated torsion-free nilpotent group. Its \(\mathbb Q\)-algebraic hull \(\HH\) is a unipotent \(\mathbb Q\)-group, and \(\Gamma^{\mathbb Q}:=\HH(\mathbb Q)\) identifies with the classical \emph{rational Mal'cev completion} \cite{Mal51}. Henceforth, we shall call such a hull \(\HH\) a \emph{unipotent hull}. 
	
Equivalently (see e.g.\ \cite[Chapter 6]{Seg83}, \cite[\S2]{Dek18}, \cite{BMO16}), \(\Gamma^{\mathbb Q}\) is, up to unique isomorphism over \(\Gamma\), the unique torsion-free nilpotent group containing \(\Gamma\) such that
	\begin{itemize}[leftmargin=1.2em]
		\item 	\(\Gamma^\mathbb Q\) is \emph{uniquely divisible}: for every \(g \in \Gamma^\mathbb{Q}\) and \(k \in \mathbb Z_{>0}\) there exists a unique \(r \in \Gamma^\mathbb Q\) with \(r^k = g\); 
		\item  \(\Gamma^\mathbb Q\) is the \emph{radicable closure} of \(\Gamma\): for every \(g \in \Gamma^\mathbb Q\) there exists \(k \in \mathbb Z_{>0}\) such that \(g^k \in \Gamma\).  \demo
	\end{itemize}
\end{example}

\paragraph{Existence of a \texorpdfstring{\(\mathbb Q\)}{Q}-algebraic hull.} Any group \(G\) has a largest torsion normal subgroup containing all other torsion normal subgroups, which we denote by \(\tau(G)\). If \(\tau(G) = 1\), then we call \(G\) \emph{\textsf{WTN}}; that is, \(G\) has no non-trivial torsion normal subgroups.

 A finitely generated virtually solvable minimax group \(\Gamma\) admits a \(\mathbb Q\)-algebraic hull if and only if \(\Gamma\) is \textsf{WTN} by \cite[Theorem 3.6]{DP26}.  Important examples of such groups include torsion-free polycyclic groups and finitely generated torsion-free nilpotent groups.

 Although one usually considers the property of \textsf{WTN}, it turns out that being \textsf{WTN} is equivalent to having no non-trivial \textit{finite} normal subgroups (\textsf{WFN}).

\begin{proposition}\label{prop:WTNiffWFN} 
	Let \(\Gamma\) be a finitely generated virtually solvable minimax group, then \(\Gamma\) is \emph{\textsf{WTN}} if and only if \(\Gamma\) is \emph{\textsf{WFN}}.
\end{proposition}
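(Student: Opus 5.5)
One direction is immediate. Every finite normal subgroup is a torsion normal subgroup, so $\tau(\Gamma)=1$ forces $\Gamma$ to be \textsf{WFN}. The work is the converse: assuming $\tau(\Gamma)\neq 1$, I would produce a non-trivial \emph{finite} normal subgroup. The plan is to show that $\tau(\Gamma)$ itself, or some non-trivial characteristic piece of it, is finite.

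First I would analyse the structure of $\tau(\Gamma)$ using standard facts on virtually solvable minimax groups. The subgroup $\tau(\Gamma)$ is a virtually solvable torsion minimax group. A solvable torsion minimax group is a Černikov group: each factor of the series satisfying \textbf{Max} is a finitely generated torsion abelian group, hence finite, and each factor satisfying \textbf{Min} is Černikov. Hence $\tau(\Gamma)$ is Černikov. It therefore has a characteristic finite-index subgroup $D$, its finite residual, which is a divisible abelian group isomorphic to a finite direct sum of Prüfer groups $\mathbb Z(p^\infty)$. Since $D$ is characteristic in $\tau(\Gamma)$, it is normal in $\Gamma$.

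I would then split into two cases.

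\textbf{Case $D=1$.} Then $\tau(\Gamma)$ is finite and non-trivial, so $\Gamma$ is not \textsf{WFN}, and we are done.

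\textbf{Case $D\neq 1$.} This is the main obstacle. I would aim to show that a finitely generated virtually solvable minimax group cannot contain a non-trivial divisible abelian torsion normal subgroup that is centralized by a finite-index subgroup. The conjugation action of $\Gamma$ on the finite layer $D[p]$ of elements of order dividing $p$ has finite image. Consequently a finite-index subgroup $\Gamma_0$ acts unipotently on each $p$-primary part, and by replacing $\Gamma_0$ further one may arrange that it centralizes a non-trivial Prüfer subgroup of $D$. This step needs care, since $\operatorname{Aut}(\mathbb Z(p^\infty)^k)=\operatorname{GL}_k(\mathbb Z_p)$ is infinite; I would use that $\Gamma$ is finitely generated and that its image in $\operatorname{GL}_k(\mathbb Z_p)$ is virtually solvable. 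The key input is then the known theorem (Robinson) that finitely generated solvable minimax groups have finite torsion subgroup in their FC-centre. Equivalently, in a finitely generated virtually solvable minimax group, $\tau(\Gamma)$ is finite: the maximal torsion normal subgroup is finite, because quasicyclic sections of a finitely generated solvable minimax group cannot be central. The argument here is that $\Gamma_0$ is finitely generated, and a central $\mathbb Z(p^\infty)$ would give a finitely generated group whose centre contains a non-finitely-generated divisible subgroup, contradicting the standard result that centres of finitely generated minimax (indeed of finitely generated nilpotent-by-polycyclic) groups behave well. If I cannot cite this directly, I would invoke the result from \cite{DP26} that $\tau(\Gamma)$ is finite for finitely generated virtually solvable minimax groups, which is used there in the construction of the hull.

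Putting the cases together, $\tau(\Gamma)$ is a finite normal subgroup. So if $\Gamma$ is \textsf{WFN}, then $\tau(\Gamma)=1$ and $\Gamma$ is \textsf{WTN}. I expect ruling out the divisible part $D$ to be the genuinely difficult step.
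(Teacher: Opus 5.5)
Your first step matches the paper: $\tau(\Gamma)$ is Chernikov, and its finite residual $D$ is a characteristic, finite-index subgroup that is a finite product of Pr\"ufer groups. The case $D=1$ is also fine.

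The gap is in the case $D\neq 1$. There you try to show that $D$ cannot occur, i.e.\ that $\tau(\Gamma)$ is always finite and that a finitely generated solvable minimax group has no central quasicyclic subgroup. Both claims are false, so no result of Robinson or of \cite{DP26} can supply them. P.~Hall's classical example is a counterexample:
\begin{itemize}
\item Let $H$ be the group of upper unitriangular $3\times 3$ matrices over $\mathbb Z[1/p]$, and let $d=\operatorname{diag}(1,p,1)$.
\item Conjugation by $d$ multiplies the $(1,2)$-entry by $p^{-1}$ and the $(2,3)$-entry by $p$, and fixes the $(1,3)$-entry.
\item Hence $G=H\rtimes\langle d\rangle$ is generated by $d$, $I+E_{12}$ and $I+E_{23}$. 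It is solvable and minimax, and $\{I+zE_{13}\mid z\in\mathbb Z[1/p]\}$ is central in $G$.
\item Quotienting by the central subgroup $\{I+zE_{13}\mid z\in\mathbb Z\}$ gives a finitely generated solvable minimax group containing a central copy of $\mathbb Z[1/p]/\mathbb Z\cong\mathbb Z(p^\infty)$.
\end{itemize}
So $D\neq 1$ genuinely happens, and your unipotent-action detour through $\operatorname{GL}_k(\mathbb Z_p)$ cannot be completed.

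You do not need $D$ to be trivial in general. You only need that $D\neq 1$ contradicts \textsf{WFN}, and this is immediate. If $D\cong\prod_{i=1}^k\mathbb Z(p_i^\infty)$ with $k\geq 1$, take the subgroup of $D$ generated by its elements of prime order, namely $\prod_{i=1}^k C_{p_i}$ (or just $D[p_1]$). It is finite, non-trivial and characteristic in $D$. Since $D$ is characteristic in $\tau(\Gamma)$ and $\tau(\Gamma)\lhd\Gamma$, this subgroup is normal in $\Gamma$. You already mention the layer $D[p]$, but only as something $\Gamma$ acts on; it is itself the finite normal subgroup you need. Hall's example is consistent with the proposition for exactly this reason: its central $C_p\leq\mathbb Z(p^\infty)$ shows it is not \textsf{WFN}. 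With this replacement, \textsf{WFN} forces $D=1$, so $\tau(\Gamma)$ is finite and hence trivial, which is precisely the paper's proof.
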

\begin{proof}
	The implication \textsf{WTN} \(\Rightarrow\) \textsf{WFN} is trivial. For the converse implication, assume that \(\Gamma\) has no non-trivial finite normal subgroups. We show that the maximal normal torsion subgroup \(T:= \tau(\Gamma) \) is trivial.
	
	Note that \(T\) is a \emph{Chernikov group} by \cite[5.2.1]{LR04}. Hence its \emph{finite residual} \[\operatorname{Fr}(T) := \bigcap \, \big\{N \, \big|\,  N \lhd T, [T:N] < \infty\big\}  \] is a characteristic finite index subgroup of  \(T\) by \cite[1.4.2]{LR04}. It thus suffices to argue that \(\operatorname{Fr}(T) = 1\).
	
	Aiming for a contradiction, suppose that \(\operatorname{Fr}(T) \neq 1\). By \cite[Proof of 1.4.1]{LR04}, \(\operatorname{Fr}(T)\) is a direct product of finitely many \emph{Pr\"ufer \(p\)-groups} 
	\[
	\operatorname{Fr}(T) = (C_{{p}_1^\infty}) \times (C_{{p}_2^\infty}) \times \cdots \times (C_{{p}_k^\infty}),
	\]
for some not necessarily distinct prime numbers \(p_1,p_2,\ldots,p_k\). In every factor \(C_{{p}_i^\infty}\), we may consider the unique subgroup \(C_{p_i} \cong \mathbb Z / p_i \mathbb Z\) isomorphic to the cyclic group of order \(p_i\), and form \[N = C_{p_1} \times C_{p_2} \times \cdots \times C_{p_k}.\] 
Then \(N\) is non-trivial and finite, but also characteristic in \(\operatorname{Fr}(T)\), as it equals the subgroup generated by the set of prime order elements of \(\operatorname{Fr}(T)\). Hence \(N\) is a non-trivial finite normal subgroup of \(\Gamma\), a contradiction.
	\end{proof}

\paragraph{The Fitting subgroup.} For any group \(G\), the \emph{Fitting subgroup} \(\operatorname{Fitt}(G)\) is defined by \[\operatorname{Fitt}(G) = \langle N \mid N \lhd G, \, N \text{ nilpotent} \rangle.\] \(\operatorname{Fitt}(G)\) is  \emph{locally nilpotent}, i.e.\ every finitely generated subgroup of \(\operatorname{Fitt}(G)\) is nilpotent. Therefore, if \(G\) is finitely generated, then \(G\) is nilpotent if and only if \(\operatorname{Fitt}(G) = G\).

  Let \(\Gamma\) be a finitely generated virtually solvable minimax group. We will usually denote its Fitting subgroup by \(F = \operatorname{Fitt}(\Gamma)\). In this setting, \(F\) itself is nilpotent and \(\Gamma/F\) is virtually abelian by \cite[5.2.2 \& 4.4.1]{LR04}. 
Assume moreover that \(\Gamma\) admits a \(\mathbb Q\)-algebraic hull \(\HH\). By \cite[Proposition 3.13]{DP26}, it holds that \(F = \Gamma \cap \U\big( \HH \big)\). In this case, it is clear that \(F\) is \textit{injectively characteristic}, in the sense that \(\varphi(F) \leq F\) for every monomorphism \(\varphi\colon \Gamma \to \Gamma\).

\subsection{Proof of Theorem \texorpdfstring{\ref{thm:C-minimax}}{C}}

\begin{proposition}\label{prop:virtuallypolycyclicGroupsInjectiveIFFinjectiveandAutomorphismIFFAutomorphism}
	Let \(\Gamma\) be a finitely generated virtually solvable minimax group with a \(\mathbb Q\)-algebraic hull \(\HH\) whose identity component \(\HH^\circ\) has no non-trivial abelian \(\mathbb Q\)-factor. Assume \(\varphi = \theta \cdot \zeta\) for some endomorphisms \(\varphi, \theta, \zeta \colon \Gamma \to \Gamma\), where \(\zeta\) is a central endomorphism, then: 
	\begin{enumerate}[label=\emph{(\arabic*)}, ref={(\arabic*)}]
		\item 	 \(\varphi\) is injective if and only if \(\theta\) is injective.  \label{item:prop:virtuallypolycyclicGroupsInjectiveIFFinjectiveandAutomorphismIFFAutomorphism(1)}
		\item \(\varphi\) is an automorphism if and only if \(\theta\) is an automorphism.  \label{item:prop:virtuallypolycyclicGroupsInjectiveIFFinjectiveandAutomorphismIFFAutomorphism(2)}
	\end{enumerate}
\end{proposition}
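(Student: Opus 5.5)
The plan is to reduce everything to one structural fact: a central endomorphism $\zeta$ of $\Gamma$ kills the center $Z(\Gamma)$. Once this holds, the argument is symmetric in $\theta$ and $\varphi$. Indeed, $\zeta$ central implies that $\zeta^{-1}\colon x\mapsto \zeta(x)^{-1}$ is again a central endomorphism and $\theta=\varphi\cdot\zeta^{-1}$. So in both \ref{item:prop:virtuallypolycyclicGroupsInjectiveIFFinjectiveandAutomorphismIFFAutomorphism(1)} and \ref{item:prop:virtuallypolycyclicGroupsInjectiveIFFinjectiveandAutomorphismIFFAutomorphism(2)} it suffices to prove one implication: if $\theta$ is injective (resp.\ an automorphism), then so is $\varphi=\theta\cdot\zeta$.

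\emph{Key step: $\zeta(Z(\Gamma))=1$.} By Zariski density of $\Gamma$ in $\HH$, we have $Z(\Gamma)=\Gamma\cap Z(\HH)$. The strong unipotent radical gives $Z(\HH)\subset C_\HH(\U)\subset \U\subset \HH^\circ$, so $Z(\HH)\subset Z(\HH^\circ)$. Since $Z(\HH)$ is unipotent, it is connected, hence $Z(\HH)\subset Z(\HH^\circ)^\circ$. The hypothesis that $\HH^\circ$ has no abelian $\mathbb Q$-factor gives, via Proposition~\ref{prop:K-indecomposableGroupsIFFK-algebraicallyIndecomposableLiealgebras}, that $Z(\HH^\circ)^\circ\subset[\HH^\circ,\HH^\circ]$.

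Next, choose a finite index subgroup $\Gamma'\le\Gamma$ with $\Gamma'\subset\HH^\circ$. By Lemma~\ref{lemma:QalgebraicHullOfAFiniteIndexSubgroup}, $\HH^\circ$ is the hull of $\Gamma'$, and the Zariski closure of $[\Gamma',\Gamma']$ is $[\HH^\circ,\HH^\circ]$, a unipotent group contained in $\U(\HH)$. The claim I need is that every $z\in\Gamma\cap[\HH^\circ,\HH^\circ]$ has a power $z^m\in[\Gamma',\Gamma']$. This is a radicable-closure statement for the nilpotent minimax subgroup $[\Gamma',\Gamma']\subset F$ inside its unipotent Zariski closure, in the spirit of Example~\ref{example:unipotenthull}.

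Granting the claim, for $z\in Z(\Gamma)$ we get $\zeta(z)^m=\zeta(z^m)=1$, since $\zeta$ has abelian image and so kills commutators. Finally $\zeta(z)\in Z(\Gamma)\subset \U(\HH)(\mathbb Q)$, which is torsion-free, so $\zeta(z)=1$. I expect the radicable-closure claim to be the main obstacle. I would first try to prove it by comparing the Hirsch length of $[\Gamma',\Gamma']$ with the dimension of its closure, using $F=\Gamma\cap\U(\HH)$.

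\emph{Injectivity.} Suppose $\theta$ is injective and $\varphi(x)=1$. Then $\theta(x)=\zeta(x)^{-1}\in Z(\Gamma)$. Extend $\theta$ to a $\mathbb Q$-automorphism $\Theta$ of $\HH$ by the extension lemma. Since $\Theta$ preserves $Z(\HH)$, we get $x\in\Gamma\cap Z(\HH)=Z(\Gamma)$. The key step then gives $\zeta(x)=1$, so $\theta(x)=1$ and hence $x=1$.

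\emph{Surjectivity.} Suppose $\theta$ is an automorphism; then $\theta(Z(\Gamma))=Z(\Gamma)$. Given $y\in\Gamma$, write $y=\theta(x)$, so that $\varphi(x)=y\,\zeta(x)$. Since $\zeta(x)\in Z(\Gamma)$, we can write $\zeta(x)=\theta(z)$ with $z\in Z(\Gamma)$. The key step gives $\varphi(z)=\theta(z)\zeta(z)=\theta(z)$. Therefore $\varphi(xz^{-1})=y\,\zeta(x)\theta(z)^{-1}=y$, so $\varphi$ is surjective. Combined with injectivity, $\varphi$ is an automorphism.
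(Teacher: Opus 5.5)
Your proposal is correct, and it takes a genuinely different route from the paper.

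\textbf{How the paper argues.} The paper works inside the hull throughout. It first reduces to the case $\HH$ connected. It then extends $\theta$ to $\Theta$ and $\zeta$ to a central $\mathbb Q$-morphism $\Psi$. Next it shows that $\Phi_\ast=\Theta_\ast+\Psi_\ast$ is invertible, because $\Theta_\ast^{-1}\Psi_\ast$ squares to zero once $Z(\mathfrak h)\subset[\mathfrak h,\mathfrak h]$. Injectivity then follows from Proposition~\ref{prop:K-automorphismOfLieAlgebraIFFK-automorphismOfGroup}. For surjectivity it factors $\Phi=\Theta\circ E$ with $E(h)=h\,\Theta^{-1}\Psi(h)$ and checks that $E^{\pm1}(\Gamma)=\Gamma$.

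\textbf{How your route differs.} You extract the single fact $\zeta|_{Z(\Gamma)}=1$ and finish with an argument valid in any group. Your correction $x\mapsto xz^{-1}$ with $\theta(z)=\zeta(x)$ is the group-level version of $E^{-1}$. The paper's identity $\Delta^2=1$ is essentially the hull-level form of your key step.

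What your route buys:
\begin{itemize}
\item no reduction to the connected case;
\item no Lie algebra computation;
\item it shows that the proposition depends only on $\zeta|_{Z(\Gamma)}=1$, the same hypothesis used later in Proposition~\ref{prop:GeneralTheoremAboutReidemeisterEquality}.
\end{itemize}
Your injectivity step does not even need $\Theta$. If $\theta(x)$ is central, then $\theta([x,g])=1$ for all $g$, so $x\in Z(\Gamma)$ by injectivity of $\theta$.

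\textbf{The claim you left open.} The radicable-closure claim is true. Let $N\le\U(\mathbb Q)$ be any subgroup and let $\sqrt N=\{u\in\U(\mathbb Q)\mid u^m\in N\text{ for some }m\ge 1\}$ be its isolator. Then:
\begin{itemize}
\item $\sqrt N$ is a radicable subgroup, so $\mathfrak m=\log\sqrt N$ is a $\mathbb Q$-Lie subalgebra;
\item $\exp(\mathfrak m\otimes\mathbb C)$ is a closed $\mathbb Q$-subgroup containing $N$, and its rational points are exactly $\sqrt N$.
\end{itemize}
Hence $\overline N(\mathbb Q)=\sqrt N$, which is what you need.

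You can also bypass the claim entirely, as the paper does when it verifies $\zeta|_{Z(\Gamma)}=1$ in the proof of Theorem~\ref{thm:E-Reidemeister}. Extend $\zeta$ to a $\mathbb Q$-morphism $\Psi\colon\HH\to Z(\HH)$, as in the paper's proof of this proposition. Since the target of $\Psi$ is abelian and its kernel is closed, $\Psi$ kills $[\HH,\HH]\supseteq[\HH^\circ,\HH^\circ]\supseteq Z(\HH)$. Therefore $\zeta(Z(\Gamma))=\Psi(\Gamma\cap Z(\HH))=1$ directly.
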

\begin{proof}	Since \(x \mapsto \zeta(x)^{-1}\) defines a central endomorphism of \(\Gamma\) as well, it suffices to prove the two converse implications.
	
	\medskip 
	
We first reduce to the case where \(\HH\) is Zariski connected. 
	Set \(\Gamma^\circ := \Gamma \cap \HH^\circ\). Then \(\Gamma^\circ \lhd \Gamma\) has finite index, and \(\HH^\circ\) is the \(\mathbb Q\)-algebraic hull of \(\Gamma^\circ\) by Lemma~\ref{lemma:QalgebraicHullOfAFiniteIndexSubgroup}. 
	Moreover, \[\zeta(\Gamma)\subset Z(\Gamma)=\Gamma\cap Z(\HH)\subset \U(\HH)\subset \HH^\circ,\] so \(\zeta(\Gamma)\leq \Gamma^\circ\). If \(\theta\) is injective, let \(\Theta\) be the induced \(\mathbb Q\)-automorphism of \(\HH\). Then \(\Theta(\HH^\circ)=\HH^\circ\), so \(\theta(\Gamma^\circ)\leq \Gamma^\circ\). It follows that \(\varphi,\theta,\zeta\) restrict to endomorphisms of \(\Gamma^\circ\), and that \(\varphi\) and \(\theta\) induce the same endomorphism of the finite quotient \(\Gamma/\Gamma^\circ\). Consequently, once the proposition is proved for the connected pair \((\Gamma^\circ,\HH^\circ)\), the general case follows by passing to the finite quotient \(\Gamma/\Gamma^\circ\): injectivity of \(\varphi\) (and bijectivity of \(\varphi\) for \ref{item:prop:virtuallypolycyclicGroupsInjectiveIFFinjectiveandAutomorphismIFFAutomorphism(2)}) reduce to the corresponding statements on \(\Gamma^\circ\).
	
		\medskip 

Henceforth assume that \(\HH = \HH^\circ\) is Zariski connected.

\noindent \ref{item:prop:virtuallypolycyclicGroupsInjectiveIFFinjectiveandAutomorphismIFFAutomorphism(1)}. Assume that \(\theta\) is injective. Then \(\theta\) extends to a \(\mathbb Q\)-automorphism \(\Theta \colon \HH \to \HH\). Similarly, we may view \(\zeta\colon \Gamma \to \zeta(\Gamma)\) as a surjective homomorphism between finitely generated virtually solvable minimax groups. 
	Since \(\zeta(\Gamma) \leq Z(\Gamma) = \Gamma \cap Z(\HH) \subset \U\big(\HH\big),\) the \(\mathbb Q\)-algebraic hull of \(\zeta(\Gamma)\) equals the Zariski closure \(\mathbf Z\) of \(\zeta(\Gamma)\) inside \(\U(\HH)\).
	By \cite[Proposition 3.7]{DP26}, \(\zeta\) extends to a \(\mathbb Q\)-morphism \(\Psi \colon \HH \to \mathbf Z\). Since \(Z(\Gamma) \leq Z(\HH)\), we can view the \(\mathbb Q\)-morphism \(\Psi\) as a central \(\mathbb Q\)-endomorphism of \(\HH\).
	
	Next, define the \(\mathbb Q\)-endomorphism \(\Phi := \Theta \cdot \Psi\) of \(\HH\); it is clear that \(\Phi\) extends \(\varphi\colon \Gamma \to \Gamma\). On the Lie algebra \(\mathfrak h\), the inclusion \(Z\big(\mathfrak h\big) \subset [\mathfrak h, \mathfrak h] \) holds by  Proposition~\ref{prop:K-indecomposableGroupsIFFK-algebraicallyIndecomposableLiealgebras}. Now \(\Phi_\ast = \Theta_\ast + \Psi_\ast\), where \(\Theta_\ast\) is an automorphism and \(\Psi_\ast\) is a central endomorphism of \(\mathfrak h\). Therefore, 
	a similar argument to that in Proposition~\ref{prop:addendumtothm:CharacterizationAlgebraicAutomorphismsDirectSumLieAlgebras} implies that \(\Phi_\ast\) is an automorphism. By Proposition~\ref{prop:K-automorphismOfLieAlgebraIFFK-automorphismOfGroup}, \(\Phi\) is a \(\mathbb Q\)-automorphism of \(\HH\). Therefore, \(\varphi\colon \Gamma \to \Gamma\) is injective. 
	
	\medskip 
	
\noindent \ref{item:prop:virtuallypolycyclicGroupsInjectiveIFFinjectiveandAutomorphismIFFAutomorphism(2)}. Assume moreover that \(\theta\) is surjective, and keep the same notations as above. Define \(\Delta:= \Theta^{-1} \circ \Psi \colon \HH \to Z(\HH)\). By Proposition~\ref{prop:K-indecomposableGroupsIFFK-algebraicallyIndecomposableLiealgebras}, it holds that \(Z(\HH) = Z(\HH)^\circ \subset [\HH,\HH]\), hence
\[\Delta^2(\HH) \subset \Delta([\HH,\HH]) = \{e\}\] by centrality of \(\Delta\). Next, define the \(\mathbb Q\)-automorphism \(E \in \operatorname{Aut}(\HH)\) by \(E(h) = h \cdot \Delta(h)\) with inverse \(E^{-1}(h) = h \cdot \Delta(h)^{-1}\). It is easy to see that \(\Phi = \Theta \circ E\). 

Since \(\theta(\Gamma) = \Gamma\), we have that \(\Theta^{\pm 1}(\Gamma) = \Gamma\). Hence \(\Delta(\Gamma) = \Theta^{-1}\big( \Psi(\Gamma) \big) \leq \Gamma\). It follows that \(E^{\pm 1}(\Gamma) \leq \Gamma\), and thus \(E^{\pm 1}(\Gamma) = \Gamma\). Therefore, \(E{|_\Gamma}\) is an automorphism of \(\Gamma\), and so is \[
\varphi = \Phi|_{\Gamma} = (\Theta \circ E)|_{\Gamma} = \theta \circ E|_{\Gamma}.  
\]  
\end{proof}

\noindent  

We now apply the \CMP{}-statement Theorem~\ref{thm:virtuallysolvablegroups} for virtually solvable \(\mathbb Q\)-groups with strong unipotent radical.  The following lemma allows us to reduce to the case where the identity component \(\HH_i^\circ\) of each factor \(\Gamma_i\) is non-abelian: this case is very rigid. 
\begin{lemma}\label{lemma:infinitedihedral}
	Let \(\Gamma\) be a non-abelian finitely generated virtually solvable minimax group. Assume \(\Gamma\) has a \(\mathbb Q\)-algebraic hull  \(\HH\) whose identity component \(\HH^\circ\) is abelian and \(\mathbb Q\)-indecomposable. Then \(\Gamma\) is isomorphic to the infinite dihedral group \(\mathcal D_{\infty} := \mathbb Z \rtimes_{-1} C_2\) and \(\HH \cong \mathbb G_{\mathrm{a}} \rtimes_{-1} C_2\). 
\end{lemma}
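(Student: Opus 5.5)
The plan is to first pin down the connected hull \(\HH^\circ\) using the strong unipotent radical, then the finite component group, and finally read off \(\Gamma\) from its intersection with \(\HH^\circ\).

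\emph{Step 1: \(\HH^\circ\cong\mathbb G_{\mathrm a}\).} Since \(\HH^\circ\) is connected and abelian, it splits as \(\HH^\circ=\T\times\U\), where \(\U=\U(\HH)\) and \(\T\) is a \(\mathbb Q\)-torus (the splitting of commutative groups cited from \cite[Theorem 4.7]{Bor91}). The torus \(\T\) centralizes \(\U\), so the strong unipotent radical property gives \(\T\subset C_{\HH}(\U)\subset\U\). Hence \(\T=1\) and \(\HH^\circ=\U\) is a vector group \((\mathbb G_{\mathrm a})^d\) over \(\mathbb Q\). A vector group of dimension \(d\ge 2\) decomposes over \(\mathbb Q\), so \(\mathbb Q\)-indecomposability forces \(d\le 1\). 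Moreover \(d\neq 0\): otherwise \(\HH\) is finite with \(C_{\HH}(1)=\HH\subset\U=1\), which makes \(\Gamma\) trivial and contradicts that \(\Gamma\) is non-abelian. So \(\HH^\circ=\U\cong\mathbb G_{\mathrm a}\).

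\emph{Step 2: the component group.} Conjugation gives an action of \(\HH\) on \(\U\cong\mathbb G_{\mathrm a}\) by \(\mathbb Q\)-automorphisms, that is, by scalars \(\lambda\in\mathbb Q^\times\). This action factors through the finite group \(\HH/\HH^\circ\), so each \(\lambda\) is a root of unity in \(\mathbb Q\), namely \(\lambda=\pm1\). The kernel of the action is \(C_{\HH}(\U)\subset\U=\HH^\circ\), so \(\HH/\HH^\circ\) embeds into \(\{\pm1\}\). If \(\HH\) were connected, \(\Gamma\subset\HH=\mathbb G_{\mathrm a}\) would be abelian. Therefore \(\HH/\HH^\circ\cong C_2\), with the non-trivial component acting by \(-1\). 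By the Levi decomposition \(\HH=\U\rtimes\mathbf S\) recalled in the preliminaries, \(\mathbf S\) is a \(\mathbb Q\)-subgroup of order \(2\). This gives \(\HH\cong\mathbb G_{\mathrm a}\rtimes_{-1}C_2\).

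\emph{Step 3: identify \(\Gamma\).} Set \(\Gamma^\circ=\Gamma\cap\HH^\circ\). It has finite index in \(\Gamma\), so it is finitely generated, and it is a subgroup of \(\HH^\circ(\mathbb Q)=\mathbb Q\). It is non-trivial because \(\Gamma\) is infinite (by Zariski density). Hence \(\Gamma^\circ\cong\mathbb Z\), and \([\Gamma:\Gamma^\circ]=2\) since \(\Gamma\not\subset\HH^\circ\). Pick \(g=(a,s)\in\Gamma\setminus\Gamma^\circ\). In the semidirect product, \(g^2=a+(-a)=0\), so \(g\) has order \(2\) and acts on \(\Gamma^\circ\) by inversion. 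Thus \(\Gamma=\Gamma^\circ\rtimes\langle g\rangle\cong\mathbb Z\rtimes_{-1}C_2=\mathcal D_\infty\).

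\emph{Main obstacle.} I expect Step 2 to need the most care. One point is making sure the conjugation action really goes through \(\mathbb Q\)-automorphisms of \(\mathbb G_{\mathrm a}\), so that \(\lambda\) is rational; this holds because \(\HH\) and \(\U\) are \(\mathbb Q\)-defined. The other is the use of the strong unipotent radical to kill the kernel of the action. The rest is direct.
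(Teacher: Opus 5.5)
Your proposal follows the paper's proof essentially step for step: the strong unipotent radical kills the torus part of the abelian group \(\HH^\circ\), \(\mathbb Q\)-indecomposability forces \(\HH^\circ=\U\cong\mathbb G_{\mathrm a}\), and faithfulness of the action of \(\HH/\HH^\circ\) on \(\U\) embeds this component group into \(\{\pm1\}\); beyond that you only fill in the details for \(\Gamma\cong\mathcal D_\infty\) that the paper calls straightforward. One justification needs correcting: rationality of the scalars \(\lambda\) does not follow just from \(\HH\) and \(\U\) being \(\mathbb Q\)-defined (e.g.\ \(\mathbb G_{\mathrm a}\rtimes\mu_3\), with \(\mu_3\) acting by multiplication, is a \(\mathbb Q\)-group with strong unipotent radical whose component group acts by non-rational cube roots of unity); it follows because \(\Gamma\subseteq\HH(\mathbb Q)\) is Zariski dense, so every coset of \(\HH^\circ\) contains an element of \(\Gamma\), and such an element acts on \(\U\) by a \(\mathbb Q\)-automorphism.
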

\begin{proof}
Let \(\U:= \U(\HH) = \U(\HH^\circ)\) be the unipotent radical. Then \(\T := \HH^\circ / \U\) is a torus, and when viewed inside the abelian \(\mathbb Q\)-group \(\HH^\circ\) via a Levi decomposition, centralizes \(\U\). Hence \(\T = 1\) and \(\HH^\circ = \U = \mathbb{G}_{\mathrm{a}}\) by \(\mathbb Q\)-indecomposability.

 But the strong unipotent radical property implies the action of \(\HH / \HH^\circ\) on \(\HH^\circ = \U\cong \mathbb{G}_{\mathrm{a}}\) is faithful. As the only non-trivial finite subgroup of \(\operatorname{Aut}_{\mathbb Q}(\mathbb{G}_{\mathrm{a}}) = \mathbb Q^\times\) is \(\{\pm 1\}\),  \(\HH / \HH^\circ\)  has order~\(\leq\,\)2. Since \(\Gamma\) is  non-abelian, \(\HH / \HH^\circ \cong C_2\), with \(C_2\) acting by inversion on \(\HH^\circ \cong \mathbb{G}_{\mathrm{a}}\). Therefore \(\HH \cong \mathbb{G}_{\mathrm{a}} \rtimes_{-1} C_2\). 
 
 It is straightforward to show that \(\Gamma^\circ = \Gamma \cap \HH^\circ\) is infinite cyclic, \([\Gamma \colon \Gamma^\circ]=2\), and  \(\Gamma \cong \mathbb{Z} \rtimes_{-1} C_2\). 
\end{proof}

\noindent If \(\Gamma = \Gamma_1\times \cdots \times \Gamma_r\) is a direct product of finitely generated virtually solvable minimax groups where each factor \(\Gamma_i\) has a \(\mathbb Q\)-algebraic hull \(\HH_i\), then \(\HH = \HH_1  \times \cdots \times \HH_r\) is a \(\mathbb Q\)-algebraic hull of \(\Gamma\). 

\begin{letterthmbody}\label{thm:CharacterizationAutomorphismDirectProductOfMinimaxGroups}
	Let \(\Gamma = \Gamma_1  \times \cdots \times \Gamma_r\) be a direct product of non-abelian finitely generated virtually solvable minimax groups having \(\mathbb Q\)-algebraic hulls \(\HH_i\) with \(\mathbb Q\)-indecomposable identity components \(\HH_i^\circ\).  
	Then every monomorphism \(\varphi\colon \Gamma \to \Gamma\) uniquely factorizes as
	\[
	\varphi = \theta \cdot \zeta,
	\]
	where:  
	\begin{enumerate}[label=\emph{(\arabic*)}, ref={(\arabic*)}]
		\item \(\theta\) is a monomorphism of \(\Gamma\) and there is a unique permutation \(\sigma \in \mathcal S_r\) such that \(\theta\) sends each factor \(\Gamma_i\) into a factor \(\Gamma_{\sigma(i)}\), where \(\Gamma_i\) and \(\Gamma_{\sigma(i)}\) have \(\mathbb Q\)-isomorphic \(\mathbb Q\)-algebraic hulls.   \label{item:thmCharacterizationAutomorphismDirectProductOfMinimaxGroups(1)} 
		\item \(\zeta\) is a central endomorphism of \(\Gamma\) such that \(\pi_{\sigma(i)}\big(\zeta(\Gamma_i)\big) = \{e\}\).  \label{item:thmCharacterizationAutomorphismDirectProductOfMinimaxGroups(2)} 
	\end{enumerate}
	Conversely, every such pair \((\theta, 
\zeta)\) defines a monomorphism \(\varphi:= \theta \cdot \zeta\) of \(\Gamma\).

\medskip 

\noindent 	Moreover, if \(\varphi \in \operatorname{Aut}(\Gamma)\), then \(\theta \in \operatorname{Aut}(\Gamma)\) sends every factor \(\Gamma_i\) onto an isomorphic factor \(\Gamma_{\sigma(i)}\). 
\end{letterthmbody}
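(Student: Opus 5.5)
We extend the monomorphism to the $\mathbb Q$-algebraic hull $\HH = \HH_1\times\cdots\times\HH_r$, apply the group-level \CMP{} there (Theorem~\ref{thm:virtuallysolvablegroups}), and then restrict back to $\Gamma$.

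\textbf{Reduction to non-abelian identity components.} First dispose of factors whose identity component $\HH_i^\circ$ is abelian. By Lemma~\ref{lemma:infinitedihedral}, such a factor is $\mathcal D_\infty$ with hull $\mathbb G_{\mathrm a}\rtimes_{-1} C_2$. Its centre is trivial and its hull still has a strong unipotent radical. I would handle it by checking directly that the proof of Theorem~\ref{thm:virtuallysolvablegroups} only used non-abelianness of $\HH_i^\circ$ to obtain the permutation on the connected parts. Failing that, I would treat $\mathcal D_\infty$ factors separately: since $Z(\mathcal D_\infty)=1$, any component map into a $\mathcal D_\infty$ factor from a commuting factor is trivial, by a centralizer argument in $\HH_j$ using $C_{\HH_j}(\U_j)\subset\U_j$. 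So I will assume every $\HH_i^\circ$ is non-abelian and $\mathbb Q$-indecomposable, so that Theorem~\ref{thm:virtuallysolvablegroups} applies to $\HH$ with $K=\mathbb Q$.

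\textbf{Existence.} Given a monomorphism $\varphi$, extend it uniquely to a $\mathbb Q$-automorphism $\Phi$ of $\HH$. Theorem~\ref{thm:virtuallysolvablegroups} gives $\Phi=\Theta\cdot\Psi$ with $\Theta(\HH_i)=\HH_{\sigma(i)}$ and $\Psi$ central and off-diagonal, so $\pi_{\sigma(i)}\Psi(\HH_i)=1$. Define $\theta$ and $\zeta$ through the component maps of $\varphi$ (Lemma~\ref{lemma:componentMorphisms} for abstract groups): $\theta_i:=\pi_{\sigma(i)}\circ\varphi|_{\Gamma_i}$ and $\zeta_{ji}:=\pi_j\circ\varphi|_{\Gamma_i}$ for $j\ne\sigma(i)$. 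These land in $\Gamma$ because $\Gamma$ is a direct product and $\varphi(\Gamma)\le\Gamma$. They are restrictions of $\Theta$ and $\Psi$, since $\pi_{\sigma(i)}\Phi|_{\HH_i}=\Theta|_{\HH_i}$ and $\pi_j\Phi|_{\HH_i}=\Psi_{ji}$. Hence $\zeta$ is central, because $\zeta(\Gamma)\le\Gamma\cap Z(\HH)=Z(\Gamma)$, and $\varphi=\theta\cdot\zeta$. Injectivity of $\theta$ follows from injectivity of $\Theta$. Each $\theta_i\colon\Gamma_i\to\Gamma_{\sigma(i)}$ is injective with image Zariski dense in $\Theta(\HH_i)=\HH_{\sigma(i)}$, so $\HH_i\cong_{\mathbb Q}\HH_{\sigma(i)}$. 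Alternatively, this follows from uniqueness of hulls applied to $\theta_i(\Gamma_i)$, which has a dense image of $\Gamma_i$.

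\textbf{Uniqueness.} The permutation $\sigma$ is unique by the uniqueness in Theorem~\ref{thm:virtuallysolvablegroups}, applied after extending any decomposition $\theta\cdot\zeta$ to the hull. Here one extends $\theta$ to $\Theta$ via the monomorphism extension lemma, and $\zeta$ to a central $\Psi$ as in Proposition~\ref{prop:virtuallypolycyclicGroupsInjectiveIFFinjectiveandAutomorphismIFFAutomorphism}. Given $\sigma$, the components of $\theta$ and $\zeta$ are forced by the component maps of $\varphi$.

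\textbf{Converse and the automorphism statement.} For the converse, $\theta\cdot\zeta$ is a homomorphism because $\zeta$ is central. Each $\HH_i^\circ$ is $\mathbb Q$-indecomposable and non-abelian, hence has no abelian $\mathbb Q$-factor, and this passes to $\HH^\circ$ by Proposition~\ref{prop:K-indecomposableGroupsIFFK-algebraicallyIndecomposableLiealgebras}, since $Z(\mathfrak h_i)\subset[\mathfrak h_i,\mathfrak h_i]$ holds summandwise. So Proposition~\ref{prop:virtuallypolycyclicGroupsInjectiveIFFinjectiveandAutomorphismIFFAutomorphism}(1) shows that $\varphi$ is injective. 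For the final statement, if $\varphi$ is an automorphism then $\theta$ is an automorphism by Proposition~\ref{prop:virtuallypolycyclicGroupsInjectiveIFFinjectiveandAutomorphismIFFAutomorphism}(2). Surjectivity of $\theta$ together with $\theta(\Gamma_i)\le\Gamma_{\sigma(i)}$ forces $\theta(\Gamma_i)=\Gamma_{\sigma(i)}$, so $\Gamma_i\cong\Gamma_{\sigma(i)}$.

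The main obstacle I expect is the $\mathcal D_\infty$ reduction, together with verifying that the abelian-free hypothesis needed for Proposition~\ref{prop:virtuallypolycyclicGroupsInjectiveIFFinjectiveandAutomorphismIFFAutomorphism} survives when such factors are present.
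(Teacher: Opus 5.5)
Your proposal is essentially the paper's proof: extend $\varphi$ to $\Phi$ on $\HH$, apply Theorem~\ref{thm:virtuallysolvablegroups}, restrict $\Theta,\Psi$ to $\Gamma$ via the component maps, and get the converse and the automorphism clause from Proposition~\ref{prop:virtuallypolycyclicGroupsInjectiveIFFinjectiveandAutomorphismIFFAutomorphism}, after splitting off the $\mathcal D_\infty$ factors as the paper also does. In that split-off, which the paper only calls routine, your first idea cannot work because the $\mathbb G_{\mathrm a}$ factors of $\HH^\circ$ admit $\mathrm{GL}$-mixing; your fallback must additionally show that the unique non-trivial (hence surjective) component into each centerless $\mathbb G_{\mathrm a}\rtimes_{-1}C_2$ comes from a dihedral factor, which follows from Krull--Schmidt on the non-abelian indices together with injectivity of $\Phi$ on $\HH/\HH^\circ$, while components from dihedral factors into the torsion-free centres vanish because the abelianization of $\mathbb G_{\mathrm a}\rtimes_{-1}C_2$ is $C_2$.
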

\begin{proof} 
	We first reduce to the case where every \(\HH_i^\circ\) is non-abelian. Indeed, by Lemma~\ref{lemma:infinitedihedral}, any factor \(\Gamma_i\) with abelian identity component \(\HH_i^\circ\) is necessarily \(\Gamma_i\cong \mathcal D_\infty\) with hull \(\HH_i\cong \mathbb G_{\mathrm a}\rtimes_{-1} C_2\). These dihedral factors form a rigid centerless block: a routine verification using Lemma~\ref{lemma:componentMorphisms} shows that for any \(\mathbb Q\)-automorphism \(\Phi\) of \(\HH\), all component maps between this block and the complementary block with non-abelian \(\HH_i^\circ\) vanish, so the product splits into two independent blocks. The case where each factor \(\Gamma_i\) is infinite dihedral follows directly from the centerlessness of the hulls \(\HH_i\). Henceforth, we assume that all \(\HH_i^\circ\) are non-abelian.
	
	\medskip

	Using Theorem \ref{thm:KalgebraicKrullSchmidt} and Proposition~\ref{prop:K-indecomposableGroupsIFFK-algebraicallyIndecomposableLiealgebras}, we deduce that \(\HH^\circ = \HH_1^\circ \times \cdots \times \HH_r^\circ\) has no non-trivial abelian \(\mathbb Q\)-factor. Let \(\varphi\colon \Gamma \to \Gamma\) be a monomorphism and let \(\Phi\) be the induced \(\mathbb Q\)-automorphism of \(\HH\).  By Theorem \ref{thm:virtuallysolvablegroups}, \(\Phi\) decomposes as \(\Phi = \Theta \cdot \Psi\), where \(
\Theta\) is an automorphism of \(\HH\) that maps each factor \(\HH_i\) onto an isomorphic factor \(\HH_{\sigma(i)}\) for some unique \(\sigma \in \mathcal S_r\),  and \(\Psi\) is a central endomorphism of \(\HH\) satisfying \(\pi_{\sigma(i)}\big(\Psi\big(\HH_i\big)\big) = 
\{e\}\) for each \(i\). 

 Using the matrix of component maps (Lemma~\ref{lemma:componentMorphisms}), it is easy to see that \(\Theta(\Gamma) \leq \Gamma\) and \(\Psi(\Gamma) \leq \Gamma\). Thus, we may define \(\theta, \zeta \colon \Gamma \to \Gamma\) by restricting \(\Theta, \Psi\). Then \ref{item:thmCharacterizationAutomorphismDirectProductOfMinimaxGroups(1)} and \ref{item:thmCharacterizationAutomorphismDirectProductOfMinimaxGroups(2)} immediately follow. 
  Uniqueness of the decomposition \(\varphi = \theta \cdot \zeta\) is immediate again from the matrix of component maps. 
 
 Assume now that \(\varphi\) is an automorphism.  Then Proposition  \ref{prop:virtuallypolycyclicGroupsInjectiveIFFinjectiveandAutomorphismIFFAutomorphism} yields that \(\theta\) is an automorphism as well, mapping every factor \(\Gamma_i\) onto an isomorphic factor \(\Gamma_{\sigma(i)}\). 
 
 The converse statement is an immediate consequence of Proposition \ref{prop:virtuallypolycyclicGroupsInjectiveIFFinjectiveandAutomorphismIFFAutomorphism}. 
\end{proof}

\subsection{The \texorpdfstring{\(\mathbb Q\)}{Q}-indecomposability assumption}\label{subsec:assumptionsDECOMP}

In this section, we give counterexamples showing that the \(\mathbb Q\)-indecomposability assumption in Theorem~\ref{thm:CharacterizationAutomorphismDirectProductOfMinimaxGroups} cannot be weakened to direct indecomposability of the factors. More precisely, we construct directly indecomposable groups \(\Gamma\) for which the identity component \(\HH^\circ\) is \(\mathbb Q\)-decomposable, and such that both the forward and converse directions of \CMP{} on automorphisms fail for the direct product \(\Gamma \times \Gamma\).

This already happens in the nilpotent case. In particular, the indecomposability assumption on the rational Mal'cev completion in \cite[Theorem~3.1]{Sen24} (``nilpotent Theorem~\ref{thm:CharacterizationAutomorphismDirectProductOfMinimaxGroups}'') cannot be weakened to direct indecomposability of the factors.

\begin{example} \label{example:Q-indecomposableNILPOTENT}
	\textit{We construct a finitely generated torsion-free nilpotent group \(\Gamma\) which is directly indecomposable and whose rational Mal'cev completion \(\Gamma^\mathbb Q\) is decomposable (equivalently, its unipotent hull \(\HH\) is \(\mathbb Q\)-decomposable). For the direct product \(\Gamma\times\Gamma\), we then construct counterexamples to both directions of \CMP{} on automorphisms.}

	The group construction is due to Gilbert Baumslag \cite[\S 5]{Bau75} (see also \cite[\S 5]{BMO16} and \cite[p. 529]{Sen24}). 
	Fix an integer \(p \geq 2\). Let \(\langle a,b,c\rangle \cong \mathbb Z^3\) and  \(\langle t \rangle \cong \mathbb Z\). Define the semidirect product
	\[
	B = \mathbb Z^3 \rtimes_A \mathbb Z = \big\langle a,b,c,t \mid t^{-1}at = ab, t^{-1}bt = bc, \text{\(c\) central} \big\rangle, \qquad A = \textstyle{\begin{bmatrix}
			1&0&0\\-1&1&0\\1&-1&1
		\end{bmatrix}.}  
	\]
	Then \(B\) is torsion-free nilpotent of class 3. Set \(K := B \times F\) where \(F = \langle f \rangle \cong \mathbb Z\); then \(K\) is torsion-free nilpotent of class 3 as well.
	
	Let \(K^\mathbb Q  = \HH_K(\mathbb Q)\) be the rational Mal'cev completion of \(K\), and let \(s \in K^\mathbb Q\) be the unique \(p^\mathrm{th}\)~root of \(bf\). Set \(\Gamma := \langle K, s\rangle\). Then \(\Gamma\) is finitely generated torsion-free nilpotent, and \(K\) has finite index in \(\Gamma\) since \(K \subset \Gamma \subset K^\mathbb Q\). In fact, one computes that \([\Gamma : K] = p^2\). As unipotent hulls are Zariski connected, Lemma~\ref{lemma:QalgebraicHullOfAFiniteIndexSubgroup} yields \(\HH_{\Gamma} = \HH_{K}\), and hence \(\Gamma^\mathbb Q = K^\mathbb Q\).

	In \cite[Lemma 3]{Bau75} it is shown that \(\Gamma\) is directly indecomposable. Since \(K = B \times F\), we have \(\HH_\Gamma = \HH_K = \HH_B \times \mathbb{G}_\mathrm{a}\), so \(\HH_\Gamma\) is \(\mathbb Q\)-decomposable. Moreover, a computation shows that \([t,s^{-1}] = c^{1/p}\), hence \(c^{1/p} \in \Gamma\).
	
	For ease of computation, we further mention that \((t,a,b,c,f)\) is a \emph{Mal'cev basis} for \(K\) and \((t,a,s,c^{1/p},f)\) is a Mal'cev basis for \(\Gamma\) (see e.g.\ \cite[\S2]{Dek18}).  In the associated Mal'cev normal forms, multiplication is given by 
	\begin{align*}
		&t^{\tau_1} a^{\alpha_1} b^{\beta_1} c^{\gamma_1} f^{\phi_1}  \cdot 	t^{\tau_2} a^{\alpha_2} b^{\beta_2} c^{\gamma_2} f^{\phi_2}  = t^{\tau_1 + \tau_2} a^{\alpha_1 + \alpha_2} b^{\beta_1 + \beta_2 + \tau_2 \alpha_1} c^{\gamma_1 + \gamma_2 + \tau_2 \beta_1 + \binom{\tau_2}2 \alpha_1} f^{\phi_1+\phi_2}, \\
		&t^{\tau_1} a^{\alpha_1} s^{\sigma_1} c^{\gamma_1/p} f^{\phi_1}  \cdot 	t^{\tau_2} a^{\alpha_2} s^{\sigma_2} c^{\gamma_2/p} f^{\phi_2} = t^{\tau_1 + \tau_2} a^{\alpha_1 + \alpha_2} s^{\sigma_1 + \sigma_2 + p \tau_2 \alpha_1} c^{\left(\gamma_1 + \gamma_2 + \tau_2 \sigma_1 + p \binom{\tau_2}2 \alpha_1\right)_{\big/ \text{\small $p$}}} f^{\phi_1+\phi_2 -  \tau_2 \alpha_1},
	\end{align*}
	where \(\binom n 2 = n(n-1)/2\). In particular, \(Z(\Gamma) = \langle c^{1/p} ,f \rangle\).

Define \(\lambda\colon \Gamma \to \mathbb Z\) by \(\lambda\big(t^{\tau} a^{\alpha} s^{\sigma} c^{\gamma/p} f^{\phi} \big) = p \phi + \sigma\) and define \(\zeta_0\colon \Gamma \to Z(\Gamma)\) by \(\zeta_0(x) = f^{\lambda(x)}\). 

\medskip 	
	
\noindent \textit{1. Forward \CMP{}-direction fails.} Define the endomorphism \(\psi \in \operatorname{End}(\Gamma \times \Gamma)\) by
	\[
\psi(x,y) = \big(x \zeta_0(x) \, \zeta_0(y), y \zeta_0(y)^{-1} \zeta_0(x)^{-1}  \big) = \big( x f^{\lambda(x) + \lambda(y)}, y f^{-\lambda(x) - \lambda(y)}  \big). 
	\]
Using that the quantity \(\Lambda(x,y)=\lambda(x)+\lambda(y)\) is preserved by \(\psi\), we see that \(\psi\) is an automorphism with inverse \(\psi^{-1}(x,y) = \big( x f^{-\lambda(x) - \lambda(y)}, y f^{\lambda(x) + \lambda(y)}  \big)\). However, using that \(x \mapsto x f^{\lambda(x)}\) is \emph{not} an automorphism of \(\Gamma\), for it induces the map 
\[
\begin{bmatrix}
	1 & 0 \\
	0 & p+1
	\end{bmatrix}
\] on the center \(Z(\Gamma) = \langle c^{1/p},f\rangle\), the automorphism \(\psi\) is of neither component map form 
\[
\begin{bmatrix}
	\theta_1 & \zeta_1 \\
	\zeta_2 & \theta_2
\end{bmatrix}, \qquad \begin{bmatrix}
 \zeta_1 & \theta_1  \\
 \theta_2 & \zeta_2
\end{bmatrix}
\]
for any automorphisms \(\theta_1,\theta_2\colon \Gamma \to \Gamma\) or central endomorphisms \(\zeta_1,\zeta_2\colon \Gamma \to Z(\Gamma)\). 
\medskip 
	
\noindent \textit{2. Converse \CMP{}-direction fails.} Define the endomorphism \(\varphi \in \operatorname{End}(\Gamma \times \Gamma)\) by \[ \varphi(x,y) = \big(x \, \zeta_0(y), \zeta_0(x) \, y \big).\] 
	Then \(\varphi\) is of the converse-\CMP{} form \(\theta \cdot \zeta\), but \(\varphi\) is not surjective: for instance \((f,1) \not\in \varphi(\Gamma \times \Gamma)\).  \demo
\end{example}

We now construct an analogous non-nilpotent torsion-free polycyclic counterexample. In addition to extending the preceding phenomenon beyond the nilpotent case, this example will play a second role in Subsection~\ref{subsec:twistedConjugacy}, where it yields a counterexample to a Reidemeister number formula.

\begin{example}\label{example:Q-indecomposabilityPolycyclic}
		Consider the torsion-free polycyclic group  
		\[
		\Gamma = \mathbb Z^4 \rtimes_A \mathbb Z, \qquad A = 
		\begin{bmatrix}
			1 & 0 & 1 & 0\\
			0 & 1 & 0 & 1\\ 
			0 & 0 & 3 & 2\\
			0 & 0 & 4 & 3
		\end{bmatrix}
		= \begin{bmatrix}
			I_2 & I_2 \\ 0 & B
		\end{bmatrix}, \qquad B = \begin{bmatrix}
			3 & 2 \\ 4 & 3
		\end{bmatrix}.
		\] 
		Observe that \(A\) is semisimple (i.e.\ diagonalizable over \(\mathbb C\)). We compute that \[
		F := \operatorname{Fitt}(\Gamma)  = \big\{ (v,0)  \mid v \in \mathbb Z^4 \big\} = \mathbb Z^4, \qquad 
		Z(\Gamma)  = \left\{\left(v_1,v_2,0,0;0\right) \, \middle| \, v_1,v_2 \in \mathbb Z\right\} = \mathbb Z^2.
		\]
	Below we prove \(\Gamma\) is directly indecomposable, and we construct its \(\mathbb Q\)-algebraic hull \(\HH\). The resulting hull is Zariski connected but \(\mathbb Q\)-decomposable, and the computation illustrates the procedure in a concrete polycyclic example. First, we exhibit the failure of \CMP{} on automorphisms of \(\Gamma \times \Gamma\).

	Define the central endomorphism \(\zeta_0\colon \Gamma \to Z(\Gamma)\) by
	\[
	\zeta_0(v,k)=(Lv,0),
	\qquad
	L=
	\begin{bmatrix}
		-2 & -2 & 1 & 0\\
		0 & 0 & 0 & 0\\
		0 & 0 & 0 & 0\\
		0 & 0 & 0 & 0
	\end{bmatrix}.
	\]
	Since \(LA=L\), we have \(LA^m=L\) for all \(m\in \mathbb Z\), so \(\zeta_0\colon \Gamma \to Z(\Gamma)\) is a homomorphism.
	
	\medskip
	
	\noindent  \emph{1. Forward \CMP{}-direction fails.}
	Define \(\psi\in \operatorname{End}(\Gamma\times \Gamma)\) by
	\[
	\psi(x,y)=\bigl(x\,\zeta_0(x)\zeta_0(y),\, y\,\zeta_0(x)^{-1}\zeta_0(y)^{-1}\bigr).
	\]
	 We claim that \(\psi\) is an automorphism. Indeed, \(\psi\) induces the identity on
	\[
	(\Gamma\times \Gamma)/(F\times F)\cong (\Gamma/F)\times (\Gamma/F)\cong \mathbb Z^2,
	\]
	and on \(F \times F \cong \mathbb Z^8 \) it is represented by  
	\[
	\psi|_{F\times F}
	=
	\begin{bmatrix}
		I_4+L & L\\
		-L & I_4-L
	\end{bmatrix}.
	\]
	A direct computation shows that \(
	\det(\psi|_{F\times F})=1,
	\) 
	so \(\psi|_{F\times F}\) is an automorphism of \(F\times F\). Therefore \(\psi\) is an automorphism of \(\Gamma\times\Gamma\).  Since \(y\mapsto y\,\zeta_0(y)^{-1}\) is not an automorphism, the same argument as in Example~\ref{example:Q-indecomposableNILPOTENT} shows that \(\psi\) is not of the prescribed \CMP{} form.

	\medskip
	
	\noindent  \emph{2. Converse \CMP{}-direction fails.}
	Define the endomorphism \(\varphi \in \operatorname{End}(\Gamma \times \Gamma)\) by \[\varphi(x,y) = \big(x \, \zeta_0(y), \zeta_0(x) \, y\big).\] 
	We claim \(\varphi\) is not an automorphism of \(\Gamma \times \Gamma\).  If \(\varphi\) were an automorphism, then it would restrict to an automorphism of 
	\(
	\operatorname{Fitt}(\Gamma \times \Gamma) = F \times F \cong \mathbb Z^8.
	\)
	On \(F \times F\) however, it is represented by 
	\[
	\varphi|_{F \times F} = \begin{bmatrix}
		I_4 & L \\ L & I_4
	\end{bmatrix},
	\] and one computes \(\det(\varphi|_{F \times F}) = -3 \neq \pm 1\), a contradiction.

	\medskip 
	
	\noindent \textit{\(\Gamma\) is directly indecomposable.} Suppose, for a contradiction, that \(\Gamma = \Gamma_1 \times \Gamma_2\) with \(\Gamma_1,\Gamma_2 \neq 1\).  Since \(F = \operatorname{Fitt}(\Gamma) = \operatorname{Fitt}(\Gamma_1) \times \operatorname{Fitt}(\Gamma_2)\), we have  
	\[
	{\Gamma}/{F} \cong \big({\Gamma_1}/{\operatorname{Fitt}(\Gamma_1)}\big) \times \big({\Gamma_2}/{\operatorname{Fitt}(\Gamma_2)}\big).
	\]
	As \(\Gamma / F \cong \mathbb Z\) is directly indecomposable, one of the two quotients must be trivial. After swapping indices if necessary, assume \(\Gamma_1 = \operatorname{Fitt}(\Gamma_1)\). Then \(\Gamma_1 \lhd \Gamma\) is nilpotent, hence \(\Gamma_1 \leq F \cong \mathbb Z^4\) is free abelian.  In particular, \(\Gamma_1\) has an infinite cyclic direct factor \(C \cong \mathbb Z\), so \(\Gamma\) admits a decomposition 
	\(
	\Gamma = C \times H
	\)
	for some subgroup \(H \leq \Gamma\). Notice that \(C \leq Z(\Gamma)\).
	
	Choose a homomorphism \(f \colon \Gamma \to \mathbb Z\) such that \(f|_C\colon C \to \mathbb Z\) is an isomorphism, and set \(\ell:= f|_F \in \operatorname{Hom}(\mathbb Z^4, \mathbb Z)\). Since \(f(tvt^{-1}) = f(v)\) for all \(v \in F\), we have  \(\ell A = \ell\). Solving \(\ell (A - I_4) = 0\) gives
	\[ \ell = \begin{bmatrix}
		2m & 2n & m-2n & -m + n
	\end{bmatrix} \qquad \text{for some \(m,n\in \mathbb Z\)}. \]
	In particular, \(\ell\) takes even values on \(Z(\Gamma)\), hence \(\ell(C) \leq 2\mathbb Z\). On the other hand, \(\ell(C) = f(C) = \mathbb Z\) (since \(C \leq F\)), a contradiction.
	
	\medskip 
	
	\noindent \textit{{\(\mathbb Q\)-algebraic hull of \(\Gamma\)}.} 
	Following \cite[Proof of Theorem 3.5]{DP26}, we construct the \(\mathbb Q\)-algebraic hull \(\HH\) of \(\Gamma\). Consider the faithful representation \(\rho\colon \Gamma \to \operatorname{GL}_6(\mathbb Z)\) given by 
	\[
	\rho(v,k) = \begin{bmatrix}
		A^k & 0 & v \\
		0 & 1 & k \\
		0 & 0 & 1
	\end{bmatrix}.
	\] 
	Let \(\HH \) be the Zariski closure of \(\rho(\Gamma)\) in \(\operatorname{GL}_6(\mathbb C)\). Since \(\rho(\Gamma)\subset \operatorname{GL}_6(\mathbb Q)\), it follows from \cite[Proposition 1.3(b)]{Bor91} that \(\HH\) is a \(\mathbb Q\)-defined linear algebraic group. We show that \(\HH\) is the \(\mathbb Q\)-algebraic hull of \(\Gamma\), along the way determining its full structure. 
	\medskip 
	
	\begin{itemize}[leftmargin=1.8em]
		\item \textit{Unipotent radical.}  Set \[\U := \left\{ \begin{bmatrix}
			I_4 & 0 & x \\ 0 & 1 & s \\ 0 & 0 & 1
		\end{bmatrix} \,\middle|\,  x \in \mathbb C^4, s \in \mathbb C \right\} \cong \mathbb G_{\mathrm{a}}^5 .\] We claim that \(\U = \U(\HH)\) equals the unipotent radical of \(\HH\). 
		
		For \(v\in \mathbb Z^4\), the element \(\rho(v,0) \in \HH\) is unipotent, and taking the Zariski closure over \(v \in \mathbb Z^4\) yields 
		\[\U_1 := \left\{ \begin{bmatrix}
			I_4 & 0 & x \\ 0 & 1 & 0 \\ 0 & 0 & 1
		\end{bmatrix} \,\middle|\,  x \in \mathbb C^4 \right\} \cong \mathbb{G}_{\mathrm a}^4. \]
		Moreover, for \(k \in \mathbb Z\), we have the multiplicative Jordan decomposition
		\begin{equation*}
			\rho(0,k) = \underbrace{\begin{bmatrix}
					A^k & 0 & 0 \\ 0 & 1 & 0 \\ 0 & 0 & 1
			\end{bmatrix}}_{\text{semisimple}} \cdot \underbrace{\begin{bmatrix}
					I_4 & 0 & 0 \\ 0 & 1 & k \\ 0 & 0 & 1
			\end{bmatrix}}_{\text{unipotent}}, 
		\end{equation*}
		so \(\rho(0,k)_u \in \HH\). Taking the Zariski closure over \(k \in \mathbb Z\) yields 
		\[
		\U_2 := \left\{ \begin{bmatrix}
			I_4 & 0 & 0 \\ 0 & 1 & s \\ 0 & 0 & 1
		\end{bmatrix} \,\middle|\,  s \in \mathbb C \right\} \cong \mathbb{G}_{\mathrm a}. \] 
		The subgroups \(\U_1\) and \(\U_2\) commute, hence \(\U = \U_1 \cdot \U_2\) is a connected unipotent subgroup of \(\HH\). Since \(\rho(\Gamma)\) normalizes \(\U\), Zariski density implies \(\U \lhd \HH\), and therefore \(\U \subset \U(\HH)\). By \cite[Proposition 3.2]{DP26}, \(\dim \U(\HH) \leq h(\Gamma) = 5\) and hence \(\U = \U(\HH)\).
		
		\item \textit{Strong unipotent radical.} We have \[
		\HH \subset \left\{ \begin{bmatrix}
			T  & 0 & x \\ 0 & 1 & s \\ 0 & 0 & 1
		\end{bmatrix} \in \operatorname{GL}_6(\mathbb C) \,\middle|\, T\in \operatorname{GL}_4(\mathbb C), x \in \mathbb C^4 , s \in \mathbb C\right\}, 
		\]
		since the right-hand side is Zariski closed and contains \(\rho(\Gamma)\).  A direct computation then shows \(C_{\HH}(\U) \subset \U\). Hence \(\HH\) has a strong unipotent radical, and \(\HH\) satisfies the conditions of Definition~\ref{def:Qalgebraichull}. Thus \(\HH\) is the \(\mathbb Q\)-algebraic hull of \(\Gamma\).  
		
		\item \textit{Torus, connectedness and \(\mathbb Q\)-decomposability.} To describe \(\HH\) more explicitly, we show that \(\HH / \U\) is a 1-dimensional torus. It will then follow that \(\HH\) is Zariski connected and \(\mathbb Q\)-decomposable. 
		
		First, let \(\T_A\) be the Zariski closure of \(\langle A \rangle \cong \mathbb Z\) in \(\operatorname{GL}_4(\mathbb C)\). The matrix \(A\) diagonalizes as
		\[
		P^{-1} A P = \operatorname{Diag}(1,1,\lambda,\lambda^{-1})
		\]
		for some \(P \in \operatorname{GL}_4(\mathbb C)\), where \(\lambda^{\pm 1} = 3 \pm 2 \sqrt 2\) are the eigenvalues of \(B\). 
		Therefore, 
		\[
		\T_A = \overline{\big\{ P\operatorname{Diag}(1,1,\lambda^k,\lambda^{-k})P^{-1} \mid k \in \mathbb Z \big\}}^{\mathrm{Zar}}=  \big\{ P\operatorname{Diag}(1,1,z,z^{-1})P^{-1} \mid z \in \mathbb C^\times \big\}  \cong \mathbb G_{\mathrm{m}},
		\]
		a 1-dimensional torus, hence connected.  
		Accordingly, define 
		\[
		\widetilde{\T}_A := \left\{\begin{bmatrix}
			T & 0 & 0 \\ 0 & 1 & 0 \\ 0 & 0 & 1
		\end{bmatrix} \in \operatorname{GL}_6(\mathbb C)\, \middle| \, T \in \T_A\right\} \subset  \HH.\]
		Equivalently, \(\widetilde{\T}_A \cong \T_A\) is the Zariski closure of \(\{\rho(0,k)_s = \operatorname{Diag}(A^k,1,1) \mid k \in \mathbb Z \} \subset \HH\).
		
		Next, one computes 
		\[
		\widetilde \T_A \cdot \U = \left\{ \begin{bmatrix}
			T & 0 & x \\ 0 & 1 & s \\ 0 & 0 & 1
		\end{bmatrix} \,\middle|\, T \in \T_A, x \in \mathbb C^4, s \in \mathbb C\right\} \subset \HH.
		\]
		Since the left-hand side of this inclusion is Zariski closed and contains \(\rho(\Gamma)\), equality holds by Zariski density.  Therefore \(\HH = \U \rtimes \widetilde{\T}_{A}\cong \mathbb G_{\mathrm{a}}^5 \rtimes \T_A \), and \(\HH\) is Zariski connected. 
		
		Finally, the factorization
		\[
		\begin{bmatrix}
			T & 0 & x \\ 0 & 1 & s \\ 0 & 0 & 1
		\end{bmatrix}  = \begin{bmatrix}
			I_4 & 0 & 0 \\ 0 & 1 & s \\ 0 & 0 & 1
		\end{bmatrix} \cdot  \begin{bmatrix}
			T & 0 & x \\ 0 & 1 & 0 \\ 0 & 0 & 1
		\end{bmatrix}
		\]
		shows that \(\HH = \mathbb G_{\mathrm{a}} \times \big( \mathbb G_{\mathrm{a}}^4 \rtimes \T_A \big)\) as a  \(\mathbb Q\)-group, hence \(\HH = \HH^\circ\) is \(\mathbb Q\)-decomposable.  \demo 
	\end{itemize}	
\end{example}

	\subsection{Two explicit non-nilpotent polycyclic examples} \label{subsec:example}
If \(\Gamma\) is finitely generated nilpotent, then it is \textsf{WTN} if and only if it is torsion-free. In that case, \(\Gamma\) admits a unipotent hull \(\HH\), see Example~\ref{example:unipotenthull}. We now give two explicit examples of non-nilpotent torsion-free polycyclic groups satisfying the assumptions of Theorem~\ref{thm:CharacterizationAutomorphismDirectProductOfMinimaxGroups}.

The first example is an elementary example of the form \(\Gamma = \mathbb Z^4 \rtimes_A \mathbb Z\) with \(A\) non-unipotent. It gives a basic non-nilpotent example with a Zariski connected, \(\mathbb Q\)-indecomposable hull, yet with \(Z(\Gamma)=1.\) The second example is Heisenberg-by-cyclic and has a non-trivial center. It is essentially of the form
\[
\Gamma = H_5(\mathbb Z) \rtimes \mathbb Z,
\]
where \(H_5(\mathbb Z)\) is the rank-\(5\) Heisenberg group, and the generator of \(\mathbb Z\) acts on the abelianization \(H_5(\mathbb Z)^{\mathrm{ab}} \cong \mathbb Z^4 \) by a non-unipotent automorphism and fixes the center \(Z(H_5(\mathbb Z))=\langle z\rangle\). We show that its \(\mathbb Q\)-algebraic hull is Zariski connected and \(\mathbb Q\)-indecomposable.

\begin{example}\label{example:minimalExample} We keep the details of the construction minimal. All omitted verifications are analogous to those in Example~\ref{example:Q-indecomposabilityPolycyclic}, which exhibits a Zariski connected but \(\mathbb Q\)-decomposable hull. 
	
	Consider the torsion-free polycyclic group
	\[
	\Gamma = \mathbb Z^4 \rtimes_A \mathbb Z, \qquad A = 
	\begin{bmatrix}
		2 & 1 & 2 & 1\\
		1 & 1 & 1 & 1\\ 
		0 & 0 & 2 & 1\\
		0 & 0 & 1 & 1
	\end{bmatrix}
	= \begin{bmatrix}
		B & B \\ 0 & B
	\end{bmatrix}, \qquad B = \begin{bmatrix}
		2 & 1 \\ 1 & 1
	\end{bmatrix}.
	\]
	We have \[
	F := \operatorname{Fitt}(\Gamma)  = \big\{ (v,0)  \mid v \in \mathbb Z^4 \big\} = \mathbb Z^4, \qquad 
	Z(\Gamma)  = 1.
	\]
	Consider the faithful representation \(\rho\colon \Gamma \to \operatorname{GL}_5(\mathbb C)\) given by 
	\[
	\rho(v,k) = \begin{bmatrix}
		A^k & v \\
		0 & 1
	\end{bmatrix}.
	\]
	Set \(\HH := \overline{\rho(\Gamma)}^{\mathrm{Zar}}\). Then \(\HH\) is the \(\mathbb Q\)-algebraic hull of \(\Gamma\). Its unipotent radical is
	\[
	\U = \left\{\begin{bmatrix}
		I_4 + sN & w \\ 0 & 1 
	\end{bmatrix} \:\middle| \: w\in \mathbb C^4, s \in \mathbb C\right\}, \qquad sN = \begin{bmatrix}
		0&sI_2\\ 0&0 
	\end{bmatrix},
	\]
	so \(\dim \U = h(\Gamma) = 5\). 
	Writing  
	\[
	\mathbf{V} = \left\{\begin{bmatrix}
		I_4  & w \\ 0 & 1 
	\end{bmatrix} \:\middle| \: w\in \mathbb C^4\right\}, \qquad \mathbf{U}_1 = \left\{\begin{bmatrix}
		I_4 + sN & 0 \\ 0 & 1 
	\end{bmatrix} \:\middle| \: s\in \mathbb C\right\},
	\]
	we have \(\U = \mathbf{V} \rtimes \U_1 \cong \mathbb G_{\mathrm{a}}^4 \rtimes \mathbb{G}_{\mathrm{a}}\). 
	
	The semisimple part of \(\rho(0,k)\) is \[
	\begin{bmatrix}
		B^k & 0 & 0\\
		0 & B^k & 0 \\
		0 & 0 & 1
	\end{bmatrix} = \begin{bmatrix}
		\widetilde{B}^k & 0 \\ 0 & 1
	\end{bmatrix}, \qquad \widetilde{B} = \operatorname{Diag}(B,B).
	\]
	Hence \[\T := \overline{\left\langle \begin{bmatrix}
			\widetilde B & 0 \\0 & 1
		\end{bmatrix}  \right\rangle}^{\mathrm{Zar}} = \left\{\begin{bmatrix}
		a I_4 + b C & 0 \\ 0 & 1
	\end{bmatrix} \in \operatorname{GL}_5(\mathbb C) \: \middle| \: a^2 -5b^2 = 1\right\}  \leq \HH, \qquad  C := 2 \widetilde{B} - 3I_4 \] is a 1-dimensional \(\mathbb Q\)-torus. In fact, \(\HH = \U \rtimes \T\), and over \(\mathbb C\) we have 
	\[
	\HH = \U \rtimes \T \cong (\mathbb{G}_{\mathrm{a}}^4 \rtimes \mathbb{G}_{\mathrm{a}}) \rtimes \mathbb{G}_{\mathrm{m}},
	\]
	so \(\HH\) is Zariski connected. 
	
	It remains to show that \(\HH\) is \(\mathbb Q\)-indecomposable. By Proposition~\ref{prop:K-indecomposableGroupsIFFK-algebraicallyIndecomposableLiealgebras}, it suffices to prove that \(\mathfrak h:=\operatorname{Lie}(\HH)\) is indecomposable.

	\medskip 
	
	\noindent \textit{Lie algebra \(\mathfrak h\)}. First, we compute
	\[
	\operatorname{Lie}(\U) = \left\{\begin{bmatrix}
		sN & w \\
		0 & 0
	\end{bmatrix} \:\middle|\: w\in \mathbb{C}^4, s\in \mathbb C\right\} \cong \mathbb{C}^4 \rtimes_N \mathbb{C}, \qquad \operatorname{Lie}(\T) = \mathbb C \begin{bmatrix}
		C & 0 \\
		0 & 0
	\end{bmatrix},
	\]
	and therefore \[
	\mathfrak{h} = \operatorname{Lie}(\U) \rtimes \operatorname{Lie}(\T) = \left\{ \begin{bmatrix}
		t C + s N & w \\
		0 & 0
	\end{bmatrix} \: \middle| \: w\in\mathbb C^4, t,s \in \mathbb{C} \right\}.
	\]
	Equivalently, \(\mathfrak h \cong W \rtimes_{C,N} (\mathbb C c \oplus \mathbb C n  )\), with \(W = \mathbb C^4\) abelian, \(CN = NC\), and
	\[
	[c,n] = 0, \qquad [c,w] = Cw, \quad [n,w] = Nw, \quad w \in W. 
	\]
	
	A direct computation gives
	\[
	Z(\mathfrak h) = 0, \qquad [\mathfrak h,\mathfrak h] = W.
	\]
	Thus \(\mathfrak h\) has no non-zero abelian summand.  
	Moreover, it is straightforward to show that every non-zero ideal of \(\mathfrak h\) meets \(W\) non-trivially.   
	
	\medskip
	
	\noindent \textit{Indecomposability of \(\mathfrak h\).} Aiming for a contradiction, assume 
	\[
	\mathfrak h = I \oplus J, \qquad [I,J] = 0,
	\] 
	for non-zero ideals \(I,J \lhd \mathfrak h\). Since \(\mathfrak h\) has no non-zero abelian summand, neither \(I\) nor \(J\) is contained in \(W\). Hence their images in the abelianization \(\mathfrak h / W \cong \mathbb C \tilde c \oplus \mathbb C \tilde n\) are non-zero. Choose \(x \in I\), \(y \in J\) such that
	\[
	x+W = \alpha \tilde{c} + \beta \tilde{n}, \qquad y + W = \gamma \tilde{c} + \delta \tilde{n}, \qquad \alpha \delta - \beta \gamma \neq 0.
	\]
	From \([I,J] = 0\), it follows that
	\[
	I \cap W \subset \operatorname{Ker}\left( \gamma C + \delta N \right), \qquad J \cap W \subset \operatorname{Ker}\left( \alpha C + \beta N \right).
	\]
	Since \(\alpha \delta - \beta \gamma \neq 0\), either \(\alpha \neq0 \) or \(\gamma\neq 0\).  If \(\alpha \neq 0\), then \(\alpha C + \beta N\) is invertible, so \(J \cap W = 0\), contradicting the fact that \(I,J\) meet \(W\) non-trivially. If \(\gamma\neq 0\), similarly \(I\cap W=0\), contradiction. Therefore \(\mathfrak h\) is indecomposable. \demo 
\end{example}

\begin{example}
	We give a non-nilpotent Heisenberg-by-cyclic example with non-trivial center and Zariski connected, \(\mathbb Q\)-indecomposable hull. It is similar to the previous example, but with \(\mathbb Z^4\) replaced by \(H_5(\mathbb{Z})\). 
	
Let \(W=\mathbb C^4=\operatorname{Span}_{\mathbb C}\{x_1,x_2,x_3,x_4\}\), and let \(x_1^*,x_2^*,x_3^*,x_4^*\) be the dual basis of \(W^*\). Equip \(W\) with the symplectic form
\[
\omega:=x_1^*\wedge x_2^*+x_3^*\wedge x_4^*
\in \bigwedge\nolimits^2 W^*.
\]	
	Define the rank-\(5\) Heisenberg group
	\[
	H_5(\mathbb Z)=\big\{(w,m/2) \, \big| \,  w\in \mathbb Z^4,\ m\in \mathbb Z\big\},
	\qquad
	(w,m/2)(w',m'/2)=\Big(w+w',\,\frac{m+m'}2+\frac12 \omega(w,w')\Big).
	\]
	Its center is \(Z(H_5(\mathbb Z))=\{(0,m/2)\mid m\in \mathbb Z\}\cong \mathbb Z\).
	
	Now define
	\begin{align*}
	D(x_1)&=5x_3,\qquad D(x_2)=-x_4,\qquad D(x_3)=x_1,\qquad D(x_4)=-5x_2, \\
		N(x_1)&=-5x_2,\qquad N(x_3)=x_4,\qquad N(x_2)=N(x_4)=0.
	\end{align*}
	Then \(D,N\in \mathfrak{sp}(W,\omega)\), \(D^2=5I_4\), \(N^2=0\), and \(DN=ND\). Set
	\[
	A:=9I_4+4D,
	\qquad
	M:=(I_4+N)A\in \operatorname{Sp}(W,\omega)(\mathbb Z).
	\]
	Since \(M\) preserves \(\omega\), the map
	\(
	\varphi(w,m/2)=(Mw,m/2)
	\) defines an automorphism of \(H_5(\mathbb Z)\). We define
	\[
	\Gamma:=H_5(\mathbb Z)\rtimes_\varphi \mathbb Z.
	\]
	Thus \(\Gamma\) is torsion-free polycyclic, non-nilpotent, and
	\[
	Z(\Gamma)=Z(H_5(\mathbb Z))\cong \mathbb Z.
	\]
	
	We now describe its \(\mathbb Q\)-algebraic hull. Let \(\HH_5\) be the \(5\)-dimensional Heisenberg \(\mathbb Q\)-group corresponding to \((W,\omega)\). Its Lie algebra is \(\mathfrak h_5 = W \oplus \mathbb Cz\) with bracket \[
	[w,w'] = \omega(w,w') z, \qquad w,w'\in W.
	\] Let
	\[
	\U:=\HH_5\rtimes \mathbb G_{\mathrm a},
	\qquad
	s\cdot (w,t)=\big((I_4+sN)w,t\big), \qquad \operatorname{Lie}(\U) = \mathfrak h_5 \rtimes_N \mathbb Cn.
	\]
	Further, let
	\[
	\T:=\overline{\langle A\rangle}^{\mathrm{Zar}}
	=
	\{\,aI_4+bD\in \operatorname{GL}_4(\mathbb C)\mid a^2-5b^2=1\,\}, \qquad \operatorname{Lie}(\T) = \mathbb C D,
	\]
	a \(1\)-dimensional \(\mathbb Q\)-torus, acting on \(\U\) by
	\[
	\tau\cdot (w,t,s)=(\tau w,t,s).
	\]
	Set
	\[
	\HH:=\U\rtimes \T.
	\]
	Then \(\HH\) is a connected solvable \(\mathbb Q\)-group with strong unipotent radical \(\U\). The group \(\Gamma = H_5(\mathbb Z) \rtimes \mathbb Z\) embeds into \(\HH(\mathbb Q)\) via the Heisenberg lattice \(H_5(\mathbb Z)\subset \HH_5(\mathbb Q)\) and the infinite cyclic generator
	\[
	g:=\exp(n)\cdot A \in \HH(\mathbb Q),
	\]
	whose conjugation action on \(H_5(\mathbb Z)\) corresponds to \(\varphi\). Moreover, \(\Gamma\) is Zariski dense and \(h(\Gamma) = \dim(\U) = 6\).  Hence \(\HH\) is the \(\mathbb Q\)-algebraic hull of \(\Gamma\).
	
	Finally, by computing 
	\[
	\mathfrak h:= \operatorname{Lie}(\HH) = \operatorname{Lie}(\U) \rtimes  \operatorname{Lie}(\T) \cong \mathfrak h_5 \rtimes_{N,D} \mathbb C^2,
	\]
	one deduces that \(\mathfrak h\) is an indecomposable Lie algebra. Hence, \(\HH\) is \(\mathbb Q\)-indecomposable. 
\end{example}

\section{Applications} \label{sec:applications}

As a consequence of our main result, Theorem~\ref{thm:CharacterizationAutomorphismDirectProductOfMinimaxGroups}, several properties of automorphisms and monomorphisms of a direct product can be reduced to the corresponding properties of the factors. For torsion-free nilpotent groups, this was already known implicitly, for instance in the study of expanding maps and Anosov diffeomorphisms. More precisely, \cite[Theorems~4.6 \&~4.8]{DW23} show that the existence of such maps on a direct product is equivalent to the existence on each factor. In this section, we derive applications to co-Hopfian groups and Reidemeister theory.

\subsection{Co-Hopfian groups}

Recall that a group \(\Gamma\) is called \emph{co-Hopfian} if every monomorphism \(\varphi\colon \Gamma \to \Gamma\) is an automorphism. In this subsection, we use Theorem~\ref{thm:CharacterizationAutomorphismDirectProductOfMinimaxGroups} to prove Theorem~\ref{thm:D-cohopfian} from the introduction. 
\begin{letterthmbody}
	Let $\Gamma=\Gamma_1 \times \cdots\times \Gamma_r$ be as in Theorem \ref{thm:CharacterizationAutomorphismDirectProductOfMinimaxGroups}. Then \(\Gamma\) is co-Hopfian if and only if every factor \(\Gamma_i\) is co-Hopfian.
\end{letterthmbody}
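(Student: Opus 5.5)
We prove both directions from Theorem~\ref{thm:CharacterizationAutomorphismDirectProductOfMinimaxGroups}, including its converse part and its automorphism addendum, together with Proposition~\ref{prop:virtuallypolycyclicGroupsInjectiveIFFinjectiveandAutomorphismIFFAutomorphism}.

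\emph{Factors co-Hopfian $\Rightarrow$ $\Gamma$ co-Hopfian.} Let $\varphi$ be a monomorphism of $\Gamma$. Write $\varphi=\theta\cdot\zeta$ as in Theorem~\ref{thm:CharacterizationAutomorphismDirectProductOfMinimaxGroups}, with permutation $\sigma$. Here $\theta$ is a monomorphism sending $\Gamma_i$ into $\Gamma_{\sigma(i)}$. We record the restrictions $\theta_i\colon\Gamma_i\to\Gamma_{\sigma(i)}$; each is injective.

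Fix a cycle $(i_1\,i_2\,\cdots\,i_m)$ of $\sigma$. The composite $\theta_{i_m}\circ\cdots\circ\theta_{i_1}\colon\Gamma_{i_1}\to\Gamma_{i_1}$ is a monomorphism. Since $\Gamma_{i_1}$ is co-Hopfian, this composite is surjective. Hence the last map $\theta_{i_m}\colon\Gamma_{i_m}\to\Gamma_{i_1}$ is surjective.

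Applying the same argument starting at each element of the cycle shows that every $\theta_i$ is surjective. Therefore $\theta$ is an automorphism. By part (2) of Proposition~\ref{prop:virtuallypolycyclicGroupsInjectiveIFFinjectiveandAutomorphismIFFAutomorphism}, applied to $\Gamma$ with hull $\HH=\prod\HH_i$, the map $\varphi$ is an automorphism.

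To apply that proposition we must check that $\HH^\circ$ has no non-trivial abelian $\mathbb Q$-factor. In the non-abelian-identity-component case this was already established in the proof of Theorem~\ref{thm:CharacterizationAutomorphismDirectProductOfMinimaxGroups}. Infinite dihedral factors (Lemma~\ref{lemma:infinitedihedral}) need separate care. They are centerless and co-Hopfian-relevant only through the block splitting described in that proof. Alternatively, one can bypass the proposition entirely by noting that the addendum only requires the converse direction. Here is how that works. The hull automorphism is $\Phi=\Theta\cdot\Psi$ with $\Theta(\Gamma)=\Gamma$. Write $\Phi=\Theta\circ E$ with $E(h)=h\,\Delta(h)$ and $\Delta=\Theta^{-1}\Psi$, where $\Delta^2=1$. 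Then $E^{\pm1}(\Gamma)=\Gamma$, exactly as in that proof.

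\emph{$\Gamma$ co-Hopfian $\Rightarrow$ factors co-Hopfian.} Suppose some $\Gamma_j$ admits a non-surjective monomorphism $\alpha$. Define $\theta=\operatorname{Diag}(\mathrm{Id},\dots,\alpha,\dots,\mathrm{Id})$ and take $\zeta$ trivial. By the converse part of Theorem~\ref{thm:CharacterizationAutomorphismDirectProductOfMinimaxGroups}, or simply by direct inspection, $\theta$ is a monomorphism of $\Gamma$. It is not surjective, since its image meets $\Gamma_j$ only in $\alpha(\Gamma_j)\neq\Gamma_j$. So $\Gamma$ is not co-Hopfian.

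\emph{Main obstacle.} The only delicate point is the cycle argument. Each $\theta_i$ lands in $\Gamma_{\sigma(i)}$ but need not a priori be onto. Co-Hopficity of a single factor in each cycle propagates surjectivity along the whole cycle, because for injective maps, surjectivity of a composite forces surjectivity of its last factor. Beyond this, the work reduces to verifying the hypotheses of Proposition~\ref{prop:virtuallypolycyclicGroupsInjectiveIFFinjectiveandAutomorphismIFFAutomorphism}, or directly invoking the ``Moreover''/converse mechanism of Theorem~\ref{thm:CharacterizationAutomorphismDirectProductOfMinimaxGroups}.
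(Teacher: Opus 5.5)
Your proof is essentially the paper's. The paper notes that $\theta^{r!}$ is diagonal with injective components and uses co-Hopficity to conclude that $\theta$ is onto, which is a coarser form of your cycle-composite argument. It then uses that $\varphi=\theta\cdot\zeta$ is an automorphism once $\theta$ is, via the mechanism of part (2) of Proposition~\ref{prop:virtuallypolycyclicGroupsInjectiveIFFinjectiveandAutomorphismIFFAutomorphism}, and observes that the other implication holds for any direct product.

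Your paragraph on infinite dihedral factors can be dropped, and your bypass would not avoid that issue anyway, since $\Delta^2=1$ is derived there from $Z(\HH)\subset[\HH,\HH]$. The case simply cannot occur: $\mathcal D_\infty$ is not co-Hopfian, because $a\mapsto a^3$, $s\mapsto s$ is a non-surjective monomorphism. So if all $\Gamma_i$ are co-Hopfian, Lemma~\ref{lemma:infinitedihedral} forces every $\HH_i^\circ$ to be non-abelian, and the proposition applies directly.
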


\begin{proof}
It is easy to see that the implication
\[\Gamma = \prod_{i=1}^r \Gamma_i \text{ is co-Hopfian} \implies \text{every \(\Gamma_i\) is co-Hopfian}\] 
 always holds, for any direct product of groups. It is only for the converse implication that we need to apply  Theorem~\ref{thm:CharacterizationAutomorphismDirectProductOfMinimaxGroups}.
 
 So assume that every factor \(\Gamma_i\) is co-Hopfian. Let \(\varphi\colon \Gamma \to \Gamma\) be a monomorphism of \(\Gamma\). We show that \(\varphi\) is an automorphism. Factorize \(\varphi = \theta \cdot \zeta\) as in Theorem~\ref{thm:CharacterizationAutomorphismDirectProductOfMinimaxGroups}, with associated permutation of the factors \(\sigma \in \mathcal S_r\).  Since \(\sigma^{r!} = \mathrm{Id}\), the iterate \(\theta^{r!}\) preserves each factor, so
 \[
 \theta^{r!} = \operatorname{Diag}(\theta_1, \theta_2,\ldots,\theta_r), \qquad \text{\(\theta_i\colon \Gamma_i \to \Gamma_i\) injective.}
 \]
By co-Hopfianity, each \(\theta_i\) is an automorphism, hence \(\theta^{r!}\) is an automorphism. Therefore \(\theta\) is surjective, so \(\theta\) is an automorphism. By Theorem~\ref{thm:CharacterizationAutomorphismDirectProductOfMinimaxGroups}, \(\varphi\) is an automorphism. 
\end{proof}

\subsection{Twisted conjugacy and Reidemeister numbers}\label{subsec:twistedConjugacy}

Given an endomorphism \(\varphi\) of a group \(G\), we define an equivalence relation on \(G\) by declaring \(x,y\in G\) to be \(\varphi\)-conjugate if \(x=zy\varphi(z)^{-1}\) for some \(z\in G\). Denote the resulting \(\varphi\)-conjugacy class by \([x]_{\varphi}\). Note that when \(\varphi = \mathrm{Id}\), we recover the usual conjugacy relation. Let \(\mathcal R[\varphi]\) be the set of all \(\varphi\)-conjugacy classes, and define the \emph{Reidemeister number}  of \(\varphi\) by \(R(\varphi):= \# \mathcal R[\varphi]\). Since \(G\) is usually countable, one writes \(R(\varphi)\in \mathbb N_{>0}\cup\{\infty\}\). The set \(\operatorname{Spec}_{\mathrm{R}}(G) = \{R(\varphi) \mid \varphi \in \operatorname{Aut}(G) \}\) is called the \emph{Reidemeister spectrum} of \(G\) and \(G\) is said to have the \emph{\(R_\infty\)-property} if \(\operatorname{Spec}_{\mathrm{R}}(G) = \{\infty\}\).  

The goal of this subsection is to prove Theorem~\ref{thm:E-Reidemeister} and Corollary~\ref{cor:F-spectrum} from the introduction.

\begin{letterthmbody}\label{thm:Reidemeister}
	Let $\Gamma=\Gamma_1\times\cdots\times \Gamma_r$ be as in Theorem~\ref{thm:CharacterizationAutomorphismDirectProductOfMinimaxGroups}.  
Let $\varphi\colon \Gamma\to\Gamma$ be a monomorphism.  	
Decompose $\varphi=\theta\cdot\zeta$ as in Theorem~\ref{thm:CharacterizationAutomorphismDirectProductOfMinimaxGroups}. 
Then
\(
R(\varphi)=R(\theta).
\) 
\end{letterthmbody}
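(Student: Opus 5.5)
The plan is to compare twisted conjugacy classes of \(\varphi\) and \(\theta\) by factoring \(\varphi\) through \(\theta\) and an automorphism of \(\Gamma\) that moves only central, off-diagonal coordinates. By Theorem~\ref{thm:CharacterizationAutomorphismDirectProductOfMinimaxGroups}, \(\varphi=\theta\cdot\zeta\), where \(\zeta\) is central and satisfies \(\pi_{\sigma(i)}(\zeta(\Gamma_i))=\{e\}\). I would reduce as in the proof of Proposition~\ref{prop:virtuallypolycyclicGroupsInjectiveIFFinjectiveandAutomorphismIFFAutomorphism}, after setting aside the dihedral block of Lemma~\ref{lemma:infinitedihedral}, which is centerless so that \(\zeta\) vanishes there. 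I would then pass to the hull \(\HH\) with \(\Theta,\Psi\) extending \(\theta,\zeta\). Since \(\Theta\) is a \(\mathbb Q\)-automorphism of \(\HH\), the map \(\Delta:=\Theta^{-1}\circ\Psi\) is central, and \(\Delta^2=1\) because \(Z(\HH^\circ)\subset[\HH^\circ,\HH^\circ]\) (Proposition~\ref{prop:K-indecomposableGroupsIFFK-algebraicallyIndecomposableLiealgebras}, with the non-connected part handled via \(Z(\HH)\subset\U\)). This gives \(\Phi=\Theta\circ E\) with \(E(h)=h\Delta(h)\).

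The obstacle is that, when \(\theta\) is only injective, \(\Delta(\Gamma)\) need not lie in \(\Gamma\), so \(E\) need not restrict to \(\Gamma\). I would therefore avoid \(E\) and compare the two twisted relations directly on \(\Gamma\).

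\textbf{Step 1: a fixed-point-free decomposition via the center.} Set \(Z:=Z(\Gamma)\), so that \(\zeta(\Gamma)\subset Z\) and \(\zeta([\Gamma,\Gamma])=1\). Moreover \(\zeta(Z)=1\): indeed \(Z\subset Z(\HH)\cap\Gamma\), \(Z(\HH)^\circ\subset[\HH,\HH]\), and \(\Psi\) kills \([\HH,\HH]\). A finite-index issue from the component group remains, but \(Z(\HH)\subset\U\subset\HH^\circ\) should handle it. Now use the standard addition formula for Reidemeister numbers along the \(\theta\)- and \(\varphi\)-invariant central subgroup \(Z\). The classes of \(\varphi\) fiber over the classes of \(\bar\varphi\) on \(\Gamma/Z\). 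Here \(\bar\varphi=\bar\theta\), because \(\varphi\) and \(\theta\) agree modulo \(Z\).

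\textbf{Step 2: comparing fibers.} For a class \([xZ]_{\bar\theta}\), the fiber under \(\varphi\) is the set of orbits of \(Z\) under the action \(z\mapsto c\,z\,\varphi'(c)^{-1}\) twisted by the stabilizer. Explicitly, the fiber is \(Z/\{y\,z\,x\varphi(y)^{-1}x^{-1} : \dots\}\); this is the known formula for central extensions, namely \(Z\) modulo the image of \(\mathrm{Id}-\varphi|_Z\) combined with contributions from \(\mathrm{Fix}\) of \(\tau_x\bar\varphi\). On \(Z\) we have \(\varphi|_Z=\theta|_Z\cdot\zeta|_Z=\theta|_Z\). The remaining contributions have the form \(y\,x\varphi(y)^{-1}x^{-1}\in Z\) for \(y\) with \(yZ\) fixed by \(\tau_x\bar\varphi\), and these differ from the \(\theta\)-versions exactly by \(\zeta(y)^{-1}\). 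So the key task is to show that \(\zeta\) kills every such \(y\), or at least maps it into the \(\theta\)-subgroup already being quotiented. This is the step I expect to be hardest.

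\textbf{Step 3: the key claim.} To prove it, I would use off-diagonality. Decompose \(\sigma\) into cycles and reduce to a single cycle by product decomposition; in a single cycle of length \(\ge2\), off-diagonality alone does not kill \(\zeta\). The alternative I would try first is a bijection between twisted classes. The map \(x\mapsto x\) does not work. Instead, consider on \(\HH\) the relation \(\Phi=\Theta\circ E\) and note that \(R\) is invariant in the sense \(R(\Theta\circ E)=R(E\circ\Theta)\) (conjugate by \(E\)), and the latter equals \(\Theta\cdot(\Delta\circ\Theta)\). Iterating this with \(\Delta^2=1\) and the off-diagonality of \(\Delta\) relative to \(\sigma\), I would aim to show that the \(\varphi\)-classes and \(\theta\)-classes of \(\Gamma\) correspond via \(x\mapsto x\,\eta(x)\) for a suitable central correction \(\eta\) built from \(\zeta\) and \(\theta^{-1}\) on \(\Gamma\cap\HH\). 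I would verify directly that this correction preserves \(\Gamma\) by using the matrix of component maps.
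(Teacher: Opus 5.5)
\textbf{There is a genuine gap: your Step~3 never proves the key claim.}

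Your Steps~1 and~2 set up the right framework, and it is the same one the paper uses. You fiber the twisted classes over $\mathcal R[\bar\varphi]=\mathcal R[\bar\theta]$ on $\Gamma/Z(\Gamma)$. You note that $\varphi|_{Z(\Gamma)}=\theta|_{Z(\Gamma)}$, because $\zeta$ kills $Z(\Gamma)\subset Z(\HH)\subset[\HH,\HH]$. You then correctly observe that the groups acting on $\mathcal R[\varphi|_{Z(\Gamma)}]$ in the sum formula differ only by the values $\zeta(y)$, for $y$ in the preimage $H$ of $\operatorname{Stab}_{\bar\varphi}(xZ(\Gamma))$.

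Step~3, however, is a list of attempts rather than an argument:
\begin{itemize}
\item Off-diagonality plays no role in this theorem; only centrality of $\zeta$ and $\zeta|_{Z(\Gamma)}=1$ are needed, so the cycle analysis leads nowhere.
\item The $\HH$-level identity $R(\Theta\circ E)=R(E\circ\Theta)$ does not count classes in $\Gamma$. As you note yourself, $E$ need not preserve $\Gamma$, and the correcting map $x\mapsto x\eta(x)$ is never constructed.
\item The claim that $\zeta$ kills every such $y$ is false unconditionally. When $R(\bar\varphi)=\infty$ the stabilizers can be infinite. For instance, with $\theta=\mathrm{Id}$ and a non-trivial off-diagonal central $\zeta$ on a product of Heisenberg groups, $H$ is the whole group and $\zeta|_H\neq 1$.
\end{itemize}

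\textbf{The missing idea is a case split on $R(\bar\varphi)$ together with finiteness of twisted stabilizers.}

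If $R(\bar\varphi)=\infty$, then $R(\varphi)$ and $R(\theta)$ are both infinite. This holds because $\mathcal R[\varphi]\to\mathcal R[\bar\varphi]$ is surjective (Lemma~\ref{lemma:ReidemeisterNumberOfQuotientIsSmallerThanReidemeisterNumber}), and likewise for $\theta$.

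If $R(\bar\varphi)<\infty$, proceed in three steps.
\begin{enumerate}
\item Show that $\Gamma/Z(\Gamma)$ is finitely generated and residually finite. This needs an argument, since $\Gamma/Z(\Gamma)$ may have torsion. A Schur-theorem argument shows that it is again a finitely generated virtually solvable minimax group without non-trivial finite normal subgroups (Lemma~\ref{lemma:AssumptionsOfGeneralTheoremReidemeisterProduct}). Such groups are residually finite.
\item Apply Lemma~\ref{lemma:finitelygeneratedresiduallyfinitegroupWithFiniteReidemeisterNumberHasFiniteStabilizers}: finite Reidemeister number on a finitely generated residually finite group forces every $\operatorname{Stab}_{\bar\varphi}(xZ(\Gamma))$ to be finite.
\item Since $\zeta|_{Z(\Gamma)}=1$, the restriction $\zeta|_H$ factors through the finite group $H/Z(\Gamma)$. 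So $\zeta(H)$ is a finite subgroup of the torsion-free group $Z(\Gamma)$, hence trivial.
\end{enumerate}
Thus $\varphi|_H=\theta|_H$ on each relevant $H$. The actions in Lemma~\ref{lemma:twistedConjugacySumFormula} coincide fiber by fiber, and $R(\varphi)=R(\theta)$ follows. No cycle decomposition and no passage to $E$ on $\HH$ is needed.
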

The hypothesis that the identity components \(\HH^\circ_i\) are \(\mathbb Q\)-indecomposable is again essential in Theorem~\ref{thm:Reidemeister}. Indeed, for the directly indecomposable torsion-free polycyclic group \(\Gamma = \mathbb Z^4 \rtimes_{A} \mathbb Z\) from Example~\ref{example:Q-indecomposabilityPolycyclic}, there even exist automorphisms \(\varphi, \theta \in \operatorname{Aut}(\Gamma \times \Gamma)\) of the form described in Theorem~\ref{thm:CharacterizationAutomorphismDirectProductOfMinimaxGroups}, such that \(R(\varphi) \neq R(\theta)\) \emph{and} both are finite; see the next example.

\begin{example} Let \(\Gamma = \mathbb Z^4 \rtimes_A \mathbb Z\) be the directly indecomposable torsion-free polycyclic group from Example~\ref{example:Q-indecomposabilityPolycyclic}, with Zariski connected but \(\mathbb Q\)-decomposable hull \(\HH\). Recall that \(F = \operatorname{Fitt}(\Gamma) = \mathbb Z^4\).
	
	Define \(\theta_0\colon \Gamma \to \Gamma\) by \(\theta_0(v,k) = (Dv,-k)\), where 
	\[
	D = \begin{bmatrix}
		-1&-1&0&0\\
		2&1&0&0\\
		0&0&-1&1\\
		0&0&-2&1
	\end{bmatrix} \in \operatorname{GL}_4(\mathbb Z).
	\]
	Since \(A^{-1}D = DA\), we have that \(\theta_0\) is a homomorphism. Since \(\det D = 1\), it is an automorphism. As in Example~\ref{example:Q-indecomposabilityPolycyclic}, define central endomorphisms \(\zeta_i\colon \Gamma \to Z(\Gamma)\) by \(\zeta_i(v,k) = (L_iv,0)\), where
	\[
	L_1 = \begin{bmatrix}
		0&0&0&0\\
		2&2&-1&0\\
		0&0&0&0\\
		0&0&0&0
	\end{bmatrix}, \qquad 	L_2 = \begin{bmatrix}
		0&0&0&0\\
		-2&-2&1&0\\
		0&0&0&0\\
		0&0&0&0
	\end{bmatrix} = -L_1.
	\]
	Next, define endomorphisms of \(\Gamma \times \Gamma\):  
	\[
	\theta = \begin{bmatrix}
		\theta_0 & 0 \\
		0	& \theta_0
	\end{bmatrix}, \quad \zeta = \begin{bmatrix}
		0 & \zeta_1 \\
		\zeta_2 & 0
	\end{bmatrix}, \quad \varphi = \theta \cdot \zeta = \begin{bmatrix}
		\theta_0 & \zeta_1 \\
		\zeta_2 & \theta_0 
	\end{bmatrix}.
	\]

	\medskip 
	
	\noindent  \textit{Reidemeister number of \(\theta\).} The automorphism \(\theta_0\) preserves \(F\), and the induced maps are \(\theta_0'= D\) on \(F \cong \mathbb Z^4\) and \(\overline{\theta}_0 = -1\) on \(\Gamma /F = \mathbb Z\). Hence the Reidemeister classes induced on \(\mathbb Z\) are the cosets 
	\[
	\mathcal R \big[\overline{\theta}_0\big] = {\mathbb Z}/({(1 - (-1)) \mathbb Z} = \big\{  2 \mathbb Z, 1 + 2\mathbb Z \big\}.
	\]
	By the \emph{addition formula} \cite[Lemma 2.1]{GW03}, we have
	\[
	R(\theta_0) = \sum_{[\overline{\alpha}] \in \mathcal{R}[\overline{\theta}_0]} R\big(\tau_{\alpha} \circ \theta_0' \big) = R\big( \theta_0') + R(\tau_{t} \circ \theta_0' ) = R(D) + R(AD),
	\]
	where \(\tau_\alpha(v) = \alpha v \alpha^{-1}\) for \(v\in F\) denotes conjugation. Since \(F\) is abelian,
	\[
	R(D) + R(AD) = |\det(I_4 - D)| + |\det(I_4 - AD)| = 4+4 = 8.
	\]
	Therefore \(R(\theta) = R\big(\operatorname{Diag}(\theta_0, \theta_0)\big) = R(\theta_0)^2 = 64.\)
	\medskip
	
	\noindent \textit{Reidemeister number of \(\varphi\).} The Fitting subgroup of \(\Gamma \times \Gamma\) is \(F\times F\), and \(\varphi(F\times F) \leq F\times F\). The induced endomorphisms of \(F \times F\cong \mathbb Z^8\) and \((\Gamma \times \Gamma)/(F \times F) \cong \mathbb Z^2\) are 
	\[
	\varphi' = \varphi|_{F\times F} = \begin{bmatrix}
		D & L_1 \\ 
		L_2 & D
	\end{bmatrix}, \qquad  \overline{\varphi} = -I_2.
	\]
	Since \(\det(\varphi') = \det(\overline{\varphi}) = 1\), both are automorphisms, hence \(\varphi \in \operatorname{Aut}(\Gamma \times \Gamma)\). 
	
	On the abelian group \(\mathbb Z^2\), the Reidemeister classes of \(\overline{\varphi}\) are the four cosets
	\[
	\mathcal R\big[\overline{\varphi}\big] =	{\mathbb Z^2}/{\big(I_2 - (-I_2)\big) \mathbb Z^2} = \big\{ 2\mathbb Z^2, (1,0) + 2\mathbb Z^2, (0,1) + 2\mathbb Z^2, (1,1) + 2\mathbb Z^2 \big\}. 
	\]
	Again by the addition formula \cite[Lemma 2.1]{GW03}, we have
	\[
	R(\varphi) = \sum_{[\overline{\alpha}] \in \mathcal R[\overline{\varphi}]} R\big(\tau_{\alpha} \circ \varphi'\big).
	\]
	If \(\alpha = \big((v_1,k_1), (v_2,k_2)\big) \in \Gamma \times \Gamma\), then \(\tau_\alpha\) acts on \(F \times F\) as \(\operatorname{Diag}(A^{k_1},A^{k_2})\). We compute   
	\[
	R\left(\tau_{\big((v_1,k_1),(v_2,k_2)\big)} \circ \varphi'\right) = \left| \det \left( I_8 - \begin{bmatrix}
		A^{k_1} & 0 \\ 0 & A^{k_2}
	\end{bmatrix} \begin{bmatrix}
		D & L_1 \\ L_2 & D
	\end{bmatrix} \right) \right| = 32. 
	\]
	Thus \(R(\varphi) = 4 \cdot 32 = 128\). In particular, \(R(\varphi)\) and \(R(\theta)\) are both finite and \(R(\varphi)\neq R(\theta)\).  \demo 
\end{example}

\subsubsection{Computing \texorpdfstring{\(R(\theta)\)}{R(θ)} and the Reidemeister spectrum} 
 The Reidemeister number \(R(\theta)\) in Theorem \ref{thm:Reidemeister} may be computed using a product formula which applies to any such \emph{permutation-shaped endomorphisms} of a direct product of groups.  The general procedure is outlined below.

\medskip  

Let \(G=G_1\times G_2 \times \cdots \times G_r\) be any direct product of groups and fix \(\sigma \in \mathcal S_r\). Assume we are given (not necessarily injective) homomorphisms 
\[\phi_i\colon G_{\sigma^{-1}(i)} \to G_i \qquad \text{for \(i = 1,2,\ldots,r\)}, \] and let \(\phi\colon G \to G\) be the associated endomorphism permuting the factors:
\[
\phi(x_1, x_2,\ldots,x_r) = \big( \phi_1(x_{\sigma^{-1}(1)}),\phi_2(x_{\sigma^{-1}(2)}),\ldots,\phi_r(x_{\sigma^{-1}(r)}) \big).
\]
Let \(\sigma = C_1 \circ C_2 \circ \cdots \circ C_s\) be any disjoint cycle decomposition of \(\sigma\). For a cycle
\[
C_j = \begin{pmatrix}
	i_1 & i_2 & \cdots & i_m
\end{pmatrix},\quad \text{i.e. \(\sigma(i_{t}) = i_{t+1}\) and \(\sigma(i_m)=i_1\)},\]
 we define the endomorphism \(\phi_{C_j}\colon G_{i_1} \to G_{i_1}\) as the following composition: 
\[
G_{i_1} \overset{\phi_{i_2}}{\longrightarrow} G_{i_2} \overset{\phi_{i_3}}{\longrightarrow} G_{i_3} \overset{\phi_{i_4}}{\longrightarrow} \cdots \overset{\phi_{i_m}}{\longrightarrow} G_{i_m} \overset{\phi_{i_1}}{\longrightarrow} G_{i_1}.
\] 

\begin{proposition}\label{prop:productFormulaPermutationShapedEndomorphisms} With the above notation, \[R(\phi) = \prod_{j=1}^s  R(\phi_{C_j}).\] In particular, if \(\sigma = \mathrm{Id},\) then \(R(\phi) = \prod_{i=1}^r R(\phi_i)\). 
\end{proposition}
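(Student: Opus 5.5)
The plan is to reduce first to a single cycle and then to a single factor. Disjoint cycles of \(\sigma\) partition \(\{1,\dots,r\}\) into \(\sigma\)-invariant blocks, so \(G=\prod_j G_{(C_j)}\), where \(G_{(C_j)}\) is the product of the factors indexed by \(C_j\). Since \(\phi\) maps each \(G_{(C_j)}\) into itself, it is a diagonal endomorphism \(\phi=\operatorname{Diag}(\phi|_{G_{(C_1)}},\dots,\phi|_{G_{(C_s)}})\).

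For any diagonal endomorphism \(\psi=\operatorname{Diag}(\psi_1,\psi_2)\) of \(A\times B\), the relation \((a,b)\sim (za\psi_1(z)^{-1},wb\psi_2(w)^{-1})\) is componentwise. Hence \(\mathcal R[\psi]\cong\mathcal R[\psi_1]\times\mathcal R[\psi_2]\) as sets, and \(R(\psi)=R(\psi_1)R(\psi_2)\), with the convention that \(\infty\cdot n=\infty\) for \(n\geq 1\). Induction then gives \(R(\phi)=\prod_j R(\phi|_{G_{(C_j)}})\). The case \(\sigma=\mathrm{Id}\) is immediate, since every cycle has length one and \(\phi_{C_j}=\phi_{i}\).

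It remains to show that \(R(\phi)=R(\phi_{C})\) when \(\sigma\) is a single cycle \((i_1\, i_2\,\cdots\, i_m)\). Relabel so that \(G=G_1\times\cdots\times G_m\) and \(\phi(x_1,\dots,x_m)=(\phi_1(x_m),\phi_2(x_1),\dots,\phi_m(x_{m-1}))\). Then \(\phi_C=\phi_1\phi_m\cdots\phi_2\colon G_1\to G_1\). Define \(\iota\colon G_1\to G\) by \(x\mapsto (x,e,\dots,e)\). The claim is that \(\iota\) induces a bijection \(\mathcal R[\phi_C]\to\mathcal R[\phi]\).

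For surjectivity, I use twisting by elements supported in one coordinate to move mass backward along the cycle. Given \((x_1,\dots,x_m)\), choose \(z=(e,\dots,e,x_m,e)\) in coordinate \(m\). Then \(z x\phi(z)^{-1}\) has coordinate \(m\) equal to \(x_m\cdot x_m\cdot\phi_m(e)\)... It is cleaner to use \(z\) with \(z_m=x_m^{-1}\) placed so that the new \(m\)-th coordinate becomes trivial: \((zx\phi(z)^{-1})_m=z_m x_m\phi_m(z_{m-1})^{-1}\). So I take \(z_m=x_m^{-1}\) and \(z_{m-1}=e\), which modifies coordinate \(1\) by \(\phi_1(z_m)^{-1}\). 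Successively killing coordinates \(m,m-1,\dots,2\) in this way leaves an element of \(\iota(G_1)\).

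For injectivity, suppose \(\iota(a)=z\,\iota(b)\,\phi(z)^{-1}\). Coordinatewise this reads \(a=z_1b\phi_1(z_m)^{-1}\) and \(e=z_k\phi_k(z_{k-1})^{-1}\) for \(k\geq 2\). The latter equations give \(z_k=\phi_k\cdots\phi_2(z_1)\), hence \(\phi_1(z_m)=\phi_C(z_1)\), and so \(a=z_1b\phi_C(z_1)^{-1}\). This shows that \(\iota\) respects the relation in both directions.

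Well-definedness also needs checking: if \(a=wb\phi_C(w)^{-1}\), take \(z=(w,\phi_2(w),\phi_3\phi_2(w),\dots)\). The same computation run backward shows \(\iota(a)\sim\iota(b)\).

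I expect the only delicate point to be the bookkeeping in the surjectivity step, namely the order of the coordinates and the direction of the cycle, so that the composite comes out as \(\phi_C\) exactly as defined, rather than as a cyclic rotation of it. Any such rotation has the same Reidemeister number anyway, by the standard identity \(R(\alpha\beta)=R(\beta\alpha)\), which could be invoked as a fallback.
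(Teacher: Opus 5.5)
Your plan is the paper's: split $G$ along the cycles of $\sigma$, compute Reidemeister classes of a diagonal endomorphism coordinatewise, and on one cycle block show that $\iota\colon x\mapsto (x,e,\dots,e)$ induces a bijection $\mathcal R[\phi_C]\to\mathcal R[\phi]$. The paper defers this last step to Senden's telescoping argument. Your block reduction, well-definedness check and injectivity computation are correct. The surjectivity step, however, fails as written once $m\geq 3$.

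Take $z$ supported in coordinate $k$ with $z_k=x_k^{-1}$. Then $zx\phi(z)^{-1}$ has trivial coordinate $k$, and its coordinate $k+1$ (indices mod $m$) becomes $x_{k+1}\phi_{k+1}(x_k)$. So twisting moves mass \emph{forward} along $\sigma$, not backward. After you clear coordinate $m$, clearing coordinate $m-1$ writes $\phi_m(x_{m-1})$ back into coordinate $m$. Your order $m,m-1,\dots,2$ therefore ends at $\bigl(x_1\phi_1(x_m),e,\phi_3(x_2),\dots,\phi_m(x_{m-1})\bigr)$, which in general is not in $\iota(G_1)$. You also cannot clear coordinate $k$ by twisting in coordinate $k-1$: that needs $x_k\in\phi_k(G_{k-1})$, and the $\phi_k$ need not be surjective.

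The fix is to clear coordinates in the order $2,3,\dots,m$, so the last twist deposits everything into coordinate $1$. Equivalently, take $z_1=e$ and $z_k=\phi_k(z_{k-1})\,x_k^{-1}$ for $k=2,\dots,m$; this makes coordinates $2,\dots,m$ trivial at once and leaves $\iota\bigl(x_1\phi_1(z_m)^{-1}\bigr)$. Your fallback $R(\alpha\beta)=R(\beta\alpha)$ does not help here. The problem is not which rotation of $\phi_C$ appears, but whether the reduction lands in $\iota(G_1)$ at all. With this correction your argument is complete.
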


This proposition applies to the endomorphism \(\theta\colon \Gamma \to \Gamma\) in Theorem~\ref{thm:Reidemeister}, as it is given by 
\[
\theta(x_1, x_2,\ldots,x_r) = \big( \theta_1(x_{\sigma^{-1}(1)}),\theta_2(x_{\sigma^{-1}(2)}),\ldots,\theta_r(x_{\sigma^{-1}(r)}) \big)
\]
for some injective morphisms \(\theta_i\colon \Gamma_{\sigma^{-1}(i)}\to \Gamma_i\). 

\begin{proof}[Proof of Proposition \ref{prop:productFormulaPermutationShapedEndomorphisms}]
This is proved in \cite[Proposition~2.9]{Sen21} in the special case where \(G_1 = G_2 = \cdots = G_r\). The same telescoping argument applies here: 
one first decomposes $G=\prod_{j=1}^s \prod_{i\in C_j} G_i$ along the cycles $C_j$ of $\sigma$ and reduces to a single
cycle block. On one cycle block the computation is identical to \cite{Sen21}, since it only uses
cancellation in the twisted conjugacy relation and the composability of the maps along the cycle. We omit the routine details. 
\end{proof}

\begin{lettercorbody}
	\label{cor:ReidemeisterSpectrum}
	Let \(\Gamma_1, \ldots, \Gamma_k\) be \emph{non-isomorphic} non-abelian finitely generated virtually solvable minimax groups and let \(r_1, \ldots, r_k \geq 1\).  Assume that each \(\Gamma_i\) has a \(\mathbb Q\)-algebraic hull \(\HH_i\) with \(\mathbb Q\)-indecomposable identity component \(\HH_i^\circ\). Put \(\Gamma = \Gamma_1^{r_1} \times \cdots \times \Gamma_k^{r_k}\). Then  
	\[
	\mathrm{Spec}_{\mathrm{R}}(\Gamma)=\prod_{i=1}^k \left( \bigcup_{j = 1}^{r_i}  \mathrm{Spec}_{\mathrm{R}}(\Gamma_i)^{(j)} \right).
	\]
	In particular, \(\Gamma\) has the \(R_\infty\)-property if and only if  some factor \(\Gamma_i\) has the \(R_\infty\)-property. 
\end{lettercorbody}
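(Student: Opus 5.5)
The plan is to reduce the computation of $R(\varphi)$ for every $\varphi\in\operatorname{Aut}(\Gamma)$ to permutation-shaped automorphisms, then use Proposition~\ref{prop:productFormulaPermutationShapedEndomorphisms} and identify which permutations can occur. Let $\varphi\in\operatorname{Aut}(\Gamma)$. Theorem~\ref{thm:CharacterizationAutomorphismDirectProductOfMinimaxGroups} gives $\varphi=\theta\cdot\zeta$, where $\theta\in\operatorname{Aut}(\Gamma)$ maps each factor onto an isomorphic factor via a permutation $\sigma$. Theorem~\ref{thm:Reidemeister} then gives $R(\varphi)=R(\theta)$. Hence $\operatorname{Spec}_{\mathrm R}(\Gamma)$ equals the set of Reidemeister numbers of automorphisms $\theta$ that permute factors by isomorphisms. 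Since the $\Gamma_i$ are pairwise non-isomorphic, $\sigma$ preserves each block $\Gamma_i^{r_i}$. So $\theta$ is diagonal with respect to the blocks, and by Proposition~\ref{prop:productFormulaPermutationShapedEndomorphisms} (grouping the cycles by block) $R(\theta)=\prod_{i=1}^k R(\theta|_{\Gamma_i^{r_i}})$. Conversely, any choice of such block automorphisms assembles to a permutation-shaped automorphism of $\Gamma$. This reduces the claim to a single block, $\operatorname{Spec}$ of permutation-shaped automorphisms of $G^{r}$ with $G=\Gamma_i$, $r=r_i$.

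For a single block, let $\theta$ permute the $r$ copies of $G$ via $\sigma$, with isomorphisms $\theta_j\colon G\to G$. Decompose $\sigma$ into $s$ disjoint cycles, $1\le s\le r$. Each cycle composition $\theta_{C_j}$ is a composition of automorphisms of $G$, hence lies in $\operatorname{Aut}(G)$. Proposition~\ref{prop:productFormulaPermutationShapedEndomorphisms} then gives $R(\theta)=\prod_{j=1}^s R(\theta_{C_j})\in\operatorname{Spec}_{\mathrm R}(G)^{(s)}$, which proves the inclusion $\subseteq$. For $\supseteq$, take $1\le s\le r$ and $\alpha_1,\dots,\alpha_s\in\operatorname{Aut}(G)$. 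Choose $\sigma$ with exactly $s$ cycles, for instance $s-1$ fixed points and one cycle of length $r-s+1$. On each cycle, put $\alpha_j$ on one arrow and identity maps on the remaining arrows, so that $\theta_{C_j}=\alpha_j$. This gives $R(\theta)=\prod_j R(\alpha_j)$ with $\theta$ an automorphism, so every element of $\operatorname{Spec}_{\mathrm R}(G)^{(s)}$ is realized. Taking the union over $s$ and the product over blocks yields the displayed formula.

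For the $R_\infty$ statement, the convention $\infty\cdot n=\infty$ applies. If some $\Gamma_i$ has $\operatorname{Spec}_{\mathrm R}(\Gamma_i)=\{\infty\}$, then every power $\operatorname{Spec}_{\mathrm R}(\Gamma_i)^{(j)}$ equals $\{\infty\}$, so every product is $\infty$. If no $\Gamma_i$ has $R_\infty$, pick a finite value in each $\operatorname{Spec}_{\mathrm R}(\Gamma_i)$ and take $j=1$; the resulting product is a finite element of $\operatorname{Spec}_{\mathrm R}(\Gamma)$.

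The main point requiring care is the claim that $\sigma$ preserves blocks. Theorem~\ref{thm:CharacterizationAutomorphismDirectProductOfMinimaxGroups} says that for automorphisms $\theta$ maps $\Gamma_i$ \emph{onto} an isomorphic factor. Non-isomorphism of the distinct $\Gamma_i$ therefore forbids mixing blocks, and no comparison of hulls is needed. Everything else is bookkeeping with Proposition~\ref{prop:productFormulaPermutationShapedEndomorphisms}.
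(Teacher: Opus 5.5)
Your proposal is correct and follows the paper's route. The paper omits the details and defers to Senden's proof of \cite[Theorem 3.6]{Sen24}, with Theorem~\ref{thm:Reidemeister} in place of Senden's Proposition~3.4 and with Proposition~\ref{prop:productFormulaPermutationShapedEndomorphisms}; your argument (reduce to factor-permuting automorphisms, use non-isomorphism to keep $\sigma$ within blocks, then compute and realize each block's spectrum via the cycle product formula) writes out exactly those omitted details.
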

\begin{proof}
	The proof is identical to that of \cite[Theorem 3.6]{Sen24}, with \cite[Proposition 3.4]{Sen24} replaced by Theorem~\ref{thm:Reidemeister}, using Proposition~\ref{prop:productFormulaPermutationShapedEndomorphisms}. We therefore omit the routine details.
\end{proof}

\subsubsection{Twisted conjugacy preliminaries}\label{subsubsec:tcp} The remainder of the current section is devoted to the proof of Theorem~\ref{thm:Reidemeister}. First, we introduce the necessary technical preliminaries on twisted conjugacy. We follow the notation of \cite[\S1.2]{Sen23}, where all details are carefully worked out.

Let \(\varphi\) be an endomorphism of a group \(G\). For \(g \in G\), we define the \(\varphi\)\emph{-stabilizer} of \(g\) by 
\[
	\operatorname{Stab}_{\varphi}(g) := \{ x \in G \mid x g \varphi(x)^{-1} = g\}.
\]
Let \(C \leq Z(G)\) be a central subgroup such that \(\varphi(C) \leq C\). Let \(\pi\colon G \to G/C\) denote the canonical projection, and let \(\varphi|_{C}\) and \(\overline{\varphi}\) denote the induced endomorphisms of \(C\) and \(G/C.\)  
First we state a fact which holds in general.

\begin{lemma} \label{lemma:ReidemeisterNumberOfQuotientIsSmallerThanReidemeisterNumber}
 With the above notation, 
 \[R\big(\overline{\varphi}\big) \leq R(\varphi). \]
\end{lemma}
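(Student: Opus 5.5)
The plan is to show that the projection $\pi\colon G\to G/C$ induces a well-defined surjection of Reidemeister classes
\[
\mathcal R[\varphi]\longrightarrow \mathcal R[\overline{\varphi}], \qquad [x]_{\varphi}\longmapsto [\pi(x)]_{\overline{\varphi}}.
\]
The inequality $R(\overline{\varphi})\leq R(\varphi)$ then follows at once, in $\mathbb N_{>0}\cup\{\infty\}$. Centrality of $C$ is not really needed; only $\varphi(C)\le C$ matters, so that $C$ is normal and $\overline{\varphi}$ is well defined. Here $C$ is normal because it is central.

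\emph{Well-definedness.} Suppose $x$ and $y$ are $\varphi$-conjugate, say $x=zy\varphi(z)^{-1}$ with $z\in G$. Applying the homomorphism $\pi$ gives
\[
\pi(x)=\pi(z)\,\pi(y)\,\pi(\varphi(z))^{-1}=\pi(z)\,\pi(y)\,\overline{\varphi}(\pi(z))^{-1},
\]
using the defining identity $\overline{\varphi}\circ\pi=\pi\circ\varphi$. Hence $\pi(x)$ and $\pi(y)$ are $\overline{\varphi}$-conjugate, and the map on classes is well defined.

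\emph{Surjectivity.} Since $\pi$ is surjective, every class $[\bar g]_{\overline{\varphi}}$ has a representative $\bar g=\pi(g)$ for some $g\in G$. This class is then the image of $[g]_{\varphi}$.

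There is no real obstacle. The only point to check is the compatibility $\overline{\varphi}\circ\pi=\pi\circ\varphi$, and this is exactly how $\overline{\varphi}$ is induced from $\varphi(C)\le C$. Once the surjection of class sets is in place, the cardinality comparison is immediate, including the case where $R(\varphi)=\infty$.
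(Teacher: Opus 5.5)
Your proof is correct and is the same as the paper's: the projection induces the well-defined surjection $\mathcal R[\varphi]\to\mathcal R[\overline{\varphi}]$, $[x]_\varphi\mapsto[xC]_{\overline{\varphi}}$, and the inequality follows. One wording point: the argument uses that $C$ is normal \emph{and} $\varphi(C)\le C$, since $\varphi(C)\le C$ alone does not give normality, so the natural generalization is to normal $\varphi$-invariant subgroups rather than to arbitrary $\varphi$-invariant ones.
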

\begin{proof}
The map
 \(\mathcal R[\varphi] \to \mathcal R[\overline{\varphi}]\colon [x]_{\varphi} \mapsto [xC]_{\overline \varphi}\)
 is a well-defined surjection.
\end{proof}

\begin{lemma}[{See \cite[\S1.2]{Sen23}}]\label{lemma:twistedConjugacySumFormula}
	With the above notation, 
\begin{enumerate}[label=\emph{(\arabic*)}, ref={(\arabic*)}]
	\item For each \(g \in G\), the set \[
	\Big[g,\pi^{-1}\big(\operatorname{Stab}_{\overline{\varphi}}(gC)\big)\Big]^{\varphi} := \Big\{ g^{-1} x^{-1} g \varphi(x) \, \Big| \, x \in \pi^{-1}\big(\operatorname{Stab}_{\overline{\varphi}}(gC)\big) \Big\}
	\]
	is a subgroup of \(C\), \label{item:lemmatwistedConjugacySumFormulai}
	\item For each \(g \in G\), the map 
	\begin{align*}
	\lambda_g^{\varphi} \colon 	\Big[g,\pi^{-1}\big(\operatorname{Stab}_{\overline{\varphi}}(gC)\big)\Big]^{\varphi} \times \mathcal R\big[\varphi|_{C}\big] &\longrightarrow \mathcal R\big[\varphi|_{C}\big] \\
\big( x, [ z ]_{\varphi|_C} \big)	&\longmapsto [ xz ]_{\varphi|_C}
	\end{align*}
	is a well-defined left action of \(\Big[g,\pi^{-1}\big(\operatorname{Stab}_{\overline{\varphi}}(gC)\big)\Big]^{\varphi}\) on  \(\mathcal R\big[\varphi|_{C}\big]\), and  \label{item:lemmatwistedConjugacySumFormulaii}
	\item The following sum formula holds: \[R(\varphi) = \sum_{[gC] \in \mathcal R[\overline{\varphi}]} \# \Big\{ \mathrm{orbits}\: \mathrm{of}\:\lambda_g^{\varphi} \Big\}. \] \label{item:lemmatwistedConjugacySumFormulaiii}
\end{enumerate}
\end{lemma}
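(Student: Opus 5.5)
The plan is to prove the three parts in order. Part~(3) comes from studying the fibres of the surjection $\mathcal R[\varphi]\to\mathcal R[\overline\varphi]$ of Lemma~\ref{lemma:ReidemeisterNumberOfQuotientIsSmallerThanReidemeisterNumber}.

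For \ref{item:lemmatwistedConjugacySumFormulai}, fix $g$ and put $S:=\pi^{-1}\big(\operatorname{Stab}_{\overline\varphi}(gC)\big)$, a subgroup of $G$. Define $c\colon S\to G$ by $c(x):=g^{-1}x^{-1}g\varphi(x)$.
\begin{itemize}
\item If $x\in S$, then $xg\varphi(x)^{-1}\in gC$. Using centrality of $C$, this rearranges to $c(x)\in C$.
\item For $x,y\in S$ we compute
\[
c(xy)=g^{-1}y^{-1}\big(x^{-1}g\varphi(x)\big)\varphi(y)=g^{-1}y^{-1}\,g\,c(x)\,\varphi(y)=c(x)\,c(y),
\]
again because $c(x)$ is central.
\end{itemize}
Hence $c$ is a homomorphism $S\to C$, and its image $\big[g,S\big]^{\varphi}$ is a subgroup of $C$.

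For \ref{item:lemmatwistedConjugacySumFormulaii}, note that $C$ is abelian. So $\mathcal R[\varphi|_C]$ is the quotient group $C/\{y\varphi(y)^{-1}\mid y\in C\}$. Multiplication by elements of the subgroup $[g,S]^\varphi\le C$ therefore descends to a well-defined action on these cosets, namely the action induced by translation in the quotient group.

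For \ref{item:lemmatwistedConjugacySumFormulaiii}, the surjection $[x]_\varphi\mapsto[xC]_{\overline\varphi}$ splits $R(\varphi)$ as a sum of fibre cardinalities. It suffices to show that the fibre over $[gC]$ is in bijection with the orbits of $\lambda^\varphi_g$.

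Every class in this fibre contains an element of $gC$, so it has the form $[gz]_\varphi$ with $z\in C$. Consider the map $z\mapsto[gz]_\varphi$ from $C$ onto the fibre. We show that $[gz]_\varphi=[gz']_\varphi$ holds exactly when $z'\in[g,S]^\varphi\cdot z\cdot\{y\varphi(y)^{-1}\mid y\in C\}$, i.e.\ exactly when $[z]_{\varphi|_C}$ and $[z']_{\varphi|_C}$ lie in the same $\lambda^\varphi_g$-orbit.
\begin{itemize}
\item Suppose $x\,gz\,\varphi(x)^{-1}=gz'$. Projecting to $G/C$ shows $x\in S$. Then $z'=\big(g^{-1}xg\varphi(x)^{-1}\big)z=c(x^{-1})\,z$ with $c(x^{-1})\in[g,S]^\varphi$.
\item Conversely, each $c(x^{-1})$ is realised this way. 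Elements $y\in C$ lie in $S$ and give $c(y^{-1})=y\varphi(y)^{-1}$, so they account for the $\varphi|_C$-twisted conjugacy in $C$.
\end{itemize}
Summing over $[gC]\in\mathcal R[\overline\varphi]$ then gives the formula.

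The main obstacle is the bookkeeping in this last step: checking that the relation on $C$ is exactly the orbit relation, with no extra identifications and none missing. One must also check that the orbit count does not depend on the representative $g$ of $[gC]_{\overline\varphi}$. For the latter I would verify directly that replacing $g$ by $hg\varphi(h)^{-1}w$, with $w\in C$, conjugates $S$ by $h$ and leaves the subgroup $[g,S]^\varphi$ unchanged, since $C$ is central.
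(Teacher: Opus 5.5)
Your proof is correct. The paper itself gives no proof of this lemma and only refers to Senden's thesis \cite[\S1.2]{Sen23}, so your argument is a self-contained substitute along the standard fibre-counting lines. You show that $c(x)=g^{-1}x^{-1}g\varphi(x)$ is a homomorphism $S\to C$. You then identify the fibre of $\mathcal R[\varphi]\to\mathcal R[\overline\varphi]$ over $[gC]_{\overline\varphi}$ with the orbit set of $\lambda_g^\varphi$ via $z\mapsto[gz]_\varphi$. All the computations check out, including $z'=c(x^{-1})z$ and the observation that $x\in S$ follows by projecting to $G/C$.

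Two small remarks on the bookkeeping you flag as the main obstacle.

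First, since $C\le S$ and $c(y^{-1})=y\varphi(y)^{-1}$ for $y\in C$, the subgroup $B=\{y\varphi(y)^{-1}\mid y\in C\}$ is contained in $[g,S]^\varphi$. Hence the $\lambda_g^\varphi$-orbits on $C/B$ are exactly the images of the cosets of $[g,S]^\varphi$ in $C$. Your two bullet points then say precisely that $[gz]_\varphi=[gz']_\varphi$ if and only if $z'\in[g,S]^\varphi z$. So the fibre has cardinality $[C:[g,S]^\varphi]$, with no extra or missing identifications.

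Second, the separate check that the orbit count does not depend on the representative $g$ is unnecessary. Your fibre argument works for an arbitrary representative $g$ of the class, and the fibre depends only on the class, so independence is automatic. Your direct verification is nevertheless correct: replacing $g$ by $hg\varphi(h)^{-1}w$ conjugates $S$ by $h$, and since $c(x)$ is central, $[g,S]^\varphi$ is literally unchanged.
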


The following result may be found implicitly in \cite[Proof of Lemma 4]{Jab08}. 
For the full statement, along with a detailed proof, see \cite[Proposition 1.2.11]{Sen23}.
\begin{lemma}\label{lemma:finitelygeneratedresiduallyfinitegroupWithFiniteReidemeisterNumberHasFiniteStabilizers}
	Let \(G\) be a finitely generated residually finite group. Let \(\varphi\colon G\to G\) be an endomorphism. If \(R(\varphi) < \infty\), then \(\operatorname{Stab}_{\varphi}(g)\) is finite for every \(g\in G\).
\end{lemma}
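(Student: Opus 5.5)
The statement to prove is Lemma~\ref{lemma:finitelygeneratedresiduallyfinitegroupWithFiniteReidemeisterNumberHasFiniteStabilizers}: if \(G\) is finitely generated and residually finite and \(R(\varphi)<\infty\), then every \(\varphi\)-stabilizer \(\operatorname{Stab}_\varphi(g)\) is finite. The plan is to prove the contrapositive by showing that \(R(\varphi)\) bounds the size of the images of \(\operatorname{Stab}_\varphi(g)\) in finite quotients of \(G\) by characteristic subgroups.

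\emph{Reduction to \(g=e\).} Observe that \(x\in\operatorname{Stab}_\varphi(g)\) iff \(xg\varphi(x)^{-1}=g\), iff \(x\) is fixed by \(\tau_g\circ\varphi\), where \(\tau_g(y)=g^{-1}yg\). Right multiplication by \(g\) maps \(\varphi\)-classes bijectively to \((\tau_g\varphi)\)-classes, so \(R(\tau_g\varphi)=R(\varphi)\). Replacing \(\varphi\) by \(\tau_g\varphi\), it therefore suffices to show that \(\operatorname{Fix}(\varphi)=\operatorname{Stab}_\varphi(e)\) is finite whenever \(R(\varphi)<\infty\).

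\emph{Characteristic finite-index subgroups.} Since \(G\) is finitely generated, it has only finitely many subgroups of a given index \(n\). Let \(N_n\) be their intersection. Then \(N_n\) is a finite-index subgroup that is fully invariant: a preimage under any endomorphism of a subgroup of index \(\le n\) has index \(\le n\), hence \(\varphi(N_n)\le N_n\). Residual finiteness gives \(\bigcap_n N_n=1\). Let \(\varphi_n\) be the endomorphism induced by \(\varphi\) on the finite group \(Q_n=G/N_n\). The quotient map \(\mathcal R[\varphi]\to\mathcal R[\varphi_n]\) is surjective, as in Lemma~\ref{lemma:ReidemeisterNumberOfQuotientIsSmallerThanReidemeisterNumber}, so \(R(\varphi_n)\le R(\varphi)\).

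\emph{Counting in a finite group (the key step).} For a finite group \(Q\) with endomorphism \(\psi\), the group \(Q\) acts on itself by \(x\cdot y=xy\psi(x)^{-1}\). The orbits are the \(\psi\)-classes, and the stabilizer of \(e\) is \(\operatorname{Fix}(\psi)\). The \(\psi\)-class of \(e\) therefore has size \(|Q|/|\operatorname{Fix}\psi|\), which alone bounds nothing. So instead of looking at one orbit, I will use a different orbit count. I will aim to show \(|\operatorname{Fix}(\psi)|\le R(\psi)\); this is the standard fact I expect to be the main obstacle. First try Burnside's lemma:
\[
R(\psi)=\frac1{|Q|}\sum_{x\in Q}\#\{y: xy\psi(x)^{-1}=y\}=\frac1{|Q|}\sum_{x}\#\{y: y^{-1}xy=\psi(x)\}.
\]
This sum ranges over \(x\) with \(\psi(x)\) conjugate to \(x\), and each such term equals \(|C_Q(x)|\). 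The terms with \(x\in\operatorname{Fix}(\psi)\) alone contribute \(\sum_{x\in\operatorname{Fix}\psi}|C_Q(x)|/|Q|\). Grouping these \(x\) by their \(Q\)-conjugacy classes, each class meeting \(\operatorname{Fix}\psi\) contributes at least \(1/|\text{class}|\) per element counted. This gives \(R(\psi)\ge\) the number of \(Q\)-conjugacy classes meeting \(\operatorname{Fix}(\psi)\). That is not yet \(|\operatorname{Fix}\psi|\). I would then refine it by noting that the map \(\operatorname{Fix}(\psi)\to\mathcal R[\psi]\), \(x\mapsto[x]_\psi\), has fibres of bounded size, or else use the alternative route in \cite{Jab08}. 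There one shows directly that if \(x_1,\dots,x_m\in\operatorname{Fix}(\varphi)\) are distinct, then the elements \(x_i\) lie in pairwise distinct \(\varphi\)-classes after passing to a suitable quotient. Concretely, if \(x_j=zx_i\varphi(z)^{-1}\) in \(Q_n\), then \(z^{-1}x_j\) is mapped to \(x_i\varphi(z)^{-1}\)\ldots. If this chase fails I would simply cite \cite[Proposition 1.2.11]{Sen23}.

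\emph{Conclusion.} Granting \(|\operatorname{Fix}(\varphi_n)|\le R(\varphi_n)\le R(\varphi)\) for every \(n\), suppose \(x_1,\dots,x_m\in\operatorname{Fix}(\varphi)\) are distinct. By residual finiteness and \(\bigcap_n N_n=1\), they map to distinct elements of some \(Q_n\), and their images are fixed by \(\varphi_n\). Hence \(m\le R(\varphi)\), so \(\operatorname{Fix}(\varphi)\) is finite, with at most \(R(\varphi)\) elements.
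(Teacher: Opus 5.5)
Your reduction to \(g=e\), the passage to finite quotients \(Q_n=G/N_n\) by fully invariant subgroups, the inequality \(R(\varphi_n)\le R(\varphi)\), and the final separation step via residual finiteness are all sound. The gap is the key finite-group step: the inequality \(|\operatorname{Fix}(\psi)|\le R(\psi)\) that you aim for is false. Take \(Q=S_3\) and \(\psi=\mathrm{Id}\). Then \(\operatorname{Fix}(\psi)=S_3\) has \(6\) elements, while \(R(\psi)=3\) is the number of conjugacy classes. Taking \(G=S_3\) itself also refutes your final claim that \(\operatorname{Fix}(\varphi)\) has at most \(R(\varphi)\) elements. No linear bound can hold either: for the affine group \(\mathbb F_q\rtimes\mathbb F_q^\times\) with \(\psi=\mathrm{Id}\) there are \(q(q-1)\) fixed points but only \(q\) classes. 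For the same reason none of your proposed repairs works:
\begin{itemize}
\item the Burnside count only bounds the number of conjugacy classes meeting \(\operatorname{Fix}(\psi)\);
\item the fibres of \(\operatorname{Fix}(\psi)\to\mathcal R[\psi]\) have unbounded size;
\item distinct fixed points need not lie in distinct twisted classes, since for \(\psi=\mathrm{Id}\) two distinct conjugate elements are both fixed and twisted-conjugate.
\end{itemize}

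The missing idea is to use all orbits of the twisted action at once, not only the orbit of \(e\) that you dismissed. The rule \(x\cdot y=xy\psi(x)^{-1}\) is a genuine action of \(Q\) for any endomorphism \(\psi\). Choose representatives \(y_1=e,y_2,\dots,y_R\) of its \(R=R(\psi)\) orbits; orbit--stabilizer then gives the class equation
\[
1=\sum_{i=1}^{R}\frac{1}{|\operatorname{Stab}_\psi(y_i)|}.
\]
By Landau's lemma, for each \(k\) the equation \(\sum_{i=1}^{k}1/s_i=1\) has only finitely many solutions in positive integers. To see this, order \(s_1\le\cdots\le s_k\). Then \(s_1\le k\), and once \(s_1,\dots,s_j\) are fixed, the positive remainder \(r\) gives \(s_{j+1}\le (k-j)/r\). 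Hence there is a nondecreasing function \(B\) with \(|\operatorname{Fix}(\psi)|=|\operatorname{Stab}_\psi(e)|\le B(R(\psi))\). Your conclusion then goes through verbatim, with \(m\le B(R(\varphi_n))\le B(R(\varphi))\) in place of \(m\le R(\varphi)\).

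Two small corrections as well:
\begin{itemize}
\item Define \(N_n\) as the intersection of all subgroups of index \emph{at most} \(n\). A preimage of an index-\(n\) subgroup can have smaller index, so intersecting only over index exactly \(n\) need not give \(\varphi(N_n)\le N_n\).
\item \(\operatorname{Stab}_\varphi(g)\) is the fixed subgroup of \(y\mapsto g\varphi(y)g^{-1}\), so your \(\tau_g\) should be conjugation by \(g^{-1}\). This is harmless, since \(g\) is arbitrary.
\end{itemize}
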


\subsubsection{Proof of Theorem \texorpdfstring{\ref{thm:Reidemeister}}{E}} 

For a finitely generated torsion-free nilpotent group, Senden's arguments in \cite[\S3]{Sen24} use the \emph{product formula} \cite[Proposition 2.6]{Sen24},
\[R(\varphi) = R(\varphi|_{Z(\Gamma)})\cdot R(\overline{\varphi}),\]
where \(\overline{\varphi}\) is the induced automorphism of \(\Gamma/Z(\Gamma)\); see also \cite[Corollary 1.2.13]{Sen23}. However, this formula relies on the fact that \(\Gamma/Z(\Gamma)\) is torsion-free, which need not hold in our case:
\begin{example}
Consider the torsion-free polycyclic group
\[
\Gamma := \mathbb Z^2 \rtimes_{-I_2} \mathbb Z = \big\langle a,b,t \: \big| \: [a,b] = 1, tat^{-1} = a^{-1}, tbt^{-1} = b^{-1} \big\rangle.
\]
We can view the elements of \(\Gamma\) as pairs \((v,k)\) where \(v \in \mathbb Z^2\) and \(k \in \mathbb Z\). Multiplication is given by
\[
(v,k) \cdot (w,\ell) = \left (v + (-1)^k w, k + \ell \right).
\]
Routine calculations show that \(Z(\Gamma) = \langle t^2 \rangle = \{(0,k) \mid k\equiv 0 \bmod 2\}\) and so \(\Gamma / Z(\Gamma) = \mathbb Z^2 \rtimes_{-I_2} \mathbb Z_2\) is not torsion-free. 
Consider the automorphism \(\varphi\colon \Gamma \to \Gamma\) defined by \[
M = \begin{bmatrix} 0 & 1 \\ -1 & 0 \end{bmatrix} \in \operatorname{GL}_2(\mathbb Z), \qquad \varphi(v,k) = (Mv,-k).
\]
Straightforward calculations yield that \(R(\varphi|_{Z(\Gamma)}) = 2\),  \(R(\overline{\varphi}) = 4\) and \(R(\varphi) = 4\), therefore  the product formula fails: 
\[
R(\varphi) \neq R(\varphi|_{Z(\Gamma)}) \cdot R(\overline{\varphi}).
\]
\demo  \end{example}

We have the following more general proposition, which states that under certain conditions adding \emph{central noise} to an endomorphism does not change the Reidemeister number. 
After proving this proposition, we show that our specific class of groups and morphisms \(\varphi = \theta \cdot \zeta\) in Theorem~\ref{thm:CharacterizationAutomorphismDirectProductOfMinimaxGroups} satisfies its assumptions. 
\begin{proposition}\label{prop:GeneralTheoremAboutReidemeisterEquality}
Let \(G\) be a group with \(Z(G)\) torsion-free and \(G / Z(G)\) finitely generated and residually finite.  Assume that \(\varphi = \theta \cdot \zeta\) holds for some endomorphisms \(\varphi, \theta,\zeta\colon G\to G\) satisfying \(\theta(Z(G)) \leq Z(G)\), \(\zeta(G)\leq Z(G)\), \(\zeta|_{Z(G)} = 1\). Then \(R(\varphi) = R(\theta)\).
\end{proposition}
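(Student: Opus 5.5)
Set \(C := Z(G)\), with projection \(\pi\colon G \to G/C\). The plan is to apply the sum formula of Lemma~\ref{lemma:twistedConjugacySumFormula} to both \(\varphi\) and \(\theta\) with respect to this central subgroup, and then compare the two sums term by term. First, \(\varphi(C)\le C\): for \(z\in C\) we have \(\varphi(z)=\theta(z)\zeta(z)=\theta(z)\in C\), since \(\zeta|_C=1\). The same computation gives \(\varphi|_C=\theta|_C\). Because \(\zeta(G)\le C\), the induced maps on \(G/C\) coincide: \(\overline{\varphi}=\overline{\theta}\). So \(\mathcal R[\overline\varphi]=\mathcal R[\overline\theta]\), \(\mathcal R[\varphi|_C]=\mathcal R[\theta|_C]\), and the stabilizers \(\operatorname{Stab}_{\overline\varphi}(gC)=\operatorname{Stab}_{\overline\theta}(gC)\) agree. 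Write \(S_g:=\pi^{-1}(\operatorname{Stab}_{\overline\varphi}(gC))\).

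The only remaining difference lies in the acting subgroups \([g,S_g]^{\varphi}\) and \([g,S_g]^{\theta}\) of \(C\). For \(x\in S_g\) we have
\[
g^{-1}x^{-1}g\varphi(x) \;=\; g^{-1}x^{-1}g\theta(x)\,\zeta(x),
\]
so \([g,S_g]^\varphi\) is the image of \(S_g\) under \(x\mapsto c_\theta(x)\zeta(x)\), where \(c_\theta(x):=g^{-1}x^{-1}g\theta(x)\). The orbits are those of the subgroup generated inside \(C/(\mathrm{Id}-\theta|_C)(C)\). My plan is therefore to split into two cases, according to whether \(R(\overline\theta)\) is finite or infinite.

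\emph{Case \(R(\overline\theta)=\infty\).} Lemma~\ref{lemma:ReidemeisterNumberOfQuotientIsSmallerThanReidemeisterNumber} gives \(R(\varphi)\ge R(\overline\varphi)=\infty\), and likewise \(R(\theta)=\infty\), so the two agree.

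\emph{Case \(R(\overline\theta)<\infty\).} Since \(G/C\) is finitely generated and residually finite, Lemma~\ref{lemma:finitelygeneratedresiduallyfinitegroupWithFiniteReidemeisterNumberHasFiniteStabilizers} shows that each \(\operatorname{Stab}_{\overline\theta}(gC)\) is finite, so \(C\) has finite index in \(S_g\). Fix \(m\) with \(x^m\in C\) for all \(x\in S_g\) (for instance \(m=[S_g:C]\) works after passing to a normal core; alternatively use that the maps involved are homomorphisms on \(S_g\)). The key point is that \(x\mapsto\zeta(x)\) restricted to \(S_g\) is a homomorphism into the torsion-free abelian group \(C\) that kills \(C\). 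It therefore factors through the finite group \(S_g/C\) and must be trivial. Hence \(\zeta|_{S_g}=1\), so \([g,S_g]^\varphi=[g,S_g]^\theta\) and the actions \(\lambda_g^\varphi\) and \(\lambda_g^\theta\) are literally the same. The sum formula of Lemma~\ref{lemma:twistedConjugacySumFormula}\ref{item:lemmatwistedConjugacySumFormulaiii} then gives \(R(\varphi)=R(\theta)\).

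The main obstacle I anticipate is justifying carefully that \(\zeta\) vanishes on \(S_g\). Concretely, each \(x\in S_g\) satisfies \(x^m\in C\) for some \(m\ge1\), since \(S_g/C\) is finite. Then \(\zeta(x)^m=\zeta(x^m)=1\) because \(\zeta|_C=1\), and torsion-freeness of \(Z(G)\) forces \(\zeta(x)=1\). This elementwise argument avoids needing any normal core. Everything else is bookkeeping: checking that the hypotheses of Lemma~\ref{lemma:twistedConjugacySumFormula} hold with \(C=Z(G)\) for both \(\varphi\) and \(\theta\), using \(\theta(Z(G))\le Z(G)\) together with the computation \(\varphi|_C=\theta|_C\) above.
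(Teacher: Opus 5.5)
Your proposal is correct and follows the paper's proof essentially step for step. Take \(C=Z(G)\) and note \(\varphi|_{C}=\theta|_{C}\) and \(\overline{\varphi}=\overline{\theta}\); dispose of the case \(R(\overline{\varphi})=\infty\) via the quotient lemma; otherwise use finiteness of the \(\overline{\varphi}\)-stabilizers and torsion-freeness of \(Z(G)\) to get \(\zeta|_{S_g}=1\), so the actions \(\lambda_g^{\varphi}\) and \(\lambda_g^{\theta}\) in the sum formula coincide. The aside about passing to a normal core is unnecessary: \(C=Z(G)\) is already normal in \(S_g\), so Lagrange gives \(x^{[S_g:C]}\in C\) directly, and your elementwise argument sidesteps the issue anyway.
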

\begin{proof}
We apply our \emph{Twisted conjugacy preliminaries} (\ref{subsubsec:tcp}) to both \(\varphi\) and \(\theta\), with \(C:= Z(G)\) the full center. As \(\zeta|_{Z(G)} = 1\) and \(\varphi = \theta \cdot \zeta\), we see that \(\varphi\) and \(\theta\) induce the same maps on \(Z(G)\) and on the quotient \(G/Z(G)\): 
\[
\varphi|_{Z(G)} = \theta|_{Z(G)} \quad \text{and} \quad \overline{\varphi} = \overline{\theta}.
\]
In particular, the induced twisted conjugacy relations on \(Z(G)\) and \(G/Z(G)\) are identical. 

If \(R(\overline{\varphi}) = R(\overline{\theta}) = \infty\), then \(R({\varphi}) = \infty\) and \(R({\theta}) = \infty\) by Lemma~\ref{lemma:ReidemeisterNumberOfQuotientIsSmallerThanReidemeisterNumber}. Hence the formula \(R(\varphi) = R(\theta)\) holds trivially. So we may assume that \(R(\overline{\theta}) = R(\overline{\varphi}) < \infty\). 
Since \(G/Z(G)\) is finitely generated and residually finite, Lemma~\ref{lemma:finitelygeneratedresiduallyfinitegroupWithFiniteReidemeisterNumberHasFiniteStabilizers} applies to \(\overline{\varphi}\): every \(\overline{\varphi}\)-stabilizer \(\operatorname{Stab}_{\overline{\varphi}}\big(gZ(G)\big) \) is finite. 
	
	Now consider the sum formula in Lemma~\ref{lemma:twistedConjugacySumFormula}\ref{item:lemmatwistedConjugacySumFormulaiii} for both \(\varphi\) and \(\theta\). It computes \(R(\varphi)\) (resp.\ \(R(\theta)\)) as a sum over all \(\big[gZ(G)\big] \in \mathcal R[\overline{\varphi}] = \mathcal R[\overline{\theta}]\) of the number of orbits of an action \(\lambda_g^{\varphi}\) (resp.\ \(\lambda_g^{\theta}\)) on \(\mathcal R[\varphi|_{Z(G)}] = \mathcal R[\theta|_{Z(G)}]\). It thus suffices to prove that the actions \(\lambda_g^{\varphi}\) and \(\lambda_g^{\theta}\) are identical for every \(g \in G\). 
	
	For a fixed \(g \in G\), recall how \(\lambda_g^\varphi\) and \(\lambda_g^\theta\) are defined in Lemma~\ref{lemma:twistedConjugacySumFormula}\ref{item:lemmatwistedConjugacySumFormulaii}. We see that they are defined by the same formula, and that they act on the same set \(\mathcal R[\varphi|_{Z(G)}] = \mathcal R[\theta|_{Z(G)}]\). Hence, it suffices to show that they act by the same group, i.e. that \(\varphi\) and \(\theta\) coincide on the subgroup
	\[H:= \pi^{-1}\Big(\operatorname{Stab}_{\overline{\varphi}}\big(gZ(G)\big)\Big) =  \pi^{-1}\Big(\operatorname{Stab}_{\overline{\theta}}\big(gZ(G)\big)\Big) \leq G.\] As \(\varphi = \theta \cdot \zeta\), it suffices to show that  \(\zeta(H) = 1.\) Since \(\zeta|_H\colon H \to Z(G)\) factors through the finite group \(H/Z(G) \cong \operatorname{Stab}_{\overline{\varphi}}\big(gZ(G)\big),\) the image \(\zeta(H)\) is a finite subgroup of \(Z(G)\), which is torsion-free. Therefore,  \(\zeta(H) = 1\) and this concludes the proof.
\end{proof}
To prove Theorem \ref{thm:Reidemeister}, it now suffices to show that the groups and endomorphisms under consideration satisfy the assumptions of Proposition~\ref{prop:GeneralTheoremAboutReidemeisterEquality}.

  The hardest part of this verification is encapsulated by the following lemma.

\begin{lemma}\label{lemma:AssumptionsOfGeneralTheoremReidemeisterProduct}
	Let \(\Gamma\) be a finitely generated virtually solvable minimax group. Then  \(\Gamma/Z(\Gamma)\) is also a finitely generated virtually solvable minimax group.  Moreover, if \(\Gamma\) is \emph{\textsf{WTN}}, then so is \(\Gamma/Z(\Gamma)\).
\end{lemma}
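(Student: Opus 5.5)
The first part of the statement is formal, and I would dispose of it first. Quotients of finitely generated groups are finitely generated, and quotients of virtually solvable groups are virtually solvable. Being minimax passes to quotients: pushing a normal series with \textbf{Max}/\textbf{Min} factors forward to \(\Gamma/Z(\Gamma)\) gives a series whose factors are quotients of the original ones, and both \textbf{Max} and \textbf{Min} are inherited by quotients.

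The substantive claim is that \textsf{WTN} passes to \(\Gamma/Z(\Gamma)\). By Proposition~\ref{prop:WTNiffWFN} it is enough to show that \(\Gamma/Z(\Gamma)\) has no non-trivial finite normal subgroup. So let \(N/Z(\Gamma)\) be a finite normal subgroup of \(\Gamma/Z(\Gamma)\), with \(Z(\Gamma)\le N\lhd \Gamma\); the goal is \(N\le Z(\Gamma)\).

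I would argue inside the \(\mathbb Q\)-algebraic hull \(\HH\). First, \(Z(\Gamma)=\Gamma\cap Z(\HH)\) by Zariski density, and \(Z(\HH)\) is unipotent, hence lies in \(\U(\HH)\).

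Take \(n\in N\) and \(\gamma\in\Gamma\). The map \(\gamma\mapsto [\gamma,n]\) has image in \(N\), and passing to \(N/Z(\Gamma)\) it takes only finitely many values modulo \(Z(\Gamma)\).

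\emph{Key step.} First pass to the finite-index subgroup \(\Gamma_0 = C_\Gamma(N/Z(\Gamma))\), the kernel of the conjugation action on the finite group \(N/Z(\Gamma)\). For \(\gamma\in\Gamma_0\) we have \([\gamma,n]\in Z(\Gamma)\). Then \(\gamma\mapsto[\gamma,n]\) is a homomorphism \(\Gamma_0\to Z(\Gamma)\), because commutators with \(n\) are central there.

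Next, \(n^k\in Z(\Gamma)\) for \(k=|N/Z(\Gamma)|\). Hence \(1=[\gamma,n^k]=[\gamma,n]^k\), since the commutators are central. As \(Z(\Gamma)\) is torsion-free (it sits in the unipotent group \(\U(\HH)\)), this gives \([\gamma,n]=1\). So \(n\) centralizes \(\Gamma_0\), whose Zariski closure contains \(\HH^\circ\), and in particular \(n\) centralizes \(\U(\HH)\).

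The strong unipotent radical property, \(C_\HH(\U)\subset \U\), then gives \(n\in \U(\HH)\). Since \(n^k\) is central and \(\U\) is unipotent, the element \(n\) lies in the Zariski closure of \(\langle n^k\rangle\). That closure is \(\{\exp(t\log n^k)\}\), and it is central in \(\HH\). Hence \(n\in Z(\HH)\cap\Gamma=Z(\Gamma)\), which is what we need.

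The step I expect to be delicate is the final one: showing that \(n\in\U\) with \(n^k\in Z(\HH)\) forces \(n\in Z(\HH)\). In a unipotent group over characteristic 0, \(\log(n^k)=k\log n\), so \(\overline{\langle n\rangle}=\overline{\langle n^k\rangle}\). To use this, I would check that the Zariski closure of a central subgroup is central, which holds because \(Z(\HH)\) is Zariski closed. A secondary care point is that the commutator homomorphism argument uses only the centrality of \([\Gamma_0,n]\), which is exactly what the passage to \(\Gamma_0\) arranged.
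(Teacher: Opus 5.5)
Your proof is correct, but it takes a different route from the paper's.

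\textbf{The paper's argument.} It never uses the hull here. Since $Z(\Gamma)\le Z(N)$ has finite index in $N$, Schur's theorem makes $[N,N]$ a finite normal subgroup of $\Gamma$, hence trivial by \textsf{WTN}. Likewise $\tau(N)=1$, so $N$ is torsion-free abelian. Conjugation by $g\in\Gamma$ fixes the finite-index subgroup $Z(\Gamma)$ pointwise. For $x\in N$ we have $x^k\in Z(\Gamma)$, so $(gxg^{-1})^k=x^k$, and uniqueness of $k$-th roots in $N$ gives $gxg^{-1}=x$.

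\textbf{Your argument.} You pass to the finite-index kernel $\Gamma_0$ and use $[\gamma,n^k]=[\gamma,n]^k$ together with torsion-freeness of $Z(\Gamma)$ to get $n\in C_\Gamma(\Gamma_0)$. Then Zariski density, the strong unipotent radical, and uniqueness of roots in the unipotent group $\U(\HH)$ finish the job.

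\textbf{Comparison.} Both arguments end with a unique-roots step, in $N$ versus in $\U(\HH)$. Both reduce to showing \textsf{WFN} via Proposition~\ref{prop:WTNiffWFN}, which needs the first part of the lemma applied to $\Gamma/Z(\Gamma)$.

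\begin{itemize}
\item The paper's route is more elementary and more general. For any group with no non-trivial torsion normal subgroups, it shows that $\Gamma/Z(\Gamma)$ has no non-trivial finite normal subgroups. The minimax hypothesis enters only through Proposition~\ref{prop:WTNiffWFN}.
\item Your route depends on the existence of the $\mathbb Q$-algebraic hull for finitely generated virtually solvable minimax \textsf{WTN} groups, recalled in Subsection~\ref{subsec:minimax}. You should cite this explicitly, since it is what licenses working inside $\HH$. In return, your route avoids Schur's theorem and fits the algebraic-group viewpoint used throughout the rest of the paper.
\end{itemize}
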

\begin{proof}
	 A quotient of a group with \textbf{Max} (resp.\ \textbf{Min}) again satisfies \textbf{Max} (resp.\ \textbf{Min}). Hence a quotient of a minimax group is again minimax. The same is true for finite generation and virtual solvability. 
 Hence it suffices to prove the \emph{Moreover, ...} statement. 
 
 \medskip 
	
 So assume that \(\Gamma\) is \textsf{WTN}. By Proposition~\ref{prop:WTNiffWFN}, it suffices to show that \(\Gamma/Z(\Gamma)\) is \textsf{WFN}, i.e. that \(\Gamma/Z(\Gamma)\) has no non-trivial finite normal subgroups. 
 
 Let \(\overline{N} \lhd \Gamma/Z(\Gamma)\) be finite, and let \(N\) be its full preimage in \(\Gamma\). Since \(Z(\Gamma) \leq Z(N)\), the quotient \(N / Z(N)\) is finite, hence Schur's theorem (\cite[10.1.4]{Rob96}) yields that \(N'=[N,N]\) is finite. As \(N'\) is characteristic in \(N\) and \(N \lhd \Gamma\), we have \(N' \lhd \Gamma\); since \(\Gamma\) is \textsf{WTN}, \(N' = 1\), so \(N\) is abelian.  Similarly,  \(\tau(N)\) is characteristic in \(N\) and \(N \lhd \Gamma\), hence \(\tau(N) = 1\) by \textsf{WTN}. Therefore, \(N\) is torsion-free abelian.  
 
 Finally, \(Z(\Gamma)\) has finite index in \(N\), and conjugation by any \(g \in \Gamma\) induces an automorphism of \(N\) that fixes \(Z(\Gamma)\) pointwise. Since \(N\) is torsion-free abelian, this automorphism is the identity, hence \(N \leq Z(\Gamma)\). Therefore \(N = Z(\Gamma)\) and \(\overline{N} = 1\).
\end{proof}

\begin{proof}[Proof of Theorem \ref{thm:Reidemeister}]
	Let \(\Gamma = \Gamma_1 \times \cdots \times \Gamma_r\) be as in Theorem~\ref{thm:CharacterizationAutomorphismDirectProductOfMinimaxGroups}. Let \(\varphi\) be a monomorphism of \(\Gamma\), and factorize \(\varphi = \theta \cdot \zeta\) as in Theorem~\ref{thm:CharacterizationAutomorphismDirectProductOfMinimaxGroups}. It suffices to verify that \(\Gamma\) and the maps \(\varphi,\theta,\zeta\) satisfy the assumptions of Proposition~\ref{prop:GeneralTheoremAboutReidemeisterEquality}.
	\begin{itemize}[leftmargin=1.8em]
\item	\emph{\(Z(\Gamma)\) is torsion-free}: This holds because a non-trivial torsion element of \(Z(\Gamma)\) would generate a non-trivial finite normal subgroup of \(\Gamma\), yet \(\Gamma\) is \textsf{WTN}.

\item	\emph{\(\Gamma / Z(\Gamma)\) is finitely generated and residually finite}: By  Lemma~\ref{lemma:AssumptionsOfGeneralTheoremReidemeisterProduct}, \(\Gamma / Z(\Gamma)\) is again a finitely generated virtually solvable minimax group without torsion normal subgroups. By \cite[Lemma 2.6]{DP26}, it follows that \(\Gamma / Z(\Gamma)\) is residually finite.
	
\item	{\(\theta(Z(\Gamma)) \leq Z(\Gamma)\)}: It holds that \(Z(\Gamma) = Z(\HH) \cap \Gamma \leq Z(\HH)\). Since the induced \(\mathbb Q\)-automorphism \(\Theta\)  satisfies \(\Theta (Z(\HH)) \leq Z(\HH)\), we deduce that \(\theta(Z(\Gamma)) \leq Z(\Gamma)\).
	
\item	\(\zeta|_{Z(\Gamma)} = 1\): Let \(x \in Z(\Gamma)\). By the previous bullet point, \(Z(\Gamma)\leq Z(\HH)\), hence \(x \in Z(\HH)\).  Under the assumptions of Theorem~\ref{thm:CharacterizationAutomorphismDirectProductOfMinimaxGroups}, the \(\mathbb Q\)-group \(\HH\) admits no abelian \(\mathbb Q\)-factor. Thus, Proposition~\ref{prop:K-indecomposableGroupsIFFK-algebraicallyIndecomposableLiealgebras} implies that \(Z(\HH) = Z(\HH)^\circ \subset [\HH, \HH]\). Finally, by extending \(\zeta\) to a central \(\mathbb Q\)-endomorphism \(\Psi \colon \HH \to \HH\) as in the proof of Proposition~\ref{prop:virtuallypolycyclicGroupsInjectiveIFFinjectiveandAutomorphismIFFAutomorphism}, we  conclude that \[\zeta(x) = \Psi (x) \in \Psi\big([\HH,\HH]\big) = \{e\}.\] \end{itemize} 
Therefore all assumptions of Proposition \ref{prop:GeneralTheoremAboutReidemeisterEquality} are satisfied, and hence \(R(\varphi)=R(\theta)\).
\end{proof}

\section*{Acknowledgements} We thank \textbf{Lukas Vander Stricht} for his master's thesis \cite{VdS24} (supervised by the first author), which worked out the torsion-free nilpotent case in full detail and explored initial steps towards the torsion-free polycyclic setting via \(\mathbb Q\)-algebraic hulls. In particular, the converse direction in Example~\ref{example:Q-indecomposableNILPOTENT} was originally found by him.

\section*{Statements and Declarations}

K.V.'s research is supported by Methusalem grant METH/21/03 --- long-term structural funding of the Flemish government. The authors have no conflict of interest to declare that are relevant to this article. Data sharing not applicable to this article as no datasets were generated or analyzed during the current study.

\end{document}